\newcommand{\R}{\mathbb R}
\newcommand{\C}{\mathbb C}
\newcommand{\N}{\mathbb N}
\newcommand{\Z}{\mathbb Z}
\newcommand{\eps}{\varepsilon}
\newcommand{\abs}[1]{\left\vert #1 \right\vert}
\newcommand{\nor}[1]{\left\Vert #1 \right\Vert}
\newcommand{\Div}[1]{\mathrm{div}#1}
\newcommand{\Di}{\mathcal{D}^{1,2}}
\renewcommand{\Re}{\mathop{\mathfrak{Re}}}
\renewcommand{\Im}{\mathop{\mathfrak{Im}}}
\newtheorem{Theorem}{Theorem}[section]
\newtheorem{Corollary}[Theorem]{Corollary}
\newtheorem{Lemma}[Theorem]{Lemma}
\newtheorem{Proposition}[Theorem]{Proposition}
 \theoremstyle{definition} \newtheorem{Definition}[Theorem]{Definition}
 \newtheorem{remark}[Theorem]{Remark}
\begin{document}

\title[Sharp estimates for Aharonov-Bohm varying pole]{Sharp asymptotic estimates
  for eigenvalues of Aharonov-Bohm operators with varying poles}
\author{Laura Abatangelo, Veronica Felli}

\address{
\hbox{\parbox{5.7in}{\medskip\noindent
  L. Abatangelo, V. Felli\\
Dipartimento di Matematica e Applicazioni,\\
 Universit\`a di Milano Bicocca, \\
Via Cozzi 55, 20125 Milano (Italy). \\[2pt]
         {\em{E-mail addresses: }}{\tt laura.abatangelo@unimib.it, veronica.felli@unimib.it.}}}
}

\date{April 1, 2015}

\thanks{ 
The authors have been partially supported by the \emph{Gruppo Nazionale per
  l'Analisi Matematica, la Probabilit\`{a} e le loro Applicazioni
  (GNAMPA)} of the \emph{Istituto Nazionale di Alta Matematica
  (INdAM)}, 2014 INdAM-GNAMPA research project ``Stabilit\`{a}
spettrale e analisi asintotica per problemi singolarmente perturbati'',
and by the project ERC Advanced Grant 2013 
``Complex Patterns for Strongly Interacting Dynamical Systems - COMPAT''.\\
\indent 2010 {\it Mathematics Subject Classification.} 
35J10, 35P20, 35Q40, 35Q60, 35J75.\\
  \indent {\it Keywords.} Magnetic Schr\"{o}dinger operators,
  Aharonov-Bohm potential, asymptotics of eigenvalues, blow-up analysis.
}

\begin{abstract}
  We investigate the behavior of eigenvalues for a magnetic
  Aharonov-Bohm operator with half-integer circulation and Dirichlet
  boundary conditions in a planar domain.  We provide sharp
  asymptotics for eigenvalues as the pole is moving in the interior of
  the domain, approaching a zero of an eigenfunction of the limiting
  problem along a nodal line.  As a consequence, we verify
  theoretically some conjectures arising from numerical evidences in
  preexisting literature.  The proof relies on an Almgren-type
  monotonicity argument for magnetic operators together with a sharp
  blow-up analysis.
\end{abstract}

\maketitle

\section{Introduction}\label{sec:introduction}

The aim of this paper is to investigate the behavior of the
eigenvalues of Aharonov-Bohm operators with moving poles.  For
$a=(a_1,a_2)\in\R^2$ and $\alpha\in \R\setminus\Z$, we consider the
vector potential
\[
A_{a}^\alpha(x)=\alpha\bigg(\frac{-(x_2-a_2)}{(x_1-a_1)^2+(x_2-a_2)^2},
\frac{x_1-a_1}{(x_1-a_1)^2+(x_2-a_2)^2}\bigg),\quad
x=(x_1,x_2)\in\R^2\setminus\{a\},
\]
which generates the Aharonov-Bohm magnetic field in $\R^2$ with pole
$a$ and circulation $\alpha$; such a field is produced by an
infinitely long thin solenoid intersecting perpendicularly the plane
$(x_1,x_2)$ at the point $a$, as the radius of the solenoid goes to
zero and the magnetic flux remains constantly equal to $\alpha$ (see
e.g. \cite{AT,AB,MOR}).
   
In this paper we will focus on the case of half-integer circulation,
so we will assume $\alpha=1/2$ and denote
\[
A_{a}(x)=A_{a}^{1/2}(x) =A_0(x-a),\quad\text{where}\quad
A_0(x_1,x_2)=\frac12\bigg(-\frac{x_2}{x_1^2+x_2^2},\frac{x_1}{x_1^2+x_2^2}\bigg).
\]
In the spirit \cite{BNNNT}, \cite{NNT} and \cite{NT}, we are
interested in studying the dependence on the pole $a$ of the spectrum
of Schr\"odinger operators with Aharonov-Bohm vector potentials,
i.e. of operators $(i\nabla +A_{a})^2$ acting on functions
$u:\R^2\to\C$ as
\[
(i\nabla +A_{a})^2 u=-\Delta u+2iA_{a}\cdot\nabla u+|A_{a}|^2 u.
\]
The interest in Aharonov-Bohm operators with half-integer circulation
$\alpha=1/2$ is motivated by the fact that nodal domains of
eigenfunctions of such operators are strongly related to spectral
minimal partitions of the Dirichlet laplacian with points of odd
multiplicity, see \cite{BNHHO09,NT}. We refer to papers \cite{BNH11, BNL14,
  H10, HHOO99, HHO10, HHO13, HHOT09, HHOT10', HHOT10} for details on the deep
relation between behavior of eigenfunctions, their nodal domains, and
spectral minimal partitions.  Furthermore, the investigation carried
out in \cite{BNNNT,lena,NNT,NT} highlighted a strong connection
between nodal properties of eigenfunctions and the critical points of
the map which associates eigenvalues of the operator $A_a$ to the
position of pole $a$. Motivated by this, in the present paper we
deepen the investigation started in \cite{BNNNT, NNT} about the
dependence of eigenvalues of Aharonov-Bohm operators on the pole
position, aiming at proving sharp asymptotic estimates for the
convergence of eigenvalues associated to operators with a moving pole.

Let $\Omega\subset\R^2$ be a bounded, open and simply connected
domain. For every $a\in\Omega$, we introduce the functional space
$H^{1 ,a}(\Omega,\C)$ as the completion of
\[
\{u\in
H^1(\Omega,\C)\cap C^\infty(\Omega,\C):u\text{ vanishes in a
  neighborhood of }a\}
\]
 with respect to the norm 
 $$
 \|u\|_{H^{1,a}(\Omega,\C)}=\left(\left\|\nabla u\right\|^2
   _{L^2(\Omega,\C^2)} +\|u\|^2_{L^2(\Omega,\C)}+\Big\|\frac{u}{|x-a|}
   \Big\|^2_{L^2(\Omega,\C)}\right)^{\!\!1/2}.
$$
It is easy to verify that 
$H^{1,a}(\Omega,\C)=\big\{u\in H^1(\Omega,\C):\frac{u}{|x-a|}\in
  L^2(\Omega,\C)\big\}$. 
We also observe that, in view of the
Hardy type inequality proved in \cite{LW99} (see \eqref{eq:hardy}), 
an equivalent norm in $H^{1,a}(\Omega,\C)$ is
given by 
\begin{equation}\label{eq:norma}
  \left(\left\|(i\nabla+A_{a}) u\right\|^2
    _{L^2(\Omega,\C^2)} +\|u\|^2_{L^2(\Omega,\C)}\right)^{\!\!1/2}.
\end{equation}
We also consider the space $H^{1 ,a}_{0}(\Omega,\C)$ as the completion
of $C^\infty_{\rm c}(\Omega\setminus\{a\},\C)$ with respect to the
norm $\|\cdot\|_{H^{1}_{a}(\Omega,\C)}$, so that
$H^{1,a}_{0}(\Omega,\C)=\big\{u\in H^1_0(\Omega,\C):\frac{u}{|x-a|}\in
L^2(\Omega,\C)\big\}$.

For every $a\in\Omega$, we consider the eigenvalue problem
\begin{equation}\label{eq:eige_equation_a}\tag{$E_a$}
  \begin{cases}
   (i\nabla + A_{a})^2 u = \lambda u,  &\text{in }\Omega,\\
   u = 0, &\text{on }\partial \Omega,
 \end{cases}
\end{equation}
in a weak sense, i.e.  we say that $\lambda\in\R$ is an eigenvalue of
problem \eqref{eq:eige_equation_a} if there exists $u\in
H^{1,a}_{0}(\Omega,\C)\setminus\{0\}$ (called eigenfunction) such that
\[
\int_\Omega (i\nabla u+A_{a} u)\cdot \overline{(i\nabla v+A_{a}
  v)}\,dx=\lambda\int_\Omega u\overline{ v}\,dx \quad\text{for all }v\in H^{1,a}_{0}(\Omega,\C).
\]
From classical spectral theory, the eigenvalue problem $(E_a)$ admits
a sequence of real diverging eigenvalues $\{\lambda_k^a\}_{k\geq 1}$
with finite multiplicity; in the enumeration
\[
\lambda_1^a \leq \lambda_2^a\leq\dots\leq \lambda_j^a\leq\dots,
\]
we repeat each eigenvalue as many times as its multiplicity.  We are
interested in the behavior of the function $a\mapsto \lambda_j^a$ in a
neighborhood of a fixed point $b\in\Omega$. Up to a translation, it is
not restrictive to consider $b=0$. Thus, we assume that $0\in\Omega$.

In \cite[Theorem 1.1]{BNNNT} and \cite[Theorem 1.2]{lena} it is proved
that, for all $j\geq1$, 
\begin{equation}\label{eq:74}
\text{the function }a\mapsto \lambda_j^a
\text{ is continuos in $\Omega$}.   
\end{equation}
A strong improvement of the regularity \eqref{eq:74} holds under
simplicity of the eigenvalue. Indeed in \cite[Theorem 1.3]{BNNNT} it
is proved that, if there exists $n_0\geq 1$ such that
\begin{equation}\label{eq:1}
  \lambda_{n_0}^0\quad\text{is simple},
\end{equation}
then the function $a\mapsto \lambda_{n_0}^a$ is of class $C^\infty$ in
a neighborhood of $0$; this regularity result is improved in
\cite[Theorem 1.3]{lena}, where, in the more general setting of
Aharonov-Bohm operators with many singularities, it is shown that,
under assumption \eqref{eq:1} the function $a\mapsto \lambda_{n_0}^a$
is analytic in a neighborhood of $0$.  Then the question of what is
the leading term in the asymptotic expansion of such a function (at
least on a single straight path around the limit point $0$) naturally
arises.  The main purpose of the present paper is to answer to such a
question. This may also shed some light on the nature of $0$ as a
critical point for the map $a \mapsto \lambda_a$ when the limit
eigenfunction has in $0$ a zero of order $k/2$ with $k\geq 3$ odd.

At a deep insight into the problem, papers \cite{BNNNT} and \cite{NT}
suggest a high reliability of the behavior of the eigenvalue
$\lambda_{n_0}^a$ on the structure of the nodal lines of \emph{the}
eigenfunction relative to $\lambda_{n_0}^0$. In order to enter into
the issue, let us establish the setting and some notation.

Let us assume that there exists $n_0\geq 1$ such that \eqref{eq:1}
holds and denote
\[
\lambda_0= \lambda_{n_0}^0
\]
and, for any $a\in\Omega$,
\[
 \lambda_a= \lambda_{n_0}^a.
 \]
 From \eqref{eq:74} it follows that, if $a\to 0$, then
\begin{equation*}
  \lambda_a\to \lambda_0.
\end{equation*}
Let $\varphi_0\in H^{1,0}_{0}(\Omega,\C)\setminus\{0\}$ be an
eigenfunction of problem $(E_0)$ associated to the eigenvalue
$\lambda_0= \lambda_{n_0}^0$, i.e. solving
\begin{equation}\label{eq:equation_lambda0}
 \begin{cases}
   (i\nabla + A_0)^2 \varphi_0 = \lambda_0 \varphi_0,  &\text{in }\Omega,\\
   \varphi_0 = 0, &\text{on }\partial \Omega,
 \end{cases}
\end{equation}
such that 
\begin{equation}\label{eq:83}
  \int_\Omega |\varphi_0(x)|^2\,dx=1.
\end{equation}
In view of \cite[Theorem 1.3]{FFT} (see also Proposition
\ref{prop:fft} below) we have that
\begin{equation}\label{eq:37}
  \varphi_0 \text{ has at $0$ a zero
    or order $\frac k2$ for some odd $k\in \N$},
\end{equation}
see \cite[Definition 1.4]{BNNNT}.  
We recall from \cite[Theorem
1.3]{FFT} and \cite[Theorem 1.5]{NT} that \eqref{eq:37} implies that
the eigenfunction $\varphi_0$ has got exactly $k$ nodal lines meeting
at $0$ and dividing the whole angle into $k$ equal parts.

A first result relating the rate of convergence of $\lambda_a$ to
$\lambda_0$ with the order of vanishing of $\varphi_0$ at $0$ can be
found in \cite{BNNNT}, where the following estimate is proved.
\begin{Theorem}[\cite{BNNNT}, Theorem 1.7]\label{t:stimaBNNNT}
  If assumptions \eqref{eq:1} and \eqref{eq:37} with $k\geq 3$ are
  satisfied, then
 \[
  |\lambda_a -\lambda_0 | \leq C |a|^{\frac{k+1}{2}} \qquad \text{as }a\to0 
 \]
 for a constant $C>0$ independent of $a$.
\end{Theorem}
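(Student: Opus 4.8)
The plan is to compare the energy of $\varphi_0$ with the first eigenvalue $\lambda_a$ by constructing from $\varphi_0$ a suitable competitor in the space $H^{1,a}_0(\Omega,\C)$, i.e.\ a function vanishing near the new pole $a$ rather than near $0$. The natural choice is a truncation: fix a smooth radial cut-off $\eta_a$ which is $0$ on a ball $B(a,|a|)$ of radius $\sim|a|$ centered at $a$ and $1$ outside $B(a,2|a|)$, and consider $v_a=\eta_a\,\varphi_0$. Since $\varphi_0$ already vanishes at $0$ to order $k/2$, on the annulus where $\eta_a$ varies one has $|\varphi_0|\lesssim |x|^{k/2}\sim|a|^{k/2}$ and $|\nabla\varphi_0|\lesssim|x|^{k/2-1}\sim|a|^{k/2-1}$ by the regularity/asymptotic expansion of eigenfunctions near a zero (Proposition~\ref{prop:fft} / \cite[Theorem 1.3]{FFT}). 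Because $A_a$ differs from $A_0$ essentially only near $a$, and $v_a$ is supported away from $a$, the magnetic gradient $(i\nabla+A_a)v_a$ can be estimated in terms of $(i\nabla+A_0)\varphi_0$ plus error terms coming from differentiating $\eta_a$ and from the difference $A_a-A_0$ on the support of $v_a$.

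Quantitatively, I would expand
\[
\int_\Omega|(i\nabla+A_a)v_a|^2
= \int_\Omega \eta_a^2\,|(i\nabla+A_a)\varphi_0|^2
+ \int_\Omega |\nabla\eta_a|^2|\varphi_0|^2
+ \text{cross terms},
\]
and then replace $A_a$ by $A_0$ inside, paying $|A_a-A_0|\,|v_a|$ which is supported where $|x|\gtrsim|a|$ so that $|A_a-A_0|\lesssim 1$ there (indeed $A_a-A_0$ is bounded away from $a$). The term with $|\nabla\eta_a|^2|\varphi_0|^2\lesssim |a|^{-2}\cdot|a|^{k}\cdot|a|^2 = |a|^{k}$, after integrating over the annulus of area $\sim|a|^2$. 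All cross terms and the $A_a-A_0$ contributions are, by Cauchy--Schwarz and the same pointwise bounds, of order at most $|a|^{k/2}$ times the $L^2$ norm of the main magnetic gradient over $B(0,2|a|)$, hence of order $|a|^{k/2}\cdot|a|^{k/2}=|a|^{k}$. Therefore
\[
\int_\Omega|(i\nabla+A_a)v_a|^2 \le \lambda_0\int_\Omega|\varphi_0|^2 + O(|a|^{k})
= \lambda_0 + O(|a|^{k}),
\]
while $\int_\Omega|v_a|^2 = 1 - \int_\Omega(1-\eta_a^2)|\varphi_0|^2 = 1 - O(|a|^{k+2})$. By the variational characterization $\lambda_a=\lambda_{n_0}^a$ one does not immediately get an upper bound for a possibly higher eigenvalue from a single competitor; instead one uses the continuity \eqref{eq:74} together with the fact that, under simplicity \eqref{eq:1}, for $a$ small the $n_0$-th eigenvalue stays isolated, so testing the Rayleigh quotient on the $n_0$-dimensional space spanned by (truncations of) the first $n_0$ eigenfunctions of $(E_0)$ — or more simply, a standard min-max argument comparing the two operators on $B(0,2|a|)^c$ where they agree — yields $\lambda_a \le \lambda_0 + C|a|^{k}$ and, by the symmetric construction exchanging the roles of $0$ and $a$ (truncating eigenfunctions of $(E_a)$ near $0$, using their analogous vanishing which follows from \cite{FFT} applied at the pole $a$), also $\lambda_0 \le \lambda_a + C|a|^{k}$.

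Combining the two one-sided estimates gives $|\lambda_a-\lambda_0|\le C|a|^{k}$, which is actually stronger than the claimed $|a|^{(k+1)/2}$ for $k\ge 2$; to recover precisely the statement of Theorem~\ref{t:stimaBNNNT} as phrased (with exponent $(k+1)/2$) one simply notes $k\ge(k+1)/2$ for $k\ge1$ and that $|a|\le\mathrm{diam}(\Omega)$, absorbing constants. The main obstacle I anticipate is the reverse inequality $\lambda_0\lesssim\lambda_a$: truncating an eigenfunction $\varphi_a$ of $(E_a)$ near the point $0$ requires a pointwise vanishing estimate for $\varphi_a$ at $0$ that is \emph{uniform in $a$}, which is not obvious since $0$ is not a pole of $A_a$ and a priori $\varphi_a$ need not vanish at $0$ at all. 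This is presumably circumvented in \cite{BNNNT} by a compactness/normalization argument: rescale $w_a(x)=\varphi_a(|a|x)/\|\cdot\|$, show $w_a$ converges (up to subsequence, after a gauge transformation) to a solution of a limiting Aharonov--Bohm eigenvalue problem on $\R^2$ whose energy is related to $\lambda_0$, and extract from the convergence the needed decay of $\varphi_a$ near $0$ on the scale $|a|$. Handling this blow-up carefully, and in particular controlling the magnetic phase $e^{i\theta_a(x)}$ relating $A_a$ to $A_0$, is the technical heart of the argument; the truncation estimates themselves are routine once the uniform decay is in hand.
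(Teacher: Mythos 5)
This statement is Theorem~1.7 of \cite{BNNNT}; the present paper only cites it and does not reprove it, so there is no internal proof to compare against. Judged on its own merits, your proposal has a concrete gap in the test-function estimate.

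Your competitor $v_a=\eta_a\varphi_0$ omits the gauge factor. The right object (which the paper itself uses in \S\ref{sec:rayl-quot-lambd}, cf.\ the definition of $v_{j,R,a}^{ext}$) is $\eta_a\,e^{\frac i2(\theta_a-\theta_0)}\varphi_0$, because $(i\nabla+A_a)\big(e^{\frac i2(\theta_a-\theta_0)}u\big)=e^{\frac i2(\theta_a-\theta_0)}(i\nabla+A_0)u$ away from the segment $[0,a]$ on which the phase jumps. With that factor the only error is the cutoff term $\int|\nabla\eta_a|^2|\varphi_0|^2=O(|a|^k)$. Without it you must control $(A_a-A_0)v_a$, and here your bookkeeping is wrong: $A_a-A_0$ is not bounded uniformly on the support of $v_a$ (it is $\sim1/|x|$ near $0$ and $\sim1/|a|$ on $\partial D_{|a|}(a)$), and more importantly it is not localized near the cutoff annulus. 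For $|x|\gtrsim|a|$ one has $|A_a-A_0|\sim|a|/|x|^2$, and with $|\varphi_0|\lesssim|x|^{k/2}$ the quadratic error over the whole domain is
\[
\int_{|x|\ge C|a|}|A_a-A_0|^2|\varphi_0|^2\,dx \;\lesssim\; |a|^2\int_{C|a|}^{r_0} r^{k-3}\,dr \;=\; O\!\big(|a|^{\min(2,k-1)}\big),
\]
which for $k\ge3$ is $O(|a|^2)$, not $O(|a|^k)$. For $k\ge5$ this already exceeds the target $|a|^{(k+1)/2}$, so your claim that the $A_a-A_0$ contributions are ``of order $|a|^{k/2}\cdot|a|^{k/2}=|a|^k$'' because they see only the $L^2$ mass of the magnetic gradient over $B(0,2|a|)$ is incorrect: the error has a long-range tail. (There is a hidden piece of luck you did not invoke, namely that the cross term $2\Re\int(i\nabla+A_0)\varphi_0\cdot\overline{(A_a-A_0)\varphi_0}$ actually vanishes identically because $e^{-\frac i2\theta_0}\varphi_0$ is real-valued; but this does not rescue the $|T_2|^2$ term.)

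Consequently, the asserted one-sided bound $\lambda_a\le\lambda_0+O(|a|^k)$ does not follow from your construction, and a fortiori the claim that the full two-sided estimate improves to $O(|a|^k)$ is unjustified: that sharp rate is precisely the main new content of this paper (Theorem~\ref{t:main_asy_eige}), requires the gauge-corrected test functions \emph{and} the full Almgren-monotonicity/blow-up machinery, and even then is established only along nodal directions. Your identification of the reverse inequality (truncating $\varphi_a$ near $0$, controlling its decay uniformly in $a$ by a rescaling/blow-up argument) as the technical heart is correct and indeed mirrors the strategy both of \cite{BNNNT} and of this paper; but the ``routine'' truncation estimates need the gauge phase built in from the start, and should be expected to deliver only $|a|^{(k+1)/2}$ at this level of generality, not $|a|^k$.
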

As already mentioned, the latter theorem pursue the idea that the
asymptotic expansion of the function $a\mapsto \lambda_a$ has to do
with the nodal properties of the related limit eigenfunction.

The main result of the present paper establishes the exact order of
the asymptotic expansion of $\lambda_a -\lambda_0$ along a suitable
direction as $|a|^k$, where $k$ is the number of nodal lines of
$\varphi_0$ at $0$ which coincides with twice the order of vanishing
of $\varphi_0$ 
in assumption \eqref{eq:37}.  In addition, we detect 
the sharp coefficient of the asymptotics, which can be characterized in
terms of the 
limit profile of a 
blow-up sequence obtained by a 
suitable scaling of approximating eigenfunctions.

In order to state our main result, we need to recall some known facts
and to introduce some additional notation. 
By \cite[Theorem 1.3]{FFT} (see Proposition
\ref{prop:fft} below), 
if $\varphi_0$ is an eigenfunction of $(i\nabla +
A_0)^2$ on $\Omega$ satisfying  assumption \eqref{eq:37}, 
there exist $\beta_1,\beta_2\in\C$ such that
$(\beta_1,\beta_2)\neq(0,0)$ and
\begin{equation}\label{eq:131}
  r^{-k/2} \varphi_0(r(\cos t,\sin t)) \to 
  \beta_1
  \frac{e^{i\frac t2}}{\sqrt{\pi}}\cos\Big(\frac k2
  t\Big)+\beta_2 
  \frac{e^{i\frac t2}}{\sqrt{\pi}}\sin\Big(\frac k2
  t\Big) \quad \text{in }C^{1,\tau}([0,2\pi],\C)
\end{equation}
as $r\to0^+$ for any $\tau\in (0,1)$.

Let $s_0$ be the positive half-axis $s_0=\{(x_1,x_2)\in\R^2:
x_2=0\text{ and }x_1\geq0\}$.  We observe that, for every odd natural
number $k$, there exists a unique (up to a multiplicative constant)
function $\psi_k$ which is harmonic on $\R^2\setminus s_0$,
homogeneous of degree $k/2$ and vanishing on $s_0$.  Such a function
is given by
\begin{equation}\label{eq:psi_k}
  \psi_k(r\cos t,r\sin t)= r^{k/2} \sin
  \bigg(\frac{k}{2}\,t\bigg),\quad 
  r\geq0,\quad t\in[0,2\pi].
\end{equation}
Let $s:=\{(x_1,x_2)\in\R^2: x_2=0\text{ and }x_1\geq 1\}$ and
$\R^2_+=\{(x_1,x_2)\in\R^2:x_2>0)\}$.  We denote as $\Di_s(\R^2_+)$
the completion of $C^\infty_{\rm c}(\overline{\R^2_+} \setminus s)$
under the norm $( \int_{\R^2_+} |\nabla u|^2\,dx )^{1/2}$.  From the
Hardy type inequality proved in \cite{LW99} (see \eqref{eq:hardy}) and
a change of gauge, it follows that functions in $\Di_{s}(\R^2_+)$
satisfy the following Hardy type inequality:
\[ 
\int_{\R^2} |\nabla \varphi(x)|^2\,dx
\geq \frac14 \int_{\R^2} \dfrac{|\varphi(x)|^2}{|x-{\mathbf e}|^2}\,dx,
\quad\text{for all }\varphi\in \Di_{s}(\R^2_+),
 \] 
where ${\mathbf e}=(1,0)$.
Then 
\[
\Di_{s}(\R^2_+)=\Big\{ u\in L^1_{\rm loc}(\overline{\R^2_+} \setminus s):\
 \nabla u\in L^2(\R^2_+), 
\ \tfrac{u}{|x-{\mathbf e}|}\in L^2(\R^2_+), \text{ and } u=0 \text{
  on }s\Big\}.
\]
The functional
\begin{equation}\label{eq:Jk}
 J_k: \Di_{s}(\R^2_+)\to\R,\quad 
J_k(u) = \frac12 \int_{\R^2_+} |\nabla u(x)|^2 \,dx-
 \int_{ \partial
 \R^2_+\setminus s} u(x_1,0)\frac{\partial \psi_k}{\partial x_2}(x_1,0)\,dx_1,
\end{equation}
is well-defined on the space $\Di_{s}(\R^2_+)$; we notice that
$\frac{\partial \psi_k}{\partial x_2}$ vanishes on $ \partial
\R^2_+\setminus s_0$, so that $\int_{ \partial \R^2_+\setminus s}
u(x_1,0)\frac{\partial \psi_k}{\partial x_2}(x_1,0)\,dx_1=\int_0^1
u(x_1,0)\frac{\partial \psi_k}{\partial x_2}(x_1,0)\,dx_1$.  By
standard minimization methods, $J_k$ achieves its minimum over the
whole space $\Di_{s}(\R^2_+)$ at some function $w_k\in
\Di_{s}(\R^2_+)$, i.e.  there exists $w_k\in \Di_{s}(\R^2_+)$ such
that
\begin{equation}\label{eq:Ik}
{\mathfrak m}_k=\min_{u\in \Di_{s}(\R^2_+)}J_k(u)=J_k(w_k).
\end{equation}
We note that 
\begin{equation}\label{eq:segno_mk}
{\mathfrak m}_k=J_k(w_k)=-\frac12  \int_{\R^2_+} |\nabla
w_k(x)|^2 \,dx=-\frac12 \int_0^1 \dfrac{\partial_+ \psi_k}{\partial x_2}(x_1,0)\,w_k (x_1,0)\,dx_1  
<0,
\end{equation}
where, for all $x_1>0$, $\frac{\partial_+ \psi_k}{\partial
  x_2}(x_1,0)=\lim_{t\to0^+}\frac{\psi_k(x_1,t)-\psi_k(x_1,0)}{t}=\frac
k2 x_1^{\frac k2-1}$.

We are now in a position to state our main theorem.
\begin{Theorem}\label{t:main_asy_eige}
Let
$\Omega\subset\R^2$ be a bounded, open and simply connected
domain such that $0\in\Omega$ and let $n_0\geq 1$ be such that 
the $n_0$-th eigenvalue $\lambda_0=\lambda_{n_0}^0$ of $(i\nabla +
A_0)^2$ on $\Omega$ is simple  with associated eigenfunctions having  in
  $0$ a zero of order $k/2$ with $k\in\N$ odd. For $a\in\Omega$ let $\lambda_a=\lambda_{n_0}^a$ be
  the $n_0$-th eigenvalue of $(i\nabla +
A_a)^2$ on $\Omega$. 
Let ${\mathfrak r}$ be the half-line tangent to a nodal line of eigenfunctions associated to $\lambda_0$
  ending at $0$. 
Then, as $a\to0$ with $a\in{\mathfrak r}$,  
\[
\frac{\lambda_0-\lambda_a}{|a|^k}\to
-4 \frac{|\beta_1|^2+|\beta_2|^2}{\pi}\,{\mathfrak m}_k
\]
with $(\beta_1,\beta_2)\neq(0,0)$ being as in \eqref{eq:131} and
${\mathfrak m}_k$ being as in \eqref{eq:Ik}--\eqref{eq:segno_mk}.
\end{Theorem}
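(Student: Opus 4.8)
\emph{Strategy.} The plan is to squeeze $\lambda_a$ between the Rayleigh quotients of two competitors — one built from $\varphi_0$ and tested in $(E_a)$, one built from $\varphi_a$ and tested in $(E_0)$ — whose correction terms near the respective poles are both governed by the minimizer $w_k$ of $J_k$; a sharp blow-up analysis of the eigenfunctions near the moving pole is what makes the two estimates match. Up to a rotation I take $\mathfrak r$ to be the positive $x_1$-semiaxis, $a=|a|\mathbf e$ with $\mathbf e=(1,0)$, and I fix $L^2$-normalized eigenfunctions $\varphi_a$ of $(E_a)$. Simplicity of $\lambda_0$ and the continuity \eqref{eq:74} provide a uniform spectral gap, so for $|a|$ small $\lambda_a$ is the only eigenvalue of $(E_a)$ near $\lambda_0$ and, after a choice of phase, $\varphi_a\to\varphi_0$ weakly in $H^1_0(\Omega,\C)$ and in $C^1_{\mathrm{loc}}(\overline\Omega\setminus\{0\},\C)$. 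Let $g_a$ be a branch of $\tfrac12(\theta_a-\theta_0)$, $\theta_p$ denoting the polar angle around $p$; then $e^{ig_a}$ is single valued off the segment $[0,a]$, where it changes sign, and multiplication by $e^{-ig_a}$ conjugates $(i\nabla+A_a)^2$ to $(i\nabla+A_0)^2$, so $\hat\varphi_a:=e^{-ig_a}\varphi_a$ solves $(i\nabla+A_0)^2\hat\varphi_a=\lambda_a\hat\varphi_a$ on $\Omega\setminus[0,a]$, vanishes on $\partial\Omega$, jumps in sign across $[0,a]$, and converges to $\varphi_0$ away from $0$; by \eqref{eq:norma} all norms below are comparable.

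\emph{Blow-up analysis.} Rescale at the natural scale $|a|$: $\hat V_a(y):=|a|^{-k/2}\hat\varphi_a(|a|y)$. By \eqref{eq:131}, $|a|^{-k/2}\varphi_0(|a|y)\to\Phi_0(y)$ in $C^1_{\mathrm{loc}}(\R^2\setminus\{0\})$, where $\Phi_0(\rho\cos t,\rho\sin t)=\tfrac{\rho^{k/2}e^{it/2}}{\sqrt\pi}(\beta_1\cos\tfrac k2 t+\beta_2\sin\tfrac k2 t)$; moreover $A_a(|a|y)=|a|^{-1}A_{\mathbf e}(y)$ and $|a|^2\lambda_a\to0$, so $\hat V_a$ is an asymptotic solution of the gauged homogeneous magnetic equation with pole at $\mathbf e$. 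To make this quantitative one develops an Almgren-type monotonicity formula for $(i\nabla+A_a)^2$ centered at $a$: the associated frequency is almost monotone — the zeroth-order term $\lambda_a u$ being absorbed with errors uniform as $a\to0$ — which yields $a$-uniform doubling inequalities; these force $\varphi_a$ to vanish at $a$ with order exactly $k/2$ for $|a|$ small, and give strong convergence $\hat V_a\to\hat V$, with quantitative remainders, to an entire profile $\hat V=\Phi_0+\Psi$ with $\Psi$ decaying at infinity. Letting the variational characterization of $\lambda_a$ pass to the limit, and using the reflection symmetry across $\mathfrak r$ together with the change of gauge turning $(i\nabla+A_0)^2$-harmonic functions into harmonic functions vanishing on $s_0$, one identifies $\Psi$ with the $(\beta_1,\beta_2)$-linear combination of $w_k$; in particular $\Psi(\rho\cos t,\rho\sin t)=\tfrac{\rho^{-k/2}e^{it/2}}{\sqrt\pi}(\gamma_1\cos\tfrac k2 t+\gamma_2\sin\tfrac k2 t)+o(\rho^{-k/2})$ with $(\gamma_1,\gamma_2)=\tfrac{4\mathfrak m_k}{k\pi}(\beta_1,\beta_2)$, the constant being read off from the Euler--Lagrange equation for $w_k$ and \eqref{eq:segno_mk}.

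\emph{Two-sided estimate.} For the upper bound on $\lambda_a$, let $v_a$ equal $e^{ig_a}\varphi_0$ outside $B_{R|a|}(0)$ and, inside, the $H^{1,a}$-function with that boundary datum of least magnetic energy (which after rescaling is the $w_k$-extension). Then $\|v_a\|^2_{L^2}=1+o(|a|^k)$ and, since the gauge preserves the magnetic gradient off $[0,a]$ while inside $B_{R|a|}(0)$ the energy change relative to $\varphi_0$ is, by the blow-up and \eqref{eq:segno_mk}, $4\tfrac{|\beta_1|^2+|\beta_2|^2}{\pi}\mathfrak m_k\,|a|^k+o(|a|^k)$ (negative, as $\mathfrak m_k<0$), its Rayleigh quotient $\mathcal R_a$ satisfies $\mathcal R_a(v_a)=\lambda_0+4\tfrac{|\beta_1|^2+|\beta_2|^2}{\pi}\mathfrak m_k\,|a|^k+o(|a|^k)$. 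Because $\varphi_0$ is $L^2$-orthogonal to the lower eigenspaces of $(E_0)$, and $[0,a]$ lies deep inside $B_{R|a|}(0)$, decay estimates for $\varphi_0$ and for the lower eigenfunctions at their poles show that the components of $v_a$ along the converging lower eigenspaces of $(E_a)$ are $o(|a|^{k/2})$, so the spectral gap gives $\lambda_a\le\mathcal R_a(v_a)+o(|a|^k)$. Symmetrically, $u_a$ defined to equal $\hat\varphi_a$ outside $B_{R|a|}(0)$ and be $w_k$-corrected inside competes in $(E_0)$ with $\mathcal R_0(u_a)=\lambda_a-4\tfrac{|\beta_1|^2+|\beta_2|^2}{\pi}\mathfrak m_k\,|a|^k+o(|a|^k)$, so $\lambda_0\le\mathcal R_0(u_a)+o(|a|^k)$; the two inequalities yield $\lambda_0-\lambda_a=-4\tfrac{|\beta_1|^2+|\beta_2|^2}{\pi}\mathfrak m_k\,|a|^k+o(|a|^k)$. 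Equivalently, one may apply the magnetic Green identity to $\varphi_0,\hat\varphi_a$ on $\Omega\setminus B_{R|a|}(0)$: its boundary term, evaluated via the blow-up, equals $-k|a|^k\Re(\beta_1\overline{\gamma_1}+\beta_2\overline{\gamma_2})+o(|a|^k)$, which is the same.

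\emph{Main difficulty.} Everything rests on the blow-up step: one must prove the Almgren monotonicity for $(i\nabla+A_a)^2$ with the zeroth-order term and error estimates uniform in $a$, hence $a$-uniform doubling and vanishing-order bounds — this is exactly what upgrades the weak convergence of the blow-ups to strong convergence with the $o(|a|^k)$ remainders required above — and one must carry out the rigidity identification of $\hat V$ with the $w_k$-object (no loss of mass in the blow-up, correct decay of $\Psi$ at infinity, correct normalization constant $\tfrac{4\mathfrak m_k}{k\pi}$). Granted these, the remaining steps are routine, relying on the Hardy inequality \eqref{eq:norma} and the known asymptotics \eqref{eq:131} of $\varphi_0$.
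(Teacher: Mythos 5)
Your strategy coincides with the paper's: squeeze $\lambda_a$ via minimax competitors built from $\varphi_0$ (lower bound on $\lambda_0-\lambda_a$) and from $\varphi_a$ (upper bound), and read the leading constant off a blow-up of $\varphi_a$ at scale $|a|$ identified against $\Psi_k=e^{i\theta_{\mathbf e}/2}(\psi_k+w_k)$. The constant $\tfrac{4\mathfrak m_k}{k\pi}$ you read off from the Euler--Lagrange equation of $w_k$ also matches the paper's Lemma~\ref{l:legame_compl}. However, there is a genuine gap exactly where you flag ``the main difficulty,'' and the tool you invoke (Almgren monotonicity) is insufficient to close it.

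You blow up as $|a|^{-k/2}\hat\varphi_a(|a|\cdot)$ and take for granted that this family is precompact with a nontrivial limit $\Phi_0+\Psi$ having finite $\mathcal{D}^{1,2}_{\mathbf e}$-energy at infinity for $\Psi$. But the Almgren monotonicity (Lemmas~\ref{lemma_stima_H_sotto}--\ref{lemma_limitatezza_N_per_blowup}) is centered at $0$, not at the moving pole $a$; it only yields the one-sided frequency bound $N(\varphi_a,\mu|a|,\lambda_a,A_a)\le\tfrac k2+\delta$, hence the loose two-sided bound $|a|^{k+2\delta}\lesssim H(\varphi_a,K_\delta|a|)\lesssim|a|^{1-2\delta}$ (see~\eqref{eq:stima_sotto_radiceH},~\eqref{eq:stima_sopra_radiceH}), which is far from pinning $H$ to $|a|^k$. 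In particular, doubling gives boundedness of the blow-up family normalized by $\sqrt{H(\varphi_a,K_\delta|a|)}$ (Theorem~\ref{t:stime_blowup}), not of the family normalized by $|a|^{k/2}$, and it says nothing about the vanishing order of $\varphi_a$ at $a$ itself (which is generically $1/2$, not $k/2$). Two further inputs are needed and are absent from your argument: (i) a preliminary lower bound $\lambda_0-\lambda_a\gtrsim|a|^k$ combined with the reverse inequality $\lambda_0-\lambda_a\le C^*H(\varphi_a,K_\delta|a|)$ to obtain $|a|^{k/2}/\sqrt{H}=O(1)$ (Proposition~\ref{prop:quasi_finito1}); (ii) the invertibility of the Fr\'echet differential of the map $F$ in~\eqref{def_operatore_F}, which is what yields the quantitative exterior energy estimate $\|v_{n_0,R,a}-\varphi_0\|_{H^{1,0}_0}=O(\sqrt{H(\varphi_a,K_\delta|a|)})$ (Theorem~\ref{stima_teo_inversione}) and hence, after scaling, the finite-exterior-energy condition~\eqref{eq:55} for the blow-up limit. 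Only then can the uniqueness Proposition~\ref{p:uniqueness} identify the limit with a multiple of $\Psi_k$, and only then does $|a|^{k/2}/\sqrt{H(\varphi_a,K_\delta|a|)}$ converge to a positive constant. None of this follows from Almgren monotonicity plus a ``rigidity'' slogan; it is the central novelty of the paper (compare the discussion around~\eqref{eq:7} in the introduction).

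A smaller but also substantive gap: in the minimax step you assert that the components of $v_a$ along the lower eigenspaces of $(E_a)$ are $o(|a|^{k/2})$, giving $\lambda_a\le\mathcal R_a(v_a)+o(|a|^k)$ directly from the spectral gap. In fact the lower eigenfunctions of $(E_a)$ vanish at $a$ typically only to order $\tfrac12$, so the relevant overlaps are only $O(|a|^{3-2\delta})$ or $O(|a|^{\frac52-\delta}\sqrt{H})$ (see~\eqref{eq:18},~\eqref{eq:76}), and the cross terms $m_{j,n_0}^{a,R}$ in the quadratic form are $O(|a|^{\frac12-\delta}\sqrt{H})$, which is a priori much larger than $H\sim|a|^k$ when $k\ge 3$. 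Showing that these cross contributions are actually $o(H(\varphi_a,K_\delta|a|))$ requires the iterative bootstrap in the proof of~\eqref{eq:claim}; a one-line spectral-gap argument does not suffice.
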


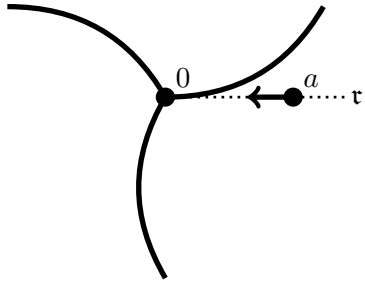
\begin{figure}\centering
\begin{tikzpicture}[scale=1.2]
    \draw[line width=2pt] (0,0) to [out=0, in=-120] (1.732,1);
    \draw[line width=2pt] (0,0) to [out=120, in=0] (-1.732,1);
    \draw[line width=2pt] (0,0) to [out=240, in=120] (0,-2);
\fill (0,0) circle (3pt) node[above right] {$0$};
\draw[dotted,line width=1pt]  (0,0) -- (2,0);
\draw[line width=2pt,->] (1.4,0) -- (0.9,0);
\fill (1.4,0) circle (3pt) node[above right] {$a$};
\node at (2.1,0) {${\mathfrak r}$};
                              \end{tikzpicture}
\caption{$a$ approaches $0$ along the tangent ${\mathfrak r}$ to a
  nodal line of $\varphi_0$.}
\label{fig:1}
\end{figure}

We remark that Theorem \ref{t:main_asy_eige} is significant non only
from a pure ``analytic'' point of view (detecting of sharp
asymptotics), but also from a quite theoretical point of view.
Indeed, several numerical simulations presented in \cite{BNNNT} are
validated and confirmed by Theorem \ref{t:main_asy_eige}.

\begin{remark}\label{rem:segnoauto}
  Due to the analyticity of the function $a\mapsto \lambda_a$
  established in \cite[Theorem 1.3]{lena}, from Theorem
  \ref{t:main_asy_eige} it follows that
  \[
\frac{\lambda_0-\lambda_a}{|a|^k}\to
4 \frac{|\beta_1|^2+|\beta_2|^2}{\pi}\,{\mathfrak m}_k
\]
as $a\to0$ along the half-line opposite to the tangent to a nodal line
of $\varphi_0$. In particular, we have that the restriction of the
function $\lambda_0-\lambda_a$ on the straight line tangent to a nodal
line of $\varphi_0$ changes sign at $0$ (is positive on the side of
the nodal line of $\varphi_0$ and negative on the opposite
side). Hence, if $\lambda_0$ is simple, then $0$ cannot be an extremal
point of the map $a\mapsto\lambda_a$.
\end{remark}

Theorem \ref{t:main_asy_eige} and the consequent Remark
\ref{rem:segnoauto} allow completing some results of papers
\cite{BNNNT,NNT,NT} concerning the investigation of critical and
extremal points of the map $a\mapsto \lambda_a$.  It is worth
recalling from \cite[Corollary 1.2]{BNNNT} that the function $a\mapsto
\lambda_a$ must have an extremal point in $\Omega$.  More precisely,
in \cite{BNNNT} the following result is proved.
\begin{Proposition}[{\cite{BNNNT}, Corollary 1.8}]\label{p:BNNNT}
  Fix any $j\in\N$. If $0$ is an extremal point of $a\mapsto
  \lambda_j^a$, then either $\lambda_j^0$ is not simple, or the
  eigenfunction of $(i\nabla + A_0)^2$ associated to $\lambda_j^0$ has
  at $0$ a zero of order $k/2$ with $k\geq 3$ odd.
\end{Proposition}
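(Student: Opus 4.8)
The plan is to deduce the statement from Theorem \ref{t:main_asy_eige} together with Remark \ref{rem:segnoauto}; once those are available the argument is purely ``soft''. Fix $j\in\N$ and suppose that $0$ is an extremal point of $a\mapsto\lambda_j^a$. If $\lambda_j^0$ is not simple we are already in the first alternative and there is nothing to prove, so I would assume $\lambda_j^0$ simple and set $n_0=j$, $\lambda_0=\lambda_j^0$, $\lambda_a=\lambda_j^a$, so that assumption \eqref{eq:1} holds. By \eqref{eq:37} (that is, by \cite[Theorem 1.3]{FFT}, cf.\ Proposition \ref{prop:fft}) every eigenfunction $\varphi_0$ associated to $\lambda_0$ has at $0$ a zero of order $k/2$ for some odd $k\in\N$. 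If $k\geq3$ we are in the second alternative and the proof is complete; the whole issue is therefore to exclude the case $k=1$.

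So I would argue by contradiction and suppose $k=1$. Then $\varphi_0$ has exactly one nodal line ending at $0$; let ${\mathfrak r}$ be the half-line tangent to it. Applying Theorem \ref{t:main_asy_eige} with this $n_0$ and $k=1$ yields
\[
\frac{\lambda_0-\lambda_a}{|a|}\ \longrightarrow\ \ell:=-\frac4\pi\bigl(|\beta_1|^2+|\beta_2|^2\bigr)\,{\mathfrak m}_1\qquad\text{as }a\to0\ \text{with }a\in{\mathfrak r},
\]
and here $\ell>0$, since $(\beta_1,\beta_2)\neq(0,0)$ while ${\mathfrak m}_1<0$ by \eqref{eq:segno_mk}. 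Hence $\lambda_a<\lambda_0$ for $a\in{\mathfrak r}$ close to $0$. To reverse the sign on the opposite side I would exploit the analyticity of $a\mapsto\lambda_a$ near $0$ (valid since $\lambda_j^0$ is simple, by \cite[Theorem 1.3]{lena}): letting ${\mathbf v}$ be a unit vector spanning ${\mathfrak r}$ and $g(t):=\lambda_0-\lambda_{t{\mathbf v}}$, the function $g$ is real-analytic near $t=0$ with $g(0)=0$ and $g(t)/t\to\ell$ as $t\to0^+$, so its Taylor expansion must start with $g(t)=\ell\,t+O(t^2)$; in particular $g(t)<0$, i.e.\ $\lambda_{t{\mathbf v}}>\lambda_0$, for $t<0$ small. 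This is exactly the sign change of Remark \ref{rem:segnoauto}. Therefore $\lambda_j^a-\lambda_j^0$ takes both signs in every neighbourhood of $0$, contradicting extremality, and we conclude $k\geq3$.

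As for the main obstacle: in this particular deduction there is essentially none, because all the analytic difficulty is already packaged into Theorem \ref{t:main_asy_eige} (the Almgren-type monotonicity and blow-up) and into the analyticity statement of \cite{lena}. The one point that still requires a little care is that Theorem \ref{t:main_asy_eige} gives only a \emph{one-sided} limit, for $a\to0$ along ${\mathfrak r}$, whereas to contradict extremality one must compare both sides of $0$; I expect this to be the only delicate step and would settle it, as above, by reading off the first non-vanishing Taylor coefficient of $t\mapsto\lambda_{t{\mathbf v}}$ (forced to be the coefficient of $t$, equal to $-\ell\neq0$) rather than by any fresh PDE estimate. I would also observe that the very same reasoning, carried out for arbitrary odd $k$, proves the stronger assertion that $0$ is never an extremal point of $a\mapsto\lambda_j^a$ when $\lambda_j^0$ is simple, so that the second alternative in the statement becomes redundant in the presence of Theorem \ref{t:main_asy_eige}; one keeps the weaker formulation above only for consistency with \cite{BNNNT}.
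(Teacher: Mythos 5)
The paper does not prove this proposition; it is quoted verbatim as \cite[Corollary~1.8]{BNNNT}, i.e.\ as a known result used for context, and the text immediately afterwards explains that Theorem~\ref{t:main_asy_eige} together with Remark~\ref{rem:segnoauto} lets one \emph{discard} the second alternative, yielding the strictly stronger Corollary~\ref{c:primo}. That strengthening is precisely what your argument establishes. Your chain of reasoning is correct: assume $\lambda_j^0$ simple, set $n_0=j$, invoke \eqref{eq:37} to get an odd $k$, take $\mathfrak r$ tangent to a nodal line, apply Theorem~\ref{t:main_asy_eige} to get the one-sided limit $\ell=-\tfrac4\pi(|\beta_1|^2+|\beta_2|^2)\mathfrak m_k>0$, and use the analyticity of $a\mapsto\lambda_a$ from \cite[Theorem~1.3]{lena} to read off the first Taylor coefficient of $t\mapsto\lambda_0-\lambda_{t\mathbf v}$ and deduce the sign change across $0$; that sign-change step is exactly Remark~\ref{rem:segnoauto}, which is worth citing explicitly rather than rederiving. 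You also correctly note that the argument works for every odd $k$, so the second alternative of the statement becomes vacuous. The one caveat is that this is not a reconstruction of the original proof: \cite{BNNNT} obviously could not appeal to Theorem~\ref{t:main_asy_eige}, and their Corollary~1.8 rests instead on their upper bound (the $k\ge3$ estimate quoted as Theorem~\ref{t:stimaBNNNT}) combined with the non-vanishing of the first-order variation of $\lambda_a$ when $k=1$ (cf.\ the discussion following Corollary~\ref{c:primo} and \cite[Corollary~1.2]{NT}). So what you have written is a valid, but anachronistic, proof --- effectively a proof of Corollary~\ref{c:primo} rather than of \cite[Corollary~1.8]{BNNNT} by the original means.
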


In view of Theorem \ref{t:main_asy_eige} and Remark
\ref{rem:segnoauto}, we can exclude the second alternative in
Proposition \ref{p:BNNNT}, obtaining the following result.
\begin{Corollary}\label{c:primo}
  Fix any $j\in\N$. If $0$ is an extremal point of $a\mapsto
  \lambda_j^a$, then $\lambda_j^0$ is not simple.
\end{Corollary}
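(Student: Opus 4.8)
The plan is to prove Corollary~\ref{c:primo} as a direct consequence of Proposition~\ref{p:BNNNT} combined with Theorem~\ref{t:main_asy_eige} and Remark~\ref{rem:segnoauto}. Proposition~\ref{p:BNNNT} already reduces the possible obstructions to extremality at $0$ to exactly two cases: either $\lambda_j^0$ fails to be simple, or the associated eigenfunction has at $0$ a zero of order $k/2$ with $k\geq 3$ odd. Hence it suffices to rule out the second alternative, i.e. to show that if $\lambda_j^0$ is simple and its eigenfunction vanishes to order $k/2$ at $0$ with $k\geq 3$ odd, then $0$ is \emph{not} an extremal point of $a\mapsto\lambda_j^a$.

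First I would apply Theorem~\ref{t:main_asy_eige} with $n_0=j$: under the standing assumptions (simplicity plus the vanishing order $k/2$, $k$ odd), letting $\mathfrak{r}$ be the half-line tangent at $0$ to a nodal line of the limiting eigenfunction $\varphi_0$, we get
\[
\frac{\lambda_0-\lambda_a}{|a|^k}\to -\,4\,\frac{|\beta_1|^2+|\beta_2|^2}{\pi}\,{\mathfrak m}_k\qquad\text{as }a\to0,\ a\in{\mathfrak r},
\]
while Remark~\ref{rem:segnoauto} (which uses the analyticity of $a\mapsto\lambda_a$ from \cite[Theorem~1.3]{lena}) gives the opposite sign along the half-line opposite to $\mathfrak{r}$:
\[
\frac{\lambda_0-\lambda_a}{|a|^k}\to 4\,\frac{|\beta_1|^2+|\beta_2|^2}{\pi}\,{\mathfrak m}_k.
\]
Since $(\beta_1,\beta_2)\neq(0,0)$ by \eqref{eq:131} and ${\mathfrak m}_k<0$ by \eqref{eq:segno_mk}, the limiting constant is nonzero. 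Therefore $\lambda_0-\lambda_a$ is strictly positive for $a$ on one side of $0$ along the tangent line and strictly negative on the other side, for $|a|$ small; equivalently $\lambda_a>\lambda_0$ on one side and $\lambda_a<\lambda_0$ on the other. This is incompatible with $0$ being either a local maximum or a local minimum of $a\mapsto\lambda_a=\lambda_j^a$, so $0$ is not an extremal point, contradicting the hypothesis of the second alternative.

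Thus the second alternative in Proposition~\ref{p:BNNNT} cannot occur, and we are left only with the conclusion that $\lambda_j^0$ is not simple, which is exactly the claim. There is essentially no analytic obstacle in this deduction itself: all the hard work is already carried out in Theorem~\ref{t:main_asy_eige} (the blow-up analysis and Almgren monotonicity) and in the analyticity result from \cite{lena} invoked in Remark~\ref{rem:segnoauto}. The only minor point to be careful about is purely logical bookkeeping: Proposition~\ref{p:BNNNT} is phrased as ``either $A$ or $B$'', and we are negating $B$; one should note that $A$ and $B$ are not a priori mutually exclusive, so strictly speaking what we conclude is ``not $B$'', hence (since the disjunction holds) ``$A$'', i.e. $\lambda_j^0$ is not simple. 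This is the entire argument.
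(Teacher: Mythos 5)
Your proof is correct and follows essentially the same route as the paper: the paper also derives Corollary~\ref{c:primo} by invoking Proposition~\ref{p:BNNNT} and excluding the second alternative via Theorem~\ref{t:main_asy_eige} together with Remark~\ref{rem:segnoauto}, using the sign change of $\lambda_0-\lambda_a$ across $0$ along the tangent line to a nodal line. The logical caveat you flag at the end is harmless and handled identically in the paper.
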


Moreover, the aforementioned simulations \cite{BNNNT} suggest that
extremal points of the map $a\mapsto \lambda_a$ are generally attained
at points where the function itself is not differentiable.  Taking
into account Corollary \ref{c:primo}, we may conjecture that the
missed differentiability is produced by the dropping of assumption
\eqref{eq:1}.

As a further remark, Theorem \ref{t:main_asy_eige} proves that the
asymptotic expansion of $\lambda_0-\lambda_a$ has a leading term of
odd degree, hence, if $k\geq 3$, $0$ is a stationary inflexion point
along $k$ directions (corresponding to the nodal lines of
$\varphi_0$), as experimentally predicted by numerical simulations in
\cite[Section 7]{BNNNT}. More precisely, as a consequence of Theorem
\ref{t:main_asy_eige} and Remark \ref{rem:segnoauto} we can state the
following result.

\begin{Corollary}
  Under assumptions \eqref{eq:1} and \eqref{eq:37} with $k\geq 3$, $0$
  is a saddle point for the map $a\mapsto \lambda_a$. In particular,
  $0$ is a stationary and not extremal point.
\end{Corollary}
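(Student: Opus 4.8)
The plan is to read off the statement directly from the sharp asymptotics in Theorem~\ref{t:main_asy_eige}, from its counterpart Remark~\ref{rem:segnoauto}, and from the rough bound of Theorem~\ref{t:stimaBNNNT}. I would split the argument into two parts: stationarity of $0$ and non-extremality of $0$ for the map $a\mapsto\lambda_a$.

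To prove that $0$ is a stationary point, I would first recall that, under assumption \eqref{eq:1}, the map $a\mapsto\lambda_a$ is of class $C^\infty$ in a neighbourhood of $0$ (indeed analytic, by \cite[Theorem 1.3]{lena}), so that it admits the first-order Taylor expansion $\lambda_a=\lambda_0+\nabla_a\lambda_a\big|_{a=0}\cdot a+o(|a|)$ as $a\to0$. On the other hand, since $k\geq3$, Theorem~\ref{t:stimaBNNNT} gives $|\lambda_a-\lambda_0|\leq C|a|^{(k+1)/2}$ with $(k+1)/2\geq2$, hence $\lambda_a-\lambda_0=o(|a|)$ as $a\to0$ in \emph{every} direction. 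Comparing the two expansions forces $\nabla_a\lambda_a\big|_{a=0}=0$, i.e. $0$ is a stationary point.

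To prove that $0$ is not an extremal point, let $\mathfrak r$ be the half-line tangent to a nodal line of $\varphi_0$ ending at $0$, as in Theorem~\ref{t:main_asy_eige}, and let $\mathfrak r'$ be the opposite half-line. Since $(\beta_1,\beta_2)\neq(0,0)$ in \eqref{eq:131} and ${\mathfrak m}_k<0$ by \eqref{eq:segno_mk}, Theorem~\ref{t:main_asy_eige} yields
\[
\lambda_a-\lambda_0=4\,\frac{|\beta_1|^2+|\beta_2|^2}{\pi}\,{\mathfrak m}_k\,|a|^k+o(|a|^k)<0\quad\text{as }a\to0,\ a\in\mathfrak r,
\]
whereas Remark~\ref{rem:segnoauto} yields
\[
\lambda_a-\lambda_0=-4\,\frac{|\beta_1|^2+|\beta_2|^2}{\pi}\,{\mathfrak m}_k\,|a|^k+o(|a|^k)>0\quad\text{as }a\to0,\ a\in\mathfrak r'.
\]
Therefore every neighbourhood of $0$ contains points where $\lambda_a<\lambda_0$ and points where $\lambda_a>\lambda_0$, so $0$ is neither a local maximum nor a local minimum. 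Being stationary and not extremal, $0$ is a saddle point, which is the assertion. I do not expect any serious obstacle here, as the corollary is essentially a reading of the sharp asymptotics already obtained; the only mildly delicate point is the proof of stationarity, which must combine the previously established smoothness of $a\mapsto\lambda_a$ with the sub-linear bound of Theorem~\ref{t:stimaBNNNT}, rather than relying on the one-directional sharp asymptotics alone.
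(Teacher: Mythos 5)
Your proof is correct. The non-extremality part is exactly the argument the paper implicitly invokes: $\lambda_0-\lambda_a$ has opposite strict signs on the two halves of the tangent line to a nodal line of $\varphi_0$, by Theorem~\ref{t:main_asy_eige} together with Remark~\ref{rem:segnoauto} and the sign ${\mathfrak m}_k<0$ from \eqref{eq:segno_mk}.

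For stationarity you take a slightly different and, in my view, cleaner route than what the paper suggests. The paper deduces the corollary ``as a consequence of Theorem~\ref{t:main_asy_eige} and Remark~\ref{rem:segnoauto}'': the sharp one-directional asymptotics give $\lambda_0-\lambda_a=O(|a|^k)$ along each of the $k\geq3$ tangent directions to the nodal lines of $\varphi_0$, and a nonzero linear form on $\R^2$ can vanish on at most one line, hence $\nabla_a\lambda_a\big|_{a=0}=0$ (this is what the later remark about vanishing Taylor polynomials, citing \cite[Lemma~6.6]{BNNNT}, makes precise). You bypass this ``vanishing on $k$ independent directions'' argument entirely by invoking Theorem~\ref{t:stimaBNNNT}: the uniform bound $|\lambda_a-\lambda_0|\leq C|a|^{(k+1)/2}$, valid for $a\to0$ in every direction, already gives $\lambda_a-\lambda_0=o(|a|)$ whenever $k\geq3$, and combined with the $C^\infty$ (indeed analytic) regularity this kills the gradient directly. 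Your route relies on the earlier result of \cite{BNNNT}, whereas the paper's intended route is self-contained given Theorem~\ref{t:main_asy_eige}, but both are valid and the conclusions coincide.
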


On the other hand, under assumptions \eqref{eq:1} and \eqref{eq:37}
with $k=1$, Theorem \ref{t:main_asy_eige} implies that the gradient of
the function $a\mapsto\lambda_a$ in $0$ is different from zero, then
$0$ is not a stationary point, a fortiori not even an extremal point;
we then recover a result stated in \cite[Corollary 1.2]{NT}.

From Theorem \ref{t:main_asy_eige}, we
directly obtain that, under the assumptions of Theorem
\ref{t:main_asy_eige}, the Taylor polynomials of the function
$a\mapsto \lambda_0-\lambda_a$ with center $0$ and degree strictly smaller than $k$ vanish,
since by Theorem \ref{t:main_asy_eige} they vanish on the $k$
independent directions corresponding to the
 nodal lines of $\varphi_0$ (see also \cite[Lemma 6.6]{BNNNT}). Then we obtain the following Taylor
 expansion at $0$:
\[
 \lambda_0-\lambda_a=P(a)+o(|a|^k),\quad\text{as }|a|\to0^+,
\]
for some 
\[
P\not\equiv0,\quad P(a)=P(a_1,a_2)=\sum_{j=0}^k \alpha_j a_1^{k-j}
a_2^j
\] 
homogeneous polynomial of degree $k$. The detection of the exact value
of all coefficients of the polynomial (and hence the sharp asymptotics
along any direction) is the object of a current investigation.

The proof of Theorem \ref{t:main_asy_eige} is based on the
Courant-Fisher minimax characterization of eigenvalues. The
asymptotics for eigenvalues is derived by combining estimates from
above and below of the Rayleigh quotient. To obtain sharp estimates,
we construct proper test functions for the Rayleigh quotient by
suitable manipulation of eigenfunctions. In this way, we obtain upper
and lower bounds whose limit as $a\to0$ can be explicitly computed
taking advantage of a fine blow-up analysis for scaled
eigenfunctions. More precisely, we prove (see Theorem \ref{t:blowup2})
that the blow-up sequence
\begin{equation}\label{eq:7}
\dfrac{\varphi_a(|a|x)}{|a|^{k/2}}
\end{equation}
converges as $|a|\to 0^+$, $a\in\mathfrak r$, to a limit profile,
which can be identified, up to a phase and a change of coordinates,
with $w_k+\psi_k$, being $w_k$ and $\psi_k$ as in \eqref{eq:Ik} and
\eqref{eq:psi_k} respectively. The proof of the energy estimates for
the blow-up sequence uses a monotonicity argument inspired by
\cite{almgren}, based on the study of an Almgren-type frequency
function given by the ratio of the local magnetic energy over mass
near the origin; see \cite{FFT,kurata,NNT} for Almgren-type
monotonicity formulae for elliptic operators with magnetic potentials.
We mention that a similar approach based on estimates of the Rayleigh
quotient, blow-up analysis and monotonicity formula was used in
\cite{AFT2} to prove a sharp control of the rate of convergence of the
eigenvalues and eigenfunctions of the Dirichlet laplacian in a
perturbed domain (obtained by attaching a shrinking handle to a smooth
region) to the relative eigenvalue and eigenfunction in the limit
domain (see also \cite{AFT1,FT12} for blow-up analysis and
monotonicity formula); however, in \cite{AFT1,AFT2,FT12} only the
particular case of limit eigenfunctions having at the singular point
the lowest vanishing order (corresponding to the case $k=1$ in our
setting) was considered. In the present paper we do not prescribe any
restriction on the order of the zero of the limit eigenfunction: this
produces significant additional difficulties with respect to
\cite{AFT2}, the main of which relies in the identification of the
limit profile of the blow-up sequence \eqref{eq:7}. Such a difficulty
is overcome here by fine energy estimates of the difference between
approximating and limit eigenfunctions, performed exploiting the
invertibility of an operator associated to the limit eigenvalue
problem.

\bigskip 

The paper is organized as follows.  Sections \ref{sec:2} and
\ref{sec:3} are devoted to set up the framework, recall some useful
known facts, introduce notation and prove some basic inequalities.
Section \ref{sec:4} contains the construction of a suitable limit
profile which will be used to describe the limit of the blowed-up
sequence.  The study of the behavior of such a blow-up sequence can
proceed thanks to the Almgren-type monotonicity argument which is
presented in section \ref{sec:5}.  Via the energy estimates proved
within section \ref{sec:5}, in section \ref{sec:6} we present some
preliminary upper and lower bound for the difference
$\lambda_0-\lambda_a$, relying on the well-known minimax
characterization for eigenvalues.  Section \ref{sec:7} contains energy
estimates of the difference between approximating and limit
eigenfunctions which are used to identify the limit profile in the
sharp blow-up analysis which is performed in section
\ref{sec:blow-up-analysis}.  Finally, section \ref{sec:9} concludes
the proof of Theorem \ref{t:main_asy_eige}.

\medskip
\noindent {\bf Notation. } We list below some notation used throughout
the paper.\par
\begin{itemize}
\item[-] For all $r>0$ and $a\in\R^2$, $D_r(a)=\{x\in\R^2:|x-a|<r\}$
  denotes the disk of center $a$ and radius $r$.
\item[-] For all $r>0$, 
$D_r=D_r(a)$ denotes the disk of center $0$ and
radius $r$.
\item[-] For every complex number $z\in\C$, $\overline{z}$ denotes its
  complex conjugate.
\item[-] For $z\in\C$, $\Re z$ denotes its real
  part and $\Im z$ its imaginary part.
\end{itemize}

\section{Preliminaries}\label{sec:2}

\subsection{Diamagnetic and Hardy inequalities}.  We recall from
\cite{LW99} (see also \cite[Lemma 3.1 and Remark 3.2]{FFT}) the
following Hardy type inequality
\begin{equation}\label{eq:hardy}
  \int_{D_r(a)} |(i\nabla+A_a)u|^2\,dx \geq \frac14 \int_{D_r(a)}\frac{|u(x)|^2}{|x-a|^2}\,dx,
\end{equation}
which holds for all $r>0$, $a\in \R^2$  and $u\in H^{1,a}(D_r(a),\C)$.

We also recall the well-known \emph{diamagnetic inequality} (see e.g. 
\cite{LL} or \cite[Lemma A.1]{FFT}for a proof): if $a\in\Omega$ and   $u\in
H^{1,a}(\Omega,\C)$, then 
\begin{equation}\label{eq:diamagnetic}
  |\nabla |u|(x)|\leq
  |i\nabla u(x)+A_a(x)u(x)| \quad {\rm for \
    a.e.} \ x\in \Omega.
\end{equation}

\subsection{Approximating eigenfunctions.}\label{sec:appr-eigenf}
 For every $b=(b_1,b_2)\in \R^2$, we define
\[
\theta_b:\R^2\setminus\{b\}\to [0,2\pi)\]
 as 
\begin{equation}\label{eq:theta_a}
\theta_b(x_1,x_2)=
\begin{cases}
  \arctan\frac{x_2-b_2}{x_1-b_1},&\text{if }x_1>b_1,\ x_2\geq b_2,\\ 
  \frac\pi2,&\text{if }x_1=b_1,\ x_2> b_2,\\ 
  \pi+\arctan\frac{x_2-b_2}{x_1-b_1},&\text{if }x_1<b_1,\\ 
  \frac32\pi,&\text{if }x_1=b_1,\ x_2< b_2,\\ 
  2\pi+\arctan\frac{x_2-b_2}{x_1-b_1},&\text{if }x_1>b_1,\ x_2< b_2,\\ 
\end{cases}
\end{equation}
so that 
\[
\theta_b(b+r(\cos t,\sin t))=t\quad\text{for all }r>0\text{ and
}t\in[0,2\pi).
\] 
For all $a\in\Omega$, let $\varphi_a\in
H^{1,a}_{0}(\Omega,\C)\setminus\{0\}$ be an eigenfunction of problem
\eqref{eq:eige_equation_a} associated to the eigenvalue $\lambda_a$,
i.e. solving
\begin{equation}\label{eq:equation_a}
 \begin{cases}
   (i\nabla + A_a)^2 \varphi_a = \lambda_a \varphi_a,  &\text{in }\Omega,\\
   \varphi_a = 0, &\text{on }\partial \Omega,
 \end{cases}
\end{equation}
such that 
\begin{equation}\label{eq:6}
  \int_\Omega |\varphi_a(x)|^2\,dx=1 \quad\text{and}\quad 
  \int_\Omega e^{\frac i2(\theta_0-\theta_a)(x)}\varphi_a(x)\overline{\varphi_0(x)}\,dx\text{ is a
    positive real number},
\end{equation}
where $\varphi_0$ is as in (\ref{eq:equation_lambda0}--\ref{eq:83});
we observe that, given an eigenfunction $v$ of $(E_a)$ associated to
$\lambda_a$, to obtain an eigenfunction $\varphi_a$ satisfying the
normalization conditions \eqref{eq:6} it is enough to consider
$(\int_\Omega |v|^2\,dx)^{-1}e^{i\vartheta}v$ where
$\vartheta=\arg\left[ \big(\int_\Omega |v|^2\,dx\big)\big(\int_\Omega
  e^{i(\theta_0-\theta_a)/2}v\overline{\varphi_0}\,dx\big)^{-1}\right]$.
Using \eqref{eq:1}, \eqref{eq:equation_lambda0},
\eqref{eq:equation_a}, \eqref{eq:6}, and standard elliptic estimates,
it is easy to prove that
\begin{equation}\label{eq:21}
\varphi_a\to \varphi_0\quad \text{in }H^1(\Omega,\C)\text{ and in
}C^2_{\rm loc}(\Omega\setminus\{0\},\C)
\end{equation}
and 
\begin{equation}\label{eq:22}
  \int_{\Omega} \abs{(i\nabla+A_a)\varphi_a(x)}^2\,dx\to 
  \int_{\Omega} \abs{(i\nabla+A_0)\varphi_0(x)}^2\,dx
\end{equation}
as $a\to 0$. We notice that \eqref{eq:21} and \eqref{eq:22} imply that 
\begin{equation}\label{eq:45}
  (i\nabla+A_a)\varphi_a\to (i\nabla+A_0)\varphi_0\quad\text{in }L^2(\Omega,\C).
\end{equation}

\subsection{Local asymptotics of eigenfunctions.}
We recall from \cite{FFT} the description of the asymptotics at the
singularity of solutions to elliptic equations with Aharonov-Bohm
potentials. In the case of Aharonov-Bohm potentials with circulation
$\frac12$, such asymptotics is described in terms of eigenvalues and
eigenfunctions of the following operator $\mathfrak L$ acting on
$2\pi$-periodic functions
\begin{equation}\label{eq:frakl}
\mathfrak L\psi=
-\psi''+i\psi'+\frac14\psi.
\end{equation}
It is easy to verify that the eigenvalues of $\mathfrak L$ are
$\big\{\frac {j^2}4:j\in \N,\ j\text{ is odd}\big\}$; moreover each
eigenvalue $\frac{j^2}4$ has multiplicity $2$ and an
$L^2((0,2\pi),\C)$-orthonormal basis of the eigenspace associated to
the eigenvalue $\frac{j^2}4$ is formed by the functions
\begin{equation}\label{eq:9}
  \psi_1^j(t)=\frac{e^{i\frac t2}}{\sqrt{\pi}}\cos\Big(\frac j2
  t\Big),\quad 
  \psi_2^j(t)=\frac{e^{i\frac t2}}{\sqrt{\pi}}\sin\Big(\frac j2
  t\Big).
\end{equation}

\begin{Proposition}[\cite{FFT}, Theorem 1.3]\label{prop:fft}
 Let $\Omega \subset \R^2$ be a bounded open set containing   $b$ and
 $h\in L^\infty_{\rm loc}(\Omega\setminus\{0\},\C)$ such that 
 \begin{equation}\label{eq:3}
   \abs{h(x)}=O(\abs{x}^{-2+\eps})\quad  \text{as }|x|\to 0 \quad
   \text{for some }\eps>0.
 \end{equation}
Let $u \in H^{1,b}(\Omega,\C)$ be a
 nontrivial weak solution to the problem
 \begin{equation}\label{eq:2}
   (i\nabla+A_b)^2 u = hu,  \quad \text{in }\Omega,
 \end{equation}
i.e. 
 \begin{equation}\label{eq:8}
   \int_\Omega (i\nabla u+A_b u)\cdot  \overline{(i\nabla v+A_b
     v)}\,dx=\int_\Omega hu\overline{ v}\,dx
   \quad\text{for all }v\in H^{1,b}_{0}(\Omega,\C).
\end{equation}
Then there exists an odd $j\in\N$ such that 
\begin{equation}\label{eq:4}
  \lim_{r\to 0^+} \dfrac{r\int_{D_r(b)} \big(\abs{(i\nabla+A_b)u(x)}^2 -
    (\Re{h(x)})\abs{u(x)}^2\big)\,dx}{\int_{\partial D_r(b)} \abs{u}^2\, ds} =
  \frac j2.
\end{equation}
Furthermore, 
\begin{equation}\label{eq:5}
  r^{-j/2} u(b+r(\cos t,\sin t)) \to 
  \beta_j^1(b,u,h) \psi_1^j(t)+\beta_j^2 (b,u,h) \psi_2^j(t) \quad \text{in }C^{1,\alpha}([0,2\pi],\C)
\end{equation}
as $r\to0^+$ for any $\alpha\in (0,1)$, where, for $\ell=1,2$,
\begin{multline}\label{def_beta}
  \beta_j^\ell(b,u,h)= \int_0^{2\pi}\bigg[(R^{-\frac j2} u(b+R(\cos
  t,\sin t)) \\+ \int_0^R \frac{h(b+s(\cos t,\sin t)) u(b+s(\cos
    t,\sin t))}{j} \bigg( s^{1-\frac j2} -\frac{s^{1+\frac j2}}{R^{j}
  } \bigg)\,ds \bigg] \overline{\psi_\ell^j(t)}\,dt
\end{multline}
for all $R>0$ such that $\{x\in\R^2:|x-b|\leq
R\}\subset \Omega$ and 
$(\beta_j^1(b,u,h) ,\beta_j^2(b,u,h)) \neq (0,0)$.
\end{Proposition}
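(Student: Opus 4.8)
\emph{Approach.} I would center the analysis at the pole; by a translation assume $b=0$. The structural fact is that in polar coordinates $(i\nabla+A_0)^2=-\partial_{rr}-\tfrac1r\partial_r+\tfrac1{r^2}\mathfrak L$, with $\mathfrak L$ the angular operator of \eqref{eq:frakl}, whose spectrum is $\{\tfrac{j^2}4:j\in\N\text{ odd}\}$ and whose eigenspaces are spanned by the $\psi^j_1,\psi^j_2$ of \eqref{eq:9}; hence a function homogeneous of degree $\gamma$ is $(i\nabla+A_0)^2$-harmonic exactly when $\gamma=j/2$ for some odd $j$ and its angular part is a $\tfrac{j^2}4$-eigenfunction of $\mathfrak L$. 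This dictates the form of the limit profile, and the proof is an Almgren-type monotonicity argument: with $H(r)=\tfrac1r\int_{\partial D_r}|u|^2\,ds$ and $D(r)=\int_{D_r}\bigl(|(i\nabla+A_0)u|^2-(\Re h)|u|^2\bigr)\,dx$, one studies the frequency $N(r)=D(r)/H(r)$ of \eqref{eq:4}.

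\emph{Monotonicity.} Differentiating and using \eqref{eq:8} --- tested against $u$ for an energy identity and against $x\cdot\nabla u$ for a magnetic Rellich--Pohozaev identity, the singular potential being absorbed via \eqref{eq:hardy} --- one gets $H'(r)=\tfrac2rD(r)+E_1(r)$ and $D'(r)=2\int_{\partial D_r}\bigl|(i\nabla+A_0)u\cdot\tfrac x{|x|}\bigr|^2 ds+E_2(r)$, with error terms produced by $h$ satisfying $|E_1(r)|\le Cr^{-1+\eps}H(r)$ and $|E_2(r)|\le Cr^{-1+\eps}(D(r)+H(r))$ by \eqref{eq:3}. The first identity, with the lower bound $D(r)\ge-Cr^{-1+\eps}H(r)$ from \eqref{eq:hardy}, controls $\tfrac{d}{dr}\log H$, so strong unique continuation gives $H(r)>0$ for $r$ small. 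Inserting the identities into $\tfrac{d}{dr}\log N=\tfrac{D'}{D}-\tfrac{H'}{H}$ and using Cauchy--Schwarz on $\partial D_r$ (so that $D(r)^2\le rH(r)\int_{\partial D_r}|(i\nabla+A_0)u\cdot\tfrac x{|x|}|^2 ds$ modulo $h$) yields $N'(r)\ge-Cr^{-1+\eps}(N(r)+1)$; hence $r\mapsto e^{Cr^{\eps}/\eps}(N(r)+1)$ is nondecreasing near $0$, so $\gamma:=\lim_{r\to0^+}N(r)$ exists in $[0,\infty)$, and integrating the inequality for $\tfrac{d}{dr}\log H$ gives $c_1r^{2\gamma}\le H(r)\le c_2r^{2\gamma}$ for small $r$.

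\emph{Blow-up and sharp expansion.} To identify $\gamma$ I would blow up: $u^\rho(x)=u(\rho x)/\sqrt{H(\rho)}$ solves $(i\nabla+A_0)^2u^\rho=\rho^2h(\rho x)u^\rho$ with $|\rho^2h(\rho x)|=O(\rho^\eps|x|^{-2+\eps})$, is bounded in $H^{1,0}(D_1,\C)$ by the previous bounds and \eqref{eq:hardy}, and normalised by $\int_{\partial D_1}|u^\rho|^2\,ds=1$; by the diamagnetic inequality \eqref{eq:diamagnetic} and interior elliptic estimates it converges, along a subsequence, weakly in $H^{1,0}(D_1,\C)$ and in $C^1_{\rm loc}(\overline{D_1}\setminus\{0\})$, to some $U\not\equiv0$ with $(i\nabla+A_0)^2U=0$ in $D_1$; scaling invariance of the frequency plus its monotonicity force $N(\cdot;U)\equiv\gamma$, so $U$ is homogeneous of degree $\gamma$, whence $\gamma=j/2$ for some odd $j$ and $U=r^{j/2}(c_1\psi^j_1+c_2\psi^j_2)$ with $(c_1,c_2)\ne(0,0)$. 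This gives \eqref{eq:4}. For the sharp statement \eqref{eq:5} I would decompose $u(r(\cos t,\sin t))=\sum_{j'\,{\rm odd}}\sum_{\ell=1,2}\varphi^\ell_{j'}(r)\psi^\ell_{j'}(t)$ with $\varphi^\ell_{j'}(r)=\int_0^{2\pi}u(r(\cos t,\sin t))\overline{\psi^\ell_{j'}(t)}\,dt$, project \eqref{eq:2} onto each $\psi^\ell_{j'}$ to obtain the scalar ODEs $-(\varphi^\ell_{j'})''-\tfrac1r(\varphi^\ell_{j'})'+\tfrac{(j')^2}{4r^2}\varphi^\ell_{j'}=\Upsilon^\ell_{j'}(r)$ with $|\Upsilon^\ell_{j'}(r)|=O(r^{-2+\eps}H(r)^{1/2})$, and solve these by variation of parameters against the homogeneous solutions $r^{j'/2},r^{-j'/2}$. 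Membership $u\in H^{1,0}$ discards the mode $r^{-j'/2}$, and $\lim N(r)=j/2$ forces the coefficient of $r^{j'/2}$ to vanish for every $j'<j$; the resulting representation of $\varphi^\ell_j$ yields $r^{-j/2}\varphi^\ell_j(r)\to\beta^\ell_j$ with $\beta^\ell_j$ precisely the ($R$-independent) constant in \eqref{def_beta}, while summable tail estimates on the remaining modes upgrade the convergence to $C^{1,\alpha}$. Finally, $(\beta^1_j,\beta^2_j)\ne(0,0)$, since otherwise $N(r)$ would tend to a value strictly larger than $j/2$, contradicting \eqref{eq:4}.

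\emph{Main obstacle.} The crux is the monotonicity step: deriving the magnetic Rellich--Pohozaev identity and, above all, bookkeeping the perturbative error terms generated by $h$ --- only assumed $O(|x|^{-2+\eps})$ and complex, so that $\Im h$, absent from $N$, must be absorbed separately --- precisely enough to preserve the almost-monotonicity of $N$, the positivity of $H$, and the two-sided power bounds on $H$. Granted this, the blow-up and the variation-of-parameters computation producing the explicit coefficients \eqref{def_beta} are lengthy but essentially routine.
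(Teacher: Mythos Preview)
The paper does not prove this proposition: it is stated as a citation of Theorem~1.3 from \cite{FFT} (Felli--Ferrero--Terracini), so there is no ``paper's own proof'' to compare against. Your outline is, in spirit, exactly the strategy carried out in \cite{FFT}: an Almgren-type frequency function, a perturbed monotonicity formula with errors controlled by the $O(|x|^{-2+\eps})$ bound on $h$, a blow-up to identify the limit $\gamma$ as a half-integer via homogeneous $(i\nabla+A_0)^2$-harmonic profiles, and a Fourier/ODE analysis to extract the explicit coefficients. The paper itself reuses the same machinery (see Section~\ref{sec:5}) for the moving-pole problem, citing \cite{FFT} and \cite{NNT} for the underlying identities. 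Your sketch is sound as a roadmap; the only caveat is that in \cite{FFT} the sharp $C^{1,\alpha}$ convergence in \eqref{eq:5} and the $R$-independence of the formula \eqref{def_beta} require more care than ``summable tail estimates'' suggests, but you have correctly located this as bookkeeping rather than a conceptual obstacle.
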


\noindent From Proposition \ref{prop:fft} we have that, under
assumption \eqref{eq:37}, 
\begin{equation*}
  r^{-k/2} \varphi_0(r(\cos t,\sin t)) \to 
  \frac{e^{i\frac
      t2}}{\sqrt{\pi}}\Big(\beta_k^1(0,\varphi_0,\lambda_0)
  \cos\big(\tfrac k2 t\big)+\beta_k^2 (0,\varphi_0,\lambda_0)
  \sin\big(\tfrac k2 t\big)\Big)  
\end{equation*}
in $C^{1,\alpha}([0,2\pi],\C)$ as $r\to0^+$ for any $\alpha\in (0,1)$ 
with 
\begin{equation}\label{eq:70}
  (\beta_k^1(0,\varphi_0,\lambda_0),
  \beta_k^2(0,\varphi_0,\lambda_0))\neq(0,0),
\end{equation}
where $\beta_k^\ell(0,\varphi_0,\lambda_0)$ are defined as in
\eqref{def_beta}.
We observe that, from \cite{HHOO99} (see also \cite[Lemma 2.3]{BNNNT}),
the function 
\begin{equation*}
  e^{-i\frac t2}\varphi_0(r(\cos t,\sin t))
\end{equation*}
is a multiple of a real-valued function and therefore 
\begin{equation}\label{eq:77}
\text{either
$\beta_k^1(0,\varphi_0,\lambda_0)=0$ or
$\tfrac{\beta_k^2(0,\varphi_0,\lambda_0)}{\beta_k^1(0,\varphi_0,\lambda_0)}$
is real.}
\end{equation}
Since \eqref{eq:70} and \eqref{eq:77} hold, the function  
\[
t\mapsto\beta_k^1(0,\varphi_0,\lambda_0) \cos\big(\tfrac
{k}2t\big)+\beta_k^2 (0,\varphi_0,\lambda_0) \sin\big(\tfrac
{k}2t\big)
\]
 has exactly $k$ zeroes
$t_1,t_2,\dots,t_k$ in $[0,2\pi)$. Up to a change of coordinates in
$\R^2$, it is not restrictive to assume that $0\in\{t_1,t_2,\dots,t_k\}$, i.e. to assume that 
\begin{equation}\label{eq:54}
  \beta_k^1(0,\varphi_0,\lambda_0)=0.
\end{equation}
\begin{remark}\label{rem_beta1=0}
  Condition \eqref{eq:54} can be interpreted as a suitable change of
  the cartesian coordinate system $(x_1,x_2)$ in $\R^2$: we rotate the
  axes in such a way that the positive $x_1$-axis is tangent to one of
  the $k$ nodal lines of $\varphi_0$ ending at $0$ (see \cite[Theorem
  1.5]{NT} for the description of nodal lines of eigenfunctions near
  the pole). It is easy to verify that, besides the alignment of a
  nodal line of $\varphi_0$ along the $x_1$-axis, such a change of
  coordinates has also the effect of rotating the vector
  $(\beta_k^1(0,\varphi_0,\lambda_0),
  \beta_k^2(0,\varphi_0,\lambda_0))$; hence, since in the asymptotics
  stated in Theorem \ref{t:main_asy_eige} only the norm of such a
  vector is involved, it is enough to prove the theorem for
  $\beta_k^1(0,\varphi_0,\lambda_0)=0$.
\end{remark}

By Proposition \ref{prop:fft}, under conditions \eqref{eq:70} and \eqref{eq:54},
$\beta_k^2(0,\varphi_0,\lambda_0)\neq 0$ can be also  characterized as 
\begin{equation}\label{eq:84}
  \beta_k^2(0,\varphi_0,\lambda_0)=\frac1{\sqrt{\pi}}\lim_{r\to 0^+}
  r^{-k/2}\int_0^{2\pi} \varphi_0(r(\cos t,\sin t)) 
e^{-i\frac t2}\sin\big(\tfrac k2 t\big)\,dt.
\end{equation}

\subsection{Fourier coefficients of angular components of solutions.}\label{sec:four-coeff-angul}
Let $U\subseteq\R^2$ be an open set, $b\in U$ and $u\in H^{1,b}(U,\C)$
be a weak solution (in the sense of \eqref{eq:8}) to the problem
\begin{equation}\label{eq_generica}
   (i\nabla + A_b)^2 u = \lambda u,\quad   \text{in }U,
\end{equation}
for some $\lambda\in \R$. 
If $b\in \R^2$ is of the form $b=(|b|,0)$, letting $\theta_b$
   as in \eqref{eq:theta_a}, we have  that 
$\theta_b\in C^\infty(\R^2\setminus \{(x_1,x_2)\in\R^2:
x_2=0, x_1\geq |b|\})$
 and
$\nabla\theta_b$ can be extended to be in $C^\infty(\R^2\setminus \{b\})$ with 
 $\nabla \big(\frac{\theta_b}{2}\big)=A_b$ in $\R^2\setminus \{b\}$.

 Let $b=(|b|,0)\in U$ and let $u\in H^{1,b}(U,\C)$ be a weak solution to
 \eqref{eq_generica}. Let $R>0$ be such that $R>|b|$ and $D_R\subset U$.  For
 $\ell\in\{1,2\}$ and $j$ odd natural number we define, for all
 $r\in(|b|,R)$,
\begin{equation}\label{coeff_fourier_centrata0}
  v_\ell^j(r): = \int_0^{2\pi}  u(r(\cos t,\sin t))e^{-\frac i2
    \theta_b(r\cos t,r\sin t)}e^{i\frac t2} \overline{\psi_\ell^j(t)}\,dt. 
\end{equation}
We notice that $\{v_\ell^j(r)\}_{j,\ell}$ are the Fourier coefficients of  the
function 
\[
t\mapsto  u(r(\cos t,\sin t)) e^{-\frac i2
  (\theta_b-\theta_0)(r\cos t,r\sin t)}
\]
with respect to the orthonormal basis of the space of
periodic-$L^2((0,2\pi),\C)$ functions given in \eqref{eq:9}.  Since
the function $w=u e^{-\frac i2 \theta_b}$ solves $-\Delta w=\lambda w$
in $\R^2\setminus\{(x_1,0):x_1\geq|b|\}$ and jumps to its opposite
across the crack $\{(x_1,0):x_1\geq|b|\}$ (as well as its derivative
$\frac{\partial w}{\partial x_2}$), we have that $v_\ell^j$ is a
solution to the equation
\[ 
-(v_\ell^j )'' - \frac1r (v_\ell^j )' + \frac{j^2}{4r^2}v_\ell^j  =
\lambda v_\ell^j,  \quad \text{in }(|b|,R)
\]
which can be rewritten as
\begin{equation}\label{eq:fourier_centrata0}
  -\left(r^{1+j}\left(r^{-\frac j2}v_\ell^j \right)'\right)'= \lambda\,
  r^{1+\frac j2}v_\ell^j,  \quad \text{in }(|b|,R).
\end{equation}

\section{Poincar\'{e} type inequalities}\label{sec:3}

In this section we establish the validity of some Poincar\'{e} type
inequalities uniformly with respect to varying poles.

\begin{Lemma}[Poincar\'{e} inequality]\label{poincare_inequality}
  Let $r>0$ and $a\in D_r$. For any $u\in H^{1,a}(\Omega,\C)$ the
  following inequality holds true
\begin{equation}
  \dfrac{1}{r^2} \int_{D_r} \abs{u}^2\,dx \leq \dfrac1r \int_{\partial
    D_r} \abs{u}^2 \,ds+ 
  \int_{D_r} \abs{(i\nabla +A_{ a})u}^2\,dx.
\end{equation} 
\end{Lemma}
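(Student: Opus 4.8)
The plan is to reduce the magnetic Poincaré inequality to the non-magnetic one via the diamagnetic inequality \eqref{eq:diamagnetic}, so that it suffices to prove
\[
\frac{1}{r^2}\int_{D_r}\abs{v}^2\,dx\leq \frac1r\int_{\partial D_r}\abs{v}^2\,ds+\int_{D_r}\abs{\nabla v}^2\,dx
\]
for real-valued $v\in H^1(D_r)$ (applied to $v=\abs{u}$). By a standard scaling argument ($v(x)=\tilde v(x/r)$) it is enough to treat $r=1$. For $r=1$ the inequality is a classical trace-type estimate on the unit disk: expanding $v$ in Fourier series $v(\rho(\cos t,\sin t))=\sum_{n\in\Z} c_n(\rho)e^{int}$, one has $\int_{D_1}\abs{v}^2 = 2\pi\sum_n\int_0^1\abs{c_n(\rho)}^2\rho\,d\rho$, $\int_{\partial D_1}\abs{v}^2 = 2\pi\sum_n\abs{c_n(1)}^2$, and $\int_{D_1}\abs{\nabla v}^2 \geq 2\pi\sum_n\int_0^1\abs{c_n'(\rho)}^2\rho\,d\rho$, so the claim follows mode-by-mode from the one-dimensional inequality $\int_0^1\abs{c(\rho)}^2\rho\,d\rho\leq \abs{c(1)}^2+\int_0^1\abs{c'(\rho)}^2\rho\,d\rho$. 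The latter is immediate: writing $c(\rho)=c(1)-\int_\rho^1 c'(\sigma)\,d\sigma$ and using Cauchy--Schwarz with the weight, or more simply integrating by parts the identity $\frac{d}{d\rho}\big(\rho^2\abs{c(\rho)}^2\big)=2\rho\abs{c}^2+2\rho^2\Re(c'\bar c)$ over $(0,1)$ and estimating $2\rho^2\abs{c'}\abs{c}\leq \rho^2\abs{c'}^2+\rho^2\abs{c}^2\leq \rho\abs{c'}^2+\rho\abs{c}^2$ on $(0,1)$.

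Assembling the pieces: given $u\in H^{1,a}(\Omega,\C)$ with $a\in D_r$, the function $u$ restricted to $D_r$ lies in $H^1(D_r\setminus\{a\},\C)$ and in fact $\abs{u}\in H^1(D_r)$ with $\abs{\nabla\abs{u}}\leq \abs{(i\nabla+A_a)u}$ a.e. by \eqref{eq:diamagnetic}. Applying the scalar inequality above to $v=\abs{u}$ and then \eqref{eq:diamagnetic} to bound $\int_{D_r}\abs{\nabla\abs{u}}^2\leq\int_{D_r}\abs{(i\nabla+A_a)u}^2$ yields exactly the asserted estimate, with the boundary term $\frac1r\int_{\partial D_r}\abs{\abs{u}}^2\,ds=\frac1r\int_{\partial D_r}\abs{u}^2\,ds$. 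The passage from the dense class of smooth functions vanishing near $a$ to all of $H^{1,a}(\Omega,\C)$ is routine by density, all three terms being continuous with respect to the $H^{1,a}$-norm (the trace term via the standard trace inequality on $D_r$).

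The only mild subtlety — and the step I would be most careful about — is the regularity of $\abs{u}$ near the pole $a$ and the validity of the diamagnetic inequality there: one must know that $u/\abs{x-a}\in L^2$ guarantees $\abs{u}\in H^1(D_r)$ across $a$ with no distributional contribution from the point $\{a\}$, which is exactly what is packaged in \eqref{eq:diamagnetic} and the definition of $H^{1,a}$. Everything else is a clean scaling-plus-Fourier computation, so I do not expect any genuine obstacle.
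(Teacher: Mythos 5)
Your argument is correct, and at the structural level it matches the paper's: both proofs reduce the magnetic inequality to its scalar counterpart via the diamagnetic inequality \eqref{eq:diamagnetic} applied to $|u|$. Where you differ is in how the scalar trace-type inequality on $D_r$ is established. You scale to $r=1$, expand in a Fourier series in the angular variable, drop the angular part of $|\nabla v|^2$, and prove a one-dimensional weighted estimate $\int_0^1 |c|^2\rho\,d\rho \le |c(1)|^2 + \int_0^1 |c'|^2\rho\,d\rho$ by integrating $\frac{d}{d\rho}(\rho^2|c|^2)$ and applying Young. The paper instead works directly in two dimensions: it writes $\frac{2}{r^2}\int_{D_r}|u|^2 = \frac{1}{r^2}\int_{D_r}\bigl(\mathrm{div}(|u|^2 x) - 2|u|\,\nabla|u|\cdot x\bigr)$, invokes the divergence theorem on the first term to produce exactly the boundary integral $\frac1r\int_{\partial D_r}|u|^2\,ds$, and bounds the second term with Young's inequality and $|x|\le r$, after which the diamagnetic inequality finishes. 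Your one-dimensional integration-by-parts identity is really the radial shadow of that same divergence-theorem computation, so the ideas coincide; the Fourier decomposition is an extra reduction step that neither shortens the argument nor yields additional information here, while the paper's global computation is a clean three-line chain. Both routes are rigorous, and your remarks about the regularity of $|u|$ near the pole and the density step are the right ones to flag.
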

\begin{proof}
  From the Divergence Theorem, the Young inequality, and the
  diamagnetic inequality \eqref{eq:diamagnetic}, it follows that
\begin{align*}
  \frac2{r^2}\int_{D_r} \abs{u}^2\,dx
  &=\frac1{r^2}\int_{D_r}\Big(\Div(|u|^2x)-2|u|\nabla|u|\cdot x\Big)\,dx\\
  &=\frac1r \int_{\partial D_r} \abs{u}^2 \,ds-\frac2{r^2}
  \int_{D_r}|u|\nabla |u|\cdot x\,dx\\
  &\leq \frac1r \int_{\partial D_r} \abs{u}^2 \,ds+\int_{D_r} |\nabla
  |u||^2\,dx+\frac1{r^2}\int_{D_r} \abs{u}^2\,dx\\
  &\leq \frac1r \int_{\partial D_r} \abs{u}^2 \,ds+\int_{D_r}
  |(i\nabla +A_a) u|^2\,dx+\frac1{r^2}\int_{D_r} \abs{u}^2\,dx
\end{align*}
which yields the conclusion.
\end{proof}

For every $b\in D_1$ we define
\begin{equation}\label{eq:m_b}
  m_{b}: = \inf_{\stackrel{v\in H^{1,b}(D_1,\C)}{v\not\equiv 0}} 
  \dfrac{\int_{D_1} \abs{(i\nabla +A_{b})v}^2\,dx}{\int_{\partial D_1}
    \abs{v}^2\,ds}.
\end{equation} 
\begin{Lemma}\label{l:mbattainedpositive}
  For every $b\in D_1$, the infimum $m_b$ defined in \eqref{eq:m_b} is
  attained and $m_b>0$.
\end{Lemma}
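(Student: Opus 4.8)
The plan is to argue by the direct method of the calculus of variations for existence of a minimizer, and then to exploit the sharp structure of the Aharonov-Bohm quadratic form to rule out $m_b=0$. First I would observe that $m_b\geq 0$ trivially, and that $m_b<+\infty$ since the ratio is finite on, say, any fixed nonzero test function vanishing near $b$. To show the infimum is attained, take a minimizing sequence $v_n\in H^{1,b}(D_1,\C)$; by the homogeneity of the Rayleigh-type quotient we may normalize $\int_{\partial D_1}|v_n|^2\,ds=1$, so that $\int_{D_1}|(i\nabla+A_b)v_n|^2\,dx\to m_b$ and in particular this sequence is bounded. Combining this bound with the Poincar\'e-type inequality of Lemma \ref{poincare_inequality} applied on $D_1$ (with $r=1$), namely
\[
\int_{D_1}|v_n|^2\,dx\leq \int_{\partial D_1}|v_n|^2\,ds+\int_{D_1}|(i\nabla+A_b)v_n|^2\,dx,
\]
gives a uniform bound on $\|v_n\|_{L^2(D_1,\C)}$, and then the equivalence of the norm \eqref{eq:norma} with $\|\cdot\|_{H^{1,b}}$ (via the Hardy inequality \eqref{eq:hardy}) yields that $\{v_n\}$ is bounded in $H^{1,b}(D_1,\C)$.

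Next I would extract a subsequence with $v_n\rightharpoonup v$ weakly in $H^{1,b}(D_1,\C)$. Weak lower semicontinuity of the convex functional $v\mapsto\int_{D_1}|(i\nabla+A_b)v|^2\,dx$ gives $\int_{D_1}|(i\nabla+A_b)v|^2\,dx\leq m_b$. For the denominator, I would use compactness of the trace operator $H^{1,b}(D_1,\C)\to L^2(\partial D_1,\C)$ — which holds because, away from the interior pole $b$, the space $H^{1,b}(D_1,\C)$ embeds as usual into $H^1$ near $\partial D_1$, so the compact trace embedding $H^1(D_1)\hookrightarrow L^2(\partial D_1)$ applies — to conclude $\int_{\partial D_1}|v_n|^2\,ds\to\int_{\partial D_1}|v|^2\,ds=1$. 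In particular $v\not\equiv0$, so $v$ is admissible and $m_b\leq \int_{D_1}|(i\nabla+A_b)v|^2\,dx/\int_{\partial D_1}|v|^2\,ds\leq m_b$; hence $v$ is a minimizer and $m_b$ is attained.

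Finally I would prove $m_b>0$. Suppose $m_b=0$; then the minimizer $v$ satisfies $\int_{D_1}|(i\nabla+A_b)v|^2\,dx=0$ with $\int_{\partial D_1}|v|^2\,ds=1$. From $(i\nabla+A_b)v\equiv0$ a.e.\ and the diamagnetic inequality \eqref{eq:diamagnetic} we get $\nabla|v|\equiv0$, so $|v|$ is constant on the connected set $D_1$; by the trace normalization this constant is nonzero, hence $v$ never vanishes. But then on the simply connected punctured disk $D_1\setminus\{b\}$ we may write $v=|v|\,e^{i\phi}$ with $\phi$ real and smooth, and $(i\nabla+A_b)v=0$ forces $\nabla\phi=A_b=\nabla(\theta_b/2)$, so $\phi-\theta_b/2$ is locally constant; but $\theta_b/2$ is not single-valued on loops around $b$ (it increases by $\pi$), contradicting the fact that $v$, hence $e^{i\phi}$, is single-valued. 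This contradiction — which is exactly the effect of the half-integer circulation — shows $m_b>0$. The main obstacle is the last step: one must make precise that a nonvanishing $H^{1,b}$ function in the kernel of $(i\nabla+A_b)$ cannot exist, and the cleanest way is the monodromy argument above; alternatively one can invoke the Hardy inequality \eqref{eq:hardy} on $D_1$ together with a Poincar\'e inequality to bound $\int_{\partial D_1}|v|^2\,ds$ by $\int_{D_1}|(i\nabla+A_b)v|^2\,dx$ directly, but the phase argument is the most transparent.
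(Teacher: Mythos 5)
Your existence argument (direct method: normalize $\int_{\partial D_1}|v_n|^2\,ds=1$, bound the sequence via Lemma~\ref{poincare_inequality}, pass to a weak limit, use compactness of the trace to get $\int_{\partial D_1}|v|^2\,ds=1$, and invoke weak lower semicontinuity) is exactly what the paper does. Your proof of $m_b>0$, however, takes a genuinely different route. Both proofs first use the diamagnetic inequality \eqref{eq:diamagnetic} to conclude that $|v|$ is a nonzero constant. The paper then finishes in one line: a positive constant modulus makes $\int_{D_1}|v|^2/|x-b|^2\,dx=+\infty$, contradicting $v\in H^{1,b}(D_1,\C)$, which by definition contains the Hardy term in its norm. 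You instead write $v=|v|e^{i\phi}$ on an annulus around $b$, derive $\nabla\phi=A_b$, and observe that $\oint A_b\cdot d\ell=\pi\ne 2\pi\Z$, so $e^{i\phi}$ cannot be single-valued. Your monodromy argument is correct and makes the role of the half-integer circulation explicit, but it requires a small regularity discussion (you need $v$ smooth on the annulus, which follows from $\nabla v=iA_b v$ with $A_b$ smooth away from $b$) and a topological point; the paper's argument is shorter and entirely measure-theoretic. One small slip: you call $D_1\setminus\{b\}$ ``simply connected,'' but it is of course not -- that is precisely why the monodromy obstruction exists. The argument you go on to give makes it clear you understand this, so it reads as a typo rather than a conceptual error, but it should be fixed.
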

\begin{proof}
 Let $v_n$ be a minimizing sequence such that
  \[ 
  \int_{\partial D_1} \abs{v_n}^2\,dx=1 \quad \text{and} \quad
  \int_{D_1} \abs{(i\nabla +A_{b})v_n}^2 =m_b+o(1)\quad\text{as
  }n\to\infty.
\] 
Then, by Lemma \ref{poincare_inequality}, we have that
$\{v_n\}_{n\in\N}$ is bounded in $H^{1,b}(D_1,\C)$. Hence there exists
a subsequence $v_{n_k}$ converging to some $v\in H^{1,b}(D_1,\C)$
weakly in $H^{1,b}(D_1,\C)$ and (by compactness of the trace embedding
$H^{1,b}(D_1,\C)\hookrightarrow L^2(\partial D_1,\C)$) strongly in
$L^2(\partial D_1,\C)$. Strong convergence in $L^2(\partial D_1,\C)$
implies that $\int_{\partial D_1} \abs{v}^2\,dx=1$, so that
$v\not\equiv 0$; moreover weak lower semicontinuity of the
$H^{1,b}(D_1,\C)$-norm implies that $v$ attains $m_b$.

If, by contradiction, $m_{b}=0$, then, via the diamagnetic inequality
\eqref{eq:diamagnetic},
\[ 
0= \int_{D_1} \abs{(i\nabla + A_{b}) v }^2 \,dx\geq \int_{D_1}
\abs{\nabla \abs{ v}}^2\, dx
\] 
which implies that $|v| \equiv C$, being $C\geq 0$ a real
constant. Since $v\not\equiv 0$, we have that $C>0$ and then
$\int_{D_1}\frac{|v|^2}{|x-b|^2}\,dx=+\infty$, thus contradicting the
fact that $v\in H^{1,b}(D_1,\C)$.
\end{proof}

\begin{Lemma}\label{lemma_Sobolev_inequality}
  Let $r>0$ and $a\in D_r$. Then
\begin{equation}\label{poincare2}
  \frac{m_{a/r}}{r} \int_{\partial D_r} \abs{u}^2 \,ds\leq \int_{D_r}
  \abs{(i\nabla +A_{a})u}^2\,dx\quad
  \text{for all }u\in H^{1,a}(D_r,\C),
\end{equation}
with $m_{a/r}$ as in \eqref{eq:m_b} with $b=\frac ar$.
\end{Lemma}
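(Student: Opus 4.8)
\textit{Plan.} The statement is scale-invariant in nature, so the plan is to deduce \eqref{poincare2} directly from the definition \eqref{eq:m_b} of $m_{a/r}$ by rescaling the disk $D_r$ to the unit disk $D_1$. First I would observe that $a\in D_r$ forces $|a/r|<1$, so that $b:=a/r\in D_1$ and $m_{a/r}$ is well defined (and, by Lemma \ref{l:mbattainedpositive}, strictly positive, although positivity is not needed for the inequality itself). Then, given $u\in H^{1,a}(D_r,\C)$, I would introduce the rescaled function $v(y):=u(ry)$ for $y\in D_1$ and show that $v$ is an admissible competitor for the infimum defining $m_{a/r}$, with a Rayleigh quotient that reproduces, up to an explicit power of $r$, the ratio appearing in \eqref{poincare2}.

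The key algebraic point is the homogeneity of degree $-1$ of $A_0$, which gives $A_a(ry)=A_0(ry-a)=\frac1r A_0\big(y-\frac ar\big)=\frac1r A_{a/r}(y)$ and hence the operator identity
\[
(i\nabla_y+A_{a/r}(y))\,v(y)=r\,\big[(i\nabla+A_a)u\big](ry).
\]
Combining this with the changes of variables $x=ry$ (so $dx=r^2\,dy$ in the interior and $ds_x=r\,ds_y$ on the boundary) I would record the three scaling relations
\[
\int_{D_1}|(i\nabla+A_{a/r})v|^2\,dy=\int_{D_r}|(i\nabla+A_a)u|^2\,dx,\qquad
\int_{\partial D_1}|v|^2\,ds=\frac1r\int_{\partial D_r}|u|^2\,ds,
\]
together with $\int_{D_1}\frac{|v(y)|^2}{|y-a/r|^2}\,dy=\int_{D_r}\frac{|u(x)|^2}{|x-a|^2}\,dx$; this last identity, along with the analogous elementary computations for $\|v\|_{L^2(D_1)}$ and $\|\nabla v\|_{L^2(D_1)}$, certifies that $v\in H^{1,a/r}(D_1,\C)$.

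Finally, if $\int_{\partial D_r}|u|^2\,ds=0$ the inequality is trivial; otherwise $v\not\equiv0$ and inserting $v$ into \eqref{eq:m_b} yields
\[
m_{a/r}\le\frac{\int_{D_1}|(i\nabla+A_{a/r})v|^2\,dy}{\int_{\partial D_1}|v|^2\,ds}
=\frac{r\int_{D_r}|(i\nabla+A_a)u|^2\,dx}{\int_{\partial D_r}|u|^2\,ds},
\]
which is exactly \eqref{poincare2} after multiplying through. I do not expect a genuine obstacle: the only point requiring care is the correct bookkeeping of the powers of $r$ in the change of variables and, above all, the scaling law $A_a(ry)=\frac1r A_{a/r}(y)$ of the Aharonov–Bohm potential; once this is in place the proof is immediate.
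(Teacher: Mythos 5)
Your rescaling argument is exactly what the paper means by its one-line justification ("It follows from \eqref{eq:m_b} and a standard dilation argument"), and the key scaling identity $A_a(ry)=\tfrac1r A_{a/r}(y)$ together with the change of variables is carried out correctly. This is the same approach, just written out in full.
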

\begin{proof}
  It follows from \eqref{eq:m_b} and a standard dilation argument.
\end{proof}

\begin{Lemma}\label{lemma_continuity_mb}
  The function $b\mapsto m_b$, with $m_b$ defined in \eqref{eq:m_b},
  is continuous in $D_1$. Moreover $m_0=\frac12$.
\end{Lemma}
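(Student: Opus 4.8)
The plan is to prove continuity of $b \mapsto m_b$ at an arbitrary point $b_0 \in D_1$ by a two-sided estimate, using minimizers as test functions and exploiting a gauge transformation to compare quadratic forms attached to different poles. First I would fix $b_0 \in D_1$ and, for $b$ close to $b_0$, let $v_b$ be a minimizer for $m_b$ (which exists by Lemma \ref{l:mbattainedpositive}) normalized by $\int_{\partial D_1} |v_b|^2\,ds = 1$. To compare $m_b$ with $m_{b_0}$, I would use the fact that $A_b - A_{b_0} = \nabla\big(\tfrac12(\theta_b - \theta_{b_0})\big)$ away from the segment joining $b$ and $b_0$; since $D_1$ is simply connected minus that segment (which has measure zero and does not affect the Dirichlet-type integrals), multiplication by the unimodular factor $e^{\frac i2(\theta_{b_0}-\theta_b)}$ maps $H^{1,b}(D_1,\C)$ to $H^{1,b_0}(D_1,\C)$ isometrically on the boundary term and transforms $(i\nabla + A_b)v$ into $e^{\frac i2(\theta_{b_0}-\theta_b)}(i\nabla + A_{b_0})(e^{\frac i2(\theta_b-\theta_{b_0})}v)$. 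The key quantitative input is that $\theta_b - \theta_{b_0} \to 0$ as $b\to b_0$ in a suitable sense on $D_1$ (uniformly on compact subsets away from $b_0$, and controlled in measure near $b_0$), so that the phase factor is close to $1$.

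The main technical point — and the step I expect to be the principal obstacle — is that the difference $\theta_b - \theta_{b_0}$ is \emph{not} small in $L^\infty$ near the poles, so one cannot simply bound the error by $\|\nabla(\theta_b-\theta_{b_0})\|_\infty$ times the norm of $v$. Instead I would argue as follows: given $\eps>0$, split $D_1$ into $D_\rho(b_0)$ and its complement. On the complement, $\nabla(\theta_b - \theta_{b_0})\to 0$ uniformly as $b\to b_0$, so the contribution of the error there is $o(1)\int_{D_1}|(i\nabla+A_{b_0})v|^2$. On the small disk $D_\rho(b_0)$, I would use the Hardy inequality \eqref{eq:hardy} together with the explicit bound $|\nabla\theta_b(x)| = |x-b|^{-1}$ and $|\nabla\theta_{b_0}(x)| = |x-b_0|^{-1}$ to control $\int_{D_\rho(b_0)}|\nabla(\theta_b-\theta_{b_0})|^2|v|^2$ by a constant times $\int_{D_\rho(b_0)}\big(\tfrac{|v|^2}{|x-b|^2} + \tfrac{|v|^2}{|x-b_0|^2}\big)$, which is in turn controlled by $\int_{D_{2\rho}(b_0)}|(i\nabla+A_b)v|^2 + \int_{D_{2\rho}(b_0)}|(i\nabla+A_{b_0})v|^2$; choosing $\rho$ small makes this as small as desired relative to the total energy (one also needs the minimizers $v_b$ to have no concentration of energy at $b_0$, which follows from the uniform $H^{1,b}$-bound given by Lemma \ref{poincare_inequality}). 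Combining, $m_{b_0} \le (1+o(1)) m_b + o(1)$; by symmetry the reverse inequality holds, so $m_b \to m_{b_0}$.

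Finally, for the value $m_0 = \tfrac12$: with $b=0$ the problem is explicitly solvable by separation of variables. Writing $v(r(\cos t,\sin t)) = \sum_{j \text{ odd}} c_j(r) \big(\psi_1^j(t)$ or $\psi_2^j(t)\big)$ via the orthonormal basis \eqref{eq:9}, the Rayleigh quotient decouples into one-dimensional problems of minimizing $\int_0^1 \big(|c_j'|^2 + \tfrac{j^2}{4r^2}|c_j|^2\big)r\,dr$ over $|c_j(1)|^2$; the minimizer is $c_j(r) = c_j(1) r^{j/2}$ (the homogeneous harmonic-type solution regular at the origin), giving value $\tfrac j2$ for the $j$-th mode. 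Hence the infimum is attained on the lowest mode $j=1$, yielding $m_0 = \tfrac12$, consistent with the vanishing order $k/2$ appearing throughout and with the Hardy constant $\tfrac14$.
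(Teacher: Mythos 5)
The computation of $m_0=\tfrac12$ via Fourier decoupling is correct and genuinely different from the paper's argument: you diagonalize the quadratic form in the angular modes $\psi_\ell^j$ and solve the resulting one-dimensional minimizations, whereas the paper obtains $m_0\geq\tfrac12$ from the Almgren monotonicity formula of \cite[Lemma 5.4]{FFT} together with the lower bound $\lim_{r\to0^+}N(v_0,r)\geq\tfrac12$ from Proposition \ref{prop:fft}, and then tests with $\tilde v(r\cos t,r\sin t)=r^{1/2}e^{it/2}\sin(t/2)$. Both routes are sound; yours has the merit of being self-contained and not invoking the monotonicity machinery, at the cost of having to justify the mode decomposition carefully.

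The continuity part, however, has a genuine gap, and it is a more serious one than the obstacle you anticipate. You correctly note that $\nabla(\theta_b-\theta_{b_0})$ is not small in $L^\infty$ near the poles, but the deeper problem is that the gauge factor $e^{\frac i2(\theta_{b_0}-\theta_b)}$ is \emph{not single-valued} on $D_1\setminus\{b,b_0\}$: it jumps by a sign across the segment $[b,b_0]$, which is exactly the hallmark of half-integer circulation. Consequently, multiplication by this factor does \emph{not} map $H^{1,b}(D_1,\C)$ into $H^{1,b_0}(D_1,\C)$; a continuous $v\in H^{1,b}$ is sent to a function with a sign jump across $[b,b_0]$, which is not in $H^1$. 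Equivalently, if you bypass the gauge and try to estimate $\int_{D_\rho(b_0)}|A_b-A_{b_0}|^2|v|^2\,dx$ directly, the estimate you invoke,
$\int_{D_\rho(b_0)}\frac{|v|^2}{|x-b_0|^2}\,dx\lesssim\int_{D_{2\rho}(b_0)}|(i\nabla+A_{b_0})v|^2\,dx$,
is only valid for $v\in H^{1,b_0}$ near $b_0$, which is precisely what the minimizer $v_b\in H^{1,b}$ fails to be: $v_b$ is smooth near $b_0$ and generically nonzero there, so $\int\frac{|v_b|^2}{|x-b_0|^2}\,dx=+\infty$ because $|x-b_0|^{-2}$ is not locally integrable in dimension two. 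There is no strong Hardy inequality in the plane to rescue this. So the right-hand side of your chain of inequalities is $+\infty$, and the argument collapses. The paper circumvents exactly this difficulty: for the upper semicontinuity it tests $m_{b_n}$ with $\varphi\in C^\infty_{\rm c}(\overline{D_1}\setminus\{b\})$, which for $n$ large is admissible for $m_{b_n}$ and on whose support $A_{b_n}$ is uniformly bounded, so dominated convergence applies; for the lower semicontinuity it extracts a weak $H^1$-limit $v$ of the minimizers $v_n$, shows by Hardy's inequality that $A_{b_n}v_n\rightharpoonup A_bv$ weakly in $L^2$ (which in particular forces $v\in H^{1,b}$), and concludes by weak lower semicontinuity of the $L^2$-norm. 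To repair your argument you would essentially have to rediscover this two-sided scheme.
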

\begin{proof}
 Let us consider $b\in D_1$ and a sequence $\{b_n\}_{n\in\N}$ such
 that $b_n \to b$. From Lemma \ref{l:mbattainedpositive}, $m_{b_n}$
 is attained by some $v_{n} \in H^{1,b_n}(D_1,\C)$ with 
\[
\int_{\partial D_1} \abs{v_{n}}^2\,ds=1\quad\text{and}\quad \int_{D_1}
\abs{(i\nabla +A_{b_n})v_n}^2 =m_{b_n}.
\] 
Moreover, by the definition of $m_b$ and density of smooth functions
vanishing in a neighborhood of $b$ in $H^{1,b}(D_1,\C)$, for every
$\eps>0$ there exists a function $\varphi \in C^\infty_{\rm
  c}({\overline D_1 \setminus \{b\}})$ such that
\[
\dfrac{\int_{D_1} \abs{(i\nabla +A_{b})\varphi}^2\,dx}{\int_{\partial
    D_1} \abs{\varphi}^2\,ds}<m_b+\eps.
\]
Since $b_n \to b$, we have that $\varphi \in H^{1,b_n}(D_1,\C)$ if $n$
is sufficiently large, hence
\[
m_{b_n}\leq \dfrac{\int_{D_1} \abs{(i\nabla
    +A_{b_n})\varphi}^2\,dx}{\int_{\partial D_1} \abs{\varphi}^2\,ds}.
\]
Since $A_{b_n}$ is bounded in the support of $\varphi$ uniformly with
respect to $n$, the Dominated Convergence Theorem ensures that
$\int_{D_1} \abs{(i\nabla +A_{b_n})\varphi}^2\,dx\to \int_{D_1}
\abs{(i\nabla +A_{b})\varphi}^2\,dx$ as $n\to\infty$ and therefore
\[
\limsup_{n\to\infty}m_{b_n}\leq \dfrac{\int_{D_1} \abs{(i\nabla
    +A_{b})\varphi}^2\,dx}{\int_{\partial D_1}
  \abs{\varphi}^2\,ds}<m_b+\eps.
\]
We conclude that
\[
\limsup_{n\to\infty}m_{b_n}\leq m_b.
\]
This in particular implies that $m_{b_n}=\int_{D_1}\abs{(i\nabla +
  A_{b_n})v_{n}}^2$ is bounded and hence, in view of Lemma
\ref{poincare_inequality} and \eqref{eq:hardy}, the sequence $\{v_n\}$ is bounded in
$H^1(D_1,\C)$. Then there exists a subsequence $\{v_{n_k}\}$
converging to some $v\in H^1(D_1,\C)$ weakly in $H^1(D_1,\C)$, a.e. in
$D_1$, and (by the Rellich-Kondrakov Theorem) strongly in $L^2(
D_1,\C)$. Moreover, by compactness of the trace embedding
$H^{1}(D_1,\C)\hookrightarrow L^2(\partial D_1,\C)$ we have that
$\int_{\partial D_1} \abs{v}^2\,dx=1$.

We notice that $A_{ b_{n_k}}v_{n_k} \to A_{b}v $ almost everywhere in
$D_1$ and, for any radius $r\in(0,1-|b|)$ e $n$ sufficiently large,
\begin{align*}
  \nor{A_{ b_n}v_{n}}_{L^2(D_1)}^2
  &= \int_{D_r(b_n)} \abs{A_{ b_n}v_{n}}^2\,dx + \int_{D_1 \setminus D_r(b_n)} \abs{A_{ b_n}v_{n}}^2\,dx  \\
  \notag&= \frac14\int_{D_r(b_n)} \frac{|v_{n}(x)|^2}{|x-b_n|^2}\,dx +
  \int_{D_1 \setminus D_r(b_n)}
  \abs{A_{ b_n}v_{n}}^2\,dx  \\
  \notag&\leq \int_{D_r(b_n)} \abs{(i\nabla + A_{b_n})v_{n}}^2 +
  \int_{D_1 \setminus D_r(b_n)} \abs{A_{ b_n}v_{n}}^2 \leq {\rm const}
\end{align*}
via the Hardy inequality \eqref{eq:hardy}. Therefore $A_{b}v\in
L^2(D_1,\C)$ and $A_{ b_{n_k}}v_{n_k}\rightharpoonup A_{b}v$ weakly in
$L^2(D_1,\C)$.  In particular $v\in H^{1,b}(D_1,\C)$.

Therefore $(i\nabla +A_{ b_{n_k}})v_{n_k} \rightharpoonup
(i\nabla +A_{b})v$ weakly in $L^2(D_1,\C)$. The weak lower semi-continuity of the $L^2$-norm yields
\[ m_{b}  \leq \int_{D_1} \abs{(i\nabla +A_{b})v}^2\,dx 
\leq \liminf_{k\to  \infty} m_{b_{n_k}} 
\leq \limsup_{k\to  \infty} m_{b_{n_k}}  \leq m_{b}.
\]
Then $\lim_{k\to  \infty} m_{b_{n_k}}=m_{b}$. From the Urysohn
property we conclude that $\lim_{n\to  \infty} m_{b_{n}}=m_{b}$ and
hence the function $b\mapsto m_b$ is continuos.

From Lemma \ref{l:mbattainedpositive}, the infimum
\[ 
m_0 = \inf_{\stackrel{v\in H^{1,0}(D_1,\C)}{v\not\equiv 0}} 
\dfrac{\int_{D_1} \abs{(i\nabla +A_{0})v}^2\,dx}{\int_{\partial D_1}
  \abs{v}^2\,ds} 
\]
is attained by a function $v_0\in H^{1,0}(D_1,\C)\setminus\{0\}$,
which weakly solves  $(i\nabla+A_0)^2 v_0 = 0$ in $D_1$ in the sense
of \eqref{eq:8}. From  \cite [Lemma 5.4]{FFT}, we have that
\begin{align*}
  N(v_0,r):= \dfrac{r\int_{D_r}\abs{(i\nabla +
      A_0)v_0}^2\,dx}{\int_{\partial D_r}\abs{v_0}^2\,ds} \quad
  \text{is monotone nondecreasing w.r.t. }r;
\end{align*}
furthermore (see Proposition \ref{prop:fft}) $\lim_{r\to
  0^+}N(v_0,r)\geq\frac12$. Hence $m_0=N(v_0,1)\geq \frac12$.  It is
easy to verify that, letting $\tilde v(r\cos t,r\sin
t)=r^{1/2}e^{i\frac t2}\sin(\frac t2)$, we have that $\tilde v\in
H^{1,0}(D_1,\C)$ and
\[
\frac12=\dfrac{\int_{D_1} \abs{(i\nabla +A_{0})\tilde v}^2\,dx}{\int_{\partial D_1}
  \abs{\tilde v}^2\,ds}\geq m_0,
\]
thus implying $m_0=\frac12$.
The proof is thereby complete. 
\end{proof}

As a direct consequence of Lemma \ref{lemma_continuity_mb}, we obtain
the following result which provides a Poincar\'{e} type inequality
with a control on the best constant which is uniform with respect to
the variation of the pole.

\begin{Corollary}\label{corollary_mb}
  For any $\delta \in(0,\frac12)$, there exists some sufficiently large
  $\mu_\delta>1$ such that $m_b \geq \frac12 -\delta$ for every $b\in
  D_1$ with $|b|<\frac{1}{\mu_\delta}$.
\end{Corollary}
\begin{proof}
 The proof is a straightforward consequence of Lemma
\ref{lemma_continuity_mb}. 
\end{proof}

\section{Limit profile}\label{sec:4}

In the present section we construct the  profile which will
be used to describe the limit of
blowed-up sequences of eigenfunctions with poles approaching $0$ along the half-line
tangent  to a nodal line of $\varphi_0$.

\begin{Lemma}\label{lemma_Phi}
 For every odd natural number $k$ there exists $\Phi_k \in \bigcup_{R>0}H^1(D_R^+)$ (where
 $D_R^+$ denotes the half-disk $\{(x_1,x_2)\in D_R(0):x_2>0\}$)  such that 
\[
\begin{cases}
  -\Delta \Phi_k =0, & \text{in } \R^2_+  \text{ in a distributional sense},\\[3pt]
  \Phi_k =0, &\text{on } s , \\[3pt]
  \frac{\partial \Phi_k}{\partial \nu}=0, &\text{on } \partial
  \R^2_+\setminus s,\\[3pt]
  \Phi_k-\psi_k\in \Di_{s}(\R^2_+),
\end{cases}
\]
where $\nu=(0,-1)$ is the outer normal unit vector on $\partial \R^2_+$. 
\end{Lemma}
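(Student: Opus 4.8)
The plan is to take $\Phi_k:=\psi_k+w_k$ in $\R^2_+$, where $\psi_k$ is the function in \eqref{eq:psi_k} (restricted to $\R^2_+$) and $w_k\in\Di_s(\R^2_+)$ is the minimizer of $J_k$ fixed in \eqref{eq:Ik}. Then the fourth required property, $\Phi_k-\psi_k=w_k\in\Di_s(\R^2_+)$, holds by construction, and it remains to establish the other three.

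First I would record the Euler--Lagrange equation for $w_k$: since $w_k$ minimizes $J_k$ (see \eqref{eq:Jk}) over the \emph{linear} space $\Di_s(\R^2_+)$, differentiating $\eps\mapsto J_k(w_k+\eps v)$ at $\eps=0$ gives
\begin{equation}\label{eq:ELplan}
\int_{\R^2_+}\nabla w_k\cdot\nabla v\,dx=\int_{\partial\R^2_+\setminus s}v(x_1,0)\,\frac{\partial\psi_k}{\partial x_2}(x_1,0)\,dx_1\qquad\text{for all }v\in\Di_s(\R^2_+).
\end{equation}
Choosing $v\in C^\infty_{\rm c}(\R^2_+)\subset\Di_s(\R^2_+)$ annihilates the right-hand side, so $-\Delta w_k=0$ in $\R^2_+$ in the distributional sense; since $\psi_k$ is harmonic on $\R^2\setminus s_0\supset\R^2_+$, this already yields $-\Delta\Phi_k=0$ in $\R^2_+$. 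For the membership $\Phi_k\in\bigcup_{R>0}H^1(D_R^+)$ I would observe that $|\nabla\psi_k|^2=\tfrac{k^2}{4}r^{k-2}$, so $\int_{D_R^+}|\nabla\psi_k|^2\,dx=\tfrac{\pi k}{4}R^k<\infty$ and $\psi_k\in H^1(D_R^+)$ for every $R>0$; moreover $w_k\in\Di_s(\R^2_+)$ has $\nabla w_k\in L^2(\R^2_+)$ and, $|x-\mathbf e|$ being bounded on $D_R^+$, also $w_k=|x-\mathbf e|\cdot\tfrac{w_k}{|x-\mathbf e|}\in L^2(D_R^+)$, hence $w_k\in H^1(D_R^+)$. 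Thus $\Phi_k\in H^1(D_R^+)$ for every $R>0$.

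Next I would handle the boundary conditions. The Dirichlet condition $\Phi_k=0$ on $s$ is immediate, since $s\subset s_0$ (so $\psi_k$ vanishes on $s$) while $w_k$ vanishes on $s$ by the definition of $\Di_s(\R^2_+)$. For the Neumann condition $\partial\Phi_k/\partial\nu=0$ on $\partial\R^2_+\setminus s$ (with $\nu=(0,-1)$) I would prove that
\begin{equation}\label{eq:weakPhiplan}
\int_{\R^2_+}\nabla\Phi_k\cdot\nabla v\,dx=0\qquad\text{for all }v\in\Di_s(\R^2_+),
\end{equation}
which, combined with $-\Delta\Phi_k=0$ in $\R^2_+$ and $\Phi_k=0$ on $s$, is exactly the weak formulation of the mixed problem in the statement. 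By density it suffices to check \eqref{eq:weakPhiplan} for $v\in C^\infty_{\rm c}(\overline{\R^2_+}\setminus s)$; for such $v$ one integrates by parts on $D_R^+\setminus D_\rho^+$ with $\mathrm{supp}\,v\subset D_R$, using that $\psi_k$ is harmonic in $\R^2_+$, that $v=0$ on $s$, the identity $\frac{\partial_+\psi_k}{\partial x_2}(x_1,0)=\tfrac k2 x_1^{k/2-1}$ for $x_1>0$, and the vanishing of $\frac{\partial\psi_k}{\partial x_2}$ on $\partial\R^2_+\setminus s_0$, and then lets $\rho\to0^+$; this gives $\int_{\R^2_+}\nabla\psi_k\cdot\nabla v\,dx=-\int_{\partial\R^2_+\setminus s}v(x_1,0)\,\frac{\partial\psi_k}{\partial x_2}(x_1,0)\,dx_1$, and adding \eqref{eq:ELplan} yields \eqref{eq:weakPhiplan}.

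I expect the only genuinely delicate point to be the justification of this last integration by parts near the two singular points of $\overline{\R^2_+}$: at the origin, where $\nabla\psi_k$ (and possibly $\nabla w_k$) blows up like $r^{k/2-1}$, one excises the half-disk $D_\rho^+$ and checks that the boundary term on $\partial D_\rho\cap\R^2_+$ is $O(\rho^{k/2})\to0$; near the crack tip $\mathbf e=(1,0)$ the integrability needed to pass to the limit is guaranteed by the Hardy inequality for $\Di_s(\R^2_+)$ recalled in the excerpt. The matching between the Neumann datum of $\Phi_k$ and the boundary term produced by the Euler--Lagrange equation for $w_k$ hinges precisely on the fact that, for $k$ odd, $\frac{\partial\psi_k}{\partial x_2}$ vanishes on the negative $x_1$-axis; everything else is bookkeeping with the explicit form of $\psi_k$.
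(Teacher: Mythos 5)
Your proposal is correct and follows exactly the same route as the paper's (very terse) proof: the paper simply records that the minimizer $w_k$ of $J_k$ weakly solves $-\Delta w_k=0$ in $\R^2_+$, $w_k=0$ on $s$, $\tfrac{\partial w_k}{\partial\nu}=-\tfrac{\partial\psi_k}{\partial\nu}$ on $\partial\R^2_+\setminus s$, and then sets $\Phi_k=\psi_k+w_k$. You supply the supporting computations the paper omits (Euler--Lagrange equation for $w_k$, local $H^1$ membership of $\psi_k$ and $w_k$, the integration-by-parts bookkeeping near $0$ and $\mathbf e$), and these are all accurate, including the observation that $\tfrac{\partial\psi_k}{\partial x_2}$ vanishes on the negative $x_1$-axis because $k$ is odd, which is precisely what makes the Neumann conditions for $\psi_k$ and $w_k$ cancel on all of $\partial\R^2_+\setminus s$ rather than just on $(0,1)\times\{0\}$.
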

\begin{proof}
The function $w_k\in \Di_{s}(\R^2_+)$ minimizing the functional $J_k$
defined in \eqref{eq:Jk}  weakly
solves 
\begin{equation}\label{eq:119}
 \begin{cases}
   -\Delta w_k=0, &\text{in }\R^2_+, \\
   w_k=0, &\text{on }s, \\
   \frac{\partial w_k}{\partial \nu}=-\frac{\partial \psi_k}{\partial
     \nu}, &\text{on }\partial \R^2_+\setminus s.
 \end{cases}
\end{equation}
 Taking 
 \begin{equation}\label{eq:44}
\Phi_k=\psi_k + w_k
\end{equation}
 we reach the conclusion.
\end{proof}

\noindent 
From now on, with a little abuse of notation, $\Phi_k$ will denote the
even extension of the function $\Phi_k$ in the previous Lemma
\ref{lemma_Phi} on the whole $\R^2$,
i.e. $\Phi_k(x_1,-x_2)=\Phi_k(x_1,x_2)$.

Let us now set 
\[
{\mathbf e}=(1,0)
\] 
and define, for every odd natural number $k$, 
\begin{equation}\label{eq:10}
\Psi_k=e^{i\frac{\theta_{\mathbf e}}{2}}\Phi_k,
\end{equation}
where $\theta_{\mathbf e}$ is as in \eqref{eq:theta_a} (with
$b={\mathbf e}$) and $\Phi_k$ is the extension (even in $x_2$) of the
function in Lemma \ref{lemma_Phi}.

We denote as $H^{1 ,{\mathbf e}}_{\rm loc}(\R^2,\C)$ the space of
functions belonging to $H^{1 ,{\mathbf e}}(D_r,\C)$ for all $r>0$, as
$\mathcal D^{1,2}_{s}(\R^2)$ the completion of $C^\infty_{\rm c}(\R^2
\setminus s)$ with respect to the norm $( \int_{\R^2} |\nabla u|^2\,dx
)^{1/2}$ and as $\mathcal D^{1,2}_{\mathbf e}(\R^2)$ the completion of
$C^\infty_{\rm c}(\R^2 \setminus \{{\mathbf e}\})$ with respect to the
norm $( \int_{\R^2} |(i\nabla +A_{\mathbf e}) u|^2\,dx )^{1/2}$.

\begin{Proposition}\label{prop_Psi}
The functions $\Psi_k$ defined in \eqref{eq:10} satisfies the following
properties:
\begin{align}
\label{eq:15}&\Psi_k\in H^{1 ,{\mathbf e}}_{\rm loc}(\R^2,\C);\\
\label{eq:16}&(i\nabla +  A_{\mathbf e})^2 \Psi_k=0\quad\text{in
  $\R^2$ in a weak $H^{1 ,{\mathbf e}}$-sense};\\
\label{eq:17}&\int_{\R^2} \big|(i\nabla + A_{\mathbf e})(\Psi_k -
e^{i\theta_{\mathbf e}/{2}}\psi_k )\big|^2\,dx < +\infty;\\
\label{eq:75}&e^{i\frac{\theta_{\mathbf
      e}(x)}{2}}w_k =\Psi_k(x)-e^{i\frac{\theta_{\mathbf
      e}(x)}{2}}\psi_k(x)=O(|x|^{-1/2}),\quad\text{as }|x|\to+\infty.
\end{align}
\end{Proposition}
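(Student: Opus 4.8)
The plan is to prove the four properties by unwinding the definition $\Psi_k=e^{i\theta_{\mathbf e}/2}\Phi_k$ and transferring facts about the harmonic function $\Phi_k$ (equivalently $w_k=\Phi_k-\psi_k$) through the gauge transformation. Recall that $\nabla(\theta_{\mathbf e}/2)=A_{\mathbf e}$ on $\R^2\setminus\{\mathbf e\}$, so that for any reasonable scalar function $\Phi$ one has the pointwise identity $(i\nabla+A_{\mathbf e})(e^{i\theta_{\mathbf e}/2}\Phi)=e^{i\theta_{\mathbf e}/2}(i\nabla\Phi)$ away from $\mathbf e$ and away from the half-line $s$ (where $\theta_{\mathbf e}$ jumps by $2\pi$, so $e^{i\theta_{\mathbf e}/2}$ jumps sign; this sign jump is exactly compensated by the sign jump of $\Phi_k$ across $s$, since $\Phi_k$ vanishes on $s$ and is the even-in-$x_2$ extension of a function defined on the upper half-plane, hence $\Phi_k$ is antisymmetric across the portion of the $x_1$-axis to the right of $\mathbf e$ — wait, more precisely $\Phi_k$ equals the even extension so it does \emph{not} jump; rather it is the combination $e^{i\theta_{\mathbf e}/2}\Phi_k$ that must be checked to be single-valued). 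The key point is that, as established in \cite{HHOO99, BNNNT, FFT}, multiplication by $e^{i\theta_{\mathbf e}/2}$ is precisely the change of gauge that turns real functions which are odd across the crack $s$ into single-valued $H^{1,{\mathbf e}}$ functions solving the magnetic equation; this is already implicitly used in Section \ref{sec:four-coeff-angul} and in the construction of $\Di_s(\R^2_+)$. So the first step is to record carefully that $\Psi_k$ is well-defined and single-valued on $\R^2\setminus\{\mathbf e\}$.

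For \eqref{eq:15}: on any disk $D_r$, $\Phi_k\in H^1(D_r)$ (it is the even extension of an $H^1$ function on half-disks, which is harmonic hence smooth in $\R^2_+$ and across $\partial\R^2_+\setminus s$ by the Neumann condition, and lies in $H^1$ near $s$ and near $\mathbf e$), and $|\Phi_k|/|x-\mathbf e|\in L^2(D_r)$ because near $\mathbf e$ the function $\Phi_k$ behaves like $\psi_k$ shifted, which vanishes to positive order; more robustly one applies the Hardy inequality \eqref{eq:hardy} after the gauge change. Since $|\Psi_k|=|\Phi_k|$ and $|(i\nabla+A_{\mathbf e})\Psi_k|=|\nabla\Phi_k|$ a.e., membership in $H^{1,{\mathbf e}}(D_r,\C)$ follows, giving \eqref{eq:15}. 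For \eqref{eq:16}: by the gauge identity, $(i\nabla+A_{\mathbf e})^2\Psi_k=e^{i\theta_{\mathbf e}/2}(-\Delta\Phi_k)$ in the sense of distributions on $\R^2\setminus\{\mathbf e\}$; now $-\Delta\Phi_k=0$ in $\R^2_+$, $\Phi_k$ has zero Neumann data on $\partial\R^2_+\setminus s$ by \eqref{eq:119}, and by the even-in-$x_2$ reflection these two facts combine to give $-\Delta\Phi_k=0$ distributionally across $\partial\R^2_+\setminus s$; the only possible obstruction is a distribution supported on $s$ or at $\mathbf e$. On $s$ the reflected function $\Phi_k$ vanishes and, crucially, the jump of $\partial_{x_2}\Phi_k$ across $s$ combines with the sign jump of $e^{i\theta_{\mathbf e}/2}$ to produce no distributional contribution — this is the standard computation showing that the "crack" disappears after the change of gauge. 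At the single point $\mathbf e$ there is no contribution since $\Phi_k$ is bounded there and capacity of a point is zero. Testing against $v\in C^\infty_{\rm c}(\R^2\setminus\{\mathbf e\},\C)$ and then using density gives \eqref{eq:16}.

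For \eqref{eq:17} and \eqref{eq:75}: by the gauge identity, $\Psi_k-e^{i\theta_{\mathbf e}/2}\psi_k=e^{i\theta_{\mathbf e}/2}(\Phi_k-\psi_k)=e^{i\theta_{\mathbf e}/2}w_k$ (using the extension of $w_k$ even in $x_2$, consistently with the extension of $\Phi_k$), and $(i\nabla+A_{\mathbf e})(e^{i\theta_{\mathbf e}/2}w_k)=e^{i\theta_{\mathbf e}/2}(i\nabla w_k)$, so $\int_{\R^2}|(i\nabla+A_{\mathbf e})(\Psi_k-e^{i\theta_{\mathbf e}/2}\psi_k)|^2\,dx=\int_{\R^2}|\nabla w_k|^2\,dx<+\infty$ since $w_k\in\Di_s(\R^2_+)$ (and its even extension has twice that finite Dirichlet energy). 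This is \eqref{eq:17}. For \eqref{eq:75}, the identity $\Psi_k-e^{i\theta_{\mathbf e}/2}\psi_k=e^{i\theta_{\mathbf e}/2}w_k$ reduces the claim to the pointwise decay $w_k(x)=O(|x|^{-1/2})$ as $|x|\to\infty$. This follows from a Kelvin-transform / conformal inversion argument: $w_k$ is harmonic in $\R^2_+$ with zero Neumann data on $\partial\R^2_+\setminus s$, zero Dirichlet data on $s$, finite Dirichlet energy, and the inhomogeneous data supported in the bounded set $\{x_2=0,\ 0\le x_1\le 1\}$; reflecting evenly across $\partial\R^2_+\setminus s$ makes $w_k$ harmonic outside a bounded set with finite energy, hence it is bounded and, by the finite-energy constraint in two dimensions, its behavior at infinity is governed by the first nontrivial spherical harmonic compatible with the Dirichlet condition on $s$, namely the one of homogeneity $-1/2$ (the decaying counterpart of $\psi_k$'s growth $r^{k/2}$ is governed by the same ODE \eqref{eq:fourier_centrata0} whose decaying solutions are $r^{-j/2}$); matching via the Kelvin transform $x\mapsto x/|x|^2$ and the explicit harmonic functions on the slit plane yields the $O(|x|^{-1/2})$ rate. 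The main obstacle here is making the decay estimate \eqref{eq:75} rigorous and sharp: one must carefully analyze the behavior at infinity of the finite-energy solution $w_k$ of the mixed Dirichlet/Neumann problem on the slit half-plane, which is best handled by an explicit conformal map straightening $\R^2_+\setminus s$ together with a Fourier/separation-of-variables decomposition as in \eqref{eq:fourier_centrata0}, discarding the non-decaying modes by the finite-energy condition. The other properties \eqref{eq:15}, \eqref{eq:16}, \eqref{eq:17} are essentially formal once the gauge identity and the reflection principle are set up cleanly.
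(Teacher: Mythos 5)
Your argument follows essentially the same route as the paper's proof: use the gauge identity $(i\nabla+A_{\mathbf e})(e^{i\theta_{\mathbf e}/2}\Phi)=e^{i\theta_{\mathbf e}/2}(i\nabla\Phi)$ to transfer properties of $\Phi_k$ to $\Psi_k$, observe that the sign jump of $e^{i\theta_{\mathbf e}/2}$ across $s$ kills the crack, deduce \eqref{eq:17} from $w_k\in\Di_s(\R^2_+)$, and obtain \eqref{eq:75} by a Kelvin transform reducing decay at infinity to the crack-tip asymptotics at $0$. The paper's only extra ingredient is the result of Costabel--Dauge--Duduchava, invoked both to get $\Phi_k(\mathbf e+r(\cos t,\sin t))=O(r^{1/2})$ near the crack tip (for \eqref{eq:15}--\eqref{eq:16}) and to get the $O(|x|^{1/2})$ rate for $\tilde w_k$ near $0$ (for \eqref{eq:75}); your Hardy-plus-capacity argument for the first and Fourier-mode argument for the second are legitimate substitutes. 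One small inaccuracy: near $\mathbf e$ the function $\Phi_k$ does \emph{not} behave like a translate of $\psi_k$ (which would give vanishing of order $k/2$); the correct rate at a generic crack tip is $O(r^{1/2})$, independent of $k$ — but since any positive-power vanishing suffices, and you hedged with the Hardy route anyway, this does not break the argument.
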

\begin{proof}
  Statements (\ref{eq:15}--\ref{eq:16}) follow by direct calculations
  together with the asymptotic expansion of solutions to elliptic
  problems with cracks which is proved in \cite{CDD} and which yields
  that $\Phi_k({\mathbf e}+r(\cos t,\sin t))=O(r^{1/2})$ as $r\to
  0^+$. \eqref{eq:17} follows from Lemma \ref{lemma_Phi} and direct
  calculations.

To prove~\eqref{eq:75}, we write 
\[
\Psi_k=e^{i\frac{\theta_{\mathbf
      e}}{2}}\psi_k+v
\]
 where
 $v=e^{i\frac{\theta_{\mathbf e}}{2}}(\Phi_k-\psi_k)$. We note that
 $w_k=\Phi_k-\psi_k\in \mathcal D^{1,2}_s(\R^2)$ and
 hence $v\in \mathcal D^{1,2}_{\mathbf e}(\R^2)$. Since $w_k$ weakly solves $-\Delta w_k=0$ in
 $\R^2\setminus s_0$, 
 its Kelvin transform $\tilde w_k(x)=w_k(\frac{x}{|x|^2})$ weakly solves
 $-\Delta  \tilde w_k=0$ in $D_1\setminus\{(x_1,0):0\leq x_1<1\}$ and
 vanishes on $\{(x_1,0):0\leq x_1<1\}$. From the asymptotics of
  solutions to elliptic problems with cracks proved in \cite{CDD} it follows
  that $|\tilde w_k(x)|=O(|x|^{1/2})$ as $|x|\to 0^+$, which yields
$|w_k(x)|=O(|x|^{-1/2})$ as $|x|\to +\infty$. Therefore we have that 
\begin{equation}\label{eq:11}
|v(x)|=O(|x|^{-1/2})\quad\text{ as }|x|\to +\infty,
\end{equation}
thus proving \eqref{eq:75}. 
\end{proof}

The following result establishes that $\Psi_k$ is the unique function 
satisfying \eqref{eq:15}, \eqref{eq:16}, and~\eqref{eq:17}.

\begin{Proposition}\label{p:uniqueness}
  If $\Phi\in H^{1,{\mathbf e}}_{\rm loc}(\R^2)$ weakly satisfies
\begin{equation}\label{equation_Psi}
  (i\nabla +  A_{\mathbf e})^2 \Phi=0, \quad \text{in $\R^2 $},
\end{equation}
and 
\begin{equation}\label{eq:14}
  \int_{\R^2} |(i\nabla + A_{\mathbf e})(\Phi - e^{\frac
    i2\theta_{\mathbf e}}\psi_k )|^2 < +\infty,
\end{equation}
then $\Phi=\Psi_k$, with  $\Psi_k$ being the function defined in \eqref{eq:10}. 
\end{Proposition}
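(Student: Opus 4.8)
The plan is to prove uniqueness by a standard energy/difference argument.  Set $V=\Phi-\Psi_k$.  Both $\Phi$ and $\Psi_k$ belong to $H^{1,\mathbf e}_{\rm loc}(\R^2)$ and both weakly solve $(i\nabla+A_{\mathbf e})^2 u=0$, so $V\in H^{1,\mathbf e}_{\rm loc}(\R^2)$ weakly solves $(i\nabla+A_{\mathbf e})^2 V=0$ in $\R^2$.  Moreover, subtracting the two finiteness conditions \eqref{eq:14} and \eqref{eq:17} and using the triangle inequality in $L^2(\R^2,\C^2)$, we get that $(i\nabla+A_{\mathbf e})V\in L^2(\R^2,\C^2)$, i.e.
\[
\int_{\R^2}|(i\nabla+A_{\mathbf e})V|^2\,dx<+\infty .
\]
Thus $V\in\mathcal D^{1,2}_{\mathbf e}(\R^2)$ (once one checks $V$ can be approximated by $C^\infty_{\rm c}(\R^2\setminus\{\mathbf e\})$ functions, which follows from the Hardy inequality \eqref{eq:hardy} controlling $V/|x-\mathbf e|$ in $L^2$ and a cutoff/density argument).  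The goal is then to show $V\equiv 0$.

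The key step is a Caccioppoli/Pohozaev-type argument combined with the decay of the gradient to justify testing the equation against $V$ itself.  Concretely, I would test the weak formulation of $(i\nabla+A_{\mathbf e})^2 V=0$ with $\chi_R^2 V$, where $\chi_R$ is a smooth cutoff equal to $1$ on $D_R$, supported in $D_{2R}$, with $|\nabla\chi_R|\le C/R$.  Expanding,
\[
\int_{\R^2}\chi_R^2|(i\nabla+A_{\mathbf e})V|^2\,dx
=-2\,\Re\int_{\R^2}\chi_R\,\overline{V}\,\nabla\chi_R\cdot(i\nabla+A_{\mathbf e})V\,dx,
\]
and by Cauchy–Schwarz the right-hand side is bounded by
\[
2\Big(\int_{D_{2R}\setminus D_R}|\nabla\chi_R|^2|V|^2\Big)^{1/2}
\Big(\int_{\R^2}\chi_R^2|(i\nabla+A_{\mathbf e})V|^2\Big)^{1/2}.
\]
Hence $\int_{D_R}|(i\nabla+A_{\mathbf e})V|^2\le \frac{4}{R^2}\int_{D_{2R}\setminus D_R}|V|^2$.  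Now one needs a decay estimate on $V$: since $V\in\mathcal D^{1,2}_{\mathbf e}(\R^2)$ the Hardy inequality gives $\int_{\R^2}|V|^2/|x-\mathbf e|^2<\infty$, so $\frac{1}{R^2}\int_{D_{2R}\setminus D_R}|V|^2\to 0$ as $R\to\infty$ (the annular integrals of an $L^2(|x-\mathbf e|^{-2})$ function vanish).  Letting $R\to\infty$ yields $\int_{\R^2}|(i\nabla+A_{\mathbf e})V|^2=0$, hence $(i\nabla+A_{\mathbf e})V=0$ a.e.  By the diamagnetic inequality \eqref{eq:diamagnetic}, $\nabla|V|=0$, so $|V|$ is constant; since $V\in\mathcal D^{1,2}_{\mathbf e}(\R^2)$ has $V/|x-\mathbf e|\in L^2(\R^2)$, that constant must be $0$, and therefore $V\equiv 0$, i.e. $\Phi=\Psi_k$.

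The main obstacle I anticipate is the justification that $V$ genuinely lies in $\mathcal D^{1,2}_{\mathbf e}(\R^2)$ — i.e. that it is the limit of $C^\infty_{\rm c}(\R^2\setminus\{\mathbf e\})$ functions in the magnetic gradient norm — and the rigorous handling of the cutoff/density step near the pole $\mathbf e$, where both $A_{\mathbf e}$ and the finite-energy condition interact with the singularity; the Hardy inequality \eqref{eq:hardy} is exactly what controls the dangerous $|V|^2/|x-\mathbf e|^2$ term and makes both the density argument and the integration by parts legitimate.  An equivalent and perhaps cleaner route, avoiding an explicit cutoff computation, is to observe that $\mathcal D^{1,2}_{\mathbf e}(\R^2)$ is a Hilbert space with inner product $\int_{\R^2}(i\nabla+A_{\mathbf e})u\cdot\overline{(i\nabla+A_{\mathbf e})v}\,dx$ (nondegeneracy from Hardy), that $\Psi_k$ is characterized as the unique element of $e^{i\theta_{\mathbf e}/2}\psi_k+\mathcal D^{1,2}_{\mathbf e}(\R^2)$ solving the equation weakly, and that $\Phi$ by \eqref{eq:14} also lies in that affine space and solves the same equation; the difference is then orthogonal to all test functions, hence has zero norm, giving $\Phi=\Psi_k$.
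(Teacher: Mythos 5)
Your proof is correct and follows essentially the same route as the paper's: form the difference $V=\Phi-\Psi_k$, deduce finite global magnetic energy from \eqref{eq:14} and \eqref{eq:17}, invoke the Hardy inequality \eqref{eq:hardy} to control $\int |V|^2/|x-\mathbf e|^2$, and use a Caccioppoli cutoff to drive the magnetic Dirichlet energy to zero. Two minor remarks: (1) the worry about whether $V$ lies in $\mathcal D^{1,2}_{\mathbf e}(\R^2)$ is unnecessary here, since the cutoff test and the passage to the limit only use $V\in H^{1,\mathbf e}_{\rm loc}(\R^2)$ together with the finite-energy and Hardy estimates (and indeed the ``cleaner alternative'' you sketch quietly relies on that membership, which is itself the nontrivial point); (2) your final step via the diamagnetic inequality can be shortened, as the paper does, by noting that $\int_{\R^2}|(i\nabla+A_{\mathbf e})V|^2=0$ forces $\int_{\R^2}|V|^2/|x-\mathbf e|^2=0$ directly by Hardy, hence $V\equiv 0$.
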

\begin{proof}
  Suppose that $\Phi\in H^{1,{\mathbf e}}_{\rm loc}(\R^2)$ satisfies
  \eqref{equation_Psi} and \eqref{eq:14}.  Then, in view of
  \eqref{eq:16}, the difference $\Psi=\Phi-\Psi_k$ weakly solves
  $(i\nabla + A_{\mathbf e})^2 \Psi=0$ in $\R^2$. Moreover from
  \eqref{eq:17} and \eqref{eq:14} it follows that
\begin{equation}
  \label{eq:53}
    \int_{\R^2} |(i\nabla + A_{\mathbf e})\Psi(x)|^2dx < +\infty,
\end{equation}
 which, in view of \eqref{eq:hardy}, implies that 
\begin{equation}\label{eq:per_unica_Psi}
\int_{\R^2} \frac{|\Psi(x)|^2}{|x-\mathbf e|^2}\,dx <+\infty.
\end{equation}
For $R>1$, let $\eta_R:\R^2\to\R$ be a smooth cut-off function such
that
\begin{equation}\label{eq:88}
  \eta_R\equiv 0\text{ in }D_{R/2},\quad 
  \eta_R\equiv 1 \text{ on }\R^2\setminus D_{R},\quad
  |\nabla\eta_R|\leq4/R\text{ in }\R^2.
\end{equation}
Testing the equation for $\Psi$ by $(1-\eta_R)^2\Psi$ we obtain that 
\begin{align*}
  \int_{\R^2} (1-\eta_R)^2|(i\nabla+A_{\mathbf e}) \Psi|^2\,dx &= -2i
  \int_{\R^2} (1-\eta_R) \overline{\Psi} (i\nabla+A_{\mathbf e})
  \Psi\cdot \nabla\eta_R \,dx \\
  & \leq \frac12 \int_{\R^2} (1-\eta_R)^2|(i\nabla+A_{\mathbf e})
  \Psi|^2\,dx+ 2 \int_{\R^2} |\Psi|^2 |\nabla \eta_R|^2\,dx
\end{align*}
which implies that 
\begin{align*}
  \int_{D_{R/2}}|(i\nabla+A_{\mathbf e}) \Psi|^2\,dx&\leq \int_{\R^2}
  (1-\eta_R)^2|(i\nabla+A_{\mathbf e})
  \Psi|^2\,dx  \leq 4 \int_{\R^2} |\Psi|^2 |\nabla \eta_R|^2\,dx\\
  &\leq \dfrac{64}{R^2} \int_{D_{R} \setminus D_{R/2}} |\Psi|^2dx \leq
  64\dfrac{(R+1)^2}{R^2}\int_{D_{R} \setminus D_{R/2}}
  \frac{|\Psi|^2}{|x-\mathbf e|^2}\,dx \to 0
\end{align*}
as $R\to +\infty$ thanks to \eqref{eq:per_unica_Psi}. It follows that
$\int_{\R^2}|(i\nabla+A_{\mathbf e}) \Psi|^2\,dx=0$, which implies
that $\int_{\R^2} |x-\mathbf e|^{-2}|\Psi(x)|^2\,dx=0$ in view of
\eqref{eq:hardy}. Hence $\Psi\equiv 0$ in $\R^2$ and $\Phi=\Psi_k$.
\end{proof}

The following lemma establishes a deep relation between the function
$\Psi_k$ and the constant ${\mathfrak m}_k$ introduced in \eqref{eq:Ik}.
\begin{Lemma}\label{l:legame_compl}
Let  $\Psi_k$ be the function defined in \eqref{eq:10}. Then
\begin{equation}\label{eq:step3}
\pi-
\int_0^{2\pi} \Psi_k(\cos t,\sin
  t)e^{-\frac i2 \theta_{\mathbf e}(\cos t,\sin t)}\sin\big(\tfrac
  k2t\big)\,dt=\frac{4}{k} {\mathfrak m}_k
\end{equation}
with ${\mathfrak m}_k$ as in \eqref{eq:Ik}. 
\end{Lemma}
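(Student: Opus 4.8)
The plan is to reduce \eqref{eq:step3} to an integral identity for the minimizer $w_k$ of \eqref{eq:Ik}, and then to prove that identity by combining a separation-of-variables expansion of $w_k$ at infinity with Green's formula on a half-disk.

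\emph{Reduction.} Since $\Psi_k=e^{\frac i2\theta_{\mathbf e}}\Phi_k$ by \eqref{eq:10}, $\Phi_k=\psi_k+w_k$ by \eqref{eq:44}, and $\psi_k(\cos t,\sin t)=\sin(\frac k2 t)$ by \eqref{eq:psi_k}, one has
\[
\Psi_k(\cos t,\sin t)\,e^{-\frac i2\theta_{\mathbf e}(\cos t,\sin t)}=\Phi_k(\cos t,\sin t)=\sin\big(\tfrac k2 t\big)+w_k(\cos t,\sin t).
\]
As $\int_0^{2\pi}\sin^2(\frac k2 t)\,dt=\pi$, the left-hand side of \eqref{eq:step3} equals $-\int_0^{2\pi}w_k(\cos t,\sin t)\sin(\frac k2 t)\,dt$, while by \eqref{eq:segno_mk} (recall $\frac{\partial_+\psi_k}{\partial x_2}(x_1,0)=\frac k2 x_1^{k/2-1}$) its right-hand side equals $-\int_0^1 x_1^{k/2-1}w_k(x_1,0)\,dx_1$. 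Hence it suffices to prove
\[
\int_0^{2\pi}w_k(\cos t,\sin t)\sin\big(\tfrac k2 t\big)\,dt=\int_0^1 x_1^{k/2-1}w_k(x_1,0)\,dx_1.
\]

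\emph{Expansion at infinity.} I would first note that $\psi_k$ is even in $x_2$ (as $k$ is odd), hence so is $w_k=\Phi_k-\psi_k$; moreover $w_k$ is harmonic in the half-plane $\{x_2>0\}$, vanishes on $\{x_2=0,\,x_1\ge 1\}$, has vanishing normal derivative on $\{x_2=0,\,x_1\le 0\}$ (there $\psi_k$ is smooth and even, so $\partial_\nu w_k=-\partial_\nu\psi_k=0$ by \eqref{eq:119}), and $w_k(x)=O(|x|^{-1/2})$ as $|x|\to\infty$ by \eqref{eq:75}. Writing $x=(r\cos t,r\sin t)$ and using that $\{\sin(\frac j2 t):j\ \text{odd}\}$ is an orthogonal basis of $L^2(0,\pi)$ (the eigenfunctions of $-f''=\frac{j^2}{4}f$ with $f(0)=0=f'(\pi)$), separation of variables in the sector $\{r\ge 1,\,0<t<\pi\}$ gives
\[
w_k(r\cos t,r\sin t)=\sum_{j\ \mathrm{odd}}c_j\,r^{-j/2}\sin\big(\tfrac j2 t\big),\qquad c_j=\frac2\pi\int_0^\pi w_k(\cos t,\sin t)\sin\big(\tfrac j2 t\big)\,dt .
\]
By evenness of $w_k$ together with $\sin(\frac k2(2\pi-t))=\sin(\frac k2 t)$, this yields $\int_0^{2\pi}w_k(\cos t,\sin t)\sin(\frac k2 t)\,dt=2\int_0^\pi w_k(\cos t,\sin t)\sin(\frac k2 t)\,dt=\pi c_k$.

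\emph{Green's formula.} Next I would apply the second Green identity to the pair $(w_k,\psi_k)$, both harmonic in $\{x_2>0\}$, on the half-disk $D_\rho^+$ for some $\rho\ge 1$, after excising small half-balls centred at $0$ and at ${\mathbf e}=(1,0)$ — where, by \eqref{eq:44}, Lemma \ref{lemma_Phi} and the crack asymptotics of \cite{CDD}, $w_k$ and $\psi_k$ are either smooth or of order $|\cdot|^{1/2}$, so the corresponding boundary terms vanish as the radii go to $0$. On the diameter the boundary conditions collapse the integrand: $\psi_k=0$ on $\{x_1\ge 0\}$ and $\partial_\nu w_k=0$ on $\{x_1\le 0\}$ kill the term $\psi_k\,\partial_\nu w_k$, while $w_k=0$ on $\{x_1\ge 1\}$ and $\partial_{x_2}\psi_k=0$ on $\{x_1<0\}$, together with $\partial_{x_2}\psi_k(x_1,0^+)=\frac k2 x_1^{k/2-1}$ on $\{0<x_1<1\}$, leave $-\frac k2\int_0^1 x_1^{k/2-1}w_k(x_1,0)\,dx_1$. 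Hence
\[
\int_{\{|x|=\rho,\,x_2>0\}}\Big(w_k\,\frac{\partial\psi_k}{\partial r}-\psi_k\,\frac{\partial w_k}{\partial r}\Big)\,ds=\frac k2\int_0^1 x_1^{k/2-1}w_k(x_1,0)\,dx_1 .
\]
Substituting $\psi_k=r^{k/2}\sin(\frac k2 t)$, $\partial_r\psi_k=\frac k2 r^{k/2-1}\sin(\frac k2 t)$ and the series above into the left-hand side, all terms with $j\ne k$ vanish by orthogonality and the powers of $\rho$ cancel, so that integral equals $\frac{k\pi}{2}c_k$. Therefore $\pi c_k=\int_0^1 x_1^{k/2-1}w_k(x_1,0)\,dx_1$, which combined with the identity $\int_0^{2\pi}w_k(\cos t,\sin t)\sin(\frac k2 t)\,dt=\pi c_k$ gives the required identity, hence \eqref{eq:step3}.

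\emph{Main obstacle.} The delicate point is this last step: $w_k$ has only finite energy, so it is not smooth up to the boundary (in particular near the crack tip ${\mathbf e}$ and near the origin), whereas $\psi_k$ grows at infinity; the integration by parts must be justified through the excision-and-regularity argument, and one must verify that the circular integral is genuinely $\rho$-independent — which it is, precisely because of the orthogonality of the functions $\sin(\frac j2 t)$. The bookkeeping on the diameter — distinguishing the Dirichlet segment $\{x_1\ge 1\}$, the Neumann segments, and the jump of $\partial_{x_2}\psi_k$ across the crack $\{x_2=0,\,x_1\ge 0\}$ — is the other place requiring attention.
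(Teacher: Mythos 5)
Your proof is correct and follows essentially the same strategy as the paper: expand $w_k$ in the angular Fourier basis (via the ODE for the Fourier coefficient in the paper, via explicit separation of variables in your version), then apply Green's identity between $w_k$ and $\psi_k$ to link the $k$-th coefficient with the boundary integral defining $\mathfrak{m}_k$. The only cosmetic difference is that the paper works on the slit disk $D_1\setminus s_0$ with the complex basis $\psi_\ell^j$ and combines two Green's identities, whereas you exploit the evenness of $w_k$ in $x_2$ to work on the half-disk $D_\rho^+$ with real trigonometric functions and a single Green's identity.
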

\begin{proof}
Let $w_k$ be the function introduced in  \eqref{eq:Ik} and
\eqref{eq:119},
extended evenly in $x_2$ to the whole $\R^2$
(i.e. $w_k(x_1,-x_2)=w_k(x_1,x_2)$); from \eqref{eq:119} we have that
$w_k$ is harmonic on $\R^2\setminus s_0$. 
Taking into account  \eqref{eq:9},  \eqref{eq:44}, and
\eqref{eq:psi_k}, we have that 
\begin{multline*}
  -\frac1{\sqrt{\pi}}\bigg( \pi- \int_0^{2\pi} \Psi_k(\cos t,\sin
  t)e^{-\frac i2 \theta_{\mathbf e}(\cos t,\sin t)}\sin\big(\tfrac
  k2t\big)\,dt\bigg)\\
  = \int_0^{2\pi}w_k(\cos t,\sin t)e^{i\frac
    t2}\overline{\psi_2^k(t)}\,dt=\omega(1)
\end{multline*}
where
\begin{equation*}
  \omega(r):= \int_{0}^{2\pi} w_k(r\cos t,r\sin t)e^{i\frac t2}\overline{\psi_2^k(t)}\,dt.
\end{equation*}
As observed in \S \ref{sec:four-coeff-angul}, $\omega(r)$ satisfies,
for some $C_\omega\in\C$,
\[ 
\big( r^{-k/2}\omega(r) \big)'= \dfrac{C_\omega}{r^{1+k}}, \quad
\text{for }r>1.
\]
Integrating the previous equation over $(1,r)$ we obtain that
\[
\dfrac{\omega(r)}{r^{k/2}} - \omega(1) = \frac{C_\omega}{k}\left(
  1-\frac1{r^k} \right),\quad\text{for all }r\geq1.
 \]
 From \eqref{eq:75} it follows that $\omega(r)=O(r^{-1/2})$ as
 $r\to+\infty$, hence, letting $r\to+\infty$ in the previous identity,
 we obtain that necessarily $C_\omega=-k\omega(1)$ and then
\begin{equation}\label{eq:120}
 \omega(r)= \omega(1) r^{-k/2},\quad 
 \omega'(r)= -\frac k2\omega(1) r^{-\frac k2 -1},\quad\text{for all }r\geq1.
\end{equation}
On the other hand, 
\begin{equation}\label{eq:121}
  \omega'(r)= \frac{r^{-\frac k2-1}}{\sqrt\pi} \int_{\partial D_r}
  \frac{\partial w_k}{\partial \nu}\,\psi_k\,ds.
\end{equation} 
Combining \eqref{eq:120} and \eqref{eq:121} we obtain that 
\begin{equation}\label{eq:omega1}
  \omega(1)=-\dfrac{2}{k\sqrt\pi} \int_{\partial D_1} \dfrac{\partial w_k}{\partial \nu}\,\psi_k\,ds.
\end{equation}
Multiplying the equation $-\Delta w_k=0$ (which is weakly satisfied in
$\R^2\setminus s_0$) by $\psi_k$ and integrating by parts on
$D_1\setminus s_0$, we obtain that
\begin{equation}\label{eq:primopezzostep3}
 \int_{\partial D_1} \dfrac{\partial w_k}{\partial \nu}\,\psi_k\,ds = \int_{D_1} \nabla w_k \cdot \nabla \psi_k\,dx,
\end{equation}
whereas multiplying $-\Delta\psi_k=0$ (which is weakly satisfied in
$\R^2\setminus s_0$) by $w_k$ and integrating by parts on
$D_1\setminus s_0$ we obtain that
\begin{equation}\label{eq:secondopezzostep3}
  \int_{\partial D_1} \dfrac{\partial \psi_k}{\partial \nu}\,w_k\,ds - 
  2\int_0^1 \dfrac{\partial_+ \psi_k}{\partial x_2}(x_1,0)\,w_k (x_1,0)\,dx_1  
  = \int_{D_1} \nabla w_k \cdot \nabla \psi_k\,dx.
\end{equation}
Collecting \eqref{eq:primopezzostep3} and \eqref{eq:secondopezzostep3}
we have that
\[ 
 \int_{\partial D_1} \dfrac{\partial w_k}{\partial \nu}\,\psi_k\,ds
= \int_{\partial D_1} \dfrac{\partial \psi_k}{\partial \nu}\,w_k\,ds - 
2\int_0^1 \dfrac{\partial_+ \psi_k}{\partial x_2}(x_1,0)\,w_k (x_1,0)\,dx_1  
.
  \]
Since 
\[
\int_{\partial D_1} \dfrac{\partial \psi_k}{\partial
  \nu}\,w_k\,ds=\frac{k\sqrt\pi}{2}\omega(1),
\]
\eqref{eq:omega1} now reads
\[
\omega(1)=-\omega(1) + \frac{4}{k\sqrt\pi} \int_0^1 \dfrac{\partial_+
  \psi_k}{\partial x_2}(x_1,0)\,w_k (x_1,0)\,dx_1
\] and thus
\begin{equation*}
  \omega(1)= \frac{2}{k\sqrt\pi} \int_0^1 \dfrac{\partial_+ \psi_k}{\partial x_2}(x_1,0)\,w_k (x_1,0)\,dx_1  .
\end{equation*}
Letting ${\mathfrak m}_k$ as in \eqref{eq:Ik}, in view of
\eqref{eq:segno_mk} we conclude that 
\begin{equation*}
  \omega(1)= -\frac{4}{k\sqrt\pi} {\mathfrak m}_k,
\end{equation*}
thus concluding the proof of \eqref{eq:step3}.
  
\end{proof}

\section{Monotonicity formula and energy estimates for blow-up sequences}\label{sec:5}

In this section we prove some 
energy estimates for eigenfunctions using an adaption of the Almgren
monotonicity argument inspired by \cite[Section 5]{NNT} and \cite{FFT}.

\begin{Definition}
  Let $\lambda \in \R$, $b \in \R^2$, and $u \in
  H^{1,b}(D_r,\C)$. For any  $r>|b|$, we define the Almgren-type frequency function as
\[
 N(u,r,\lambda,A_b) = \dfrac{E(u,r,\lambda,A_b)}{H(u,r)},
 \]
where
\begin{align}
  \label{eq:39}& E(u,r,\lambda,A_b) = \int_{D_r} \abs{(i\nabla +
    A_b)u}^2\,dx -
  \lambda \int_{D_r} \abs{u}^2\,dx, \\
\label{eq:40}& H(u,r) = \dfrac1r \int_{\partial D_r} \abs{u}^2\,ds .
\end{align}
\end{Definition}
When we study the quotient $N=E/H$ for any magnetic
eigenfunction, we find several specific relations to hold true.  We
are interested in the derivative of such a
quotient, since it provides some information about the
possible vanishing behavior of eigenfunctions near the pole of the
magnetic potential.

For all $1\leq j\leq n_0$ and $a\in\Omega$, let $\varphi_j^a\in
H^{1,a}_{0}(\Omega,\C)\setminus\{0\}$ be an eigenfunction of problem
\eqref{eq:eige_equation_a} associated to the eigenvalue $\lambda_j^a$,
i.e. solving
\begin{equation}\label{eq_eigenfunction}
 \begin{cases}
   (i\nabla + A_a)^2 \varphi_j^a = \lambda_j^a \varphi_j^a,  &\text{in }\Omega,\\
   \varphi_j^a = 0, &\text{on }\partial \Omega,
 \end{cases}
\end{equation}
such that 
\begin{equation}\label{eq:23}
  \int_\Omega |\varphi_j^a(x)|^2\,dx=1\quad\text{and}\quad 
  \int_\Omega \varphi_j^a(x)\overline{\varphi_\ell^a(x)}\,dx=0\text{ if }j\neq\ell.
\end{equation}
For $j=n_0$, we choose 
\begin{equation}\label{eq:89}
  \varphi_{n_0}^a=\varphi_a,
\end{equation}
with $\varphi_a$ as in \eqref{eq:equation_a}--\eqref{eq:6}.  We
observe that, since $a\in\Omega\mapsto \lambda_j^a$ admits a
continuous extension on $\overline{\Omega}$ as proved in \cite[Theorem
1.1]{BNNNT}, we have that
\begin{equation}
  \Lambda:=\sup_{\substack{
      a\in\Omega\\1\leq j\leq n_0}}\lambda_j^a\in(0,+\infty).\label{eq:19}
\end{equation}

\begin{Lemma}\label{l:Hpos}\quad

\begin{enumerate}[\rm (i)]
\item There exists $R_0\in(0,(5\Lambda)^{-1/2})$ such that
  $D_{R_0}\subset\Omega$ and, if $|a|<R_0$,
\begin{equation*}
  H(\varphi_j^a,r)>0\quad\text{for all }r\in(|a|,R_0) \text{ and }1\leq
  j\leq n_0.
\end{equation*}
\item There exist $C_0>0$ and $\alpha_0\in(0,R_0)$ such that 
\begin{equation*}
H(\varphi_j^a,R_0)\geq C_0\quad\text{for all $a$ with }|a|<\alpha_0 \text{ and }1\leq
j\leq n_0.
\end{equation*}
  \end{enumerate}
\end{Lemma}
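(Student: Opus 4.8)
The two assertions are statements of strict positivity (and uniform lower bound) for the mass function $H(\varphi_j^a,r)$ along circles near the origin, and both follow from a unique-continuation-type argument combined with the strong convergence $\varphi_a\to\varphi_0$ (and, for the general index $j\le n_0$, $\varphi_j^a\to\varphi_j^0$) established in \eqref{eq:21}. The plan is to argue by contradiction in each part, exploiting the Almgren frequency machinery of \cite{FFT} together with the local asymptotics in Proposition \ref{prop:fft}.

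\emph{Part (i).} Fix a radius $R_0\in(0,(5\Lambda)^{-1/2})$ small enough that $D_{R_0}\subset\Omega$; the bound $R_0<(5\Lambda)^{-1/2}$ is chosen so that on $D_{R_0}$ the zero-order term $\lambda_j^a$ is dominated by the Hardy term (via \eqref{eq:hardy}), making $E(\varphi_j^a,r,\lambda_j^a,A_a)$ strictly positive on $(|a|,R_0]$; this is exactly the standard setup guaranteeing that the frequency function $N(\varphi_j^a,\cdot,\lambda_j^a,A_a)$ is well defined and nonnegative wherever $H>0$. Suppose $H(\varphi_j^a,\bar r)=0$ for some $\bar r\in(|a|,R_0)$ and some $j$; this means $\varphi_j^a$ vanishes on the whole circle $\partial D_{\bar r}$. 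Testing the equation \eqref{eq_eigenfunction} on $D_{\bar r}$ against $\varphi_j^a$ itself and using that the boundary term drops out, one gets $E(\varphi_j^a,\bar r,\lambda_j^a,A_a)=0$, hence (by the Hardy-type coercivity on $D_{R_0}$) $\varphi_j^a\equiv0$ on $D_{\bar r}$. Then $\varphi_j^a$ solves $(i\nabla+A_a)^2\varphi_j^a=\lambda_j^a\varphi_j^a$ in $\Omega\setminus\{a\}$ and vanishes on the open set $D_{\bar r}\setminus\{a\}$; by the strong unique continuation property for magnetic Schr\"odinger operators with Aharonov–Bohm potentials (as in \cite{FFT}, or directly from the fact that after the gauge change $e^{-i\theta_a/2}$ it becomes an ordinary Schr\"odinger equation with a real potential, to which Aronszajn's theorem applies away from $a$, plus the removability of the point $a$ granted by $\varphi_j^a\in H^{1,a}$) we conclude $\varphi_j^a\equiv0$ in $\Omega$, contradicting the normalization $\|\varphi_j^a\|_{L^2(\Omega)}=1$. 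This proves (i).

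\emph{Part (ii).} Here I would again argue by contradiction: if the conclusion fails, there are sequences $a_n\to0$ and indices $j_n\in\{1,\dots,n_0\}$ (which, passing to a subsequence, we may assume constant, say $j_n\equiv j$) with $H(\varphi_j^{a_n},R_0)\to0$, i.e. $\int_{\partial D_{R_0}}|\varphi_j^{a_n}|^2\,ds\to0$. By \eqref{eq:21} (together with the analogous convergence $\varphi_j^{a_n}\to\varphi_j^0$ in $H^1(\Omega)$ and in $C^2_{\rm loc}(\Omega\setminus\{0\})$ for every $j\le n_0$, proved exactly as \eqref{eq:21}) and the continuity of the trace operator on $\partial D_{R_0}$ (which stays away from $0$), we get $\int_{\partial D_{R_0}}|\varphi_j^0|^2\,ds=0$, so $\varphi_j^0$ vanishes on $\partial D_{R_0}$. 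Since $\varphi_j^0$ solves $(i\nabla+A_0)^2\varphi_j^0=\lambda_j^0\varphi_j^0$ in $\Omega$, the same unique continuation argument as in (i) forces $\varphi_j^0\equiv0$, contradicting $\|\varphi_j^0\|_{L^2(\Omega)}=1$. Hence such sequences cannot exist, which gives the existence of $C_0>0$ and $\alpha_0\in(0,R_0)$ as claimed.

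\emph{Main obstacle.} The only genuine point requiring care is the strong unique continuation step: one must justify that a nontrivial solution of the Aharonov–Bohm eigenvalue equation cannot vanish on a disk. The clean way is to invoke the vanishing-order results of Proposition \ref{prop:fft} — more precisely the monotonicity formula of \cite[Lemma 5.4]{FFT} — which show the frequency $N(\varphi_j^a,r,\lambda_j^a,A_a)$ has a finite nonnegative limit as $r\to|a|^+$ (or, after centering, that the vanishing order of $\varphi_j^a$ at any interior point is finite), so that vanishing on an open set is impossible unless the function is identically zero; alternatively one performs the change of gauge to reduce to a real elliptic equation and cites Aronszajn. Either route is short once the Hardy coercivity (ensuring $E\ge0$ on $D_{R_0}$) is in place, so I do not expect it to be a serious difficulty — it is mostly a matter of pointing to the right cited result.
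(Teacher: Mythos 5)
Your argument follows essentially the same route as the paper (coercivity on small disks from the choice of $R_0$, testing to show the eigenfunction vanishes on a disk, unique continuation, and, for part (ii), trace compactness). Two points deserve comment.

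First, a minor one: in part (i) you attribute the coercivity to the Hardy inequality \eqref{eq:hardy} and assert that $E(\varphi_j^a,\cdot,\lambda_j^a,A_a)$ is \emph{strictly positive} on $(|a|,R_0]$. Neither is quite right. The estimate actually used is the Poincar\'e inequality of Lemma \ref{poincare_inequality} (built from the divergence theorem and the diamagnetic inequality), which gives $E\geq(1-\Lambda r^2)\int_{D_r}|(i\nabla+A_a)\varphi_j^a|^2\,dx-\Lambda r^2 H(\varphi_j^a,r)$; in particular $E$ can be negative when $H>0$, and the frequency quotient is not a priori nonnegative. The inference you actually run is, however, the correct one: if $H(\varphi_j^a,\bar r)=0$ then the boundary term vanishes, so $E=0$ and Poincar\'e forces $\varphi_j^a\equiv0$ on $D_{\bar r}$, whence unique continuation applies. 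So the side-remark is wrong but the proof is sound.

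Second, and more substantively, in part (ii) you claim $\varphi_j^{a_n}\to\varphi_j^0$ in $H^1$ for every $j\le n_0$, ``proved exactly as \eqref{eq:21}''. This does not follow: \eqref{eq:21} uses the simplicity hypothesis \eqref{eq:1} on $\lambda_{n_0}^0$ together with the phase normalization \eqref{eq:6}, and nothing of the sort is assumed for the lower eigenvalues $\lambda_j^0$, which may be multiple; the family $\varphi_j^{a_n}$ then need not converge to any fixed eigenfunction. The paper avoids this entirely: after extracting a weak $H^1$-subsequential limit $\varphi$ (boundedness follows from $\|\varphi_j^{a_n}\|_{L^2}=1$, $\lambda_j^{a_n}\le\Lambda$, Poincar\'e and Hardy), one shows $\|\varphi\|_{L^2(\Omega)}=1$ by Rellich--Kondrakov, that $\varphi$ solves the limiting eigenvalue equation, and that $\varphi=0$ on $\partial D_{R_0}$ by trace compactness --- at no point identifying $\varphi$ with a previously chosen eigenfunction. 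Your argument is repaired simply by running it on such a weak subsequential limit rather than on $\varphi_j^0$; the rest of your reasoning then goes through.
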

\begin{proof}
  To prove (i) we argue by contradiction and assume that, for all $n$
  sufficiently large, there exist $a_n\in\Omega$ with $|a_n|<\frac1n$,
  $r_n\in \big(|a_n|,\frac1n\big)$, and $j_n\in\{1,\dots,n_0\}$  such
  that  $H(\varphi_{j_n}^{a_n},r_n)=0$,
  i.e. $\varphi_{j_n}^{a_n}\equiv 0$ on $\partial D_{r_n}$. Testing
  \eqref{eq_eigenfunction} with $\varphi_{j_n}^{a_n}$ and integrating
  on $D_{r_n}$, in view of Lemma \ref{poincare_inequality} we obtain 
  \begin{equation*}
    0=\int_{D_{r_n}}
  \Big(|(i\nabla + A_{a_n}) \varphi_{j_n}^{a_n}|^2
    - \lambda_{j_n}^{a_n} |\varphi_{j_n}^{a_n}|^2 \Big)\,dx
\geq (1-\Lambda r_n^2)\int_{D_{r_n}}
 |(i\nabla + A_{a_n}) \varphi_{j_n}^{a_n}|^2\,dx.
\end{equation*}
Since $r_n\to 0$, for $n$ sufficiently large $1-\Lambda r_n^2>0$ and
hence the above inequality yields that $\int_{D_{r_n}} |(i\nabla +
A_{a_n}) \varphi_{j_n}^{a_n}|^2\,dx=0$. Lemma
\ref{poincare_inequality} then implies that
$\|\varphi_{j_n}^{a_n}\|_{H^{1,a_n}(D_{r_n},\C)}=0$ and hence
$\varphi_{j_n}^{a_n}\equiv 0$ in $D_{r_n}$. From the unique
continuation principle (see \cite[Corollary 1.4]{FFT}) we conclude
that $\varphi_{j_n}^{a_n}\equiv 0$ in $\Omega$, a contradiction.

To prove (ii), we argue by contradiction and assume that, for all $n$
sufficiently large, there exist $a_n\in\Omega$ with $a_n\to 0$ and
$j_n\in\{1,\dots,n_0\}$ such that
  \begin{equation}\label{eq:26}
    \lim_{n\to\infty}H(\varphi_{j_n}^{a_n},R_0)=0. 
  \end{equation}
  Letting $\varphi_n:=\varphi_{j_n}^{a_n}$ and
  $\lambda_n:=\lambda_{j_n}^{a_n}$, using \eqref{eq_eigenfunction} and
  \eqref{eq:23} it is easy to prove that, along a subsequence,
  $\lambda_{n_k}\to \lambda_{j_0}^0$ for some $j_0\in\{1,\dots,n_0\}$
  and $\varphi_{n_k} \to \varphi$ weakly in $H^1(\Omega,\C)$ for some
  $\varphi\in H^{1,0}_0(\Omega,\C)$ satisfying
\begin{equation}\label{eq:24}
  (i\nabla + A_0)^2 \varphi= \lambda_{j_0}^0 \varphi, \quad
  \text{in }\Omega,
\end{equation}
in a weak sense 
and 
\begin{equation}\label{eq:25}
\int_\Omega |\varphi(x)|^2\,dx=1.
\end{equation}
Furthermore, by \eqref{eq:26} and compactness of the trace embedding
$H^{1}(D_{R_0},\C)\hookrightarrow L^2(\partial D_{R_0},\C)$, we have
that
\[
0=\lim_{k\to\infty}\frac1{R_0}\int_{\partial
  D_{R_0}}|\varphi_{n_k}|^2\,ds=\frac1{R_0}\int_{\partial
  D_{R_0}}|\varphi|^2\,ds,
\]
which implies that $\varphi=0$ on $\partial D_{R_0}$. 
Testing
  \eqref{eq:24} with $\varphi$ and integrating
  on $D_{R_0}$, in view of Lemma \ref{poincare_inequality} we obtain 
  \begin{equation*}
    0=\int_{D_{R_0}}
    \Big(|(i\nabla + A_{0}) \varphi|^2
    - \lambda_{j_0}^{0} |\varphi|^2 \Big)\,dx
    \geq (1-\Lambda R_0^2)\int_{D_{R_0}}
    |(i\nabla + A_{0}) \varphi|^2\,dx.
\end{equation*}
Since $1-\Lambda R_0^2>0$, we deduce that $\int_{D_{R_0}} |(i\nabla +
A_{0}) \varphi|^2\,dx=0$. Lemma \ref{poincare_inequality} then implies
that $\varphi\equiv 0$ in $D_{R_0}$. From the unique continuation
principle (see \cite[Corollary 1.4]{FFT}) we conclude that
$\varphi\equiv 0$ in $\Omega$, thus contradicting \eqref{eq:25}.
\end{proof}

We notice that, thanks to Lemma \ref{l:Hpos}, the function $r\mapsto
N(\varphi_j^a,r,\lambda_j^a,A_a)$ is well defined in $(|a|,R_0)$.

\begin{Lemma}\label{lemma_derivata_H}
  Let $1\leq j\leq n_0$, $a\in\Omega$, and $\varphi_j^a\in
  H^{1,a}_{0}(\Omega,\C)$ be a solution to
  \eqref{eq_eigenfunction}--\eqref{eq:23}. Then $r\mapsto
  H(\varphi_j^a,r)$ is smooth in $(|a|,R_0)$ and
\[ 
\dfrac{d}{dr} H(\varphi_j^a,r) = \dfrac2r
E(\varphi_j^a,r,\lambda_j^a,A_a). 
\]
\end{Lemma}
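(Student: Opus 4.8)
The plan is to reduce the statement to a gauge-covariant Rellich--Pohozaev-type boundary identity and then integrate by parts, paying attention to the singular pole $a$. Throughout write $u=\varphi_j^a$ and $\lambda=\lambda_j^a$.

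First I would check smoothness and differentiate under the integral sign. By standard elliptic regularity applied to \eqref{eq_eigenfunction} away from the pole, $u\in C^\infty(\Omega\setminus\{a\})$; since for $r\in(|a|,R_0)$ the set $\{x\in\R^2:|a|<|x|<R_0\}$ is contained in $D_{R_0}\setminus\{a\}\subset\Omega\setminus\{a\}$ (with $R_0$ and $D_{R_0}\subset\Omega$ furnished by Lemma \ref{l:Hpos}(i)), the map $(r,t)\mapsto|u(r\cos t,r\sin t)|^2$ is smooth on $(|a|,R_0)\times\R$. Rewriting $H(u,r)=\int_0^{2\pi}|u(r\cos t,r\sin t)|^2\,dt$ and differentiating under the integral sign then yields smoothness of $r\mapsto H(u,r)$ on $(|a|,R_0)$ together with
\[
\frac{d}{dr}H(u,r)=2\int_0^{2\pi}\Re\big(\overline{u}\,\partial_\nu u\big)\,dt=\frac2r\int_{\partial D_r}\Re\big(\overline{u}\,\partial_\nu u\big)\,ds,
\]
where $\nu=x/|x|$ is the outer unit normal on $\partial D_r$ and $\partial_\nu u=\nabla u\cdot\nu$. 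Next I would pass to a gauge-covariant form of the boundary integrand: since $A_a$ is real and $\Re(i|u|^2)=0$, one has the pointwise identity $\Re(\overline{u}\,\partial_\nu u)=\Re(\overline{iu}\,(i\nabla u+A_au)\cdot\nu)$ on $\partial D_r$, so that $\frac{d}{dr}H(u,r)=\frac2r\,\Re\int_{\partial D_r}\overline{iu}\,(i\nabla u+A_au)\cdot\nu\,ds$.

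Then I would integrate by parts while avoiding the pole. Consider the vector field $F:=\overline{iu}\,(i\nabla u+A_au)$, smooth on $\Omega\setminus\{a\}$. A direct computation using $\Div A_a=0$ and the equation $(i\nabla+A_a)^2u=\lambda u$ shows that
\[
\Div F=|(i\nabla+A_a)u|^2-\lambda|u|^2,
\]
which is real and belongs to $L^1(D_r)$. Since $a\in D_r$, I would apply the divergence theorem on $D_r\setminus\overline{D_\rho(a)}$ for small $\rho>0$; the contribution on $\partial D_\rho(a)$ is bounded by $\int_{\partial D_\rho(a)}|u|\,|(i\nabla+A_a)u|\,ds$, which by the local asymptotics of Proposition \ref{prop:fft} (giving $|u|=O(\rho^{1/2})$ and $|(i\nabla+A_a)u|=O(\rho^{-1/2})$ on $\partial D_\rho(a)$) is $O(\rho)$ and hence vanishes as $\rho\to0^+$. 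Passing to the limit gives $\Re\int_{\partial D_r}F\cdot\nu\,ds=\int_{D_r}\Div F\,dx=E(u,r,\lambda,A_a)$, and combining with the previous step yields $\frac{d}{dr}H(u,r)=\frac2r E(u,r,\lambda,A_a)$, as claimed.

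The only delicate point, and the one I expect to require the most care, is this last passage: the field $F$ is singular at $a\in D_r$, so the divergence theorem cannot be applied directly on $D_r$ and one must excise a shrinking disk around $a$ and control the boundary term it produces. The cleanest route is to invoke the local asymptotic behaviour of $u$ at $a$ from Proposition \ref{prop:fft}; alternatively, one can bypass the full asymptotics and use only $u\in H^{1,a}(D_{R_0},\C)$ together with the Hardy inequality \eqref{eq:hardy}, selecting by a mean-value (co-area) argument a sequence $\rho_n\to0^+$ along which $\rho_n\int_{\partial D_{\rho_n}(a)}\big(|(i\nabla+A_a)u|^2+\rho_n^{-2}|u|^2\big)\,ds\to0$, which suffices to annihilate the boundary contribution along that sequence and conclude.
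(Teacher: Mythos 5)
Your proof is correct and follows exactly the route underlying \cite[Lemma 5.2]{NNT}, which the paper cites in place of a written proof: differentiate $H$ under the integral sign, rewrite the boundary integrand in gauge-covariant form using that $A_a$ is real-valued, compute $\Div\big(\overline{iu}(i\nabla+A_a)u\big)=|(i\nabla+A_a)u|^2-\lambda|u|^2$ via $\Div A_a=0$ and the eigenvalue equation, and apply the divergence theorem on $D_r\setminus\overline{D_\rho(a)}$ letting $\rho\to 0^+$. One caveat on your primary justification for the vanishing of the inner boundary term: Proposition~\ref{prop:fft} as quoted gives $C^{1,\alpha}$ convergence only in the angular variable $t$, so the pointwise bound $|(i\nabla+A_a)u|=O(\rho^{-1/2})$ on $\partial D_\rho(a)$ is not literally a consequence of \eqref{eq:5} (it does follow from the finer gradient asymptotics in \cite{FFT}, but that is more than the proposition states). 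This is harmless because the alternative route you also give --- choosing $\rho_n\to 0^+$ by a mean-value argument so that $\rho_n\int_{\partial D_{\rho_n}(a)}\big(|(i\nabla+A_a)u|^2+\rho_n^{-2}|u|^2\big)\,ds\to 0$, using only $u\in H^{1,a}(D_{R_0},\C)$ and the Hardy inequality \eqref{eq:hardy} --- is self-contained and completes the proof rigorously.
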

\begin{proof}
Since the proof is similar to that of \cite[Lemma 5.2]{NNT}, we omit it.    
\end{proof}

\begin{Lemma}\label{lemma_stima_H_sotto}
  For $\delta\in(0,1/4)$, let $\mu_\delta$ be as in Corollary
  \ref{corollary_mb}.  Let $r_0 \leq R_0$ and $j\in\{1,\dots,n_0\}$.
  If $\mu_\delta |a| \leq r_1 < r_2 \leq r_0$ and $\varphi_j^a$ is a
  solution to \eqref{eq_eigenfunction}--\eqref{eq:23}, then
\begin{equation*}
  \frac{H(\varphi_j^a,r_2)}{H(\varphi_j^a,r_1)} 
  \geq e^{-\frac52 \Lambda   r_0^2} \left(\dfrac{r_2}{r_1}\right)^{1-2\delta}.
\end{equation*}
\end{Lemma}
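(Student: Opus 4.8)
The plan is to derive a differential inequality for $\log H(\varphi_j^a,r)$ and then integrate it on $[r_1,r_2]$. By Lemma \ref{lemma_derivata_H} we have, for $r\in(|a|,R_0)$,
\[
\frac{d}{dr}\log H(\varphi_j^a,r)=\frac{H'(\varphi_j^a,r)}{H(\varphi_j^a,r)}
=\frac{2}{r}\,\frac{E(\varphi_j^a,r,\lambda_j^a,A_a)}{H(\varphi_j^a,r)}
=\frac{2}{r}\,N(\varphi_j^a,r,\lambda_j^a,A_a),
\]
so everything reduces to bounding the frequency $N$ from below on the interval $[\mu_\delta|a|,r_0]$. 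Writing out $E$ explicitly,
\[
N(\varphi_j^a,r,\lambda_j^a,A_a)=\frac{\displaystyle\int_{D_r}|(i\nabla+A_a)\varphi_j^a|^2\,dx-\lambda_j^a\int_{D_r}|\varphi_j^a|^2\,dx}{\displaystyle\frac1r\int_{\partial D_r}|\varphi_j^a|^2\,ds}.
\]

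The key step is the lower bound for $N$. For the numerator I would first use the Poincar\'e-type inequality with uniform constant from Lemma \ref{lemma_Sobolev_inequality} together with Corollary \ref{corollary_mb}: since $r\geq\mu_\delta|a|$, i.e.\ $|a/r|<1/\mu_\delta$, we have $m_{a/r}\geq\frac12-\delta$, hence
\[
\int_{D_r}|(i\nabla+A_a)\varphi_j^a|^2\,dx\ \geq\ \frac{m_{a/r}}{r}\int_{\partial D_r}|\varphi_j^a|^2\,ds\ \geq\ \Big(\tfrac12-\delta\Big)\,H(\varphi_j^a,r).
\]
For the $\lambda$-term I would use the Poincar\'e inequality of Lemma \ref{poincare_inequality} in the form $\frac{1}{r^2}\int_{D_r}|\varphi_j^a|^2\le \frac1r\int_{\partial D_r}|\varphi_j^a|^2+\int_{D_r}|(i\nabla+A_a)\varphi_j^a|^2$, which gives
\[
\lambda_j^a\int_{D_r}|\varphi_j^a|^2\,dx\ \le\ \Lambda r^2\Big(H(\varphi_j^a,r)+\int_{D_r}|(i\nabla+A_a)\varphi_j^a|^2\,dx\Big),
\]
using $0<\lambda_j^a\le\Lambda$ from \eqref{eq:19}. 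Combining the two displays and recalling $r\le r_0\le R_0<(5\Lambda)^{-1/2}$ (so $\Lambda r_0^2<\frac15$) yields, after absorbing the $\Lambda r^2\int_{D_r}|(i\nabla+A_a)\varphi_j^a|^2$ term on the right into the numerator,
\[
(1-\Lambda r^2)\,E(\varphi_j^a,r,\lambda_j^a,A_a)\ \ge\ \Big(\tfrac12-\delta-\Lambda r^2\Big)H(\varphi_j^a,r)-\Lambda r^2 H(\varphi_j^a,r),
\]
hence $N(\varphi_j^a,r,\lambda_j^a,A_a)\ge \frac{1/2-\delta-2\Lambda r^2}{1-\Lambda r^2}\ge \tfrac12-\delta-C\Lambda r^2$ for a suitable absolute constant after a crude estimate; one then checks the bookkeeping gives $2N/r\ge (1-2\delta)/r-\tfrac52\Lambda r$, using $\Lambda r_0^2$ small.

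With that pointwise bound in hand, integrating $\frac{d}{dr}\log H(\varphi_j^a,r)=\frac{2}{r}N\ge \frac{1-2\delta}{r}-\frac52\Lambda r$ over $[r_1,r_2]\subset[\mu_\delta|a|,r_0]$ gives
\[
\log\frac{H(\varphi_j^a,r_2)}{H(\varphi_j^a,r_1)}\ \ge\ (1-2\delta)\log\frac{r_2}{r_1}-\frac54\Lambda(r_2^2-r_1^2)\ \ge\ (1-2\delta)\log\frac{r_2}{r_1}-\frac54\Lambda r_0^2,
\]
and exponentiating (and bounding $-\frac54\Lambda r_0^2\ge -\frac52\Lambda r_0^2$) yields the claimed inequality
\[
\frac{H(\varphi_j^a,r_2)}{H(\varphi_j^a,r_1)}\ \ge\ e^{-\frac52\Lambda r_0^2}\Big(\frac{r_2}{r_1}\Big)^{1-2\delta}.
\]
I expect the only delicate point to be the constant bookkeeping in the lower bound for $N$: one must be careful to absorb the $\lambda$-term using \emph{both} Poincar\'e inequalities (the boundary-term one with the sharp uniform constant $\frac12-\delta$, and the interior one with the extra $\Lambda r^2$ factor) so that the error is genuinely $O(\Lambda r^2)$ and produces exactly the $e^{-\frac52\Lambda r_0^2}$ factor rather than something weaker; the positivity of $H$ on $(|a|,R_0)$ from Lemma \ref{l:Hpos}(i) is what makes $\log H$ and the division legitimate throughout.
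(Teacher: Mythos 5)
Your proposal follows the paper's own proof essentially step by step: differentiate $\log H(\varphi_j^a,r)$ using Lemma~\ref{lemma_derivata_H}, lower-bound the resulting frequency $N$ via the Poincar\'e inequality (Lemma~\ref{poincare_inequality}), the boundary estimate $\int_{D_r}|(i\nabla+A_a)\varphi_j^a|^2\geq m_{a/r}H$ from Lemma~\ref{lemma_Sobolev_inequality}, and the uniform constant $m_{a/r}\geq\frac12-\delta$ from Corollary~\ref{corollary_mb}, then integrate over $[r_1,r_2]$. The only presentational difference is how the $\lambda$-term is absorbed: the paper first combines Poincar\'e with Lemma~\ref{lemma_Sobolev_inequality} to get $\frac1{r^2}\int_{D_r}|\varphi_j^a|^2<5\int_{D_r}|(i\nabla+A_a)\varphi_j^a|^2$, yielding directly $N\geq(1-5\Lambda r^2)(\frac12-\delta)$ and hence the pointwise bound $\frac{d}{dr}\log H\geq\frac{1-2\delta}{r}-5\Lambda r$; you keep both terms from the Poincar\'e inequality separately, which is equally valid. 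Your intermediate display for $(1-\Lambda r^2)E$ is garbled (there is no reason to multiply $E$ by $(1-\Lambda r^2)$) and the pointwise bound you actually get from your manipulations is $\frac{d}{dr}\log H\geq\frac{1-2\delta}{r}-(3-2\delta)\Lambda r$, not $-\frac52\Lambda r$ as you claim; but since $\frac{3-2\delta}{2}<\frac32<\frac52$, integrating still gives $\log\frac{H(r_2)}{H(r_1)}\geq(1-2\delta)\log\frac{r_2}{r_1}-\frac52\Lambda r_0^2$, so the conclusion is unaffected.
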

\begin{proof}
  Combining Lemma \ref{poincare_inequality} with Lemma
  \ref{lemma_Sobolev_inequality} and Corollary \ref{corollary_mb} we
  obtain that, for every  $\mu_\delta|a|<r<R_0$,
\[ 
\dfrac{1}{r^2}\int_{D_r} \abs{\varphi_j^a}^2\,dx \leq
\left(1+\frac{2}{1-2\delta}\right) \int_{D_r} \abs{(i\nabla +
  A_a)\varphi_j^a}^2\,dx <5 \int_{D_r} \abs{(i\nabla +
  A_a)\varphi_j^a}^2\,dx.
\] 
From above, Lemma \ref{lemma_derivata_H}, Lemma
\ref{lemma_Sobolev_inequality}, recalling that
$R_0<(5\Lambda)^{-1/2}$, for every $\mu_\delta|a|<r<r_0$ we have that
\begin{align*}
  \dfrac{d}{dr}H(\varphi_j^a,r) &= \frac2r \int_{D_r}
  \left(\abs{(i\nabla + A_a)\varphi_j^a}^2
    - \lambda_j^a \abs{\varphi_j^a}^2 \right)\,dx \\
  &\geq \frac2r \left( 1- 5\Lambda r^2 \right)
  \int_{D_r} \abs{(i\nabla + A_a)\varphi_j^a}^2\,dx \\
  &\geq \frac2r \left( 1-5 \Lambda r^2 \right)
  m_{a/r} H(\varphi_j^a,r)\\
  &\geq \frac2r \left( 1-5 \Lambda r^2 \right) \left(\frac12
    -\delta\right) H(\varphi_j^a,r),
\end{align*}
so that, in view of Lemma \ref{l:Hpos}, 
\[ 
\dfrac{d}{dr} \log H(\varphi_j^a,r) \geq \frac{1-2\delta}{r} -
\Lambda (5-10\delta)r \geq \frac{1-2\delta}{r} -
5\Lambda r.
\]
Integrating between $r_1$ and $r_2$ we obtain the desired inequality.
\end{proof}

\begin{Lemma}\label{lemma:Ma}
  For $1\leq j\leq n_0$ and $a\in\Omega$, let $\varphi_j^a\in
  H^{1,a}_{0}(\Omega,\C)$ be a solution to
  \eqref{eq_eigenfunction}--\eqref{eq:23}. Then, for all $|a|<r<R_0$,
  we have that
 \begin{equation}
   \dfrac{d}{dr} E(\varphi_j^a, r,\lambda_j^a,A_a) 
   = 2\int_{\partial D_r} \abs{(i\nabla +A_a)\varphi_j^a \cdot\nu}^2\,ds
   - \frac2r \left( M_j^a + \lambda_j^a \int_{D_r} \abs{\varphi_j^a}^2\,dx \right)
 \end{equation}
where $\nu(x)=\frac{x}{|x|}$ denotes the unit normal vector to
$\partial D_r$ and
\begin{equation}
  M_j^a = \frac14 \Big( a_1 (
  c_{a,j}^2-
  d_{a,j}^2)+2 a_2 c_{a,j} d_{a,j}\Big),
\end{equation}
with $a=(a_1,a_2)$, $c_{a,j}=\beta_1^1(a,\varphi_j^a,\lambda_j^a)$,
and $d_{a,j}=\beta_1^2(a,\varphi_j^a,\lambda_j^a)$, being
$\beta_1^1(a,\varphi_j^a,\lambda_j^a)$,
$\beta_1^2(a,\varphi_j^a,\lambda_j^a)$ the coefficients defined in
\eqref{def_beta}. Furthermore, letting $\mu_\delta$ as in Corollary
\ref{corollary_mb},
\[
\frac{|M_j^a|}{H(\varphi_j^a,\mu_\delta|a|)}\leq C_\delta,
\]
for some $C_\delta>0$ independent of $a$.
\end{Lemma}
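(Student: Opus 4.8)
The plan is to compute $\frac{d}{dr}E(\varphi_j^a,r,\lambda_j^a,A_a)$ directly by differentiating the two integrals in \eqref{eq:39} and then integrating by parts, using the equation \eqref{eq_eigenfunction}. First I would differentiate $r\mapsto \int_{D_r}\abs{(i\nabla+A_a)\varphi_j^a}^2\,dx$ to get $\int_{\partial D_r}\abs{(i\nabla+A_a)\varphi_j^a}^2\,ds$, and similarly $\lambda_j^a\frac{d}{dr}\int_{D_r}\abs{\varphi_j^a}^2\,dx=\lambda_j^a\int_{\partial D_r}\abs{\varphi_j^a}^2\,ds$. The key step is then a Rellich--Ne\v{c}as--Pohozaev type identity: multiply \eqref{eq_eigenfunction} by the radial derivative $(x-a)\cdot(i\nabla+A_a)\varphi_j^a$ (or rather by $x\cdot\nabla\varphi_j^a$ suitably gauged) and integrate over $D_r$. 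Integration by parts produces, besides the two boundary terms $\int_{\partial D_r}\abs{(i\nabla+A_a)\varphi_j^a\cdot\nu}^2\,ds$ and the lower-order $\frac1r$ terms involving $\lambda_j^a\int_{D_r}\abs{\varphi_j^a}^2$, an \emph{extra} term coming from the fact that $A_a$ is singular at $a$ and the vector field $x$ is \emph{not} centered at the pole $a$; this extra term is precisely where $M_j^a$ appears. Concretely the commutator of the dilation vector field with the magnetic potential $A_a$ gives a contribution $\int_{D_r}(\text{stuff involving }A_a\text{ and its derivatives near }a)$, and one extracts a boundary contribution over small circles $\partial D_\rho(a)$, $\rho\to 0$, using the local asymptotic expansion \eqref{eq:5} of Proposition \ref{prop:fft} with $j=1$; the coefficients $c_{a,j}=\beta_1^1(a,\varphi_j^a,\lambda_j^a)$ and $d_{a,j}=\beta_1^2(a,\varphi_j^a,\lambda_j^a)$ enter through the quadratic expression obtained by evaluating $\rho^{-1}\abs{\varphi_j^a}$-type quantities on $\partial D_\rho(a)$ and integrating the leading harmonics $\cos(t/2)$, $\sin(t/2)$ against the components $(a_1,a_2)$ of the off-center vector field. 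The precise combination $a_1(c^2-d^2)+2a_2 cd=\Re\big((a_1+ia_2)\overline{(c+id)}^2\big)$ is what one gets from the angular integrals $\int_0^{2\pi}e^{it}(c\cos\frac t2+d\sin\frac t2)^2\,dt$ after matching real parts.

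The identity for the derivative is thus essentially the magnetic Pohozaev identity localized away from the singularity, with the singular contribution computed explicitly via the blow-up profile at the pole. Since this computation is modeled on \cite[Section 5]{NNT} (cf. the analogous $M_j^a$ term there), I would present the derivation of the first displayed formula relatively briefly, referring to \cite{NNT} for the parallel steps, and concentrate on verifying that the singular boundary contribution is exactly $-\frac{2}{r}M_j^a$ with $M_j^a$ as stated.

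For the final bound $\frac{|M_j^a|}{H(\varphi_j^a,\mu_\delta|a|)}\le C_\delta$, the plan is as follows. By definition $|M_j^a|\le \tfrac14|a|\,(c_{a,j}^2+d_{a,j}^2)$, so it suffices to bound $|a|\,(c_{a,j}^2+d_{a,j}^2)$ by $C_\delta\, H(\varphi_j^a,\mu_\delta|a|)$. From the characterization \eqref{def_beta} of $\beta_1^1,\beta_1^2$ (taken with $b=a$, $R$ of order $|a|$, say $R=\mu_\delta|a|$), together with the bound $|h|=|\lambda_j^a|\le\Lambda$ on the potential and the Hardy inequality \eqref{eq:hardy}, one gets $c_{a,j}^2+d_{a,j}^2\le C\,|a|^{-1}\cdot\frac1{R}\int_{\partial D_R(a)}\abs{\varphi_j^a}^2\,ds\le C\,|a|^{-1}\cdot\frac1{R}\int_{\partial D_R(a)}\abs{\varphi_j^a}^2\,ds$ (here I use that $\psi_1^1,\psi_2^1$ are bounded and that the remainder integral $\int_0^R h\varphi_j^a(s^{1/2}-\cdots)\,ds$ is controlled by the same trace quantity via Cauchy--Schwarz and Hardy). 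Then I would compare $\frac1R\int_{\partial D_R(a)}\abs{\varphi_j^a}^2\,ds$ with $H(\varphi_j^a,\mu_\delta|a|)=\frac1{\mu_\delta|a|}\int_{\partial D_{\mu_\delta|a|}}\abs{\varphi_j^a}^2\,ds$: since $D_R(a)\subset D_{(\mu_\delta+1)|a|}$ and $D_{(\mu_\delta-1)|a|}\subset D_R(a)$ for $R=\mu_\delta|a|$, a doubling-type estimate following from Lemma \ref{lemma_stima_H_sotto} (the monotonicity of $r\mapsto H(\varphi_j^a,r)$ up to the exponential factor) together with the mean-value/trace comparison on concentric and off-center circles gives $\frac1R\int_{\partial D_R(a)}\abs{\varphi_j^a}^2\,ds\le C_\delta\,|a|^{-1}\int_{D_{(\mu_\delta+1)|a|}\setminus D_{(\mu_\delta-1)|a|}}\abs{\varphi_j^a}^2\,\frac{dx}{|a|}\le C_\delta\, H(\varphi_j^a,\mu_\delta|a|)$, where the last step again uses Lemma \ref{lemma_stima_H_sotto} to absorb the annular $L^2$ mass into the boundary integral at radius $\mu_\delta|a|$. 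Collecting the factors of $|a|$ shows $|M_j^a|\le C_\delta\,|a|\cdot|a|^{-1}H(\varphi_j^a,\mu_\delta|a|)=C_\delta\,H(\varphi_j^a,\mu_\delta|a|)$, as desired; uniformity in $a$ comes from the uniform constants in Corollary \ref{corollary_mb}, Lemma \ref{lemma_stima_H_sotto}, and the bound \eqref{eq:19} on $\lambda_j^a$.

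The main obstacle I anticipate is the careful bookkeeping in the Pohozaev identity: because the dilation field $x\mapsto x$ is centered at $0$ rather than at the pole $a$, one must handle the singular contribution of $A_a$ at $a$ through a limiting argument on shrinking circles $\partial D_\rho(a)$, and show that all terms except the stated quadratic form in $(c_{a,j},d_{a,j})$ vanish in the limit $\rho\to0^+$ — this requires the full strength of the $C^{1,\alpha}$ convergence in \eqref{eq:5} (not merely $L^2$ convergence), since products of the form $\rho^{-1}\varphi_j^a\cdot\overline{(i\nabla+A_a)\varphi_j^a}$ must be integrated against the angular weight. The quantitative estimate on $M_j^a$, by contrast, should be a routine consequence of the Hardy inequality and the monotonicity lemma already established.
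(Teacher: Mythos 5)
Your plan tracks what the paper itself delegates to the proof of \cite[Lemmas 5.6--5.9]{NNT}: a Rellich--Pohozaev identity with the dilation field $x\mapsto x$ centered at the origin rather than at the pole $a$, extraction of the singular boundary contribution on shrinking circles $\partial D_\rho(a)$ via the local expansion \eqref{eq:5} with $j=1$ (which is precisely where the quadratic form $a_1(c^2-d^2)+2a_2cd=\Re\big((a_1+ia_2)\overline{(c_{a,j}+id_{a,j})^2}\big)$ emerges), and an estimate of $M_j^a$ through \eqref{def_beta} combined with the Almgren-type machinery; this is the right route. One detail in your final paragraph needs tightening: Lemma \ref{lemma_stima_H_sotto} only bounds $H(\varphi_j^a,r_2)/H(\varphi_j^a,r_1)$ from \emph{below}, and only for $\mu_\delta|a|\le r_1<r_2$, so to absorb the annular mass over $D_{(\mu_\delta+1)|a|}\setminus D_{(\mu_\delta-1)|a|}$ into $H(\varphi_j^a,\mu_\delta|a|)$ you should invoke the complementary doubling estimate coming from the upper bound on $N(\varphi_j^a,\cdot,\lambda_j^a,A_a)$ in Lemma \ref{l:5.5} and, to stay in the regime where those lemmas apply, take $R$ slightly larger (e.g.\ $R=(\mu_\delta+1)|a|$) so that $\partial D_R(a)$ sits inside the annulus $\{\mu_\delta|a|\le|x|\le(\mu_\delta+2)|a|\}$ rather than reaching below radius $\mu_\delta|a|$.
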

\begin{proof}
Since the proof is similar to that of \cite[Lemmas 5.6, 5.7, 5.8, 5.9]{NNT}, we omit it.    
\end{proof}

\begin{Lemma}\label{l:5.5}
  For $1\leq j\leq n_0$ and $a\in\Omega$, let $\varphi_j^a\in
  H^{1,a}_{0}(\Omega,\C)$ be a solution to
  \eqref{eq_eigenfunction}--\eqref{eq:23}.  Let $\delta\in(0,1/4)$,
  $\mu_\delta$ as in Corollary \ref{corollary_mb} and $r_0 \leq R_0$.
  Then there exists $c_{\delta,r_0}>0$ such that, for all
  $\mu>\mu_\delta$, $|a|<\frac{r_0}\mu$, $\mu|a|\leq r<r_0$, and
  $1\leq j\leq n_0$,
\begin{equation}\label{eq:20}
  e^{\frac{\Lambda r^2}{1 - \Lambda {r_0}^2}} \left(N(\varphi_j^a, r,\lambda_j^a,A_a) +1\right)
  \leq e^{\frac{\Lambda {r_0}^2}{1 - \Lambda {r_0}^2}}
  \left(N(\varphi_j^a, r_0,\lambda_j^a,A_a) +1\right) +
  \frac{c_{\delta,r_0}}{\mu^{1-2\delta}}
\end{equation}
and
\begin{equation}\label{eq:27}
  N(\varphi_j^a, r,\lambda_j^a,A_a)+1>0.
\end{equation}
\end{Lemma}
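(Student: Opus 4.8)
The plan is to prove the two assertions of Lemma \ref{l:5.5} by deriving a differential inequality for the frequency function $r\mapsto N(\varphi_j^a,r,\lambda_j^a,A_a)$ and then integrating it. First I would compute the logarithmic derivative $\frac{d}{dr}\log\!\big(N(\varphi_j^a,r,\lambda_j^a,A_a)+1\big)$ using the formulas already established: Lemma \ref{lemma_derivata_H} gives $H'=\frac2r E$, and Lemma \ref{lemma:Ma} gives an exact expression for $E'$ in terms of the boundary integral $\int_{\partial D_r}|(i\nabla+A_a)\varphi_j^a\cdot\nu|^2\,ds$, the quantity $M_j^a$, and $\lambda_j^a\int_{D_r}|\varphi_j^a|^2\,dx$. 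Writing $E=\int_{\partial D_r}\Re\big[(i\nabla+A_a)\varphi_j^a\cdot\nu\,\overline{\varphi_j^a}\big]\,ds$ (an integration-by-parts identity that should be recorded along the way), the numerator of $N'$ will contain a term
\[
2\left(\int_{\partial D_r}|(i\nabla+A_a)\varphi_j^a\cdot\nu|^2\,ds\right)\left(\int_{\partial D_r}|\varphi_j^a|^2\,ds\right)-2\left(\int_{\partial D_r}\Re\big[(i\nabla+A_a)\varphi_j^a\cdot\nu\,\overline{\varphi_j^a}\big]\,ds\right)^2,
\]
which is nonnegative by Cauchy--Schwarz. This is the classical Almgren mechanism; the remaining terms come from $\lambda_j^a$ and from $M_j^a$ and must be controlled as error terms.

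Next I would estimate those error terms. The $\lambda_j^a$ contributions are bounded using $|\lambda_j^a|\le\Lambda$ and the Poincaré-type inequality of Lemma \ref{poincare_inequality} together with Lemma \ref{lemma_Sobolev_inequality} and Corollary \ref{corollary_mb}: for $r\le r_0<(5\Lambda)^{-1/2}$ one gets $\lambda_j^a\int_{D_r}|\varphi_j^a|^2\,dx\le 5\Lambda r^2\int_{D_r}|(i\nabla+A_a)\varphi_j^a|^2\,dx$, hence a bound proportional to $\frac{\Lambda r}{1-\Lambda r_0^2}\big(N(\varphi_j^a,r,\lambda_j^a,A_a)+1\big)$, which is what produces the exponential integrating factor $e^{\Lambda r^2/(1-\Lambda r_0^2)}$. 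The $M_j^a$ term is handled via the last estimate in Lemma \ref{lemma:Ma}, $|M_j^a|\le C_\delta H(\varphi_j^a,\mu_\delta|a|)$, combined with the lower bound on the growth of $H$ from Lemma \ref{lemma_stima_H_sotto}, namely $H(\varphi_j^a,r)\ge e^{-\frac52\Lambda r_0^2}(r/(\mu_\delta|a|))^{1-2\delta}H(\varphi_j^a,\mu_\delta|a|)$ for $r\ge\mu_\delta|a|$; dividing, the contribution of $M_j^a$ to $\big(\log(N+1)\big)'$ is bounded by a constant times $r^{-1}(\mu_\delta|a|/r)^{1-2\delta}$, whose integral from $\mu|a|$ to $r_0$ is $O(\mu^{-(1-2\delta)})$, giving the additive remainder $c_{\delta,r_0}\mu^{-(1-2\delta)}$ in \eqref{eq:20}.

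To conclude, I would assemble these bounds into an inequality of the form
\[
\frac{d}{dr}\left[\frac{\Lambda r^2}{1-\Lambda r_0^2}+\log\!\big(N(\varphi_j^a,r,\lambda_j^a,A_a)+1\big)\right]\ge -\,g_a(r),\qquad \int_{\mu|a|}^{r_0}g_a(r)\,dr\le \frac{c_{\delta,r_0}}{\mu^{1-2\delta}},
\]
valid on $(\mu|a|,r_0)$, provided we already know $N+1>0$ there so that $\log(N+1)$ makes sense; integrating from $r$ to $r_0$ then yields \eqref{eq:20}. The positivity \eqref{eq:27} must be obtained in tandem: by Lemma \ref{poincare_inequality}, $E(\varphi_j^a,r,\lambda_j^a,A_a)\ge \int_{D_r}|(i\nabla+A_a)\varphi_j^a|^2\,dx-\Lambda r^2\big(\frac1r\int_{\partial D_r}|\varphi_j^a|^2\,ds+\int_{D_r}|(i\nabla+A_a)\varphi_j^a|^2\,dx\big)\ge -\Lambda r^2 H(\varphi_j^a,r)$, so $N(\varphi_j^a,r,\lambda_j^a,A_a)\ge -\Lambda r^2>-1$ since $r\le r_0<(5\Lambda)^{-1/2}$. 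The main obstacle I anticipate is the careful bookkeeping of the error terms — in particular making the $M_j^a$ bound and the $H$-growth lower bound interact so that the surviving remainder is genuinely integrable and of the stated order $\mu^{-(1-2\delta)}$ uniformly in $a$, $j$, and $r$ — rather than the Cauchy--Schwarz positivity step, which is standard.
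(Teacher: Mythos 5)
Your scheme coincides with the paper's up to the final integration, and your derivation of \eqref{eq:27} (via Poincar\'e, giving the quantitative $N+1\ge 1-\Lambda r^2$) and your handling of the $M_j^a$ and $\lambda_j^a$ error terms through Lemmas \ref{lemma:Ma}, \ref{lemma_stima_H_sotto}, \ref{poincare_inequality}--\ref{lemma_Sobolev_inequality} and Corollary \ref{corollary_mb} are sound. The gap is in the last step: you propose the differential inequality
\[
\frac{d}{dr}\left[\frac{\Lambda r^2}{1-\Lambda r_0^2}+\log\big(N(\varphi_j^a,r,\lambda_j^a,A_a)+1\big)\right]\ge -g_a(r),\qquad \int_{\mu|a|}^{r_0}g_a\le\frac{c_{\delta,r_0}}{\mu^{1-2\delta}},
\]
and assert that integrating from $r$ to $r_0$ yields \eqref{eq:20}. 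It does not. Integrating and exponentiating gives only the \emph{multiplicative} bound
$e^{\frac{\Lambda r^2}{1-\Lambda r_0^2}}\big(N(r)+1\big)\le e^{\frac{\Lambda r_0^2}{1-\Lambda r_0^2}}\big(N(r_0)+1\big)\,e^{c_{\delta,r_0}/\mu^{1-2\delta}}$,
and passing from this to the additive form \eqref{eq:20} requires in addition that $N(\varphi_j^a,r_0,\lambda_j^a,A_a)+1$ be bounded above uniformly in small $a$ and in $j\le n_0$, so that $\big(e^{c_{\delta,r_0}/\mu^{1-2\delta}}-1\big)\big(N(r_0)+1\big)=O(\mu^{-(1-2\delta)})$. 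Such a bound can be obtained (argue as in Lemma \ref{l:Hpos}(ii) for a uniform lower bound on $H(\varphi_j^a,r_0)$, and use $E\le\Lambda$), but you neither state nor invoke it, so the argument as written stops short of \eqref{eq:20}. The paper sidesteps this altogether by never taking a logarithm: it rewrites $N'\ge -\mathrm{const}_\delta\,|a|^{1-2\delta}r^{-(2-2\delta)}-\frac{2\Lambda r}{1-\Lambda r_0^2}(N+1)$ as $\big(e^{\Lambda r^2/(1-\Lambda r_0^2)}(N+1)\big)'\ge -\mathrm{const}_\delta\,|a|^{1-2\delta}r^{-(2-2\delta)}e^{\Lambda r_0^2/(1-\Lambda r_0^2)}$ and integrates that directly, which produces the additive remainder with no upper bound on $N(r_0)$ ever needed.

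A small separate slip: the identity $E=\int_{\partial D_r}\Re\big[(i\nabla+A_a)\varphi_j^a\cdot\nu\,\overline{\varphi_j^a}\big]\,ds$ is wrong. Testing the equation against $\varphi_j^a$ in $D_r$ gives $E=\pm i\int_{\partial D_r}\big((i\nabla+A_a)\varphi_j^a\cdot\nu\big)\overline{\varphi_j^a}\,ds$, and since $E$ is real this boundary integral is purely imaginary, so the quantity you display is actually zero. The Cauchy--Schwarz positivity is unaffected, because it only uses $E^2=\big|\int_{\partial D_r}(i\nabla+A_a)\varphi_j^a\cdot\nu\,\overline{\varphi_j^a}\,ds\big|^2\le\int_{\partial D_r}|(i\nabla+A_a)\varphi_j^a\cdot\nu|^2\,ds\,\int_{\partial D_r}|\varphi_j^a|^2\,ds$, but the displayed identity should be corrected.
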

\begin{proof}
By direct computations and Schwarz inequality (see \cite[Lemma
5.11]{NNT}), we obtain that, for all $|a|<r<R_0$,
  \begin{align*}
    &\dfrac{dN(\varphi_j^a, r,\lambda_j^a,A_a)}{dr}\\
    & = \frac{\frac2r\Big( \big(\int_{\partial
        D_r}|(i\nabla+A_a)\varphi_j^a\cdot\nu|^2\,ds\big)\big(
      \int_{\partial D_r}|\varphi_j^a|^2\,ds\big)-\big(i
      \int_{\partial D_r}(i\nabla+A_a)\varphi_j^a\cdot\nu \,
      \overline{\varphi_j^a}\,ds
      \big)^2\Big)}{H^2(\varphi_j^a, r)}\\
    &\quad - \frac{2}{rH(\varphi_j^a, r)} \bigg( M_j^a + \lambda_j^a
    \int_{D_r} \abs{\varphi_j^a}^2 \,dx\bigg)
    \\
    &\geq - \frac{2}{rH(\varphi_j^a, r)} \bigg( |M_j^a| + \lambda_j^a
    \int_{D_r} \abs{\varphi_j^a}^2 \,dx\bigg).
\end{align*}
Via Lemmas \ref{lemma_stima_H_sotto} and \ref{lemma:Ma} we estimate,
for all $\mu_\delta|a|\leq r<r_0$,
  \[ 
  \dfrac{2|M_j^a|}{H(\varphi_j^a, r)} =
  2\dfrac{|M_j^a|}{H(\varphi_j^a,\mu_\delta|a| )} \,
  \dfrac{H(\varphi_j^a,\mu_\delta|a| )}{H(\varphi_j^a,r)} \leq {\rm
    const}_\delta \bigg(\frac{|a|}{r}\bigg)^{\!\!1-2\delta},
 \]
 where ${\rm const}_\delta >0$ is independent of $a$ (but depends on
 $\delta$).  On the other hand, by Lemma \ref{poincare_inequality} we
 have that, for all $\mu_\delta|a|\leq r<r_0$,
  \[ 
  \dfrac{1-\Lambda r^2}{r^2} \int_{D_r} \abs{\varphi_j^a}^2 \leq
  H(\varphi_j^a,r) + E(\varphi_j^a, r,\lambda_j^a,A_a)
\] 
which implies
\[
\frac{2\lambda_j^a}{rH(\varphi_j^a,r)} \int_{D_r} |\varphi_j^a|^2\,dx
\leq \frac{2\Lambda r}{1-\Lambda {r_0}^2} \Big(N(\varphi_j^a,
r,\lambda_j^a,A_a)+1\Big).
\]
Therefore \eqref{eq:27} follows. Moreover,  for all $\mu_\delta|a|\leq r<r_0$,
\[ 
\dfrac{dN(\varphi_j^a, r,\lambda_j^a,A_a)}{dr} \geq - {\rm
  const}_\delta\frac{|a|^{1-2\delta}}{r^{2-2\delta}} - \frac{2\Lambda
  r}{1-\Lambda {r_0}^2} \Big(N(\varphi_j^a,
r,\lambda_j^a,A_a)+1\Big)
 \] 
which is read as
\[ 
\left( e^{\frac{\Lambda r^2}{1 - \Lambda {r_0}^2}}
  \left(N(\varphi_j^a, r,\lambda_j^a,A_a) +1\right)
\right)'e^{-\frac{\Lambda r^2}{1 - \Lambda {r_0}^2}} \geq - {\rm
  const}_\delta\dfrac{|a|^{1-2\delta}}{r^{2-2\delta}}.
\] 
Letting $r\in[\mu_\delta|a|,r_0)$ and integrating from $r$ to $r_0$ we obtain
\[
e^{\frac{\Lambda r^2}{1 - \Lambda {r_0}^2}} \left(N(\varphi_j^a, r,\lambda_j^a,A_a) +1\right)
\leq e^{\frac{\Lambda {r_0}^2}{1 - \Lambda {r_0}^2}}
\left(N(\varphi_j^a, r_0,\lambda_j^a,A_a) +1\right) +e^{\frac{\Lambda {r_0}^2}{1 - \Lambda {r_0}^2}}\frac{ {\rm
  const}_\delta}{1-2\delta}\bigg(\frac{|a|}{r}\bigg)^{\!\!1-2\delta}. 
\]
Letting $\mu>\mu_\delta$, $|a|<\frac{r_0}\mu$ and $\mu|a|\leq r<r_0$,
and taking $c_{\delta,r_0}=e^{\frac{\Lambda {r_0}^2}{1 - \Lambda
    {r_0}^2}}\frac{ {\rm const}_\delta}{1-2\delta}$, the above
estimates yields \eqref{eq:20}.
\end{proof}

A first consequence of Lemma \ref{l:5.5} is the following estimate of
the Almgren quotient of $\varphi_a$ at radii of size $|a|$ in terms of
the order of vanishing of $\varphi_0$ at the pole.
\begin{Lemma}\label{lemma_limitatezza_N_per_blowup}
  For $a\in\Omega$, let $\varphi_a\in H^{1,a}_{0}(\Omega,\C)$ be a
  solution to (\ref{eq:equation_a}-\ref{eq:6}).  For every
  $\delta\in(0,1/4)$ there exist $r_\delta>0$ and
  $K_\delta>\mu_\delta>0$ such that if $\mu\geq K_\delta$,
  $|a|<\frac{r_\delta}\mu$, and $\mu|a|\leq r<r_\delta$
\begin{equation*}
N(\varphi_a, r,\lambda_a,A_a)\leq \frac{k}2+\delta. 
\end{equation*}
\end{Lemma}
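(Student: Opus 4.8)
The plan is to combine the monotonicity-type estimate of Lemma \ref{l:5.5} with the known local asymptotics of the limit eigenfunction $\varphi_0$ at $0$ (Proposition \ref{prop:fft} and assumption \eqref{eq:37}). The starting point is inequality \eqref{eq:20}: for $\mu > \mu_\delta$, $|a| < r_0/\mu$ and $\mu|a| \le r < r_0$,
\[
e^{\frac{\Lambda r^2}{1 - \Lambda r_0^2}} \left(N(\varphi_a, r,\lambda_a,A_a) +1\right)
\le e^{\frac{\Lambda r_0^2}{1 - \Lambda r_0^2}} \left(N(\varphi_a, r_0,\lambda_a,A_a) +1\right) + \frac{c_{\delta,r_0}}{\mu^{1-2\delta}}.
\]
Thus it suffices to bound $N(\varphi_a, r_0,\lambda_a,A_a)$ from above (uniformly in $a$ small, for a suitably chosen fixed radius $r_0$) by something close to $\frac{k}{2}+1$, and then absorb the extra error terms by fixing $r_0$ small and $\mu$ large.

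First I would pass to the limit $a \to 0$ in $N(\varphi_a, r_0,\lambda_a,A_a)$. By \eqref{eq:21}--\eqref{eq:22} we have $\varphi_a \to \varphi_0$ in $H^1(\Omega,\C)$ and $(i\nabla + A_a)\varphi_a \to (i\nabla + A_0)\varphi_0$ in $L^2$, while $\lambda_a \to \lambda_0$; moreover $H(\varphi_a,r_0) = \frac{1}{r_0}\int_{\partial D_{r_0}}|\varphi_a|^2\,ds \to \frac{1}{r_0}\int_{\partial D_{r_0}}|\varphi_0|^2\,ds = H(\varphi_0,r_0)$ by the continuity of the trace operator on a circle away from $0$. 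Since $\varphi_0 \not\equiv 0$, by unique continuation (as in Lemma \ref{l:Hpos}) one can choose $r_0 \le R_0$ with $H(\varphi_0,r_0)>0$, so that $N(\varphi_a,r_0,\lambda_a,A_a) \to N(\varphi_0,r_0,\lambda_0,A_0)$ as $a\to 0$. Hence, given any $\delta' > 0$, there is $r_\delta'$ such that $N(\varphi_a, r_0,\lambda_a,A_a) \le N(\varphi_0,r_0,\lambda_0,A_0) + \delta'$ whenever $|a|$ is small.

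Next I would estimate $N(\varphi_0, r_0, \lambda_0, A_0)$ itself. By \cite[Lemma 5.4]{FFT} (the same monotonicity formula used in the proof of Lemma \ref{lemma_continuity_mb}, but now with the potential term $-\lambda_0\int|\varphi_0|^2$), the function $r \mapsto e^{\text{(something)}\,r^2}(N(\varphi_0,r,\lambda_0,A_0)+1)$ — or more simply $N(\varphi_0,r,\lambda_0,A_0)$ up to a controlled correction — is essentially monotone, and Proposition \ref{prop:fft} together with \eqref{eq:37} gives $\lim_{r\to 0^+} N(\varphi_0,r,\lambda_0,A_0) = \frac{k}{2}$. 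Therefore, by choosing $r_0$ sufficiently small we can make $N(\varphi_0,r_0,\lambda_0,A_0) \le \frac{k}{2} + \delta''$ for an arbitrarily small $\delta''$. Putting these pieces together: first fix $r_0$ small enough that $N(\varphi_0,r_0,\lambda_0,A_0) < \frac{k}{2}+\frac{\delta}{4}$ and $e^{\Lambda r_0^2/(1-\Lambda r_0^2)} < 1 + \frac{\delta}{8(k+2)}$ (say); then choose $r_\delta \le r_0$ so that for $|a| < r_\delta$ the difference $N(\varphi_a,r_0,\lambda_a,A_a) - N(\varphi_0,r_0,\lambda_0,A_0)$ is $< \frac{\delta}{4}$; then pick $K_\delta > \mu_\delta$ large enough that $c_{\delta,r_0}/K_\delta^{1-2\delta} < \frac{\delta}{4}$. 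Feeding all this into \eqref{eq:20} and using $N + 1 > 0$, one obtains, for $\mu \ge K_\delta$, $|a| < r_\delta/\mu$ and $\mu|a| \le r < r_\delta$, that $N(\varphi_a,r,\lambda_a,A_a) + 1 \le \big(\frac{k}{2}+1+\frac{\delta}{2}\big)\big(1 + \tfrac{\delta}{8(k+2)}\big) + \frac{\delta}{4} \le \frac{k}{2}+1+\delta$, which is the claim.

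The main obstacle I anticipate is not conceptual but bookkeeping: one has to track the several small parameters ($\delta', \delta'', r_0$, and the exponential factor $e^{\Lambda r_0^2/(1-\Lambda r_0^2)}$, which is close to $1$ but not equal to $1$) and the multiplicative rather than additive form of \eqref{eq:20}, so that the final bound comes out as $\frac{k}{2}+\delta$ rather than $\frac{k}{2}+C\delta$ for some constant $C$. This is handled by first absorbing all multiplicative errors into $r_0$ (which can be taken as small as we like before any $a$-dependent choice is made) and only then choosing $r_\delta$ and $K_\delta$; the order of quantifiers is the delicate point. A secondary point requiring care is the justification that $N(\varphi_0,r,\lambda_0,A_0) \to k/2$ as $r \to 0^+$ and its monotone-up-to-correction behaviour, but this is exactly the content of \cite[Lemma 5.4]{FFT} and Proposition \ref{prop:fft}, which we are entitled to invoke.
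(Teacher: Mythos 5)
Your proof is correct and follows essentially the same route as the paper's: apply inequality~\eqref{eq:20} from Lemma~\ref{l:5.5} with a fixed small radius, use Proposition~\ref{prop:fft} to get $\lim_{r\to 0^+}N(\varphi_0,r,\lambda_0,A_0)=k/2$, use \eqref{eq:21} and \eqref{eq:45} to pass the Almgren quotient at the fixed radius from $\varphi_a$ to $\varphi_0$, and then choose the radius small and $K_\delta$ large in the right order to absorb the multiplicative and additive error terms. The only cosmetic difference is in the choice of auxiliary small constants (the paper introduces an auxiliary $p$ with $p(2+\tfrac k2+\tfrac p2)<\tfrac12$, you use explicit fractions $\delta/4$, $\delta/(8(k+2))$); both bookkeeping schemes close correctly.
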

\begin{proof}
  Let $p>0$ be sufficiently small so that $p(2+\frac k2+\frac
  p2)<\frac12$. Let $\delta\in(0,\frac14)$.  Since, in view of
  Proposition \ref{prop:fft},
\[
\lim_{r\to 0^+}N(\varphi_0, r,\lambda_0,A_0)=\frac k2,
\]
we can choose $r_\delta>0$ sufficiently small so that $r_\delta<
\min\big\{R_0, (5\Lambda)^{-1/2}\big\}$, $e^{\frac{\Lambda
    {r_\delta}^2}{1 - \Lambda {r_\delta}^2}}\leq 1+\delta p$, and
$N(\varphi_0, r_\delta,\lambda_0,A_0)<\frac k2+\delta p$.

Since, in view of \eqref{eq:21} and \eqref{eq:45}, $N(\varphi_a,
r_\delta,\lambda_a,A_a)\to N(\varphi_0, r_\delta,\lambda_0,A_a)$ as
$|a|\to0$, there exists some $\alpha_\delta>0$ such that if $|a|
<\alpha_\delta$ then $N(\varphi_a, r_\delta,\lambda_a,A_a) <\frac
k2+\delta p$. From Lemma \ref{l:5.5} it follows that, if
$\mu>\mu_\delta$, $|a|<\min\{\frac{r_\delta}{\mu},\alpha_\delta\}$,
and $\mu|a|\leq r<r_\delta$, then
\begin{align*}
  N(\varphi_a, r,\lambda_a,A_a)+1&\leq (1+\delta p)\big(\tfrac
  k2+\delta
  p+1\big)+\frac{c_{\delta,r_\delta}}{\mu^{1-2\delta}}\\
  &=1+\frac k2+\delta\Big(2p+p\frac k2+\delta
  p^2\Big)+\frac{c_{\delta,r_\delta}}{\mu^{1-2\delta}}<1+\frac
  k2+\frac12\delta+\frac{c_{\delta,r_\delta}}{\mu^{1-2\delta}}.
\end{align*}
If $K_\delta>\max\big\{\mu_\delta,\big(\frac{2c_{\delta,r_\delta}}\delta\big)^{\frac1{1-2\delta}},
r_\delta/\alpha_\delta\big\}$, we 
conclude that,  if
$\mu\geq K_\delta$, $|a|<\frac{r_\delta}{\mu}$, and
$\mu|a|\leq r<r_\delta$, then $N(\varphi_a, r,\lambda_a,A_a)<\frac
k2+\delta$, thus concluding the proof.
\end{proof}

A second consequence of Lemma \ref{l:5.5} is the following estimate of
the energy of eigenfunctions $\varphi_j^a$ in disks of radius of order $|a|$.

\begin{Lemma}\label{l:sec_con}
  For $1\leq j\leq n_0$ and $a\in\Omega$, let $\varphi_j^a\in
  H^{1,a}_{0}(\Omega,\C)$ be a solution to
  \eqref{eq_eigenfunction}--\eqref{eq:23}. Let $R_0$ be as in Lemma \ref{l:Hpos}.
  For every $\delta\in(0,1/4)$, there exist $\tilde K_\delta>1$ and
  $\tilde C_\delta>0$ such that, for all $\mu\geq \tilde K_\delta$,
  $a\in\Omega$ with $|a|<\frac{R_0}\mu$, and $1\leq j\leq n_0$,
\begin{align}
\label{eq:34}&\int_{\partial D_{\mu|a|}}|\varphi_j^a|^2\,ds\leq \tilde C_\delta (\mu|a|)^{2-2\delta},\\
\label{eq:35}&\int_{D_{\mu|a|}}|(i\nabla + A_a)\varphi_j^a|^2\,dx\leq
\tilde C_\delta (\mu|a|)^{1-2\delta},\\
\label{eq:36}&\int_{D_{\mu|a|}}|\varphi_j^a|^2\,dx\leq
\tilde C_\delta (\mu|a|)^{3-2\delta}.
\end{align}
\end{Lemma}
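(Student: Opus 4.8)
The plan is to combine the monotonicity inequality of Lemma~\ref{l:5.5} with the growth estimate of Lemma~\ref{lemma_stima_H_sotto}, and to bootstrap from a single ``anchor'' estimate on a fixed scale down to all scales of order $|a|$. Fix $\delta\in(0,1/4)$ and let $\mu_\delta$ be as in Corollary~\ref{corollary_mb}. First I would fix a reference radius $r_0\le R_0$ with $r_0<(5\Lambda)^{-1/2}$, small enough that the exponential factor $e^{\Lambda r_0^2/(1-\Lambda r_0^2)}$ is bounded by, say, $2$. By \eqref{eq:21}, \eqref{eq:23} and the continuity of the eigenvalues, for all $a$ with $|a|$ small the quantities $H(\varphi_j^a,r_0)$ and $N(\varphi_j^a,r_0,\lambda_j^a,A_a)$ are uniformly bounded above (using Lemma~\ref{l:Hpos}(ii) for the lower bound on $H$ and the energy bound $E(\varphi_j^a,r_0,\lambda_j^a,A_a)\le\|(i\nabla+A_a)\varphi_j^a\|_{L^2(\Omega)}^2$, which is bounded by \eqref{eq:22}--\eqref{eq:45} and elliptic estimates). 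So there is a constant $\Gamma_\delta$ such that $N(\varphi_j^a,r_0,\lambda_j^a,A_a)+1\le\Gamma_\delta$ for all $|a|<\alpha_\delta$ and $1\le j\le n_0$.

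Next, Lemma~\ref{l:5.5} gives, for $\mu>\mu_\delta$, $|a|<r_0/\mu$ and $\mu|a|\le r<r_0$,
\[
N(\varphi_j^a,r,\lambda_j^a,A_a)+1\le e^{\frac{\Lambda r_0^2}{1-\Lambda r_0^2}}\Gamma_\delta+\frac{c_{\delta,r_0}}{\mu^{1-2\delta}}\le 2\Gamma_\delta+c_{\delta,r_0},
\]
so $N(\varphi_j^a,r,\lambda_j^a,A_a)$ is bounded by a constant $C_\delta'$ on the whole range $[\mu|a|,r_0)$, once $\mu$ is at least some $\tilde K_\delta>\mu_\delta$. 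In particular $E(\varphi_j^a,r,\lambda_j^a,A_a)\le C_\delta' H(\varphi_j^a,r)$ there. Then I would apply Lemma~\ref{lemma_stima_H_sotto} with $r_1=\mu|a|$ and $r_2=r_0$ to write
\[
H(\varphi_j^a,\mu|a|)\le e^{\frac52\Lambda r_0^2}\Big(\frac{\mu|a|}{r_0}\Big)^{1-2\delta}H(\varphi_j^a,r_0)\le C_\delta''(\mu|a|)^{1-2\delta},
\]
using the uniform upper bound on $H(\varphi_j^a,r_0)$. This is exactly \eqref{eq:34} (recalling $H(\varphi_j^a,r)=\frac1r\int_{\partial D_r}|\varphi_j^a|^2\,ds$, so $\int_{\partial D_{\mu|a|}}|\varphi_j^a|^2\,ds=\mu|a|\,H(\varphi_j^a,\mu|a|)\le C_\delta''(\mu|a|)^{2-2\delta}$).

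For \eqref{eq:35}, note that by definition $E(\varphi_j^a,\mu|a|,\lambda_j^a,A_a)=N(\varphi_j^a,\mu|a|,\cdot)\,H(\varphi_j^a,\mu|a|)\le C_\delta' C_\delta''(\mu|a|)^{1-2\delta}$; since $\int_{D_{\mu|a|}}|(i\nabla+A_a)\varphi_j^a|^2\,dx=E(\varphi_j^a,\mu|a|,\lambda_j^a,A_a)+\lambda_j^a\int_{D_{\mu|a|}}|\varphi_j^a|^2\,dx$, it suffices to absorb the second term. For that, and for \eqref{eq:36} directly, I would invoke Lemma~\ref{poincare_inequality}: $\frac1{(\mu|a|)^2}\int_{D_{\mu|a|}}|\varphi_j^a|^2\,dx\le H(\varphi_j^a,\mu|a|)+\int_{D_{\mu|a|}}|(i\nabla+A_a)\varphi_j^a|^2\,dx$. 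Combined with \eqref{eq:34} this gives a bound on $\int_{D_{\mu|a|}}|\varphi_j^a|^2\,dx$ in terms of $(\mu|a|)^{2}$ times $\big[(\mu|a|)^{1-2\delta}+\int_{D_{\mu|a|}}|(i\nabla+A_a)\varphi_j^a|^2\big]$; feeding this back into the identity for $\int_{D_{\mu|a|}}|(i\nabla+A_a)\varphi_j^a|^2$ and using $\lambda_j^a(\mu|a|)^2\le\Lambda r_0^2<1/5$ to absorb, one closes the loop and obtains \eqref{eq:35} with an extra factor bounded by a constant, and then \eqref{eq:36} follows immediately (the power $3-2\delta=2+(1-2\delta)$ matches). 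Taking $\tilde C_\delta$ to be the largest of the constants produced, and $\tilde K_\delta$ large enough to also satisfy $r_0/\tilde K_\delta<\alpha_\delta$, completes the proof.

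The main obstacle is bookkeeping rather than a deep difficulty: one must be careful that all the constants ($\Gamma_\delta$, $c_{\delta,r_0}$, the exponential factors, the absorption constant from Poincar\'e) are genuinely independent of $a$ and of $j\in\{1,\dots,n_0\}$, which is why the uniform lower bound on $H(\varphi_j^a,R_0)$ from Lemma~\ref{l:Hpos}(ii) and the uniform energy bound $\Lambda<\infty$ from \eqref{eq:19} are essential; the one genuinely nontrivial input, the near-optimal bound $N\le k/2+\delta$ type control, is not actually needed here (that is Lemma~\ref{lemma_limitatezza_N_per_blowup}) — a crude uniform bound on $N$ at one fixed scale suffices.
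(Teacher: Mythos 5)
Your proof is correct and follows essentially the same chain of estimates as the paper's: a uniform Almgren bound at scale $\mu|a|$ from Lemma~\ref{l:5.5} anchored at a fixed radius (the paper just takes $R_0$, which already satisfies all your constraints including $e^{\Lambda R_0^2/(1-\Lambda R_0^2)}<2$, so no shrinking is needed), then Lemma~\ref{lemma_stima_H_sotto} to transport a uniform bound on $H$ down to $\mu|a|$ for \eqref{eq:34}, then the Poincar\'e absorption (Lemma~\ref{poincare_inequality}) to obtain \eqref{eq:35} and \eqref{eq:36}. The only slip is a citation one: \eqref{eq:21}, \eqref{eq:22} and \eqref{eq:45} concern $j=n_0$ only, so for the $j$-uniform anchor bounds you should instead invoke $\lambda_j^a\le\Lambda$ from \eqref{eq:19} together with \eqref{eq:23} (as the paper does in \eqref{eq:33}), and the uniform upper bound on $H(\varphi_j^a,R_0)$ via Lemma~\ref{lemma_Sobolev_inequality} and Corollary~\ref{corollary_mb} (the paper's \eqref{eq:32}), rather than $j=n_0$-specific convergences.
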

\begin{proof}
  Let us fix $\delta\in(0,1/4)$ and let $\mu_\delta$ be as in Corollary
  \ref{corollary_mb}. From Lemma \ref{l:5.5} it follows that, if
  $\mu>\mu_\delta$ and $|a|<\frac{R_0}{\mu}$ then, for all $1\leq
  j\leq n_0$,
\begin{equation}\label{eq:28}
  N(\varphi_j^a, \mu|a|,\lambda_j^a,A_a) 
  \leq e^{\frac{\Lambda {R_0}^2}{1 - \Lambda {R_0}^2}}
  \left(N(\varphi_j^a, R_0,\lambda_j^a,A_a) +1\right) +
  \frac{c_{\delta,R_0}}{\mu_\delta^{1-2\delta}}-1 .
\end{equation}
From \eqref{eq_eigenfunction}, \eqref{eq:23}, and \eqref{eq:19} we
deduce that 
\begin{equation}\label{eq:33}
\int_{D_{R_0}} \abs{(i\nabla + A_a)\varphi_j^a}^2\,dx\leq 
\int_{\Omega} \abs{(i\nabla + A_a)\varphi_j^a}^2\,dx=\lambda_j^a\leq
\Lambda,
\end{equation}
therefore, in view of Lemma \ref{l:Hpos}, if $|a|<\alpha_0$,
\begin{equation}\label{eq:29}
  N(\varphi_j^a, R_0,\lambda_j^a,A_a)=
  \frac{\int_{D_{R_0}} |(i\nabla + A_a)\varphi_j^a|^2\,dx -
    \lambda_j^a \int_{D_{R_0}}|\varphi_j^a|^2\,dx}
  {H(\varphi_j^a, R_0)}\leq \frac{2\Lambda}{C_0}.
\end{equation}
Combining \eqref{eq:28} and \eqref{eq:29} we obtain that, if
$\mu\geq \tilde K_\delta$ with
$\tilde K_\delta>\max\{\mu_\delta,R_0/\alpha_0\}$ and
$|a|<\frac{R_0}\mu$, then
\[
\int_{D_{\mu|a|}} |(i\nabla + A_a)\varphi_j^a|^2\,dx - \lambda_j^a
\int_{D_{\mu|a|}}|\varphi_j^a|^2\,dx
\leq {\rm const}_\delta H(\varphi_j^a, \mu|a|)
\]
for some positive ${\rm const}_\delta>0$ depending on $\delta$. Hence,
from Lemma \ref{poincare_inequality},
\[
(1-\Lambda\mu^2|a|^2) \int_{D_{\mu|a|}} |(i\nabla +
A_a)\varphi_j^a|^2\,dx -\Lambda(\mu|a|)^2 H(\varphi_j^a, \mu|a|)\leq {\rm const}_\delta H(\varphi_j^a, \mu|a|)
\]
which implies 
\begin{equation}\label{eq:30}
  \int_{D_{\mu|a|}} |(i\nabla +
  A_a)\varphi_j^a|^2\,dx\leq \frac{\Lambda R_0^2+{\rm
      const}_\delta}{1-\Lambda R_0^2} H(\varphi_j^a, \mu|a|).
\end{equation}
From Lemma \ref{lemma_stima_H_sotto} it follows that, if
$\mu\geq\tilde K_\delta$ and $|a|<\frac{R_0}\mu$,
\begin{equation}\label{eq:31}
  H(\varphi_j^a,\mu|a|)\leq e^{\frac52 \Lambda R_0^2}
  \left(\dfrac{\mu|a|}{R_0}\right)^{1-2\delta} H(\varphi_j^a,R_0).
\end{equation}
On the other hand, Lemma \ref{lemma_Sobolev_inequality}, Corollary
\ref{corollary_mb}, and \eqref{eq:33} yield
\begin{equation}\label{eq:32}
  H(\varphi_j^a,R_0)\leq \frac1{m_{a/R_0}}\int_{D_{R_0}} \abs{(i\nabla
    + A_a)\varphi_j^a}^2\,dx\leq \frac{2 \Lambda}{1-2\delta}.
\end{equation}
Estimate \eqref{eq:34} follows combining \eqref{eq:31}, and
\eqref{eq:32}, whereas estimate \eqref{eq:35} follows from
\eqref{eq:30}, \eqref{eq:31}, and \eqref{eq:32}.  Finally,
\eqref{eq:36} can be deduced from \eqref{eq:34}, \eqref{eq:35} and
Lemma \ref{poincare_inequality}.
\end{proof}

We blow-up the family of eigenfunctions $\{\varphi_a\}$ with
$a=(|a|,0)$ as $|a|\to0$, i.e. we introduce the family of functions
\begin{equation}\label{def_blowuppate_normalizzate}
 \tilde \varphi_a (x) :=
 \dfrac{\varphi_a(|a|x)}{\sqrt{H(\varphi_a,K_\delta |a|)}},\quad
 a=(|a|,0)=|a|\mathbf e,
\end{equation}
with  $K_\delta$ being as in Lemma \ref{lemma_limitatezza_N_per_blowup}
for some fixed $\delta\in(0,1/4)$. We observe that $\tilde \varphi_a$ weakly
solves
\begin{equation}\label{eq:equations_for_blowedup}
  (i\nabla + A_{\mathbf e})^2 \tilde\varphi_a =|a|^2 \lambda_a
  \tilde\varphi_a,  \quad\text{in }\tfrac1{|a|}\Omega
  =\{x\in\R^2:|a|x\in\Omega\},
\end{equation}
and 
\begin{equation}\label{eq:62}
  \frac{1}{K_\delta}\int_{\partial D_{K_\delta}}|\tilde\varphi_a|^2\,ds=1.
\end{equation}
In section \ref{sec:blow-up-analysis} we will prove that
$\tilde\varphi_a$ converges to a limit profile which is a multiple of
the function $\Psi_k$ introduced in \eqref{eq:10}. To this aim, the
energy estimates below will play a crucial role.

\begin{Theorem}\label{t:stime_blowup}
For all $R\geq K_\delta$,  
\begin{equation}\label{eq:67}
\text{the family of functions }\big\{\tilde
 \varphi_a: a=|a|{\mathbf e}, |a|<\tfrac{r_\delta}{R}\big\}
 \text{ is bounded in $H^{1 ,{\mathbf e}}(D_R,\C)$}.
\end{equation}
In particular, for all $R\geq K_\delta$,  
\begin{align}
\label{eq:46}&\int_{D_{R|a|}} \abs{(i\nabla +
  A_a)\varphi_a}^2dx=O(H(\varphi_a,
K_\delta|a|)),\quad\text{as }|a|\to0^+,\\
\label{eq:47}&\int_{\partial D_{R|a|}} |\varphi_a|^2dx=O(|a|H(\varphi_a,
K_\delta|a|)),\quad\text{as }|a|\to0^+,\\
\label{eq:48}&\int_{D_{R|a|}} |\varphi_a|^2dx=O(|a|^2H(\varphi_a,
K_\delta|a|)),\quad\text{as }|a|\to0^+.  
\end{align}
\end{Theorem}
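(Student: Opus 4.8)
The plan is to establish the scaled estimates \eqref{eq:46}--\eqref{eq:48} first, and then to read off \eqref{eq:67} from them via the change of variables $y=|a|x$. Fix $\delta\in(0,1/4)$ and take $K_\delta$, $r_\delta$ as in Lemma \ref{lemma_limitatezza_N_per_blowup}, enlarging $K_\delta$ and shrinking $r_\delta$ if needed so that $r_\delta<(5\Lambda)^{-1/2}$ and so that all the smallness conditions invoked below are simultaneously implied by $|a|<r_\delta/R$. The crucial first step is to control $H(\varphi_a,r)$ at radii comparable to $|a|$. By Lemma \ref{lemma_derivata_H}, $\frac{d}{dr}\log H(\varphi_a,r)=\frac2r N(\varphi_a,r,\lambda_a,A_a)$ on $(|a|,R_0)$. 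Applying Lemma \ref{lemma_limitatezza_N_per_blowup} with $\mu=K_\delta$, for $|a|<r_\delta/R$ one has $N(\varphi_a,r,\lambda_a,A_a)\le\frac k2+\delta$ for every $r\in[K_\delta|a|,R|a|]$ (note $R|a|<r_\delta$). Integrating the logarithmic-derivative identity from $K_\delta|a|$ to $r\in[K_\delta|a|,R|a|]$ then gives
\[
H(\varphi_a,r)\le\Big(\tfrac{r}{K_\delta|a|}\Big)^{k+2\delta}H(\varphi_a,K_\delta|a|)\le\Big(\tfrac{R}{K_\delta}\Big)^{k+2\delta}H(\varphi_a,K_\delta|a|);
\]
taking $r=R|a|$ and recalling \eqref{eq:40} this is exactly \eqref{eq:47}.

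Next I would derive \eqref{eq:46} and \eqref{eq:48}. From $N=E/H$ and \eqref{eq:39}--\eqref{eq:40},
\[
\int_{D_{R|a|}}|(i\nabla+A_a)\varphi_a|^2\,dx=H(\varphi_a,R|a|)\,N(\varphi_a,R|a|,\lambda_a,A_a)+\lambda_a\int_{D_{R|a|}}|\varphi_a|^2\,dx,
\]
where the first term is $\le(\frac k2+\delta)(\frac R{K_\delta})^{k+2\delta}H(\varphi_a,K_\delta|a|)$ by the previous step. For the last term, Lemma \ref{poincare_inequality} applied on $D_{R|a|}$ (legitimate since $R>1$, so $a\in D_{R|a|}$) gives
\[
\int_{D_{R|a|}}|\varphi_a|^2\,dx\le(R|a|)^2\Big(H(\varphi_a,R|a|)+\int_{D_{R|a|}}|(i\nabla+A_a)\varphi_a|^2\,dx\Big),
\]
and since $\lambda_a(R|a|)^2\le\Lambda(R|a|)^2<\frac12$ one may absorb $\lambda_a\int_{D_{R|a|}}|\varphi_a|^2$ into the left-hand side of the displayed identity, obtaining $\int_{D_{R|a|}}|(i\nabla+A_a)\varphi_a|^2\,dx\le C\,H(\varphi_a,K_\delta|a|)$ with $C=C(R,k,\delta,\Lambda)$, i.e. \eqref{eq:46}; feeding \eqref{eq:46} and \eqref{eq:47} back into the Poincaré estimate yields \eqref{eq:48}.

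It remains to pass to $\tilde\varphi_a$. Since $A_0$ is positively homogeneous of degree $-1$, one has $A_a(|a|x)=|a|^{-1}A_{{\mathbf e}}(x)$, hence
\[
(i\nabla_x+A_{{\mathbf e}}(x))\tilde\varphi_a(x)=\frac{|a|}{\sqrt{H(\varphi_a,K_\delta|a|)}}\big((i\nabla+A_a)\varphi_a\big)(|a|x),\qquad H(\tilde\varphi_a,\rho)=\frac{H(\varphi_a,\rho|a|)}{H(\varphi_a,K_\delta|a|)}.
\]
Changing variables $y=|a|x$ gives
\[
\int_{D_R}|(i\nabla+A_{{\mathbf e}})\tilde\varphi_a|^2\,dx=\frac{\int_{D_{R|a|}}|(i\nabla+A_a)\varphi_a|^2\,dy}{H(\varphi_a,K_\delta|a|)},\qquad\int_{D_R}|\tilde\varphi_a|^2\,dx=\frac{\int_{D_{R|a|}}|\varphi_a|^2\,dy}{|a|^2H(\varphi_a,K_\delta|a|)},
\]
which are $O(1)$ by \eqref{eq:46} and \eqref{eq:48}. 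Finally, since ${\mathbf e}\in D_R$, splitting $D_R$ into a small disk $D_\rho({\mathbf e})$, on which the Hardy inequality \eqref{eq:hardy} applies, and $D_R\setminus D_\rho({\mathbf e})$, on which $|x-{\mathbf e}|^{-2}\le\rho^{-2}$, gives $\int_{D_R}|x-{\mathbf e}|^{-2}|\tilde\varphi_a|^2\,dx=O(1)$; together with $|\nabla\tilde\varphi_a|\le|(i\nabla+A_{{\mathbf e}})\tilde\varphi_a|+\tfrac12|x-{\mathbf e}|^{-1}|\tilde\varphi_a|$ this bounds the $H^{1,{\mathbf e}}(D_R,\C)$-norm of $\tilde\varphi_a$ (using the equivalent norm \eqref{eq:norma}), which is \eqref{eq:67}.

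The one substantial point is the first step: to make the blow-up meaningful one needs $H(\varphi_a,r)$ to stay comparable to $H(\varphi_a,K_\delta|a|)$ for \emph{all} radii $r$ of order $|a|$, uniformly in $a$, and this is precisely what the a priori frequency bound of Lemma \ref{lemma_limitatezza_N_per_blowup} — itself a consequence of the Almgren-type monotonicity of Lemma \ref{l:5.5} — combined with the identity of Lemma \ref{lemma_derivata_H} delivers. Everything after that is routine bookkeeping with the Poincaré and Hardy inequalities and the scaling, the only care being to check that the various smallness requirements on $|a|$ are all implied by $|a|<r_\delta/R$ after shrinking $r_\delta$ and enlarging $K_\delta$.
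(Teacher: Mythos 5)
Your proof is correct and follows essentially the same route as the paper's: the key ingredients are identical (the frequency bound of Lemma~\ref{lemma_limitatezza_N_per_blowup}, integration of the logarithmic-derivative identity from Lemma~\ref{lemma_derivata_H} to control $H(\varphi_a,R|a|)/H(\varphi_a,K_\delta|a|)$, and Poincar\'e to close the estimates), with only a harmless reordering (you establish the unscaled bounds \eqref{eq:46}--\eqref{eq:48} first and then scale, whereas the paper scales first and reads the unscaled bounds off at the end) and a slightly more direct absorption of the $L^2$ term that bypasses Corollary~\ref{corollary_mb}.
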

\begin{proof}
  For $\delta\in(0,1/4)$ fixed, let $r_\delta>0$ and
  $K_\delta>\mu_\delta$ be as in Lemma
  \ref{lemma_limitatezza_N_per_blowup}, so that Lemma
  \ref{lemma_limitatezza_N_per_blowup} yields
\begin{equation}\label{eq:61}
N(\varphi_a, R|a|,\lambda_a,A_a)\leq \frac{k}2+\delta,\quad \text{for all $R\geq
  K_\delta$ and $|a|<\frac{r_\delta}R$}. 
\end{equation}
Let us observe that, by a standard change of variables in the
integrals and \eqref{eq:61},
\begin{align}\label{eq:relazione_N_blowuppate_raggio_fissato}
  N(\varphi_a, R|a|,\lambda_a,A_a) &= \dfrac{R|a|
    \left(\int_{D_{R|a|}} \abs{(i\nabla + A_a)\varphi_a}^2dx -
      \lambda_a \int_{D_{R|a|}} \abs{\varphi_a}^2dx\right)}{\int_{\partial D_{R|a|}}\abs{\varphi_a}^2ds} \\
  \notag &= \dfrac{R \left(\int_{D_R} \abs{(i\nabla + A_{\mathbf
          e})\tilde \varphi_a}^2dx - |a|^2\lambda_a \int_{D_R}
      \abs{\tilde \varphi_a}^2dx\right)}{\int_{\partial
      D_{R}}\abs{\tilde \varphi_a}^2ds} \leq \dfrac{k}2 + \delta.
\end{align}
Thus, via Corollary \ref{corollary_mb}, Lemma
\ref{poincare_inequality} and \eqref{eq:relazione_N_blowuppate_raggio_fissato}, for all $R\geq
  K_\delta$ and $|a|<\frac{r_\delta}R$ there holds
\begin{align}\label{eq:64}
 & (1-5\Lambda r_\delta^2)\int_{D_R}\abs{(i\nabla + A_{\mathbf e})\tilde
      \varphi_a}^2dx\\
\notag&\quad\leq
  \Big(1-\lambda_a|a|^2R^2(1+m_{{\mathbf e}/R}^{-1})\Big)\int_{D_R}\abs{(i\nabla + A_{\mathbf e})\tilde
      \varphi_a}^2dx\\
\notag&\quad\leq
\int_{D_R} \abs{(i\nabla + A_{\mathbf e})\tilde
      \varphi_a}^2dx - |a|^2\lambda_a 
\int_{D_R} \abs{\tilde \varphi_a}^2dx\\
\notag&\quad\leq H(\tilde\varphi_a,R)\Big(\frac{k}2+\delta\Big)=
\frac{H(\varphi_a,R|a|)}{H(\varphi_a,K_\delta|a|)}\bigg(\frac{k}2+\delta\bigg).
\end{align}
From Lemmas \ref{lemma_derivata_H} and
\ref{lemma_limitatezza_N_per_blowup}, there holds that, if $R\geq K_\delta$ and $|a|<\frac{r_\delta}{R}$,
\begin{equation}\label{eq:73}
  \frac1 {H(\varphi_a,r)}\dfrac{d}{dr} H(\varphi_a,r) = \dfrac2r
  N(\varphi_a, r,\lambda_a,A_a)\leq \frac
  2r\bigg(\frac{k}2+\delta\bigg)\quad\text{for all }K_\delta |a|\leq
  r\leq r_\delta,
\end{equation}
hence integration between $K_\delta |a|$ and $R|a|$ yields
\begin{equation}\label{eq:63}
 H(\tilde\varphi_a,R)=\frac{H(\varphi_a,R|a|)}{H(\varphi_a,K_\delta|a|)}
\leq \bigg(\frac{R}{K_\delta}\bigg)^{\!\!k+2\delta}.
\end{equation}
From \eqref{eq:64} and \eqref{eq:63} we obtain that, if $R\geq K_\delta$ and $|a|<\frac{r_\delta}{R}$, 
\begin{equation}\label{eq:65}
  \int_{D_R}\abs{(i\nabla + A_{\mathbf e})\tilde
    \varphi_a}^2dx\leq \frac1{1-5\Lambda
    r_\delta^2}\bigg(\frac{k}2+\delta\bigg)
  \bigg(\frac{R}{K_\delta}\bigg)^{\!\!k+2\delta}.
  \end{equation}
Moreover \eqref{eq:63} yields 
\begin{equation}\label{eq:66}
  \int_{\partial D_R}|\tilde\varphi_a|^2\,ds\leq R
  \bigg(\frac{R}{K_\delta}\bigg)^{\!\!k+2\delta}.
\end{equation}
Estimates \eqref{eq:65} and \eqref{eq:66} together with Lemma
\ref{poincare_inequality} imply \eqref{eq:67}. To conclude, we observe
that \eqref{eq:65} yields \eqref{eq:46}, \eqref{eq:63} imply
\eqref{eq:47}, while \eqref{eq:48} follows from \eqref{eq:46} and
\eqref{eq:47} in view of Lemma \ref{poincare_inequality}.
\end{proof}

\section{Preliminary estimates for the difference of eigenvalues}\label{sec:6}

\subsection{Upper bound for $\lambda_0-\lambda_a$: the Rayleigh quotient for $\lambda_0$}\label{sec:rayl-quot-lambd}
We are now going to estimate the Rayleigh quotient for $\lambda_0$.
By the Courant-Fisher \emph{minimax characterization} of the eigenvalue
 $\lambda_0$, we have that
\begin{equation}\label{eq:91}
  \lambda_0 =\! \min\bigg\{\!\max_{u\in F\setminus \{0\}}
\dfrac{\int_{\Omega} \abs{(i\nabla+A_0) u}^2dx}{\int_{\Omega}
  |u|^2\,dx}:F \text{ is a subspace of $H^{1,0}_0(\Omega,\C)$, $\dim F= n_0$}\bigg\}.
\end{equation}
Let $R>2$. Being $R_0$ as in Lemma \ref{l:Hpos}, for every $a=(|a|,0)$ with $|a|<{R_0}/{R}$ 
we define  the functions
$v_{j,R,a}$  as follows:
\[
 v_{j,R,a}= 
 \begin{cases}
  v_{j,R,a}^{ext}, &\text{in }\Omega \setminus D_{R|a|},\\
  v_{j,R,a}^{int}, &\text{in } D_{R|a|},
 \end{cases}
\quad j=1,\ldots,n_0,
\]
where 
\begin{equation}\label{eq:viext}
  v_{j,R,a}^{ext} := e^{\frac{i}{2}(\theta_0 - \theta_a)}
  \varphi_j^a\quad \text{in }
  \Omega \setminus D_{R|a|}, 
\end{equation}
with $\varphi_j^a$ as in \eqref{eq_eigenfunction}--\eqref{eq:89} and
$\theta_a,\theta_0$ as in \eqref{eq:theta_a} (notice that
$e^{\frac{i}{2}(\theta_0 - \theta_a)}$ is smooth in $\Omega \setminus D_{R|a|}$),  so that it solves
\begin{equation*}
 \begin{cases}
   (i\nabla +A_0)^2 v_{j,R,a}^{ext} = \lambda_j^a v_{j,R,a}^{ext}, &\text{in }\Omega \setminus D_{R|a|},\\
   v_{j,R,a}^{ext} = e^{\frac{i}{2}(\theta_0 - \theta_a)} \varphi_j^a
   &\text{on }\partial (\Omega \setminus D_{R|a|}),
 \end{cases}
\end{equation*}
whereas $v_{j,R,a}^{int}$ is the unique solution to the minimization problem 
\begin{multline}\label{eq:viint}
  \int_{D_{R|a|}} |(i\nabla +A_0)
  v_{j,R,a}^{int}(x)|^2\,dx\\
  = \min\left\{ \int_{D_{R|a|}} |(i\nabla +A_0)u(x)|^2\,dx:\, u\in
    H^{1,0}(D_{R|a|},\C), \ u= e^{\frac{i}{2}(\theta_0 - \theta_a)}
    \varphi_j^a \text{ on }\partial D_{R|a|} \right\},
\end{multline}
so that it solves
\begin{equation}\label{problema_viint}
 \begin{cases}
  (i\nabla +A_0)^2 v_{j,R,a}^{int} = 0, &\text{in }D_{R|a|},\\
  v_{j,R,a}^{int} = e^{\frac{i}{2}(\theta_0 - \theta_a)} \varphi_j^a, &\text{on }\partial D_{R|a|}.
 \end{cases}
\end{equation}
It is easy to verify that 
\begin{equation}\label{eq:90}
\mathop{\rm dim}\big(\mathop{\rm span} \{v_{1,R,a},\ldots,v_{n_0,R,a}\}\big)=n_0.
\end{equation}

\begin{Lemma}\label{lemma_stime_vj}
  For $\delta\in(0,1/4)$, let $\tilde K_\delta>1$ be as in Lemma
  \ref{l:sec_con} and let $R_0$ be as in Lemma \ref{l:Hpos}. For all
  $R> \max\{2,\tilde K_\delta\}$, $a=(|a|,0)\in\Omega$ with
  $|a|<\frac{R_0}R$, and $1\leq j\leq n_0$, let $v_{j,R,a}^{int}$ be
  defined in \eqref{eq:viint}-\eqref{problema_viint}. 
Then
  there exists $\hat C_\delta>0$ (depending only on $\delta$) such that
\begin{align}
\label{eq:35vint}&\int_{D_{R|a|}}|(i\nabla + A_0) v_{j,R,a}^{int}|^2\,dx\leq
\hat C_\delta (R|a|)^{1-2\delta},\\
\label{eq:34vint}&\int_{\partial D_{R|a|}}|v_{j,R,a}^{int}|^2\,ds\leq \hat C_\delta (R|a|)^{2-2\delta},\\
\label{eq:36vint}&\int_{D_{R|a|}}|v_{j,R,a}^{int}|^2\,dx\leq
\hat C_\delta(R|a|)^{3-2\delta}.
\end{align}
\end{Lemma}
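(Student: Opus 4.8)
The plan is to deduce all three estimates from the minimality of $v_{j,R,a}^{int}$ in \eqref{eq:viint}, together with the energy bounds of Lemma \ref{l:sec_con}, which we always use with $\mu=R$ (admissible since $R>\tilde K_\delta$ and $|a|<R_0/R$, so in particular $D_{R|a|}\subset D_{R_0}\subset\Omega$). Estimate \eqref{eq:34vint} is immediate: by \eqref{problema_viint} one has $v_{j,R,a}^{int}=e^{\frac i2(\theta_0-\theta_a)}\varphi_j^a$ on $\partial D_{R|a|}$, whence $\int_{\partial D_{R|a|}}|v_{j,R,a}^{int}|^2\,ds=\int_{\partial D_{R|a|}}|\varphi_j^a|^2\,ds\leq\tilde C_\delta(R|a|)^{2-2\delta}$ by \eqref{eq:34}.

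For \eqref{eq:35vint} I would exhibit an explicit admissible competitor for \eqref{eq:viint}. Since $R>2$, the closed disk $\overline{D_{|a|}}$ is contained in $D_{R|a|/2}$; in particular both the pole $a$ and the segment $[0,a]=\{(x_1,0):0\leq x_1\leq|a|\}$ — which is exactly the set off which $e^{\frac i2(\theta_0-\theta_a)}$ is smooth, this function being smooth on $\R^2$ minus that segment and merely changing sign across it — lie inside $D_{R|a|/2}$. Fix a smooth $\zeta\colon[0,+\infty)\to[0,1]$ with $\zeta\equiv0$ on $[0,\tfrac12]$ and $\zeta\equiv1$ on $[\tfrac34,+\infty)$, put $\xi(x)=\zeta(|x|/(R|a|))$, and set
\[
z:=\xi\,e^{\frac i2(\theta_0-\theta_a)}\varphi_j^a\qquad\text{in }D_{R|a|}.
\]
Then $z$ vanishes identically on $D_{R|a|/2}$ and coincides with $\xi$ times a smooth function on the complement, where neither the pole nor the singular segment is present; hence $z$ is smooth on $D_{R|a|}$, vanishes near $0$, has bounded gradient, and therefore belongs to $H^{1,0}(D_{R|a|},\C)$, while $z=e^{\frac i2(\theta_0-\theta_a)}\varphi_j^a$ on $\partial D_{R|a|}$ because $\xi\equiv1$ there; so $z$ is admissible in \eqref{eq:viint}.

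On $\{\xi\neq0\}$ all functions involved are smooth and the gauge identity $(i\nabla+A_0)\big(e^{\frac i2(\theta_0-\theta_a)}u\big)=e^{\frac i2(\theta_0-\theta_a)}(i\nabla+A_a)u$ holds (recall $\nabla\big(\tfrac12(\theta_0-\theta_a)\big)=A_0-A_a$), so
\[
(i\nabla+A_0)z=i(\nabla\xi)\,e^{\frac i2(\theta_0-\theta_a)}\varphi_j^a+\xi\,e^{\frac i2(\theta_0-\theta_a)}(i\nabla+A_a)\varphi_j^a,
\]
and thus $|(i\nabla+A_0)z|^2\leq2|\nabla\xi|^2|\varphi_j^a|^2+2|(i\nabla+A_a)\varphi_j^a|^2$ on $D_{R|a|}$. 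Since $|\nabla\xi|\leq c/(R|a|)$ for an absolute constant $c$, integrating and invoking \eqref{eq:36} and \eqref{eq:35} of Lemma \ref{l:sec_con} gives
\[
\int_{D_{R|a|}}|(i\nabla+A_0)z|^2\,dx\leq\frac{c^2}{(R|a|)^2}\int_{D_{R|a|}}|\varphi_j^a|^2\,dx+2\int_{D_{R|a|}}|(i\nabla+A_a)\varphi_j^a|^2\,dx\leq C_\delta(R|a|)^{1-2\delta}.
\]
By the minimality of $v_{j,R,a}^{int}$ in \eqref{eq:viint}, the left-hand side dominates $\int_{D_{R|a|}}|(i\nabla+A_0)v_{j,R,a}^{int}|^2\,dx$, which proves \eqref{eq:35vint}. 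Finally, applying the Poincaré inequality of Lemma \ref{poincare_inequality} on $D_{R|a|}$ with pole $0$ (its proof is purely local and applies verbatim to any element of $H^{1,0}(D_{R|a|},\C)$) to $v_{j,R,a}^{int}$, and using \eqref{eq:34vint} and \eqref{eq:35vint}, we get
\[
\frac1{(R|a|)^2}\int_{D_{R|a|}}|v_{j,R,a}^{int}|^2\,dx\leq\frac1{R|a|}\int_{\partial D_{R|a|}}|v_{j,R,a}^{int}|^2\,ds+\int_{D_{R|a|}}|(i\nabla+A_0)v_{j,R,a}^{int}|^2\,dx\leq C_\delta(R|a|)^{1-2\delta},
\]
and multiplying by $(R|a|)^2$ yields \eqref{eq:36vint}.

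The only genuinely delicate step is the choice of competitor: the ``obvious'' candidate $e^{\frac i2(\theta_0-\theta_a)}\varphi_j^a$ cannot be used on the whole disk, because it changes sign across the segment $[0,a]$ and hence does not belong to $H^{1,0}(D_{R|a|},\C)$. Truncating it outside $D_{R|a|/2}$ — possible precisely because $|a|=R|a|/R$ is small compared with the radius $R|a|$ — removes both the singular segment and the pole, at the cost of a gradient term that \eqref{eq:36} controls by $(R|a|)^{1-2\delta}$. Everything else is a routine combination of Lemma \ref{l:sec_con} and Lemma \ref{poincare_inequality}, and the constants appearing above can be absorbed into a single $\hat C_\delta>0$ depending only on $\delta$.
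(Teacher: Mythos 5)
Your proof is correct and follows essentially the same route as the paper's: both obtain \eqref{eq:34vint} directly from the boundary condition and \eqref{eq:34}, both obtain \eqref{eq:35vint} via the Dirichlet principle with a competitor obtained by truncating the gauged eigenfunction inside $D_{R|a|/2}$ (you call the cut-off $\xi$, the paper calls it $\eta_{a,R}$), and both deduce \eqref{eq:36vint} from the first two via the magnetic Poincar\'e inequality (Lemma \ref{poincare_inequality}). The paper leaves implicit the observation you make explicit — that $e^{\frac i2(\theta_0-\theta_a)}\varphi_j^a$ is not admissible because of the sign jump across $[0,a]$, which is why the truncation is needed — but the underlying argument is identical.
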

\begin{proof}
Let  $\eta_{a,R}:\R^2\to\R$ be a smooth cut-off function such that
$\eta_{a,R}(x)=1$ if $|x|\geq R|a|$, 
$\eta_{a,R}(x)=0$ if $|x|\leq \frac{R|a|}{2}$, 
$0\leq \eta_{a,R}(x)\leq 1$ for all $x$, and $|\nabla
\eta_{a,R}(x)|\leq \frac4{R|a|}$.
From \eqref{eq:viint} it follows that 
\begin{align}\label{eq:12}
&  \int_{D_{R|a|}} |(i\nabla +A_0) v_{j,R,a}^{int}(x)|^2\,dx\leq
  \int_{D_{R|a|}} \Big|(i\nabla +A_0) \big(e^{\frac{i}{2}(\theta_0 -
    \theta_a)} \varphi_j^a \eta_{a,R}\big)(x) \Big|^2\,dx\\
\notag&=\int_{D_{R|a|}} \Big|\eta_{a,R}(x) (i\nabla +A_0) \big(e^{\frac{i}{2}(\theta_0 -
    \theta_a)} \varphi_j^a\big)(x) +ie^{\frac{i}{2}(\theta_0 -
    \theta_a)(x)} \varphi_j^a(x) \nabla\eta_{a,R}(x)\Big|^2\,dx\\
\notag&\leq 2\int_{D_{R|a|}\setminus D_{\frac{R|a|}2} } \Big|(i\nabla +A_0) \big(e^{\frac{i}{2}(\theta_0 -
    \theta_a)} \varphi_j^a\big)(x)\Big|^2\,dx +
2\int_{D_{R|a|}}
|\varphi_j^a(x)|^2 |\nabla\eta_{a,R}(x)|^2\,dx\\
\notag&\leq 2\int_{D_{R|a|}\setminus D_{\frac{R|a|}2} } \big|(i\nabla +A_a)\varphi_j^a(x)\big|^2\,dx +
\frac{32}{R^2|a|^2}\int_{D_{R|a|}}
|\varphi_j^a(x)|^2 \,dx,
\end{align}
which yields \eqref{eq:35vint} in view of estimates \eqref{eq:35} and
\eqref{eq:36}. Estimate \eqref{eq:34vint}
follows directly from \eqref{problema_viint} and \eqref{eq:34}. We
finally conclude by observing that \eqref{eq:36vint} follows from 
Lemma \ref{poincare_inequality} and estimates \eqref{eq:35vint} and \eqref{eq:34vint}.
\end{proof}

For all  $R> 2$ and  $a=(|a|,0)\in\Omega$ with
  $|a|<\frac{R_0}R$, we define 
\begin{align}\label{eq:85} 
Z_a^R(x):=\frac{v_{n_0,R,a}^{int} (|a|x)}{\sqrt{H(\varphi_a, K_\delta|a|)}}.
\end{align}

\begin{Lemma}\label{stime_lemma_blow-up}
For all $R>2$,  
\begin{equation}\label{eq:67_zar}
\text{the family of functions }\big\{Z_a^R: a=|a|{\mathbf e}, |a|<\tfrac{r_\delta}{R}\big\}
 \text{ is bounded in $H^{1 ,0}(D_R,\C)$}.
\end{equation}
In particular, for all $R>2$,  
\begin{align}
\label{eq:46zar}&\int_{D_{R|a|}} \abs{(i\nabla +
  A_0) v_{n_0,R,a}^{int}}^2dx=O(H(\varphi_a,
K_\delta|a|)),\quad\text{as }|a|\to0^+,\\
\label{eq:47zar}&\int_{\partial D_{R|a|}} |v_{n_0,R,a}^{int}|^2dx=O(|a|H(\varphi_a,
K_\delta|a|)),\quad\text{as }|a|\to0^+,\\
\label{eq:48zar}&\int_{D_{R|a|}} |v_{n_0,R,a}^{int}|^2dx=O(|a|^2H(\varphi_a,
K_\delta|a|)),\quad\text{as }|a|\to0^+.  
\end{align}
\end{Lemma}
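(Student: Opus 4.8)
The plan is to run the very same argument as in the proof of Lemma~\ref{lemma_stime_vj}, the only change being that the input estimates will now be \eqref{eq:46}--\eqref{eq:48} of Theorem~\ref{t:stime_blowup} (which carry the factor $H(\varphi_a,K_\delta|a|)$) rather than the cruder power bounds \eqref{eq:35}--\eqref{eq:36} of Lemma~\ref{l:sec_con}; this substitution is exactly what turns the right-hand sides into multiples of $H(\varphi_a,K_\delta|a|)$ instead of powers of $R|a|$. Throughout, $\delta\in(0,1/4)$ is fixed and $K_\delta$, $r_\delta$ are as in Lemma~\ref{lemma_limitatezza_N_per_blowup}, and I restrict to $a=|a|{\mathbf e}$ with $|a|<\tfrac{r_\delta}R$, so that $D_{R|a|}\subset\Omega$ and all estimates of Theorem~\ref{t:stime_blowup} are available (for $2<R<K_\delta$ one simply replaces $R$ by $K_\delta$ in those estimates, using $D_{R|a|}\subset D_{K_\delta|a|}$ and controlling $H(\varphi_a,R|a|)$ by a multiple of $H(\varphi_a,K_\delta|a|)$ via Lemma~\ref{lemma_derivata_H} and \eqref{eq:27}).

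I will first establish \eqref{eq:47zar}, which is almost immediate: by \eqref{problema_viint}, $v_{n_0,R,a}^{int}=e^{\frac i2(\theta_0-\theta_a)}\varphi_a$ on $\partial D_{R|a|}$, and there $\theta_0,\theta_a$ are real-valued, so the phase factor has modulus $1$ and $|v_{n_0,R,a}^{int}|=|\varphi_a|$ pointwise on $\partial D_{R|a|}$; hence $\int_{\partial D_{R|a|}}|v_{n_0,R,a}^{int}|^2\,ds=\int_{\partial D_{R|a|}}|\varphi_a|^2\,ds=O(|a|\,H(\varphi_a,K_\delta|a|))$ by \eqref{eq:47}. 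For \eqref{eq:46zar} I will reuse the competitor $e^{\frac i2(\theta_0-\theta_a)}\varphi_a\,\eta_{a,R}$ employed in \eqref{eq:12}: since $R>2$, the cut-off $\eta_{a,R}$ vanishes on $D_{R|a|/2}$, which contains both the pole $a$ and the segment joining $0$ to $a$ along which $e^{\frac i2(\theta_0-\theta_a)}$ jumps, so this competitor is an admissible element of $H^{1,0}(D_{R|a|},\C)$ with the correct boundary trace, and on $D_{R|a|}\setminus D_{R|a|/2}$ the gauge identity $(i\nabla+A_0)(e^{\frac i2(\theta_0-\theta_a)}\varphi_a)=e^{\frac i2(\theta_0-\theta_a)}(i\nabla+A_a)\varphi_a$ holds. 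The minimality of $v_{n_0,R,a}^{int}$ and the computation \eqref{eq:12} then give
\[
\int_{D_{R|a|}}|(i\nabla+A_0)v_{n_0,R,a}^{int}|^2\,dx
\le 2\int_{D_{R|a|}\setminus D_{R|a|/2}}|(i\nabla+A_a)\varphi_a|^2\,dx
+\frac{32}{R^2|a|^2}\int_{D_{R|a|}}|\varphi_a|^2\,dx ,
\]
and now \eqref{eq:46} bounds the first term by $O(H(\varphi_a,K_\delta|a|))$ while \eqref{eq:48} bounds the second by $\tfrac{32}{R^2|a|^2}\,O(|a|^2H(\varphi_a,K_\delta|a|))=O(H(\varphi_a,K_\delta|a|))$ ($R$ being fixed), which is \eqref{eq:46zar}.

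Then \eqref{eq:48zar} will follow from \eqref{eq:46zar} and \eqref{eq:47zar} by the Poincaré inequality of Lemma~\ref{poincare_inequality} applied on $D_{R|a|}$ with pole at the origin:
\[
\frac{1}{(R|a|)^2}\int_{D_{R|a|}}|v_{n_0,R,a}^{int}|^2\,dx
\le \frac1{R|a|}\int_{\partial D_{R|a|}}|v_{n_0,R,a}^{int}|^2\,ds
+\int_{D_{R|a|}}|(i\nabla+A_0)v_{n_0,R,a}^{int}|^2\,dx
=O\big(H(\varphi_a,K_\delta|a|)\big),
\]
so $\int_{D_{R|a|}}|v_{n_0,R,a}^{int}|^2\,dx=O(|a|^2H(\varphi_a,K_\delta|a|))$. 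Finally, \eqref{eq:67_zar} is obtained by rewriting these three estimates for $Z_a^R$: under the change of variables $x\mapsto|a|x$ the $A_0$-pole at the origin is fixed, so $(i\nabla+A_0)^2$ scales cleanly and $\int_{D_R}|(i\nabla+A_0)Z_a^R|^2\,dx=H(\varphi_a,K_\delta|a|)^{-1}\int_{D_{R|a|}}|(i\nabla+A_0)v_{n_0,R,a}^{int}|^2\,dx=O(1)$ by \eqref{eq:46zar}, likewise $\int_{D_R}|Z_a^R|^2\,dx=|a|^{-2}H(\varphi_a,K_\delta|a|)^{-1}\int_{D_{R|a|}}|v_{n_0,R,a}^{int}|^2\,dx=O(1)$ by \eqref{eq:48zar}, and the weighted term $\int_{D_R}|x|^{-2}|Z_a^R|^2\,dx$ is bounded by $4\int_{D_R}|(i\nabla+A_0)Z_a^R|^2\,dx$ via the Hardy inequality \eqref{eq:hardy}. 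The only point requiring genuine care is the admissibility of the competitor in \eqref{eq:viint}, i.e.\ checking that $\eta_{a,R}$ removes the locus where $e^{\frac i2(\theta_0-\theta_a)}$ fails to be smooth, which is precisely the role of the hypothesis $R>2$; once this is in place the whole proof is a transcription of arguments already carried out for Lemma~\ref{lemma_stime_vj} and Theorem~\ref{t:stime_blowup}, the essential improvement being that all bounds are now proportional to $H(\varphi_a,K_\delta|a|)$.
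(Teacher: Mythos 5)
Your proposal is correct and follows essentially the same route as the paper: in both, the key step is the Dirichlet-principle comparison with the cut-off competitor (your $\eta_{a,R}\,e^{\frac i2(\theta_0-\theta_a)}\varphi_a$ is just the unscaled version of the paper's $\eta_R\,e^{\frac i2(\theta_0-\theta_{\mathbf e})}\tilde\varphi_a$), fed by the energy bounds of Theorem~\ref{t:stime_blowup}, together with the boundary condition in \eqref{problema_viint} and the Poincar\'e inequality of Lemma~\ref{poincare_inequality}. The only cosmetic difference is the order of deductions — you derive \eqref{eq:46zar}--\eqref{eq:48zar} in the original variables and then rescale to get \eqref{eq:67_zar}, whereas the paper works directly on $Z_a^R$ and obtains \eqref{eq:46zar} as a corollary — and you also spell out the small extension to $2<R<K_\delta$ which the paper leaves implicit.
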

\begin{proof}
We notice that $Z_a^R$ solves
\begin{equation*}
 \begin{cases}
   (i\nabla +A_0)^2 Z_a^R =0, &\text{in $D_R$}\\
   Z_a^R = e^{\frac{i}{2}(\theta_0-\theta_{\mathbf
       e})}\tilde\varphi_a, &\text{on $\partial D_R$},
 \end{cases}
\end{equation*}
and, by the Dirichlet principle and Theorem \ref{t:stime_blowup},
\begin{align}\label{eq:49}
 \int_{D_R} &|(i\nabla +A_0)Z_a^R|^2dx
\leq \int_{D_R} \abs{(i\nabla +A_0)\big(\eta_R\,
e^{\frac{i}{2}(\theta_0-\theta_{\mathbf e})}\tilde \varphi_a)\big)}^2 dx\\
\notag &\leq 2 \int_{D_R}|\nabla \eta_R|^2|
\tilde \varphi_a|^2dx 
+2\int_{D_R\setminus D_{R/2}}
\eta_R^2\big|(i\nabla +A_{\mathbf e})\tilde \varphi_a\big|^2dx\leq C_R,
\end{align}
for some $C_R>0$, 
where $\eta_R:\R^2\to\R$ is a smooth cut-off function as in \eqref{eq:88}.
Then, taking into account
\eqref{eq:hardy}, we obtain \eqref{eq:67_zar}. Estimate
\eqref{eq:46zar} follows directly from \eqref{eq:49} and
\eqref{eq:85} while \eqref{eq:47zar} is a direct consequence of the
definition of $v_{n_0,R,a}^{int}$ (see \eqref{problema_viint}) and
\eqref{eq:47}.
 \eqref{eq:48zar} follows from \eqref{eq:46zar} and
\eqref{eq:47zar} in view of Lemma \ref{poincare_inequality}.
\end{proof}

\begin{Lemma}\label{l:stima_Lambda0_sopra}
 There exists
  $\tilde R>2$ such that for all $R>\tilde R$ and  $a=(|a|,0)\in\Omega$ with
  $|a|<\frac{R_0}R$, 
\[
\frac{\lambda_0-\lambda_{a}}{H(\varphi_{a}, K_\delta|{a}|)}\leq f_R(a)
\]
where 
\begin{align}\label{eq:69}
&f_R(a)=
\int_{D_R}|(i\nabla+A_0)Z_a^R|^2\,dx-\int_{D_R}|(i\nabla+A_{\mathbf
  e})\tilde \varphi_a|^2\,dx+o(1),\quad\text{as }|a|\to 0^+,\\
\notag &f_R(a)=O(1) ,\quad\text{as }|a|\to 0^+,
\end{align}
with $\tilde \varphi_a$ and  $Z_a^R$ defined in
\eqref{def_blowuppate_normalizzate} and \eqref{eq:85} respectively.
\end{Lemma}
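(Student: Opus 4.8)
The plan is to test the minimax formula \eqref{eq:91} for $\lambda_0$ with the $n_0$-dimensional space $F_a:=\mathrm{span}\{v_{1,R,a},\dots,v_{n_0,R,a}\}$ (of dimension $n_0$ by \eqref{eq:90}), so that $\lambda_0\le\max_{u\in F_a\setminus\{0\}}\mathcal R_0(u)$ with $\mathcal R_0(u):=\int_\Omega|(i\nabla+A_0)u|^2\,dx\big/\int_\Omega|u|^2\,dx$, and then to estimate this maximum. First I would expand $\mathcal R_0$ on $F_a$: for $u=\sum_{j=1}^{n_0}\alpha_j v_{j,R,a}$ one has on $\Omega\setminus D_{R|a|}$ that $|(i\nabla+A_0)v_{j,R,a}^{ext}|=|(i\nabla+A_a)\varphi_j^a|$, because $\nabla\tfrac{\theta_0-\theta_a}{2}=A_0-A_a$ there; hence, using \eqref{eq_eigenfunction}, the orthonormality \eqref{eq:23}, and the definitions \eqref{eq:viext}--\eqref{problema_viint},
\[
\mathcal R_0(u)=\frac{\sum_{j}|\alpha_j|^2\lambda_j^a+C_a(u)}{\sum_{j}|\alpha_j|^2+D_a(u)},
\]
where, with $w:=\sum_j\alpha_j\varphi_j^a$, the correction terms $C_a(u)=\int_{D_{R|a|}}\big(|(i\nabla+A_0)u|^2-|(i\nabla+A_a)w|^2\big)\,dx$ and $D_a(u)=\int_{D_{R|a|}}\big(|u|^2-|w|^2\big)\,dx$ are supported in the small disk $D_{R|a|}$. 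By Lemmas \ref{l:sec_con} and \ref{lemma_stime_vj} together with the Cauchy--Schwarz inequality, every integral over $D_{R|a|}$ entering $C_a$ and $D_a$ is $O((R|a|)^{1-2\delta})$, uniformly over the coefficients with $\sum_j|\alpha_j|^2=1$; in particular $D_a(u)=o(1)$ uniformly, so the denominator equals $1+o(1)$.

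Next I would normalize so that the maximum is attained at some $u^*=\sum_j\alpha_j^*v_{j,R,a}$ with $\sum_j|\alpha_j^*|^2=1$, and set $\sigma:=1-|\alpha_{n_0}^*|^2=\sum_{j<n_0}|\alpha_j^*|^2$. Recalling $\varphi_{n_0}^a=\varphi_a$ and $\lambda_{n_0}^a=\lambda_a$, we get $\mathcal R_0(u^*)-\lambda_a=\mathcal N/(1+D_a(u^*))$ with $\mathcal N:=\sum_{j<n_0}|\alpha_j^*|^2(\lambda_j^a-\lambda_a)+C_a(u^*)-\lambda_a D_a(u^*)$. Here the simplicity of $\lambda_0$ is crucial: since $\lambda_{n_0-1}^0<\lambda_{n_0}^0=\lambda_0$, continuity \eqref{eq:74} provides $\gamma>0$ with $\lambda_j^a-\lambda_a\le-\gamma$ for all $j<n_0$ and $|a|$ small, whence $\mathcal N\le-\gamma\sigma+O((R|a|)^{1-2\delta})$; combining this with $\mathcal N=(\mathcal R_0(u^*)-\lambda_a)(1+D_a(u^*))\ge(\lambda_0-\lambda_a)(1+o(1))$ and $\lambda_0-\lambda_a\to0$ forces $\sigma\to0$. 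I would then split $u^*=\alpha_{n_0}^*v_{n_0,R,a}+\sum_{j<n_0}\alpha_j^*v_{j,R,a}$ (and $w$ analogously), expand $C_a(u^*)$ and $D_a(u^*)$, and control the contributions carrying an index $j<n_0$: the ``off-$n_0$'' self-terms are $O(\sigma(R|a|)^{1-2\delta})$ and the cross-terms between the $j=n_0$ and the $j<n_0$ parts are $O\big((\sigma(R|a|)^{1-2\delta}H)^{1/2}\big)$, with $H:=H(\varphi_a,K_\delta|a|)$ and with the factor $H^{1/2}$ stemming from $\int_{D_{R|a|}}|(i\nabla+A_a)\varphi_a|^2\,dx=O(H)$ and $\int_{D_{R|a|}}|(i\nabla+A_0)v_{n_0,R,a}^{int}|^2\,dx=O(H)$ (see \eqref{eq:46} and \eqref{eq:46zar}). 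Since $(R|a|)^{1-2\delta}\to0$, the self-terms are absorbed into $-\gamma\sigma$, and by Young's inequality so are the $\sigma$-parts of the cross-terms, the remaining cross contribution being $o(H)$. What survives is
\[
\mathcal N\le|\alpha_{n_0}^*|^2\Big(\int_{D_{R|a|}}|(i\nabla+A_0)v_{n_0,R,a}^{int}|^2\,dx-\int_{D_{R|a|}}|(i\nabla+A_a)\varphi_a|^2\,dx\Big)+o(H),
\]
the residual $-\lambda_a\big(\int_{D_{R|a|}}|v_{n_0,R,a}^{int}|^2-\int_{D_{R|a|}}|\varphi_a|^2\big)=O(|a|^2H)=o(H)$ by \eqref{eq:48} and \eqref{eq:48zar}.

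To conclude, I would rescale by $x\mapsto|a|x$: since $A_0$ and $A_a=A_0(\cdot-a)$ are $(-1)$-homogeneous about $0$ and $a=|a|{\mathbf e}$ respectively, the change of variables identifies $\int_{D_{R|a|}}|(i\nabla+A_0)v_{n_0,R,a}^{int}|^2\,dx=H\int_{D_R}|(i\nabla+A_0)Z_a^R|^2\,dx$ and $\int_{D_{R|a|}}|(i\nabla+A_a)\varphi_a|^2\,dx=H\int_{D_R}|(i\nabla+A_{\mathbf e})\tilde\varphi_a|^2\,dx$, with $Z_a^R$ and $\tilde\varphi_a$ as in \eqref{eq:85} and \eqref{def_blowuppate_normalizzate}; by Theorem \ref{t:stime_blowup} and Lemma \ref{stime_lemma_blow-up} both integrals over $D_R$ are $O(1)$. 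Writing $\Delta_R(a):=\int_{D_R}|(i\nabla+A_0)Z_a^R|^2\,dx-\int_{D_R}|(i\nabla+A_{\mathbf e})\tilde\varphi_a|^2\,dx=O(1)$ and using $|\alpha_{n_0}^*|^2=1-\sigma$ with $\sigma\to0$, the last display becomes $\mathcal N\le H\Delta_R(a)+o(H)$; since $(1+D_a(u^*))^{-1}=1+o(1)$ and $H\Delta_R(a)=O(H)$, dividing by $1+D_a(u^*)$ yields $\lambda_0-\lambda_a\le\mathcal R_0(u^*)-\lambda_a\le H\Delta_R(a)+o(H)$, and dividing by $H=H(\varphi_a,K_\delta|a|)>0$ (positive by Lemma \ref{l:Hpos}(i)) gives the claim with $f_R(a):=\Delta_R(a)+o(1)=O(1)$; one takes $\tilde R>\max\{2,\tilde K_\delta\}$ so that all the quoted estimates apply. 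The hard part will be precisely the estimate of the middle paragraph: a priori the correction integrals over $D_{R|a|}$ are only of order $(R|a|)^{1-2\delta}$, which for $k\ge3$ is much larger than $H(\varphi_a,K_\delta|a|)$, so they cannot be discarded as $o(H)$; the resolution is that each of them carries the factor $\sigma=1-|\alpha_{n_0}^*|^2$ and these contributions must be absorbed into the strictly negative term $-\gamma\sigma$ furnished by the spectral gap below the simple eigenvalue $\lambda_0$.
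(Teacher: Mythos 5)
Your approach is correct and reaches the same conclusion, but the execution is genuinely different in one respect worth highlighting. Both you and the paper test the minimax formula \eqref{eq:91} with the same $n_0$-dimensional space spanned by $\{v_{j,R,a}\}$ and both ultimately rely on the spectral gap $\lambda_{n_0-1}^0<\lambda_0$ to control the coefficients $\alpha_j^*$ with $j<n_0$. Where you diverge: the paper first runs the Gram--Schmidt process to $L^2$-orthonormalize the $v_{j,R,a}$, reduces the Rayleigh quotient to a quadratic form $\sum_{j,n}m_{j,n}^{a,R}\alpha_j\overline{\alpha_n}$ on the unit sphere, and then establishes $\sum_{j\neq n}m_{j,n}^{a,R}\alpha_j^{a,R}\overline{\alpha_n^{a,R}}=o(H)$ by a recursive subsequence/bootstrap argument (a finite number $M$ of iterations of an improving estimate, the number of iterations depending on $k$). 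You instead work directly with the non-orthonormal basis, expand $\mathcal{R}_0(u)$ as a ratio $(\sum|\alpha_j|^2\lambda_j^a+C_a)/(\sum|\alpha_j|^2+D_a)$, isolate the negative term $-\gamma\sigma$ with $\sigma=1-|\alpha_{n_0}^*|^2$, and absorb all the $\sigma$-carrying contributions in a single step via Young's inequality, the key point being that $(\sigma H(R|a|)^{1-2\delta})^{1/2}\le \tfrac{\gamma}{4}\sigma+C\gamma^{-1}(R|a|)^{1-2\delta}H$ and the second summand is $o(H)$ because $(R|a|)^{1-2\delta}\to0$. This replaces the $k$-dependent recursion by a single quantitative absorption and is, I think, cleaner. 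A couple of points to make explicit when writing this up: (i) the Gram--Schmidt step in the paper is exactly what lets them write $\|u\|^2=1$ on the normalized sphere; without it you are dividing by $1+D_a(u^*)$, so you should record that $1+D_a(u^*)>0$ and $D_a(u^*)=o(1)$ uniformly (from \eqref{eq:36}, \eqref{eq:36vint}) and observe that $\frac{\mathcal N}{1+D_a}-\big(H\Delta_R(a)+o(H)\big)$ has numerator $\mathcal N-H\Delta_R(a)-o(H)-D_a\cdot O(H)\le o(H)$, so the division introduces only another $o(H)$; (ii) the absorption of $-\sigma H\Delta_R(a)$ (coming from writing $|\alpha_{n_0}^*|^2=1-\sigma$) into $-\gamma\sigma$ uses $H|\Delta_R(a)|\to0$, which you should state; and (iii) you correctly choose $\tilde R>\max\{2,\tilde K_\delta\}$ so that Lemmas \ref{l:sec_con}, \ref{lemma_stime_vj}, \ref{stime_lemma_blow-up} and Theorem \ref{t:stime_blowup} all apply.
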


\begin{proof}
  Let $\tilde K_\delta>1$ be as in Lemma
  \ref{l:sec_con} and fix 
  $R> \max\{2,\tilde K_\delta\}$. 
  
  In \eqref{eq:91} we choose $F$ as the space of functions $\{\tilde
  v_{j,R,a}\}$ which result from $\{v_{j,R,a}\}$ by a Gram--Schmidt
  process, that is
\[ 
\tilde v_{j,R,a}:= \dfrac{\hat v_{j,R,a}}{\|\hat v_{j,R,a}\|_{L^2(\Omega,\C)}}, \quad j=1,\ldots, n_0, 
\]
where
\begin{align*}
& \hat v_{n_0,R,a} := v_{n_0,R,a},\\
& \hat v_{j,R,a} := v_{j,R,a} - \sum_{\ell= j+1}^{n_0}\dfrac{\int_\Omega
  v_{j,R,a}  \overline{\hat v_{\ell,R,a}}\,dx}{\|\hat v_{\ell,R,a}\|_{L^2(\Omega,\C)}^2} \hat v_{\ell,R,a} 
\quad \text{for }j=1,\ldots, n_0-1.
\end{align*}
For notation convenience we also set 
\[
 d_{\ell,j}^{R,a} :=\dfrac{\int_\Omega
  v_{j,R,a}
  \overline{\hat v_{\ell,R,a}}\,dx}{\|\hat v_{\ell,R,a}\|_{L^2(\Omega,\C)}^2}.
 \]
From \eqref{eq:23}, Lemmas \ref{l:sec_con} and \ref{lemma_stime_vj},
and an induction argument, il follows that 
\begin{equation}\label{eq:18}
\|\hat v_{j,R,a} \|_{L^2(\Omega,\C)}^2=1+O(|a|^{3-2\delta})
\quad\text{and}\quad 
d_{\ell,j}^{R,a}=O(|a|^{3-2\delta})\text{ for }\ell\neq j
\end{equation}
as $|a|\to 0^+$.
Furthermore, from \eqref{eq:23}, \eqref{eq:48}, and \eqref{eq:48zar} we deduce that 
\begin{equation}\label{eq:13}
\|\hat v_{n_0,R,a} \|_{L^2(\Omega,\C)}^2=\|v_{n_0,R,a}
\|_{L^2(\Omega,\C)}^2=1+O\big(|a|^{2}H(\varphi_a, K_\delta|a|)\big)
\quad\text{as }|a|\to 0^+,
\end{equation}
and 
\begin{equation}\label{eq:76}
\quad 
d_{n_0,j}^{R,a}= O\big(|a|^{\frac52-\delta}\sqrt{H(\varphi_a, K_\delta|a|})\big)
\quad\text{as }|a|\to 0^+,\quad\text{for all }j<n_0.
\end{equation}
From \eqref{eq:91} and \eqref{eq:90} it follows that 
\begin{equation*}
  \lambda_0 \leq \max_
  {\substack{(\alpha_1,\dots, \alpha_{n_0})\in \C^{n_0}\\
      \sum_{j=1}^{{n_0}}|\alpha_j|^2 =1}}
  \int_{\Omega} \bigg|(i\nabla+A_0) \bigg(\sum_{j=1}^{{n_0}}\alpha_j
     \tilde v_{j,R,a}\bigg)\bigg|^2 dx.
\end{equation*}
Hence 
\begin{equation}\label{eq:107}
\lambda_0-\lambda_a\leq \max_
  {\substack{(\alpha_1,\dots, \alpha_{n_0})\in \C^{n_0}\\
      \sum_{j=1}^{{n_0}}|\alpha_j|^2 =1}}\sum_{j,n=1}^{n_0}
m_{j,n}^{a,R}\alpha_j \overline{\alpha_n},
\end{equation}
where 
\[
m_{j,n}^{a,R}= \int_{\Omega} (i\nabla+A_0)    \tilde v_{j,R,a}\cdot
\overline{(i\nabla+A_0)    \tilde v_{n,R,a}}\, dx
-\lambda_a \delta_{jn},
\]
with $\delta_{jn}=1$ if $j=n$ and 
$\delta_{jn}=0$ if $j\neq n$.

Before proceeding, let us observe that integration of \eqref{eq:73}
over the interval $(K_\delta |a|, r_\delta)$ yields
\begin{equation}\label{eq:stima_sotto_radiceH}
H(\varphi_a, K_\delta |a|) \geq C_\delta |a|^{k +
   2\delta},\quad\text{if }|a|<\frac{r_\delta}{K_\delta},
\end{equation}
for some $C_\delta>0$ independent of $a$,
whereas estimate
\eqref{eq:34}  implies that  
\begin{equation}\label{eq:stima_sopra_radiceH}
 H(\varphi_a, K_\delta |a|)=O(|a|^{1-2\delta})\quad\text{ as }|a|\to0.
\end{equation}
From \eqref{eq:13}, \eqref{eq:85},
\eqref{def_blowuppate_normalizzate},  Theorem \ref{t:stime_blowup}, and
Lemma \ref{stime_lemma_blow-up} we deduce that 
\begin{align}\label{eq:95}
  m_{n_0,n_0}^{a,R} &= \dfrac{\lambda_a (1-\|v_{n_0,R,a} \|_{L^2(\Omega,\C)}^2)}{\|v_{n_0,R,a} \|_{L^2(\Omega,\C)}^2} \\
  \notag&\ + \dfrac{1}{\|v_{n_0,R,a} \|_{L^2(\Omega,\C)}^2} \left(
    \int_{D_{R|a|}} \big| (i\nabla +A_0)v_{n_0,R,a}^{int} \big|^2 dx
    - \int_{D_{R|a|}} \big| (i\nabla +A_a)\varphi_{a} \big|^2 dx\right)\\
  \notag &= H(\varphi_a,
  K_\delta|a|)\bigg(\int_{D_R}|(i\nabla+A_0)Z_a^R|^2\,dx-\int_{D_R}|(i\nabla+A_{\mathbf
    e})\tilde \varphi_a|^2\,dx+o(1)\bigg),
\end{align}
as 
$|a|\to0^+$.
From \cite[Theorem 1.1]{BNNNT} (which ensures that $\lambda_j^a\to
\lambda_j^0$ as $|a|\to0$), \eqref{eq:18}, \eqref{eq_eigenfunction}, \eqref{eq:23}, and Lemmas \ref{l:sec_con} and
\ref{lemma_stime_vj}, we obtain that, if $j<n_0$, 
\begin{align}\label{eq:96}
  m_{j,j}^{a,R}&= -\lambda_a \\
  \notag &\ +\dfrac{1}{\|\hat v_{j,R,a}\|_{L^2(\Omega,\C)}^2} \left(
    \lambda_j^a - \int_{D_{R|a|}}\!\! \!\abs{(i\nabla
      +A_a)\varphi_j^a}^2dx
    + \int_{D_{R|a|}}\! \!\!\abs{(i\nabla +A_0)v_{j,R,a}^{int}}^2dx \right) \\
  &\notag \ + \dfrac{1}{\|\hat v_{j,R,a}\|_{L^2(\Omega,\C)}^2}
  \int_{\Omega} \bigg|(i\nabla + A_0)
  \Big(\sum_{\ell>j} d_{\ell,j}^{R,a} \hat v_{\ell,R,a} \Big)\bigg|^2dx\\
  &\notag \ -\dfrac{2}{\|\hat v_{j,R,a}\|_{L^2(\Omega,\C)}^2}
  \Re\bigg(\int_{\Omega} (i\nabla + A_0)v_{j,R,a} \cdot
  \overline{(i\nabla + A_0) \Big( \sum_{\ell>j} d_{\ell,j}^{R,a} \hat v_{\ell,R,a} \Big)}\,dx\bigg)\\
  \notag&=(\lambda_j^0-\lambda_0)+o(1)\quad\text{as }|a|\to0.
\end{align}
From \eqref{eq:18}, \eqref{eq:76},
\eqref{eq_eigenfunction}, \eqref{eq:23}, \eqref{eq:46},
Lemmas \ref{l:sec_con} and \ref{lemma_stime_vj}, and 
\eqref{eq:46zar}, it follows that, for all $j<n_0$,
\begin{align}\label{eq:98}
  m_{j,n_0}^{a,R}&= \dfrac{\int_{D_{R|a|}}\Big((i\nabla+A_0)
    v_{j,R,a}^{int}\cdot \overline{(i\nabla+A_0) v_{n_0,R,a}^{int} }-
    (i\nabla+A_a) \varphi_j^a\cdot
    \overline{(i\nabla+A_a) \varphi_a}\Big)\,dx}{\|\hat v_{j,R,a}\|_{L^2(\Omega,\C)} \|\hat v_{n_0,R,a}\|_{L^2(\Omega,\C)}} \\
  \notag &\ - \dfrac{\int_{\Omega} (i\nabla+A_0)
    \big(\sum_{\ell>j}d_{\ell,j}^{R,a} \hat v_{\ell,R,a}\big) \cdot
    \overline{(i\nabla+A_0) v_{n_0,R,a} }\,dx}{\|\hat
    v_{j,R,a}\|_{L^2(\Omega,\C)} \|\hat
    v_{n_0,R,a}\|_{L^2(\Omega,\C)}}
  \\
  \notag&=O\Big(|a|^{\frac12 -\delta}
  \sqrt{H(\varphi_a,K_\delta|a|})\Big),
\end{align}
and 
\begin{equation}\label{eq:99}
m_{n_0,j}^{a,R}=\overline{ m_{j,n_0}^{a,R}}=O\Big(|a|^{\frac12 -\delta} \sqrt{H(\varphi_a,K_\delta|a|})\Big)
\end{equation}
as 
$|a|\to 0^+$.
From  \eqref{eq:18},
\eqref{eq_eigenfunction}, \eqref{eq:23}, and Lemmas \ref{l:sec_con} and \ref{lemma_stime_vj},
we deduce that, for all $j,n<n_0$ with $j\neq n$,
\begin{align}\label{eq:100} 
  m_{j,n}^{a,R}&=\dfrac{\int_{D_{R|a|}}\Big((i\nabla+A_0)
    v_{j,R,a}^{int}\cdot \overline{(i\nabla+A_0) v_{n,R,a}^{int} }-
    (i\nabla+A_a) \varphi_j^a\cdot \overline{(i\nabla+A_a)
      \varphi_n^a}\Big)\,dx}{
    \|\hat v_{j,R,a}\|_{L^2(\Omega,\C)}\|\hat v_{n,R,a}\|_{L^2(\Omega,\C)}} \\
  \notag &\ + \dfrac{\int_{\Omega} (i\nabla +A_0) \big( \sum_{\ell>j}
    d_{\ell,j}^{R,a} \hat v_{\ell,R,a} \big) \cdot \overline{(i\nabla
      +A_0) \big( \sum_{h>n} d_{h,n}^{R,a} \hat v_{h,R,a}
      \big)}\,dx}{\|\hat v_{j,R,a}\|_{L^2(\Omega,\C)}\|\hat
    v_{n,R,a}\|_{L^2(\Omega,\C)}}
  \\
  \notag &\ - \dfrac{\int_{\Omega} (i\nabla +A_0) \big( \sum_{\ell>j}
    d_{\ell,j}^{R,a} \hat v_{\ell,R,a} \big) \cdot \overline{(i\nabla
      +A_0) v_{n,R,a}} \,dx}{\|\hat v_{j,R,a}\|_{L^2(\Omega,\C)}\|\hat
    v_{n,R,a}\|_{L^2(\Omega,\C)}}
  \\
  \notag &\ - \dfrac{\int_{\Omega} (i\nabla +A_0) v_{j,R,a} \cdot
    \overline{(i\nabla +A_0) \big( \sum_{h>n} d_{h,n}^{R,a} \hat
      v_{h,R,a} \big)}\,dx}{\|\hat v_{j,R,a}\|_{L^2(\Omega,\C)}\|\hat
    v_{n,R,a}\|_{L^2(\Omega,\C)}}
  \\
  \notag&=O(|a|^{1-2\delta}) \quad\text{as }|a|\to0.
\end{align}
Let $\alpha_{j}^{a,R}\in\C$, $j=1,\dots,n_0$, such that
\begin{equation}\label{eq:106}
  \sum_{j=1}^{{n_0}}|\alpha_j^{a,R}|^2 =1\quad\text{and}\quad
  \sum_{j,n=1}^{n_0} m_{j,n}^{a,R}\alpha_j^{a,R}
  \overline{\alpha_n^{a,R}}= \max_
  {\substack{(\alpha_1,\dots, \alpha_{n_0})\in \C^{n_0}\\
      \sum_{j=1}^{{n_0}}|\alpha_j|^2 =1}}\sum_{j,n=1}^{n_0}
  m_{j,n}^{a,R}\alpha_j \overline{\alpha_n}.
\end{equation}
From 
\begin{equation}\label{eq:absubis}
 m_{n_0,n_0}^{a,R} \leq \sum_{j,n=1}^{n_0} m_{j,n}^{a,R}\alpha_j^{a,R}
\overline{\alpha_l^{a,R}}
\end{equation}
it follows that 
\[ 
\big(1-|\alpha_{n_0}^{a,R}|^2\big)\Big(m_{n_0,n_0}^{a,R}-\max_{j<n_0}m_{j,j}^{a,R}\Big)\leq
\sum_{j\neq n}
m_{j,n}^{a,R}\alpha_j^{a,R}
\overline{\alpha_n^{a,R}}
\]
and hence, by \eqref{eq:95} and \eqref{eq:96},
\begin{equation}\label{eq:105}
  \big(1-|\alpha_{n_0}^{a,R}|^2\big)\Big(-\max_{j<n_0}(\lambda_j^0-\lambda_0)+o(1)\Big)\leq
  \sum_{j\neq n}
  m_{j,n}^{a,R}\alpha_j^{a,R}
  \overline{\alpha_n^{a,R}},
\end{equation}
as 
$|a|\to 0^+$.
Due to \eqref{eq:1}, \eqref{eq:stima_sopra_radiceH}, and \eqref{eq:98}--\eqref{eq:100} we then have
\begin{equation}\label{eq:101}
1-|\alpha_{n_0}^{a,R}|^2=O(|a|^{1-2\delta})\quad\text{as
}|a|\to 0^+.
\end{equation}
Since $1-|\alpha_{n_0}^{a,R}|^2=\sum_{j<n_0}|\alpha_{j}^{a,R}|^2$, we
also have that
\begin{equation}\label{eq:102}
|\alpha_{j}^{a,R}|^2=O(|a|^{1-2\delta}),\quad
\text{for all }j<n_0,
\end{equation}
as 
$|a|\to 0^+$.
We claim that 
\begin{equation}\label{eq:claim}
\sum_{\substack{j,n=1 \\ j\neq n}}^{n_0}
m_{j,n}^{a,R}\alpha_j^{a,R} \overline{\alpha_n^{a,R}} =
o\big(H(\varphi_a,K_\delta|a|)\big)  
\end{equation}
as 
$|a|\to 0^+$. To prove \eqref{eq:claim} it is enough to show that
\begin{equation}\label{eq:50}
  \begin{cases}
    &\text{for every sequence $a_l=|a_l|{\mathbf e}\to 0$ there
      exists a subsequence
      $a_{l_p}$ such that}\\
    &\sum_{\substack{j,n=1 \\ j\neq n}}^{n_0}
    m_{j,n}^{a_{l_p},R}\alpha_j^{a_{l_p},R}
    \overline{\alpha_n^{a_{l_p},R}} =
    o\big(H(\varphi_{a_{l_p}},K_\delta|a_{l_p}|)\big) \text{ as
      $p\to+\infty$.}
  \end{cases}
\end{equation}
 Let $a_l=|a_l|{\mathbf e}\to 0$.
Via estimates \eqref{eq:stima_sopra_radiceH}, \eqref{eq:98},
\eqref{eq:99},
\eqref{eq:100}, \eqref{eq:101}, and \eqref{eq:102},  we
have that 
\begin{align*}
  \sum_{\substack{j,n=1 \\ j\neq n}}^{n_0} m_{j,n}^{a,R}\alpha_j^{a,R}
  \overline{\alpha_n^{a,R}} &= \sum_{\substack{j,n=1 \\ j\neq
      n}}^{n_0-1} m_{j,n}^{a,R}\alpha_j^{a,R}
  \overline{\alpha_n^{a,R}} + \sum_{j=1}^{n_0-1}
  m_{j,n_0}^{a,R}\alpha_j^{a,R} \overline{\alpha_{n_0}^{a,R}}
  + \sum_{j=1}^{n_0-1} m_{n_0,j}^{a,R}\alpha_{n_0}^{a,R} \overline{\alpha_{j}^{a,R}} \\
  &= O(|a|^{2-4\delta}) + O(|a|^{1-2\delta}\sqrt{H(\varphi_a,K_\delta|a|)}) \\
  &= O(|a|^{3/2-3\delta})
\end{align*}
as $a=a_{l}$, $l\to\infty$.
If
$|a_{l}|^{3/2-3\delta}=o(H(\varphi_{a_{l}},K_\delta|a_{l}|))$, we have proved claim
\eqref{eq:50}; if not, there holds
\begin{equation}\label{stima_sopra_radiceH_prima}
 H(\varphi_a,K_\delta|a|) = O(|a|^{3/2-3\delta})
\end{equation}
along a subsequence of $a_{l}$ (still denoted as $a_{l}$).
Hence estimate \eqref{eq:102} is improved as
\begin{equation}\label{eq:102'}
|\alpha_{j}^{a,R}|^2=O(|a|^{3/2-3\delta}),\quad
\text{for all }j<n_0,
\end{equation}
along the subsequence.  We now perform a recursive argument, improving
the previous estimates step by step.  Proceeding as above and
exploiting the improved estimate \eqref{stima_sopra_radiceH_prima},
along the subsequence we have
\begin{align*}
  \sum_{\substack{j,n=1 \\ j\neq n}}^{n_0} m_{j,n}^{a,R}\alpha_j^{a,R}
  \overline{\alpha_n^{a,R}} &= \sum_{\substack{j,n=1 \\ j\neq
      n}}^{n_0-1} m_{j,n}^{a,R}\alpha_j^{a,R}
  \overline{\alpha_n^{a,R}} + \sum_{j=1}^{n_0}
  m_{j,n_0}^{a,R}\alpha_j^{a,R} \overline{\alpha_{n_0}^{a,R}}
  + \sum_{j=1}^{n_0-1} m_{n_0,j}^{a,R}\alpha_{n_0}^{a,R} \overline{\alpha_{j}^{a,R}}  \\
  &= O(|a|^{1-2\delta}|a|^{\frac32-3\delta}) + O\big(|a|^{\frac12-\delta}\sqrt{H(\varphi_a,K_\delta|a|)}|a|^{\frac34-\frac32\delta}\big) \\
  &= O(|a|^{2-4\delta})
\end{align*}
via estimates \eqref{eq:98},
\eqref{eq:99}, \eqref{eq:100}, \eqref{stima_sopra_radiceH_prima}, \eqref{eq:102'}.
If $|a|^{2-4\delta}=o(H(\varphi_a,K_\delta|a|))$ along the
subsequence, we have proved claim \eqref{eq:50}; if not, up to
passing to a subsequence again, there holds 
\begin{equation}\label{stima_sopra_radiceH_seconda}
 H(\varphi_a,K_\delta|a|) = O(|a|^{2-4\delta}).
\end{equation}
Hence we  improve estimate \eqref{eq:102'} as 
\begin{equation}\label{eq:102''}
|\alpha_{j}^{a,R}|^2=O(|a|^{2-4\delta}),\quad
\text{for all }j<n_0,
\end{equation}
along the subsequence.
 Repeating the above argument $M$ times, we obtain that, along   a
 subsequence,
\[
\sum_{\substack{j,n=1 \\ j\neq n}}^{n_0}
m_{j,n}^{a,R}\alpha_j^{a,R} \overline{\alpha_n^{a,R}} = O\big(|a|^{1+\frac
  M2-(2+M)\delta}\big).
\]
In view of \eqref{eq:stima_sotto_radiceH}, if $M$ is such that $1+\frac
  M2-(2+M)\delta>k+2\delta$, we obtain claim \eqref{eq:50} and then \eqref{eq:claim}.

From \eqref{eq:105} and \eqref{eq:claim}, it follows
that 
\begin{equation}\label{eq:104}
  |\alpha_{n_0}^{a,R}|^2=1+o(H(\varphi_a,K_\delta|a|))\quad\text{and}\quad 
  |\alpha_{j}^{a,R}|^2=o(H(\varphi_a,K_\delta|a|))\quad\text{for all }j<n_0,
\end{equation}
as $|a|\to 0^+$. From \eqref{eq:95}, \eqref{eq:96}, \eqref{eq:106},
\eqref{eq:claim}, and \eqref{eq:104}, we deduce that
\begin{multline*}
  \max_
  {\substack{(\alpha_1,\dots, \alpha_{n_0})\in \C^{n_0}\\
      \sum_{j=1}^{{n_0}}|\alpha_j|^2 =1}}\sum_{j,n=1}^{n_0}
  m_{j,n}^{a,R}\alpha_j
  \overline{\alpha_n}\\
  =H(\varphi_a,K_\delta|a|)\bigg(
  \int_{D_R}|(i\nabla+A_0)Z_a^R|^2\,dx-\int_{D_R}|(i\nabla+A_{\mathbf
    e})\tilde \varphi_a|^2\,dx+o(1)\bigg)
\end{multline*}
as $|a|\to 0^+$,
which, in view of \eqref{eq:107}, yields
\[
\frac{\lambda_0-\lambda_{a}}{H(\varphi_{a},K_\delta|a|)}\leq f_R(a)
\]
where 
\[
f_R(a)=
\int_{D_R}|(i\nabla+A_0)Z_a^R|^2\,dx-\int_{D_R}|(i\nabla+A_{\mathbf
  e})\tilde \varphi_a|^2\,dx+o(1)\quad\text{as }|a|\to 0^+.
\]
We notice that, from Theorem \ref{t:stime_blowup} and Lemma
\ref{stime_lemma_blow-up}, for all $R>\tilde R$, $f_R(a)=O(1)$ as $|a|\to 0^+$.
The proof is thereby complete.
\end{proof}

As a direct consequence of Lemma \ref{l:stima_Lambda0_sopra} the
following corollary holds.

\begin{Corollary}\label{cor:lower-bound-lambda_a}
There exists  positive constants $C^*,r^*>0$ such that, for all
$a=(|a|,0)\in\Omega$ with $|a|<r^*$,
\[
\lambda_0-\lambda_{a}\leq C^*H(\varphi_{a}, K_\delta|{a}|).
\]
\end{Corollary}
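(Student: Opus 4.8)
The plan is to obtain the statement essentially by unwinding the conclusion of Lemma \ref{l:stima_Lambda0_sopra}. First I would fix once and for all a value of the parameter $R$ with $R>\tilde R$, where $\tilde R>2$ is the constant provided by Lemma \ref{l:stima_Lambda0_sopra}. For this $R$, the lemma supplies, for every $a=(|a|,0)\in\Omega$ with $|a|<R_0/R$, the inequality
\[
\frac{\lambda_0-\lambda_a}{H(\varphi_a,K_\delta|a|)}\le f_R(a),
\]
together with the information that $f_R(a)=O(1)$ as $|a|\to0^+$. The second fact, by definition of $O(1)$, yields a constant $C^*>0$ and a radius $r'\in(0,R_0/R)$ such that $f_R(a)\le C^*$ whenever $a=(|a|,0)\in\Omega$ and $|a|<r'$.

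Next I would check that the denominator is positive, so that the inequality above can be cleared. By Lemma \ref{l:Hpos}(i) we have $H(\varphi_a,r)>0$ for all $r\in(|a|,R_0)$ as soon as $|a|<R_0$; since $K_\delta>\mu_\delta>1$, the radius $K_\delta|a|$ belongs to $(|a|,R_0)$ provided $|a|<R_0/K_\delta$. I would therefore set $r^*=\min\{r',R_0/K_\delta\}$. Then for every $a=(|a|,0)\in\Omega$ with $|a|<r^*$ both $H(\varphi_a,K_\delta|a|)>0$ and $f_R(a)\le C^*$ hold, so that
\[
\lambda_0-\lambda_a=\frac{\lambda_0-\lambda_a}{H(\varphi_a,K_\delta|a|)}\,H(\varphi_a,K_\delta|a|)\le f_R(a)\,H(\varphi_a,K_\delta|a|)\le C^*\,H(\varphi_a,K_\delta|a|),
\]
which is exactly the desired bound.

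I do not expect any genuine obstacle here, since the entire analytic content is already contained in Lemma \ref{l:stima_Lambda0_sopra} (and, through it, in Theorem \ref{t:stime_blowup}, Lemma \ref{stime_lemma_blow-up}, Lemma \ref{l:sec_con} and Lemma \ref{lemma_stime_vj}). The only steps that deserve a moment of attention are the quantifier bookkeeping --- turning the qualitative statement $f_R(a)=O(1)$ into a single constant $C^*$ valid on a single punctured neighbourhood of $0$ --- and the trivial but necessary check that $K_\delta|a|$ stays in the interval $(|a|,R_0)$ where Lemma \ref{l:Hpos}(i) guarantees $H(\varphi_a,K_\delta|a|)>0$, so that dividing by this quantity is legitimate.
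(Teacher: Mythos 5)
Your proof is correct and matches what the paper intends: it presents the corollary as a direct consequence of Lemma \ref{l:stima_Lambda0_sopra} without writing out the argument, and your unwinding (fixing $R>\tilde R$, turning $f_R(a)=O(1)$ into a uniform bound $C^*$, and invoking Lemma \ref{l:Hpos}(i) together with $K_\delta>\mu_\delta>1$ to justify positivity of $H(\varphi_a,K_\delta|a|)$ before clearing the denominator) is exactly the routine bookkeeping that is being elided.
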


\subsection{Lower bound for $\lambda_0-\lambda_a$: the Rayleigh quotient for $\lambda_a$}
By the Courant-Fisher \emph{minimax characterization} of the eigenvalue
 $\lambda_a$, we have that
\begin{equation}\label{eq:91_la}
  \lambda_a =\! \min\bigg\{\!\max_{u\in F\setminus \{0\}}
\dfrac{\int_{\Omega} \abs{(i\nabla+A_a) u}^2dx}{\int_{\Omega}
  |u|^2\,dx}:F \text{ is a subspace of $H^{1,a}_0(\Omega,\C)$, $\dim F= n_0$}\bigg\}.
\end{equation}
Being $R_0$ as in Lemma \ref{l:Hpos}, for every $R>2$ and
$a=(|a|,0)\in\Omega$ with $|a|<{R_0}/{R}$ we define the functions
$w_{j,R,a}$ as
\[
 w_{j,R,a}= 
 \begin{cases}
  w_{j,R,a}^{ext}, &\text{in }\Omega \setminus D_{R|a|},\\
  w_{j,R,a}^{int}, &\text{in } D_{R|a|},
 \end{cases}
\quad j=1,\ldots,n_0,
\]
where 
\begin{equation*}
  w_{j,R,a}^{ext} := e^{\frac{i}{2}(\theta_a - \theta_0)}
  \varphi_j^0\quad \text{in }
  \Omega \setminus D_{R|a|}, 
\end{equation*}
with $\varphi_j^0$ as in \eqref{eq_eigenfunction}--\eqref{eq:89} with $a=0$,  so that it solves
\begin{equation*}
 \begin{cases}
   (i\nabla +A_a)^2 w_{j,R,a}^{ext} = \lambda_j^0 w_{j,R,a}^{ext}, &\text{in }\Omega \setminus D_{R|a|},\\
   w_{j,R,a}^{ext} = e^{\frac{i}{2}(\theta_a - \theta_0)} \varphi_j^0
   &\text{on }\partial (\Omega \setminus D_{R|a|}),
 \end{cases}
\end{equation*}
whereas $w_{j,R,a}^{int}$ is the unique solution to the minimization problem 
\begin{multline}\label{eq:viint_la}
  \int_{D_{R|a|}} |(i\nabla +A_a)
  w_{j,R,a}^{int}(x)|^2\,dx\\
  = \min\left\{ \int_{D_{R|a|}} |(i\nabla +A_a)u(x)|^2\,dx:\, u\in
    H^{1,a}(D_{R|a|},\C), \ u= e^{\frac{i}{2}(\theta_a - \theta_0)}
    \varphi_j^0 \text{ on }\partial D_{R|a|} \right\},
\end{multline}
thus solving
\begin{equation}\label{problema_viint_la}
 \begin{cases}
  (i\nabla +A_a)^2 w_{j,R,a}^{int} = 0, &\text{in }D_{R|a|},\\
  w_{j,R,a}^{int} = e^{\frac{i}{2}(\theta_a - \theta_0)} \varphi_j^0, &\text{on }\partial D_{R|a|}.
 \end{cases}
\end{equation}
It is easy to verify that 
\begin{equation}\label{eq:90_la}
\mathop{\rm dim}\big(\mathop{\rm span} \{w_{1,R,a},\ldots,w_{n_0,R,a}\}\big)=n_0.
\end{equation}
As a direct consequence of \cite[Theorem 1.3]{FFT} (see also Proposition \ref{prop:fft}), there exists
some $\tilde K>0$ such, for every $R>2$, $a=(|a|,0)\in\Omega$ with
  $|a|<\frac{R_0}R$, 
and $1\leq j\leq n_0$,
\begin{align}
\label{eq:34_la}&\int_{\partial D_{R|a|}}|\varphi_j^0|^2\,ds\leq \tilde K (R|a|)^{2},\\
\label{eq:35_la}&\int_{D_{R|a|}}|(i\nabla + A_0)\varphi_j^0|^2\,dx\leq \tilde K (R|a|),\\
\label{eq:36_la}&\int_{D_{R|a|}}|\varphi_j^0|^2\,dx\leq
\tilde K (R|a|)^{3}.
\end{align}
Arguing as in the proof of Lemma \ref{lemma_stime_vj} (using estimates
\eqref{eq:34_la}, \eqref{eq:35_la} and \eqref{eq:36_la} instead of
\eqref{eq:34}--\eqref{eq:36}) we obtain (up to enlarging the constant
$\tilde K$) that, for every $R>2$, $a=(|a|,0)\in\Omega$ with
  $|a|<\frac{R_0}R$, 
and $1\leq j\leq n_0$,
\begin{align}
\label{eq:35vint_la}&\int_{D_{R|a|}}|(i\nabla + A_a) w_{j,R,a}^{int}|^2\,dx\leq
\tilde K (R|a|),\\
\label{eq:34vint_la}&\int_{\partial
  D_{R|a|}}|w_{j,R,a}^{int}|^2\,ds\leq \tilde K (R|a|)^{2},\\
\label{eq:36vint_la}&\int_{D_{R|a|}}|w_{j,R,a}^{int}|^2\,dx\leq
\tilde K(R|a|)^{3}.
\end{align}
For all  $R> 2$ and  $a=(|a|,0)\in\Omega$ with
  $|a|<\frac{R_0}R$, we define 
\begin{align}\label{eq:85_la} 
U_a^R(x):=\frac{w_{n_0,R,a}^{int} (|a|x)}{|a|^{k/2}}, \quad 
 W_a(x):=\frac{\varphi_0(|a|x)}{|a|^{k/2}}.
\end{align}
Under assumptions \eqref{eq:37} and \eqref{eq:54}, from \cite[Theorem 1.3 and
Lemma 6.1]{FFT} we have  that 
\begin{equation}\label{eq:vkext_la}
W_a\to  \beta e^{\frac i2\theta_0}\psi_k\quad\text{as } |a|\to0
\end{equation}
in $H^{1 ,0}(D_R,\C)$ for
every $R>1$, where $\psi_k$ is defined in \eqref{eq:psi_k} and 
\begin{equation}\label{eq:beta}
\beta:= \frac{\beta_k^2(0,\varphi_0,\lambda_0)}{\sqrt\pi}
\end{equation}
with $\beta_k^2(0,\varphi_0,\lambda_0)$ as in \eqref{def_beta} 
and  \eqref{eq:84}.

We also denote as $w_R$ the unique solution to the minimization problem 
\begin{multline}\label{eq:def_zR_la}
  \int_{D_{R}} |(i\nabla +A_{\mathbf e})
  w_R(x)|^2\,dx\\
  = \min\left\{ \int_{D_{R}} |(i\nabla +A_{\mathbf e})u(x)|^2\,dx:\, u\in
    H^{1,{\mathbf e}}(D_{R},\C), \ u= e^{\frac{i}{2}\theta_{\mathbf e}}
\psi_k \text{ on }\partial D_{R} \right\},
\end{multline}
which then  solves
\begin{equation}\label{eq:equazione_w_R}
 \begin{cases}
  (i\nabla +A_{\mathbf e})^2 w_R = 0, &\text{in }D_{R},\\
  w_R = e^{\frac{i}{2}\theta_{\mathbf e}}
\psi_k, &\text{on }\partial D_{R}.
 \end{cases}
\end{equation}
By the Dirichlet principle and \eqref{eq:vkext_la}, we have that  
\begin{align*}
 \int_{D_R} &\abs{(i\nabla +A_{\mathbf e})(U_a^R - \beta w_R) }^2dx
\leq \int_{D_R} \abs{(i\nabla +A_{\mathbf e})\big(\eta_R\,
e^{\frac{i}{2}(\theta_{\mathbf e}-\theta_0)}(W_a- \beta e^{\frac{i}{2}\theta_{0} }\psi_k\big)}^2 dx\\
&\leq 2 \int_{D_R}|\nabla \eta_R|^2\big|W_a- \beta e^{\frac{i}{2}\theta_{0} }\psi_k\big|^2dx 
+2\int_{D_R\setminus D_{R/2}}
\eta_R^2\big|(i\nabla +A_{0})(W_a- \beta e^{\frac{i}{2}\theta_{0} }\psi_k)\big|^2dx\\
&= o(1)\quad \text{as }
|a|\to0^+,
\end{align*}
where $\eta_R:\R^2\to\R$ is a smooth cut-off function as in \eqref{eq:88}.
Hence, for all $R>2$,
\begin{equation}\label{eq:vkint_la} 
  U_a^R \to  \beta w_R, \quad\text{in  }H^{1,{\mathbf e}}(D_R,\C),
\end{equation}
as $|a|\to 0$, where $\beta$ is defined in \eqref{eq:beta}.

\begin{Lemma}\label{l:conv_w_r}
For every $r>1$, $w_R\to\Psi_k$ in $H^{1,{\mathbf e}}(D_r,\C)$ as $R\to+\infty$.  
\end{Lemma}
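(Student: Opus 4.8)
The plan is to compare $w_R$ directly with the limit profile $\Psi_k=e^{\frac i2\theta_{\mathbf e}}(\psi_k+w_k)$ (see \eqref{eq:10}, \eqref{eq:44}) and to show that their difference, which is magnetic-harmonic on $D_R$, carries boundary data that becomes small as $R\to+\infty$; then the Hardy inequality will propagate this smallness to any fixed disk $D_r$. Set $\zeta_R:=w_R-\Psi_k$. Since $w_R=e^{\frac i2\theta_{\mathbf e}}\psi_k$ on $\partial D_R$, we have $\zeta_R=-e^{\frac i2\theta_{\mathbf e}}w_k$ on $\partial D_R$, and by \eqref{eq:equazione_w_R} and \eqref{eq:16} the function $\zeta_R\in H^{1,{\mathbf e}}(D_R,\C)$ weakly solves $(i\nabla+A_{\mathbf e})^2\zeta_R=0$ in $D_R$.

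First I would establish the global energy bound $\int_{D_R}|(i\nabla+A_{\mathbf e})\zeta_R|^2\,dx\to0$ as $R\to+\infty$. Because $\zeta_R$ solves the homogeneous equation, the Dirichlet principle identifies it as the minimizer of $v\mapsto\int_{D_R}|(i\nabla+A_{\mathbf e})v|^2\,dx$ among all $v\in H^{1,{\mathbf e}}(D_R,\C)$ that agree with $-e^{\frac i2\theta_{\mathbf e}}w_k$ on $\partial D_R$. I would test this with $\tilde v:=-\eta_R\,e^{\frac i2\theta_{\mathbf e}}w_k$, where $\eta_R$ is the cut-off of \eqref{eq:88} supported in $\overline{D_R}\setminus D_{R/2}$ (this is a legitimate competitor: it has the prescribed trace, and since $w_k$ vanishes on $s$ the function $e^{\frac i2\theta_{\mathbf e}}w_k$ is $H^1$ across the crack $s$, so $\tilde v\in H^{1,{\mathbf e}}(D_R,\C)$ because it moreover vanishes near $\mathbf e$). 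Using the gauge identity $(i\nabla+A_{\mathbf e})(e^{\frac i2\theta_{\mathbf e}}f)=e^{\frac i2\theta_{\mathbf e}}\,i\nabla f$, which holds a.e.\ on the full-measure set $\R^2\setminus s$, the comparison reduces to estimating $\int_{D_R\setminus D_{R/2}}|\nabla(\eta_R w_k)|^2\,dx\le \frac{C}{R^2}\int_{D_R\setminus D_{R/2}}|w_k|^2\,dx+C\int_{\R^2\setminus D_{R/2}}|\nabla w_k|^2\,dx$. Writing $|w_k(x)|^2=|x-\mathbf e|^2\,\frac{|w_k(x)|^2}{|x-\mathbf e|^2}\le(R+1)^2\frac{|w_k(x)|^2}{|x-\mathbf e|^2}$ on $D_R$, and recalling that the even extension of $w_k$ lies in $\mathcal{D}^{1,2}_s(\R^2)$ (see the proof of Proposition \ref{prop_Psi}), so that $\int_{\R^2}|\nabla w_k|^2<+\infty$ and, by the Hardy inequality for $\mathcal{D}^{1,2}_s$ recalled in the introduction, $\int_{\R^2}\frac{|w_k|^2}{|x-\mathbf e|^2}<+\infty$, both terms are tails of convergent integrals and hence vanish as $R\to+\infty$.

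Second, to upgrade this to convergence in $H^{1,{\mathbf e}}(D_r,\C)$ for fixed $r>1$, I would apply the Hardy inequality \eqref{eq:hardy} on the \emph{large} disk $D_{R-1}(\mathbf e)$: for $R>r+2$ one has $D_r\subset D_{R-1}(\mathbf e)\subset D_R$, so $\int_{D_r}\frac{|\zeta_R|^2}{|x-\mathbf e|^2}\,dx\le 4\int_{D_R}|(i\nabla+A_{\mathbf e})\zeta_R|^2\,dx\to0$. Then, since $|x-\mathbf e|\le r+1$ on $D_r$, $\int_{D_r}|\zeta_R|^2\,dx\le(r+1)^2\int_{D_r}\frac{|\zeta_R|^2}{|x-\mathbf e|^2}\,dx\to0$, and finally $\int_{D_r}|\nabla\zeta_R|^2\,dx\le 2\int_{D_r}|(i\nabla+A_{\mathbf e})\zeta_R|^2\,dx+\tfrac12\int_{D_r}\frac{|\zeta_R|^2}{|x-\mathbf e|^2}\,dx\to0$; together these give $\|w_R-\Psi_k\|_{H^{1,{\mathbf e}}(D_r,\C)}\to0$.

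The main (and essentially only) delicate point is the first step, specifically checking that $\tilde v$ is an admissible competitor in the right function space and that the gauge identity applies to it — this hinges on $w_k$ vanishing on $s$ so that multiplication by the discontinuous factor $e^{\frac i2\theta_{\mathbf e}}$ does not destroy $H^1$-regularity across the crack. Everything else is the bookkeeping of two Hardy inequalities (the global one for $\mathcal{D}^{1,2}_s$, giving the energy bound, and \eqref{eq:hardy} on $D_{R-1}(\mathbf e)$, localising it); no compactness or monotonicity-formula argument is needed.
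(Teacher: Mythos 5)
Your proof is correct and follows essentially the same route as the paper's: both identify $w_R-\Psi_k$ as a magnetic-harmonic function on $D_R$ with boundary trace $-e^{\frac i2\theta_{\mathbf e}}w_k$, test the Dirichlet principle against the same cut-off competitor $-\eta_R e^{\frac i2\theta_{\mathbf e}}w_k$, and conclude by sending $R\to\infty$. The only small deviations are that you control the lower-order cut-off term via the Hardy inequality for $\mathcal D^{1,2}_s$ where the paper invokes the pointwise decay $|w_k|=O(|x|^{-1/2})$ from \eqref{eq:75}, and you spell out explicitly the passage from the magnetic-Dirichlet-energy smallness on $D_R$ to full $H^{1,\mathbf e}(D_r)$-norm convergence (via \eqref{eq:hardy} centered at $\mathbf e$), a step the paper leaves implicit; both are harmless refinements of the same argument.
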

\begin{proof}
  Let $r>2$. For every $R>r$, by the Dirichlet Principle, \eqref{eq:17}, and
  \eqref{eq:75} we have that,
  letting $\eta_R$ as in \eqref{eq:88},
  \begin{align*}
    \int_{D_{r}}& |(i\nabla +A_{\mathbf e})(w_R-\Psi_k)(x)|^2\,dx\leq
    \int_{D_{R}} |(i\nabla +A_{\mathbf e})(w_R-\Psi_k)(x)|^2\,dx\\
&\leq \int_{D_{R}} |(i\nabla +A_{\mathbf e})(\eta_R(e^{\frac{i}{2}\theta_{\mathbf e}}\psi_k
-\Psi_k))(x)|^2\,dx\\
&\leq 2 \int_{D_{R}\setminus D_{R/2}} \eta_R^2|(i\nabla +A_{\mathbf e})(e^{\frac{i}{2}\theta_{\mathbf e}}\psi_k
-\Psi_k)|^2\,dx+2
 \int_{D_{R}\setminus D_{R/2}} |\nabla\eta_R|^2|e^{\frac{i}{2}\theta_{\mathbf e}}\psi_k
-\Psi_k|^2\,dx
\\
&\leq  2\int_{\R^2\setminus D_{R/2}} |(i\nabla +A_{\mathbf e})(e^{\frac{i}{2}\theta_{\mathbf e}}\psi_k
-\Psi_k)|^2\,dx+\frac{32}{R^2}
 \int_{D_{R}\setminus D_{R/2}}|e^{\frac{i}{2}\theta_{\mathbf e}}\psi_k
-\Psi_k|^2\,dx=o(1)
  \end{align*}
as $R\to+\infty$.
\end{proof}

\begin{Lemma}\label{l:stima_Lambda0_sotto}
 For $a=(|a|,0)\in\Omega$, let 
$\varphi_a\in
H^{1,a}_{0}(\Omega,\C)$ solve (\ref{eq:equation_a}-\ref{eq:6})
and $\varphi_0\in
H^{1,0}_{0}(\Omega,\C)$ be a solution to
(\ref{eq:equation_lambda0}--\ref{eq:83}). If \eqref{eq:1} and
\eqref{eq:37} hold and \eqref{eq:54} is satisfied, then, for all
  $R>\tilde R$ and $a=(|a|,0)\in\Omega$,
\[
\frac{\lambda_0-\lambda_a}{|a|^k}\geq g_R(a)
\]
where 
\[
\lim_{|a|\to 0}g_R(a)=i|\beta|^2\tilde\kappa_R,
\]
with $\beta$ as in \eqref{eq:beta} and 
\begin{equation}\label{eq:tildekappa_R}
\tilde\kappa_R=\int_{\partial D_R}\Big(e^{-\frac i2 \theta_{\mathbf
    e}} (i\nabla+A_{\mathbf e})w_R\cdot\nu
- (i\nabla) \psi_k\cdot\nu
\Big)\psi_k\,ds
\end{equation}
being $\psi_k$ as in \eqref{eq:psi_k}.
\end{Lemma}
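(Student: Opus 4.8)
The plan is to mirror the argument of Lemma~\ref{l:stima_Lambda0_sopra}, exchanging the roles of the pole $a$ and of the origin: now I build competitors for $\lambda_a$ out of the (gauge-transformed, suitably cut) limit eigenfunctions $\varphi_j^0$ rather than the approximating ones. By the Courant--Fisher characterization \eqref{eq:91_la} applied to the $n_0$-dimensional subspace $F=\mathop{\rm span}\{w_{1,R,a},\dots,w_{n_0,R,a}\}$ (see \eqref{eq:90_la}), after a Gram--Schmidt orthonormalization $\tilde w_{j,R,a}=\hat w_{j,R,a}/\|\hat w_{j,R,a}\|_{L^2(\Omega)}$ one gets
\[
\lambda_a-\lambda_0\le \max_{\sum_j|\alpha_j|^2=1}\ \sum_{j,n=1}^{n_0}m_{j,n}^{a,R}\,\alpha_j\overline{\alpha_n},\qquad
m_{j,n}^{a,R}=\int_\Omega(i\nabla+A_a)\tilde w_{j,R,a}\cdot\overline{(i\nabla+A_a)\tilde w_{n,R,a}}\,dx-\lambda_0\delta_{jn}.
\]
First I would estimate the entries, using the gauge identity $|(i\nabla+A_a)(e^{\frac i2(\theta_a-\theta_0)}v)|=|(i\nabla+A_0)v|$ (which makes $w_{j,R,a}^{ext}$ an eigenfunction of $(i\nabla+A_a)^2$ with eigenvalue $\lambda_j^0$ outside $D_{R|a|}$), the orthonormality $\int_\Omega\varphi_j^0\overline{\varphi_n^0}=\delta_{jn}$, the interior energy bounds \eqref{eq:34_la}--\eqref{eq:36_la} and \eqref{eq:35vint_la}--\eqref{eq:36vint_la}, and the convergences \eqref{eq:vkext_la}, \eqref{eq:vkint_la}, which improve the crude bounds to $\int_{D_{R|a|}}|(i\nabla+A_0)\varphi_0|^2+\int_{D_{R|a|}}|(i\nabla+A_a)w_{n_0,R,a}^{int}|^2=O(|a|^k)$. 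Since the $\varphi_j^0$ are already $L^2$-orthogonal, the Gram--Schmidt corrections turn out to be of higher order, and one gets, as $|a|\to0$,
\[
m_{j,j}^{a,R}=(\lambda_j^0-\lambda_0)+o(1)\ (j<n_0),\qquad m_{n_0,n_0}^{a,R}=O(|a|^k),\qquad m_{j,n_0}^{a,R}=O(|a|^{(k+1)/2}),\qquad m_{j,n}^{a,R}=O(|a|)\ (j\neq n,\ j,n<n_0).
\]

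Next, since $\lambda_j^0-\lambda_0<0$ for $j<n_0$ by \eqref{eq:1}, comparing the maximum with the value $m_{n_0,n_0}^{a,R}$ attained at $\alpha_{n_0}=1$ and isolating the negative-definite part of the diagonal forces any maximizing amplitude $(\alpha_j^{a,R})$ to satisfy $\sum_{j<n_0}|\alpha_j^{a,R}|^2=O(|a|^{k+1})$ and $|\alpha_{n_0}^{a,R}|^2=1+O(|a|^{k+1})$; feeding this back, every off-diagonal term and every subleading diagonal term is $o(|a|^k)$, so that $\max_{\sum|\alpha_j|^2=1}\sum m_{j,n}^{a,R}\alpha_j\overline{\alpha_n}=m_{n_0,n_0}^{a,R}+o(|a|^k)$. (This is the analogue of the recursive argument in Lemma~\ref{l:stima_Lambda0_sopra}, but no iteration is needed here, the normalizing scale $|a|^k$ being known a priori.) Using again the gauge identity and $\int_\Omega|(i\nabla+A_0)\varphi_0|^2=\lambda_0$, one has $\int_\Omega|(i\nabla+A_a)w_{n_0,R,a}|^2=\lambda_0+\int_{D_{R|a|}}|(i\nabla+A_a)w_{n_0,R,a}^{int}|^2-\int_{D_{R|a|}}|(i\nabla+A_0)\varphi_0|^2$ while $\|w_{n_0,R,a}\|_{L^2(\Omega)}^2=1+o(|a|^k)$; rescaling $x\mapsto|a|x$ and recalling \eqref{eq:85_la} gives $m_{n_0,n_0}^{a,R}=|a|^k\big(\int_{D_R}|(i\nabla+A_{\mathbf e})U_a^R|^2dx-\int_{D_R}|(i\nabla+A_0)W_a|^2dx\big)+o(|a|^k)$. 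Hence $\frac{\lambda_0-\lambda_a}{|a|^k}\ge g_R(a)$ with
\[
g_R(a):=\int_{D_R}|(i\nabla+A_0)W_a|^2\,dx-\int_{D_R}|(i\nabla+A_{\mathbf e})U_a^R|^2\,dx+o(1),\qquad |a|\to0.
\]

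Finally I would compute $\lim_{|a|\to0}g_R(a)$. By \eqref{eq:vkext_la} and \eqref{eq:vkint_la}, $g_R(a)\to|\beta|^2\big(\int_{D_R}|(i\nabla+A_0)(e^{\frac i2\theta_0}\psi_k)|^2dx-\int_{D_R}|(i\nabla+A_{\mathbf e})w_R|^2dx\big)$; since $(i\nabla+A_0)(e^{\frac i2\theta_0}\psi_k)=e^{\frac i2\theta_0}\,i\nabla\psi_k$, the first integral is $\int_{D_R}|\nabla\psi_k|^2dx$. Both $e^{\frac i2\theta_0}\psi_k$ and $w_R$ solve the homogeneous magnetic equation in $D_R$ (for $A_0$, resp.\ $A_{\mathbf e}$), so the magnetic Green identity
\[
\int_{D_R}(i\nabla+A)u\cdot\overline{(i\nabla+A)v}\,dx=-\,i\int_{\partial D_R}\big((i\nabla+A)u\cdot\nu\big)\overline v\,ds\qquad\text{when }(i\nabla+A)^2u=0,\ \mathrm{div}\,A=0
\]
(the boundary contribution at the pole vanishing in the limit) turns both energies into integrals over $\partial D_R$; there $w_R=e^{\frac i2\theta_{\mathbf e}}\psi_k$ and $\psi_k$ is real, and subtracting yields
\[
\int_{D_R}|\nabla\psi_k|^2dx-\int_{D_R}|(i\nabla+A_{\mathbf e})w_R|^2dx
=i\int_{\partial D_R}\Big(e^{-\frac i2\theta_{\mathbf e}}(i\nabla+A_{\mathbf e})w_R\cdot\nu-(i\nabla)\psi_k\cdot\nu\Big)\psi_k\,ds=i\,\tilde\kappa_R,
\]
so $\lim_{|a|\to0}g_R(a)=i|\beta|^2\tilde\kappa_R$, as claimed. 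The main obstacle is the bookkeeping of the minimax maximization — obtaining the sharp sizes of the entries $m_{j,n}^{a,R}$ and the self-consistent bounds on the maximizing amplitudes so that everything off the $(n_0,n_0)$-entry is negligible at scale $|a|^k$; this is nevertheless lighter than in the companion Lemma~\ref{l:stima_Lambda0_sopra}, and the residual work is the clean conversion of the limiting energies into the boundary functional $\tilde\kappa_R$ via the Green identity above.
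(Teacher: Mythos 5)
Your proposal is correct and follows essentially the same route as the paper's own proof: Courant--Fisher with the test space spanned by the Gram--Schmidt orthonormalization of $\{w_{j,R,a}\}$, the same sizes for the matrix entries $p_{j,n}^{a,R}$ (your $m_{j,n}^{a,R}$), reduction of the maximum to the $(n_0,n_0)$ entry plus $o(|a|^k)$, rescaling via \eqref{eq:85_la} and the convergences \eqref{eq:vkext_la}--\eqref{eq:vkint_la}, and finally the magnetic Green identity converting $\int_{D_R}|\nabla\psi_k|^2-\int_{D_R}|(i\nabla+A_{\mathbf e})w_R|^2$ into $i\,\tilde\kappa_R$. Your observation that no recursive bootstrap is needed here (unlike in Lemma~\ref{l:stima_Lambda0_sopra}, where the normalization $\sqrt{H(\varphi_a,K_\delta|a|)}$ is not yet known to be of order $|a|^{k/2}$) is exactly the simplification the paper invokes implicitly with ``arguing as in \S\ref{sec:rayl-quot-lambd}''.
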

\begin{proof}
In \eqref{eq:91_la} we choose $F$ as the space of functions $\{\tilde w_{j,R,a}\}$ which result from $\{w_{j,R,a}\}$  
by a Gram--Schmidt process, that is 
\[ 
\tilde w_{j,R,a}:= \dfrac{\hat w_{j,R,a}}{\|\hat w_{j,R,a}\|_{L^2(\Omega,\C)}}, \quad j=1,\ldots, n_0, 
\]
where
\begin{align*}
& \hat w_{n_0,R,a} := w_{n_0,R,a},\\
& \hat w_{j,R,a} := w_{j,R,a} - \sum_{\ell= j+1}^{n_0}c_{\ell,j}^{R,a} \hat w_{\ell,R,a} 
\quad \text{for }j=1,\ldots, n_0-1,
\end{align*}
where
\[
c_{\ell,j}^{R,a} :=\dfrac{\int_\Omega
  w_{j,R,a}
  \overline{\hat w_{\ell,R,a}}\,dx}{\|\hat w_{\ell,R,a}\|_{L^2(\Omega,\C)}^2}.
 \]
From \eqref{eq:23}, \eqref{eq:36_la}, and \eqref{eq:36vint_la}
and an induction argument, it follows that 
\begin{equation}\label{eq:18_la}
\|\hat w_{j,R,a} \|_{L^2(\Omega,\C)}^2=1+O(|a|^{3})
\quad\text{and}\quad 
c_{\ell,j}^{R,a}=O(|a|^{3})\text{ for }\ell\neq j
\end{equation}
as $|a|\to 0^+$.
Furthermore, from \eqref{eq:23}, \eqref{eq:vkext_la}, and \eqref{eq:vkint_la} we deduce that 
\begin{equation}\label{eq:13_la}
\|\hat w_{n_0,R,a} \|_{L^2(\Omega,\C)}^2=\|w_{n_0,R,a} \|_{L^2(\Omega,\C)}^2=1+O\big(|a|^{2+k})\big)
\quad\text{as }|a|\to 0^+,
\end{equation}
and 
\begin{equation}\label{eq:76bis}
\quad 
c_{n_0,j}^{R,a}= O\big(|a|^{\frac52+\frac k2})\big)
\quad\text{as }|a|\to 0^+,\quad\text{for all }j<n_0.
\end{equation}
From \eqref{eq:91_la} and \eqref{eq:90_la} it follows that 
\begin{equation*}
  \lambda_a \leq \max_
  {\substack{(\alpha_1,\dots, \alpha_{n_0})\in \C^{n_0}\\
      \sum_{j=1}^{{n_0}}|\alpha_j|^2 =1}}
  \int_{\Omega} \bigg|(i\nabla+A_a) \bigg(\sum_{j=1}^{{n_0}}\alpha_j
     \tilde w_{j,R,a}\bigg)\bigg|^2 dx.
\end{equation*}
Hence 
\begin{equation}\label{eq:107_la}
\lambda_a-\lambda_0\leq \max_
  {\substack{(\alpha_1,\dots, \alpha_{n_0})\in \C^{n_0}\\
      \sum_{j=1}^{{n_0}}|\alpha_j|^2 =1}}\sum_{j,n=1}^{n_0}
p_{j,n}^{a,R}\alpha_j \overline{\alpha_n},
\end{equation}
where 
\[
p_{j,n}^{a,R}= \int_{\Omega} (i\nabla+A_a)    \tilde w_{j,R,a}\cdot
\overline{(i\nabla+A_a)    \tilde w_{n,R,a}}\, dx
-\lambda_0 \delta_{jn}.
\]
By \eqref{eq:85_la}, \eqref{eq:vkext_la}, \eqref{eq:vkint_la}, \eqref{eq:13_la}, and
integration by parts we obtain that
\begin{align*}
 p_{n_0,n_0}^{a,R} &= \dfrac{\lambda_0 (1-\|w_{n_0,R,a} \|_{L^2(\Omega,\C)}^2)}{\|w_{n_0,R,a} \|_{L^2(\Omega,\C)}^2} \\
\notag&\  + \dfrac{1}{\|w_{n_0,R,a} \|_{L^2(\Omega,\C)}^2}
|a|^k\bigg(\int_{D_R}|(i\nabla+A_{\mathbf
  e})U_a^R|^2\,dx-\int_{D_R}|(i\nabla+A_{0})W_a|^2\,dx\bigg)\\ 
&=|a|^k|\beta|^2\bigg(\int_{D_R}|(i\nabla+A_{\mathbf
    e})w_R|^2\,dx-\int_{D_R}|\nabla \psi_k|^2\,dx+o(1)\bigg)\\
  \notag&=-i |a|^k|\beta|^2\big(\tilde\kappa_R+o(1)\big),
\end{align*}
as $|a|\to0$, where 
\[
\tilde\kappa_R=\int_{\partial D_R}\Big(e^{-\frac i2 \theta_{\mathbf
    e}} (i\nabla+A_{\mathbf e})w_R\cdot\nu
- (i\nabla) \psi_k\cdot\nu
\Big)\psi_k\,ds.
\]
From \eqref{eq:35_la}, \eqref{eq:35vint_la}, and \eqref{eq:18_la}, we have that, for all $j<n_0$, 
\begin{align*}
  p_{j,j}^{a,R}&= -\lambda_0 \\
  \notag &\ +\dfrac{1}{\|\hat w_{j,R,a}\|_{L^2(\Omega,\C)}^2} \left(
    \lambda_j^0 - \int_{D_{R|a|}}\!\! \!\abs{(i\nabla
      +A_0)\varphi_j^0}^2dx +
    \int_{D_{R|a|}}\! \!\!\abs{(i\nabla +A_a)w_{j,R,a}^{int}}^2dx \right) \\
  & \ + \dfrac{1}{\|\hat w_{j,R,a}\|_{L^2(\Omega,\C)}^2} \int_{\Omega}
  \bigg|(i\nabla + A_a)
  \Big(\sum_{\ell>j} c_{\ell,j}^{R,a} \hat w_{\ell,R,a} \Big)\bigg|^2dx\\
  &\ - \dfrac{2}{\|\hat w_{j,R,a}\|_{L^2(\Omega,\C)}^2}
  \Re\bigg(\int_{\Omega} (i\nabla + A_a)w_{j,R,a} \cdot
  \overline{(i\nabla + A_a) \Big( \sum_{\ell>j} c_{\ell,j}^{R,a} \hat w_{\ell,R,a} \Big)}\,dx\bigg)\\
  \notag&=(\lambda_j^0-\lambda_0)+o(1)\quad\text{as }|a|\to0.
\end{align*}
From \eqref{eq:35_la}, \eqref{eq:35vint_la}, \eqref{eq:vkext_la}, \eqref{eq:vkint_la},
 \eqref{eq:18_la}, 
and \eqref{eq:76bis} it follows that, for all $j<n_0$, 
\begin{align*}
  p_{j,n_0}^{a,R}&=\overline{p_{n_0,j}^{a,R}}\\
  &= \dfrac{\int_{D_{R|a|}}\Big((i\nabla+A_a) w_{j,R,a}^{int}\cdot
    \overline{(i\nabla+A_a) w_{n_0,R,a}^{int} }- (i\nabla+A_0)
    \varphi_j^0\cdot
    \overline{(i\nabla+A_0) \varphi_0}\Big)\,dx}{\|\hat w_{j,R,a}\|_{L^2(\Omega,\C)} \|\hat w_{n_0,R,a}\|_{L^2(\Omega,\C)}} \\
  &\quad - \dfrac{\int_{\Omega} (i\nabla+A_a)
    \big(\sum_{\ell>j}c_{\ell,j}^{R,a} \hat w_{\ell,R,a}\big) \cdot
    \overline{(i\nabla+A_a) w_{n_0,R,a} }\,dx}{\|\hat
    w_{j,R,a}\|_{L^2(\Omega,\C)} \|\hat
    w_{n_0,R,a}\|_{L^2(\Omega,\C)}}
  \\
  &=O\Big(|a|^{\frac{k+1}2}\Big),
\end{align*}
and, in a similar way, for all
 $j,n<n_0$ with $j\neq n$,
\begin{equation*}
p_{j,n}^{a,R}=O(|a|) \quad\text{as }|a|\to0.
\end{equation*}
Arguing as in \S \ref{sec:rayl-quot-lambd}, we can then prove that 
\[
\max_
{\substack{(\alpha_1,\dots, \alpha_{n_0})\in \C^{n_0}\\
    \sum_{j=1}^{{n_0}}|\alpha_j|^2 =1}}\sum_{j,n=1}^{n_0}
p_{j,n}^{a,R}\alpha_j
\overline{\alpha_n}=|a|^k\Big(-i|\beta|^2\tilde\kappa_R+o(1)\Big),
\quad\text{as }|a|\to0,
\]
which, in view of \eqref{eq:107_la}, yields
$\frac{\lambda_0-\lambda_a}{|a|^k}\geq g_R(a)$ where $\lim_{|a|\to
  0}g_R(a)=i|\beta|^2\tilde\kappa_R$.  The proof is thereby complete.
\end{proof}

\begin{Lemma}\label{l:lim_tilde_kappa_R}
 Let $\tilde \kappa_R$ be as in \eqref{eq:tildekappa_R}. Then
 \begin{equation}
  \lim_{R\to+\infty} \tilde\kappa_R = 4i\mathfrak{m}_k,
 \end{equation}
with $\mathfrak{m}_k$ as in \eqref{eq:Ik}.
\end{Lemma}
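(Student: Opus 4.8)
The plan is to reduce $\tilde\kappa_R$, via the gauge transformation $u\mapsto e^{-\frac i2\theta_{\mathbf e}}u$, to a purely \emph{real} boundary integral, to observe that this integral is independent of the radius on which it is taken, and then to pass to the limit on a fixed circle that keeps away from the pole $\mathbf e$. Set $\tilde w_R:=e^{-\frac i2\theta_{\mathbf e}}w_R$, with $w_R$ as in \eqref{eq:def_zR_la}--\eqref{eq:equazione_w_R}. Since $A_{\mathbf e}=\nabla(\theta_{\mathbf e}/2)$ in $\R^2\setminus\{\mathbf e\}$, one has $e^{-\frac i2\theta_{\mathbf e}}(i\nabla+A_{\mathbf e})w_R=i\nabla\tilde w_R$; hence $\tilde w_R$ is real-valued, harmonic in $D_R\setminus s$, changes sign together with its normal derivative across the crack $s$, equals $\psi_k$ on $\partial D_R$, and is $O(\abs{x-\mathbf e}^{1/2})$ near $\mathbf e$. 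Writing $v_R:=\tilde w_R-\psi_k$, which vanishes on $\partial D_R$, the definition \eqref{eq:tildekappa_R} reads
\[
\tilde\kappa_R=i\int_{\partial D_R}\psi_k\,\partial_\nu v_R\,ds=i\int_{\partial D_R}\bigl(\psi_k\,\partial_\nu v_R-v_R\,\partial_\nu\psi_k\bigr)\,ds,
\]
with $\nu$ the outer radial unit vector; equivalently, this is the identity $-i\tilde\kappa_R=\int_{D_R}\abs{(i\nabla+A_{\mathbf e})w_R}^2\,dx-\int_{D_R}\abs{\nabla\psi_k}^2\,dx$ already used in the proof of Lemma~\ref{l:stima_Lambda0_sotto}, after one integration by parts.

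Next I would show that this integral does not depend on the radius: for every $\rho\in(1,R)$,
\[
\tilde\kappa_R=i\int_{\partial D_\rho}\bigl(\psi_k\,\partial_\nu v_R-v_R\,\partial_\nu\psi_k\bigr)\,ds .
\]
This is Green's second identity in the annulus $D_R\setminus D_\rho$ cut along $s$: both $\psi_k$ and $v_R$ are harmonic there, and on the two lips of the cut $s\cap(D_R\setminus D_\rho)\subset s_0$ one has $\psi_k=0$ and $v_R^++v_R^-=0$ while $\partial_{x_2}\psi_k$ changes sign, so the two lips together contribute $(v_R^++v_R^-)\,\tfrac{\partial_+\psi_k}{\partial x_2}=0$ and only the two circular boundaries survive. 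Fixing $\rho=2$, the circle $\partial D_2$ lies at distance $1$ from $\mathbf e$; by Lemma~\ref{l:conv_w_r} together with interior elliptic estimates for $(i\nabla+A_{\mathbf e})^2(\,\cdot\,)=0$ (uniformly elliptic away from $\mathbf e$) we get $w_R\to\Psi_k$, hence $\tilde w_R\to\Phi_k$ and $v_R\to\Phi_k-\psi_k=w_k$ in $C^1$ on $\partial D_2$ (the single point $\partial D_2\cap s$ being negligible), where $\Phi_k$ is the even extension of Lemma~\ref{lemma_Phi}, $\Psi_k$ is as in \eqref{eq:10} and $w_k$ as in \eqref{eq:44}. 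Therefore $\tilde\kappa_R\to i\int_{\partial D_2}\bigl(\psi_k\,\partial_\nu w_k-w_k\,\partial_\nu\psi_k\bigr)\,ds$ as $R\to+\infty$.

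It remains to evaluate this limit. Applying Green's second identity in $D_2$ cut along $s_0$ (with a vanishing disk removed at the origin), using that $\psi_k$ and the even extension of $w_k$ are harmonic in $\R^2\setminus s_0$, that $\psi_k=0$ on $s_0$, that $w_k=0$ on $s$, and that $w_k$ is continuous across $\{(x_1,0):0<x_1<1\}$, the lips of the cut contribute $0$ along $s$ and $2\,w_k(x_1,0)\,\tfrac{\partial_+\psi_k}{\partial x_2}(x_1,0)$ along $\{0<x_1<1\}$, whence
\[
\int_{\partial D_2}\bigl(\psi_k\,\partial_\nu w_k-w_k\,\partial_\nu\psi_k\bigr)\,ds=-2\int_0^1 w_k(x_1,0)\,\frac{\partial_+\psi_k}{\partial x_2}(x_1,0)\,dx_1=4\,{\mathfrak m}_k,
\]
the last equality being \eqref{eq:segno_mk}. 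Combining the last two displays gives $\tilde\kappa_R\to 4i\,{\mathfrak m}_k$.

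The points requiring care are the two Green computations with cracks — tracking which portions of $s_0$ carry a jump of the function itself (hence are killed by the factor $\psi_k\equiv 0$ there) and which carry only a jump of the normal derivative — and the observation that one must \emph{not} split $w_R$ as $\Psi_k$ plus a small remainder: the bulk energies $\int_{D_R}\abs{(i\nabla+A_{\mathbf e})w_R}^2$ and $\int_{D_R}\abs{\nabla\psi_k}^2$ individually diverge as $R\to+\infty$, and only their difference $-i\tilde\kappa_R$ stays bounded, which is exactly why the radius-independence step — moving everything to the fixed circle $\partial D_2$, where Lemma~\ref{l:conv_w_r} applies cleanly — is the correct device.
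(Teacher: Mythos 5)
Your proof is correct and follows a genuinely different route from the paper's. The paper works in Fourier space: it sets $\upsilon_R(r)=\int_0^{2\pi}w_R(r\cos t,r\sin t)e^{-\frac i2\theta_{\mathbf e}}e^{\frac i2 t}\overline{\psi_2^k(t)}\,dt$, solves the resulting scalar ODE $(r^{-k/2}\upsilon_R)'=c_R r^{-1-k}$ explicitly, reads off $\upsilon_R'(R)$ from the boundary condition at $r=R$, obtains $\tilde\kappa_R=\frac{ik\sqrt\pi R^k}{R^k-1}(\sqrt\pi-\upsilon_R(1))$, lets $R\to\infty$ to replace $\upsilon_R(1)$ by $\xi(1)$ via Lemma~\ref{l:conv_w_r}, and finally invokes Lemma~\ref{l:legame_compl} to identify $\sqrt\pi-\xi(1)=\frac{4}{k\sqrt\pi}\mathfrak m_k$. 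You instead gauge-transform to the real harmonic function $\tilde w_R=e^{-\frac i2\theta_{\mathbf e}}w_R$, recognize $-i\tilde\kappa_R$ as the flux pairing $\int_{\partial D_R}(\psi_k\partial_\nu v_R-v_R\partial_\nu\psi_k)$ with $v_R=\tilde w_R-\psi_k$, use Green's second identity in the cut annulus to move this pairing from $\partial D_R$ to the fixed circle $\partial D_2$ (checking that the two lips of $s$ cancel because $\psi_k$ vanishes there while $v_R$ and $\partial_{x_2}\psi_k$ jump oppositely), pass to the limit using Lemma~\ref{l:conv_w_r}, and finally evaluate the limiting pairing by another Green identity on $D_2\setminus s_0$ — which is exactly the computation \eqref{eq:primopezzostep3}--\eqref{eq:secondopezzostep3} from the proof of Lemma~\ref{l:legame_compl}, re-derived directly. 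In effect you replace the paper's two Fourier-mode arguments (one inside the proof and one inside Lemma~\ref{l:legame_compl}) by a single "radius-independence of the Wronskian" argument; this is more geometric and avoids the explicit ODE solution, at the cost of tracking signs and orientations across the crack in the Green identities and of being slightly more explicit about the regularity needed to pass $\partial_\nu v_R\to\partial_\nu w_k$ in $L^1(\partial D_2)$ (one should invoke elliptic boundary estimates up to the smooth portion of $s$ near $(2,0)$, not merely interior estimates, since $\partial D_2$ meets $s$ — though this is a presentation point, not a gap, and all the ingredients are in place).
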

\begin{proof}
 We claim that 
\begin{equation}\label{eq:123}
  \tilde \kappa_R=ik\sqrt\pi(\sqrt\pi-\xi(1))+o(1),\quad\text{as }R\to+\infty,
\end{equation}
where
\begin{equation}\label{eq:coeff_fourier_Psi1} 
  \xi(r):= \int_{0}^{2\pi} e^{\frac{i}{2}(\theta_0-\theta_{\mathbf
      e})(r\cos t,r\sin t) } \Psi_k(r\cos t,r\sin t)
  \overline{\psi^k_2(t)}\,dt,\quad r\geq1. 
\end{equation}
To prove claim \eqref{eq:123}, we note that, according to \eqref{coeff_fourier_centrata0} and
\eqref{eq:fourier_centrata0},  the function $\upsilon_R$ defined as 
\begin{equation*}
\upsilon_R(r):= 
\int_0^{2\pi}  w_R(r(\cos t,\sin t))e^{-\frac i2
  \theta_{\mathbf e}(r\cos t,r\sin t)}e^{i\frac t2}
\overline{\psi_2^k(t)}\,dt,\quad r\in[1,R],
\end{equation*}
satisfies, for some $c_{R}\in\C$, $\big( r^{-k/2}\upsilon_R(r) \big)'=
\frac{c_{R}}{r^{1+k}}$ in  $(1,R)$. 
Integrating the previous equation over $(1,r)$ we obtain 
\begin{equation}\label{eq:125}
 r^{-k/2} \upsilon_R(r) -\upsilon_R(1) =
\frac{c_{R}}{k}\bigg(1-\frac1{r^k}\bigg), \quad \text{for all }r\in(1,R]. 
\end{equation}
We notice that, in view of \eqref{eq:9} and \eqref{eq:psi_k},
\begin{equation}\label{eq:126}
\psi_k (r\cos t,r\sin t)= \sqrt\pi r^{k/2} e^{-\frac{i}{2}t}
\psi^k_2(t), \quad \text{for all }t\in(0,2\pi)\text{ and }r>0. 
\end{equation}
Since \eqref{eq:equazione_w_R} and \eqref{eq:126} imply that 
\[
\upsilon_R(R)=\sqrt\pi R^{k/2},
\]
from \eqref{eq:125} we deduce that 
\[
c_R=k\,\frac{R^k}{R^k-1}(\sqrt\pi-\upsilon_R(1))
\]
and then 
\begin{align*}
  \upsilon_R(r) &= r^{k/2} \upsilon_R(1) + r^{k/2} \frac{R^k
    (\sqrt\pi-\upsilon_R(1))}{R^k-1}
  \bigg(1-\frac1{r^k}\bigg)\\
  &=r^{k/2}
  \frac{R^k\sqrt\pi-\upsilon_R(1)}{R^k-1}-r^{-k/2}\frac{R^k(\sqrt\pi-\upsilon_R(1))}{R^k-1},
  \quad \text{for all }r\in(1,R].
\end{align*}
By differentiation of the previous identity, we obtain that 
\begin{equation}\label{eq:127}
\upsilon_R'(R)=\frac k2\,\frac{R^{\frac{k}{2}-1}}{R^k-1}\Big((R^k+1)\sqrt\pi-2\upsilon_R(1)\Big).
\end{equation}
On the other hand, writing $\upsilon_R$ as 
\[
\upsilon_R(r)=\frac1r\int_{\partial D_r}
w_R(x)e^{-\frac i2(\theta_{\mathbf e}-\theta_0)(x)}\,
\overline{\psi^k_2(\theta_0(x))}\,ds(x),
\]
differentiating and using \eqref{eq:126}, we obtain that
\begin{equation}\label{eq:128}
  \upsilon_R'(r)=
  -\frac{i}{\sqrt\pi}r^{-1-\frac k2}
  \int_{\partial D_r}e^{-\frac{i}{2}\theta_{\mathbf
      e}}(i\nabla+A_{\mathbf e})w_R\cdot\nu\, \psi_k\, ds.
\end{equation}
Combination of \eqref{eq:127} and \eqref{eq:128} yields
\begin{equation}\label{eq:129}
  \int_{\partial D_R}e^{-\frac{i}{2}\theta_{\mathbf
      e}}(i\nabla+A_{\mathbf e})w_R\cdot\nu\, \psi_k\, ds
  =\frac{i k\sqrt\pi}{2}\frac{R^k}{R^k-1}\Big((R^k+1)\sqrt\pi-2\upsilon_R(1)\Big).
\end{equation}
Moreover, \eqref{eq:psi_k} directly gives 
\begin{equation}\label{eq:130}
\int_{\partial D_R}(i\nabla) \psi_k\cdot\nu\psi_k\,ds=\frac k2\, iR^k\pi.
\end{equation}
From \eqref{eq:129}, \eqref{eq:130}, and \eqref{eq:tildekappa_R}, it follows that 
\[
\tilde\kappa_R=\frac{ik\sqrt\pi
  R^k}{R^k-1}\big(\sqrt\pi-\upsilon_R(1)\big).
\]
Since Lemma \ref{l:conv_w_r} and \eqref{eq:coeff_fourier_Psi1} imply that 
\[
\lim_{R\to+\infty}\upsilon_R(1)=\xi(1),
\]
we obtain claim \eqref{eq:123}. The conclusion follows by combining
\eqref{eq:123} and the identity  
\begin{equation}\label{eq:43}
  \sqrt\pi-\xi(1)=\frac{4}{k\sqrt\pi} {\mathfrak m}_k,
\end{equation}
which results from Lemma \ref{l:legame_compl}.
\end{proof}
Combining Lemma \ref{l:stima_Lambda0_sotto} and Lemma
\ref{l:lim_tilde_kappa_R} we deduce the following
result.
\begin{Proposition}\label{prop:stima_sopra}
 For $a=(|a|,0)\in\Omega$, let 
$\varphi_a\in
H^{1,a}_{0}(\Omega,\C)$ solve (\ref{eq:equation_a}-\ref{eq:6})
and $\varphi_0\in
H^{1,0}_{0}(\Omega,\C)$ be a solution to
(\ref{eq:equation_lambda0}--\ref{eq:83}). If \eqref{eq:1} and
\eqref{eq:37} hold and \eqref{eq:54} is satisfied, then
\[
\liminf_{|a|\to0}
\frac{\lambda_0-\lambda_a}{|a|^k}\geq -4|\beta|^2 \mathfrak{m}_k>0
\]
with $\beta$ as in \eqref{eq:beta} and 
$\mathfrak{m}_k$ as in (\ref{eq:Ik}-\ref{eq:segno_mk}). 
\end{Proposition}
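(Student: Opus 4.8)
The plan is to obtain Proposition \ref{prop:stima_sopra} as an immediate consequence of the two preceding lemmas via an iterated limit: Lemma \ref{l:stima_Lambda0_sotto} bounds $\frac{\lambda_0-\lambda_a}{|a|^k}$ from below, for each fixed truncation radius $R>\tilde R$, by a quantity $g_R(a)$ whose limit as $|a|\to0$ is $i|\beta|^2\tilde\kappa_R$, and Lemma \ref{l:lim_tilde_kappa_R} identifies $\lim_{R\to+\infty}\tilde\kappa_R=4i\mathfrak m_k$. Passing to the limit first in $a$ and then in $R$ produces the asserted bound $-4|\beta|^2\mathfrak m_k$, and its strict positivity is a bookkeeping check on the signs of $\mathfrak m_k$ and $|\beta|^2$.

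In detail, I would fix $R>\tilde R$ and use the inequality $\frac{\lambda_0-\lambda_a}{|a|^k}\geq g_R(a)$ of Lemma \ref{l:stima_Lambda0_sotto}, valid for all $a=(|a|,0)\in\Omega$ with $|a|$ small, to get
\[
\liminf_{|a|\to0}\frac{\lambda_0-\lambda_a}{|a|^k}\geq \liminf_{|a|\to0}g_R(a)=\lim_{|a|\to0}g_R(a)=i|\beta|^2\tilde\kappa_R .
\]
Since the left-hand side is independent of $R$, I can then let $R\to+\infty$ and invoke Lemma \ref{l:lim_tilde_kappa_R}:
\[
\liminf_{|a|\to0}\frac{\lambda_0-\lambda_a}{|a|^k}\geq \lim_{R\to+\infty}i|\beta|^2\tilde\kappa_R=i|\beta|^2\cdot 4i\mathfrak m_k=-4|\beta|^2\mathfrak m_k .
\]
Here $i|\beta|^2\tilde\kappa_R$ is automatically real, being the limit of the real quantities $g_R(a)$; equivalently, the expression $\tilde\kappa_R=\frac{ik\sqrt\pi R^k}{R^k-1}\big(\sqrt\pi-\upsilon_R(1)\big)$ obtained in the proof of Lemma \ref{l:lim_tilde_kappa_R}, together with the reality of $\sqrt\pi-\upsilon_R(1)$ coming from \eqref{eq:43} and Lemma \ref{l:legame_compl}, shows $\tilde\kappa_R\in i\R$, so $i\tilde\kappa_R\in\R$ throughout.

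It remains to verify $-4|\beta|^2\mathfrak m_k>0$. This is immediate: $\mathfrak m_k<0$ by \eqref{eq:segno_mk}, while $|\beta|^2>0$ because $\beta=\beta_k^2(0,\varphi_0,\lambda_0)/\sqrt\pi$ by \eqref{eq:beta} and $\beta_k^2(0,\varphi_0,\lambda_0)\neq0$ under assumptions \eqref{eq:70} and \eqref{eq:54} (see also \eqref{eq:84}). Since all the analytic content has been placed in Lemmas \ref{l:stima_Lambda0_sotto} and \ref{l:lim_tilde_kappa_R}, there is no genuine obstacle at this stage; the only points requiring a moment's care are the legitimacy of exchanging the two limits (handled by the uniformity in $R$ of the lower bound and by $\liminf$ monotonicity) and the tracking of the real/imaginary parts of $\beta$ and $\tilde\kappa_R$.
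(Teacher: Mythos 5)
Your proof is correct and follows exactly the route the paper takes: the paper's own proof is just the one-line remark ``Combining Lemma \ref{l:stima_Lambda0_sotto} and Lemma \ref{l:lim_tilde_kappa_R} we deduce the following result,'' and you have simply spelled out that combination (fix $R$, pass $|a|\to 0$, then let $R\to\infty$ using that the left side is $R$-independent) together with the sign check $\mathfrak m_k<0$ and $\beta\neq 0$ from \eqref{eq:70}, \eqref{eq:54}, and \eqref{eq:segno_mk}.
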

\begin{remark}\label{rem:segno}
  As a consequence of Proposition \ref{prop:stima_sopra}, we have
  that, if $a\in\Omega$ approaches $0$ along the half-line tangent to
  a nodal line of eigenfunctions associated to the simple eigenvalue
  $\lambda_0$, then
\[
\lambda_a<\lambda_0.
\]
\end{remark}

Combining Corollary \ref{cor:lower-bound-lambda_a} with Proposition
\ref{prop:stima_sopra} we obtain the following upper and lower
estimates for $\lambda_0-\lambda_a$.

\begin{Proposition}\label{prop:quasi_finito1}
  For $a=(|a|,0)=|a|{\mathbf e}\in\Omega$, let $\varphi_a\in
  H^{1,a}_{0}(\Omega,\C)$ solve (\ref{eq:equation_a}-\ref{eq:6}) and
  $\varphi_0\in H^{1,0}_{0}(\Omega,\C)$ be a solution to
  (\ref{eq:equation_lambda0}--\ref{eq:83}). Let \eqref{eq:1},
  \eqref{eq:37}, and \eqref{eq:54} hold. Then there exists a positive
  constant $C^*>0$ such that
\[
-4|\beta|^2 \mathfrak{m}_k\,|a|^k (1+o(1))
\leq \lambda_0 - \lambda_{a}
\leq C^*\, H(\varphi_{a}, K_\delta |a|), 
\quad \text{as }|a|\to 0,
\]
with $\beta$ as in \eqref{eq:beta} and 
$\mathfrak{m}_k<0$ as in (\ref{eq:Ik}-\ref{eq:segno_mk}). 
\end{Proposition}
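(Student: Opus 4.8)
The plan is to obtain the statement by simply pairing the two one‑sided estimates already proved in this section, so that essentially no new argument is needed. For the right‑hand inequality I would quote Corollary~\ref{cor:lower-bound-lambda_a} verbatim: it provides positive constants $C^{*},r^{*}$ with $\lambda_{0}-\lambda_{a}\le C^{*}H(\varphi_{a},K_{\delta}|a|)$ for every $a=(|a|,0)\in\Omega$ with $|a|<r^{*}$, which is precisely the upper bound claimed (with the same constant $C^{*}$).

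For the left‑hand inequality I would invoke Proposition~\ref{prop:stima_sopra}, valid under the standing assumptions \eqref{eq:1}, \eqref{eq:37}, \eqref{eq:54}, which gives $\liminf_{|a|\to0}\frac{\lambda_{0}-\lambda_{a}}{|a|^{k}}\ge -4|\beta|^{2}\mathfrak{m}_{k}$. Here $\mathfrak{m}_{k}<0$ by \eqref{eq:segno_mk}, and $\beta=\beta_{k}^{2}(0,\varphi_{0},\lambda_{0})/\sqrt{\pi}\neq0$ because the change of coordinates \eqref{eq:54} together with \eqref{eq:70} forces $\beta_{k}^{2}(0,\varphi_{0},\lambda_{0})\neq0$; hence $-4|\beta|^{2}\mathfrak{m}_{k}$ is a strictly positive number. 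Writing $\frac{\lambda_{0}-\lambda_{a}}{|a|^{k}}=-4|\beta|^{2}\mathfrak{m}_{k}+\rho(a)$ with $\liminf_{|a|\to0}\rho(a)\ge0$, one rewrites the $\liminf$ bound as $\lambda_{0}-\lambda_{a}\ge -4|\beta|^{2}\mathfrak{m}_{k}\,|a|^{k}\bigl(1+o(1)\bigr)$ as $|a|\to0$ (the $o(1)$ being any function tending to $0$ that bounds $\min\{0,\rho(a)/(-4|\beta|^{2}\mathfrak{m}_{k})\}$ from below), which is exactly the lower bound in the statement.

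Intersecting the two ranges of validity (shrinking $r^{*}$ if necessary so that the $o(1)$ in the lower estimate is small) yields the two‑sided inequality, completing the proof. I do not anticipate any obstacle: the substantive analytic input — the minimax upper estimate for $\lambda_{0}$ built on the test functions $v_{j,R,a}$, the minimax lower estimate built on $w_{j,R,a}$, and the identification $\tilde\kappa_{R}\to4i\mathfrak{m}_{k}$ from Lemma~\ref{l:lim_tilde_kappa_R} — has already been carried out, and Proposition~\ref{prop:quasi_finito1} is merely the convenient repackaging that will feed into the sharp blow‑up analysis of Section~\ref{sec:blow-up-analysis} and the conclusion in Section~\ref{sec:9}.
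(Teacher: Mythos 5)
Your proposal is correct and takes exactly the same route as the paper: the statement is obtained by combining Corollary~\ref{cor:lower-bound-lambda_a} (upper bound) with Proposition~\ref{prop:stima_sopra} (lower bound), precisely as you do, and the translation of the $\liminf$ inequality into the $(1+o(1))$ form is the intended reading.
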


\section{Energy estimates}\label{sec:7}

To obtain our main result, we aim at proving that the difference of
the eigenvalues $\lambda_0 - \lambda_a$ is estimated even from above
by the rate $|a|^k$, i.e. we have to determine the exact asymptotic
behavior of the normalization term in
\eqref{def_blowuppate_normalizzate}, i.e. of $\sqrt{H(\varphi_{a},
  K_\delta |a|)}$.  To this purpose, in this section we obtain some
preliminary energy estimates of the difference between approximating
and limit eigenfunctions after blow-up, based on the invertibility of
the differential of the function $F$ defined below.

Throughout this section, we will treat the space
$H^{1,0}_0(\Omega,\C)$ defined in \S \ref{sec:introduction} as a real
Hilbert space endowed with the scalar product
\[
(u,v)_{H^{1,0}_{0,\R}(\Omega,\C)}=\Re\bigg(\int_\Omega (i\nabla+A_0) u
\cdot \overline{(i\nabla+A_0) v}\,dx\bigg),
\]
which induces on  $H^{1,0}_0(\Omega,\C)$ the norm
\[
\|u\|_{H^{1,0}_0(\Omega,\C)}=\bigg(
\int_{\Omega}\big|(i\nabla +A_0)u\big|^2dx\bigg)^{\!\!1/2}
\]
which is 
 equivalent to the norm \eqref{eq:norma} (see Lemma \ref{poincare_inequality}).
To emphasize the fact that here $H^{1,0}_0(\Omega,\C)$ is meant as a
vector space over $\R$ we denote it as $H^{1,0}_{0,\R}(\Omega,\C)$. We
will denote as  $(H^{1,0}_{0,\R}(\Omega,\C))^\star$ the real dual space of  $H^{1,0}_{0,\R}(\Omega,\C)$.

Let us consider the function
\begin{align}\label{def_operatore_F}
  F: \C \times H^{1,0}_{0,\R}(\Omega,\C)  &\longrightarrow  \R \times \R \times (H^{1,0}_{0,\R}(\Omega,\C))^\star\\
  \notag (\lambda,\varphi) &\longmapsto \Big( {\textstyle{
      \|\varphi\|_{H^{1,0}_0(\Omega,\C)}^2 -\lambda_0,\
      \mathfrak{Im}\big(\int_{\Omega}
      \varphi\overline{\varphi_0}\,dx\big), \ (i\nabla +A_0)^2
      \varphi-\lambda \varphi}}\Big),
\end{align}
   where  $(i\nabla +A_0)^2  \varphi-\lambda \varphi\in (H^{1,0}_{0,\R}(\Omega,\C))^\star$ acts as 
\[
\phantom{a}_{(H^{1,0}_{0,\R}(\Omega,\C))^\star}\!\Big\langle (i\nabla
+A_0)^2 \varphi-\lambda \varphi , u
\Big\rangle_{\!H^{1,0}_{0,\R}(\Omega,\C)}\!\!=\mathfrak{Re}
\left({\textstyle{\int_{\Omega}(i\nabla+A_0)\varphi\cdot\overline{(i\nabla+A_0)u}\,dx
      -\!\lambda \!\int_{\Omega} \varphi\overline{u} \,dx}}\right)
\]
for all $\varphi\in H^{1,0}_{0,\R}(\Omega,\C)$.  In
\eqref{def_operatore_F} $\C$ is also meant as a vector space over
$\R$. From \eqref{eq:equation_lambda0} and \eqref{eq:83} it follows
that $F(\lambda_0,\varphi_0)=(0,0,0)$.
\begin{Lemma}\label{F_frechet}
  Under assumptions \eqref{eq:1}, \eqref{eq:equation_lambda0} and
  \eqref{eq:83}, the function $F$ defined in \eqref{def_operatore_F}
  is Frech\'{e}t-differentiable at $(\lambda_0,\varphi_0)$ and its
  Frech\'{e}t-differential
\[
dF(\lambda_0,\varphi_0)\in \mathcal L\big( \C \times
H^{1,0}_{0,\R}(\Omega,\C),\R\times \R \times (H^{1,0}_{0,\R}(\Omega,\C))^\star\big)
\]
is invertible.
\end{Lemma}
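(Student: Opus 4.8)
The plan is to first write down $dF(\lambda_0,\varphi_0)$ explicitly by differentiating each of the three components of $F$ at $(\lambda_0,\varphi_0)$, and then verify invertibility by checking that $dF(\lambda_0,\varphi_0)$ is a Fredholm operator of index zero with trivial kernel. Differentiability itself is routine: the first component $\varphi\mapsto\|\varphi\|^2_{H^{1,0}_0}-\lambda_0$ has differential $\varphi\mapsto 2\,\Re\int_\Omega(i\nabla+A_0)\varphi_0\cdot\overline{(i\nabla+A_0)\varphi}\,dx$; the second component $\varphi\mapsto\Im\int_\Omega\varphi\overline{\varphi_0}\,dx$ is already linear; and the third component is affine in $(\lambda,\varphi)$, with differential $(\mu,\psi)\mapsto (i\nabla+A_0)^2\psi-\lambda_0\psi-\mu\varphi_0$ in $(H^{1,0}_{0,\R}(\Omega,\C))^\star$. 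So
\[
dF(\lambda_0,\varphi_0)(\mu,\psi)=\Big(2\,(\varphi_0,\psi)_{H^{1,0}_{0,\R}(\Omega,\C)},\ \Im{\textstyle\int_\Omega}\psi\overline{\varphi_0}\,dx,\ (i\nabla+A_0)^2\psi-\lambda_0\psi-\mu\varphi_0\Big).
\]

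Next I would analyze the kernel. Suppose $dF(\lambda_0,\varphi_0)(\mu,\psi)=(0,0,0)$. The third equation says $(i\nabla+A_0)^2\psi-\lambda_0\psi=\mu\varphi_0$ in the weak/dual sense. Pairing this identity with $\varphi_0$ and using that $\varphi_0$ solves \eqref{eq:equation_lambda0} (so that $(i\nabla+A_0)^2\varphi_0-\lambda_0\varphi_0=0$ acting on $\psi$), together with the self-adjointness of $(i\nabla+A_0)^2-\lambda_0$, gives $\mu\int_\Omega|\varphi_0|^2\,dx=0$, hence $\mu=0$ by \eqref{eq:83}. Then $\psi$ is an eigenfunction for $\lambda_0$, i.e.\ $\psi\in\ker((i\nabla+A_0)^2-\lambda_0)$. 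By the simplicity assumption \eqref{eq:1}, $\psi=c\,\varphi_0$ for some $c\in\C$. Now the first component $(\varphi_0,\psi)_{H^{1,0}_{0,\R}}=\Re(c)\,\|\varphi_0\|^2_{H^{1,0}_0}=0$ forces $\Re(c)=0$, and the second component $\Im\int_\Omega\psi\overline{\varphi_0}\,dx=\Im(c)\int_\Omega|\varphi_0|^2\,dx=0$ forces $\Im(c)=0$; hence $\psi=0$ and $dF(\lambda_0,\varphi_0)$ is injective.

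For surjectivity I would argue that $dF(\lambda_0,\varphi_0)$ is a compact perturbation of an isomorphism, so that by the Fredholm alternative injectivity implies surjectivity. Write the third-component map $\psi\mapsto(i\nabla+A_0)^2\psi-\lambda_0\psi$ as $L-\lambda_0\,K$, where $L:H^{1,0}_{0,\R}(\Omega,\C)\to(H^{1,0}_{0,\R}(\Omega,\C))^\star$ is the Riesz isomorphism induced by the scalar product $(\cdot,\cdot)_{H^{1,0}_{0,\R}(\Omega,\C)}$ and $K$ is the (compact, by Rellich together with \eqref{eq:hardy} and Lemma \ref{poincare_inequality}) operator $\psi\mapsto\Re\int_\Omega\psi\overline{(\cdot)}\,dx$ into the dual. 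Likewise, the scalar parts $(\mu,\psi)\mapsto(2(\varphi_0,\psi)_{H^{1,0}_{0,\R}},\Im\int_\Omega\psi\overline{\varphi_0}\,dx)$ and the term $-\mu\varphi_0$ are finite-rank, hence compact. Thus $dF(\lambda_0,\varphi_0)$ differs from the isomorphism $(\mu,\psi)\mapsto(2(\varphi_0,\psi)_{H^{1,0}_{0,\R}},\mu,\,L\psi)$—or, more carefully, from a suitable index-zero model operator—by a compact operator, so it is Fredholm of index zero; combined with the injectivity proved above, this yields bijectivity, and the bounded inverse theorem gives that $dF(\lambda_0,\varphi_0)^{-1}$ is continuous.

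The main obstacle is the bookkeeping in the surjectivity step: one must set up the correct index-zero reference operator on $\C\times H^{1,0}_{0,\R}(\Omega,\C)$ so that the difference is genuinely compact, and keep careful track of the fact that everything is over $\R$ (so $\C\cong\R^2$ and the $\Im$-component is a genuine real-linear constraint). Once the right splitting into (Riesz isomorphism) $+$ (finite-rank/compact) is identified, the Fredholm alternative closes the argument; the kernel computation above then guarantees the perturbed operator is actually bijective. All the analytic inputs needed—compactness of the embedding $H^{1,0}_{0}(\Omega,\C)\hookrightarrow L^2(\Omega,\C)$ and of the trace, equivalence of norms, simplicity of $\lambda_0$—are already available from the preliminaries.
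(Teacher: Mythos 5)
Your argument is, in substance, the paper's own proof: you compute $dF(\lambda_0,\varphi_0)$, prove injectivity of the kernel by pairing the third equation with $\varphi_0$ (and, implicitly, $i\varphi_0$) to kill $\mu$, then use simplicity of $\lambda_0$ together with the two real constraints to kill $\psi$, and you conclude invertibility from the Fredholm alternative for compact perturbations of an isomorphism. The one place where the bookkeeping has not actually been done — and you flag this yourself — is the reference operator: the map $(\mu,\psi)\mapsto\big(2(\varphi_0,\psi)_{H^{1,0}_{0,\R}},\mu,L\psi\big)$ does not even typecheck, since $\mu\in\C$ is placed in an $\R$-slot; and once one replaces $\mu$ by, say, $\Re\mu$ there, the resulting map has a one-dimensional kernel $\{(\mu,0):\Re\mu=0\}$ and the element $(1,0,0)$ is not in its range, so it is neither an isomorphism nor obviously index-zero on its face. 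The correct model, and essentially the one the paper uses after composing on the left with $\mathcal I\times\mathcal R$ (the $\R^2\!\cong\!\C$ identification times the Riesz map) to reduce to the identity on $\C\times H^{1,0}_{0,\R}(\Omega,\C)$, is
\[
(\mu,\psi)\longmapsto(\Re\mu,\,\Im\mu,\,L\psi),
\]
which is an isomorphism; the difference $dF(\lambda_0,\varphi_0)$ minus this map has its first two components landing in $\R$ (finite rank, hence compact) and third component $-\lambda_0K\psi-K(\mu\varphi_0)$, a sum of the compact $K$ (Rellich plus Hardy, as you say) and a rank-two map in $\mu$, so it is compact. With this substitution your Fredholm-alternative argument closes exactly as intended, and the injectivity computation you carried out supplies bijectivity.
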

\begin{proof}
  By direct calculations it is easy to verify that $F$ is
  Frech\'{e}t-differentiable at $(\lambda_0,\varphi_0)$ and
\begin{multline*}
  dF(\lambda_0,\varphi_0)(\lambda,\varphi)\\
  = \bigg( 2
  \,\mathfrak{Re}\left({\textstyle{\int_{\Omega}(i\nabla+A_0)
        \varphi_0\cdot\overline{(i\nabla+A_0)\varphi} \,dx}}\right) ,
  \mathfrak{Im}\left({\textstyle{\int_{\Omega}
        \varphi\overline{\varphi_0}\,dx}}\right), (i\nabla+A_0)^2
  \varphi -\lambda_0\varphi -\lambda \varphi_0 \bigg)
\end{multline*}
for every $(\lambda,\varphi)\in \C \times  H^{1,0}_{0,\R}(\Omega,\C)$.

It remains to prove that $dF(\lambda_0,\varphi_0):\C \times
H^{1,0}_{0,\R}(\Omega,\C)\to\R \times\R \times
(H^{1,0}_{0,\R}(\Omega),\C)^\star$ is invertible.  To this aim, by
exploiting the compactness of the map
\[
T:H^{1,0}_{0,\R}(\Omega,\C)\to (H^{1,0}_{0,\R}(\Omega,\C))^\star,\quad  
u\mapsto \lambda_0 u,
\]
 it is easy to prove that, if
\[
\mathcal R:(H^{1,0}_{0,\R}(\Omega,\C))^\star\to
H^{1,0}_{0,\R}(\Omega,\C)
\]
 is the Riesz isomorphism and
$\mathcal I$ denotes the standard identification of $\R\times\R$ onto $\C$,
then the operator $(\mathcal I \times \mathcal R)\circ
dF(\lambda_0,\varphi_0)\in\mathcal L(\C \times H^{1,0}_{0,\R}(\Omega))$ is a compact
perturbation of the identity. Indeed, 
since by definition
\[
\phantom{a}_{(H^{1,0}_{0,\R}(\Omega))^\star}\big\langle (i\nabla
+A_0)^2 \varphi , u \big\rangle_{H^{1,0}_{0,\R}(\Omega)}=\mathfrak{Re}
\left(\int_{\Omega}(i\nabla+A_0)\varphi\cdot\overline{(i\nabla+A_0)u}\,dx
\right) = \big(\varphi,u\big)_{H^{1,0}_{0,\R}(\Omega,\C)},
\]
we have that
\[ 
\mathcal R \big((i\nabla+A_0)^2\varphi - \lambda_0\varphi -
\lambda\varphi_0 \big)= \varphi - \mathcal R(\lambda_0\varphi) -
\mathcal R(\lambda\varphi_0) 
\] 
being $\mathcal R(\lambda_0\varphi)$
the image of $\varphi$ by a compact operator (composition of the Riesz
isomorphism and the compact operator $T$), as well as $\mathcal R(\lambda\varphi_0)$.

Therefore, from the Fredholm
alternative, $dF(\lambda_0,\varphi_0)$ is invertible if and only if it is
injective. So, to conclude the proof, it is enough to prove that
$\ker(dF(\lambda_0,\varphi_0))=\{0,0\}$.
Let $(\lambda,\varphi)\in \C \times  H^{1,0}_{0,\R}(\Omega,\C)$ be such that 
\begin{equation}\label{eq:inv}
\begin{cases}
  \mathfrak{Re}\left(\int_{\Omega}(i\nabla+A_0) \varphi_0\cdot\overline{(i\nabla+A_0)\varphi} \,dx\right)=0\\
  \mathfrak{Im}\left(\int_{\Omega} \varphi\overline{\varphi_0}\,dx\right)=0\\
  (i\nabla+A_0)^2 \varphi -\lambda_0\varphi -\lambda \varphi_0 =0.
\end{cases}
\end{equation}
The last equation in \eqref{eq:inv} means that
\[
\mathfrak{Re}\left(\int_{\Omega}\Big((i\nabla+A_0)
  \varphi\cdot\overline{(i\nabla+A_0)u}-\lambda_0
 \varphi\overline{u} -\lambda
  \varphi_0\overline{u}\Big) \,dx\right) =0, \quad \text{for
  all }u\in H^{1,0}_{0,\R}(\Omega,\C).
\]
Plugging $u=\varphi_0$ and $u=i\varphi_0$ into the previous
identity and recalling \eqref{eq:equation_lambda0} and \eqref{eq:83},
we obtain $\mathfrak{Re}\lambda=0$ and $\mathfrak{Im}\lambda=0$,
respectively. Then the last equation in \eqref{eq:inv} becomes
\[
(i\nabla+A_0)^2 \varphi -\lambda_0\varphi =0, \quad \text{in }(H^{1,0}_{0,\R}(\Omega,\C))^\star,
\]
which, by assumption \eqref{eq:1}, implies that $\varphi =
(\alpha+i\beta) \varphi_0$ for some $\alpha,\beta\in\R$.  The first
and the second equation in \eqref{eq:inv} imply $\alpha =0$ and
$\beta=0 $, respectively, so that $\varphi=0$. Then we conclude that
the only element in the kernel of $dF(\lambda_0,\varphi_0)$ is
$(0,0)\in \C\times H^{1,0}_{0,\R}(\Omega,\C)$.
\end{proof}

\begin{Theorem}\label{stima_teo_inversione}
  For every $R>2$ and $a=(|a|,0)\in\Omega$ with $|a|<{R_0}/{R}$ and
  $R_0$ being as in Lemma \ref{l:Hpos}, let 
$\varphi_a\in
H^{1,a}_{0}(\Omega,\C)$ solve (\ref{eq:equation_a}-\ref{eq:6}), $\varphi_0\in
H^{1,0}_{0}(\Omega,\C)$ be a solution to
(\ref{eq:equation_lambda0}--\ref{eq:83}) satisfying  \eqref{eq:1},
\eqref{eq:37}, and \eqref{eq:54}, and
$v_{n_0,R,a}$ be as in \S
  \ref{sec:rayl-quot-lambd} (see also \eqref{eq:viext} and
  \eqref{eq:viint}). 
 Then, for every  $R>2$,
\begin{equation*}
\|v_{n_0,R,a} -
\varphi_0\|_{H^{1,0}_0(\Omega,\C)}=O\Big(\sqrt{H(\varphi_{a},K_\delta|a|)}\Big) \quad \text{as }|a|\to0^+,
\end{equation*}
with  $K_\delta$ as in Lemma \ref{lemma_limitatezza_N_per_blowup}
for some fixed $\delta\in(0,1/4)$.
\end{Theorem}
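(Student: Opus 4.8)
The plan is to invert the map $F$ of \eqref{def_operatore_F} near $(\lambda_0,\varphi_0)$. By Lemma \ref{F_frechet} the map $F$ is Fr\'echet-differentiable at $(\lambda_0,\varphi_0)$, with $F(\lambda_0,\varphi_0)=(0,0,0)$ and $dF(\lambda_0,\varphi_0)$ a Banach space isomorphism; hence there are a neighborhood $\mathcal U$ of $(\lambda_0,\varphi_0)$ in $\C\times H^{1,0}_{0,\R}(\Omega,\C)$ and a constant $C>0$ with $\|(\lambda,\varphi)-(\lambda_0,\varphi_0)\|\le C\,\|F(\lambda,\varphi)\|$ for all $(\lambda,\varphi)\in\mathcal U$. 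So it is enough to show that $(\lambda_a,v_{n_0,R,a})\in\mathcal U$ for $|a|$ small and that $\|F(\lambda_a,v_{n_0,R,a})\|=O\big(\sqrt{H(\varphi_a,K_\delta|a|)}\big)$ as $|a|\to0^+$. That $(\lambda_a,v_{n_0,R,a})\to(\lambda_0,\varphi_0)$ follows from \eqref{eq:74} for the first coordinate, and for the second from splitting $\int_\Omega|(i\nabla+A_0)(v_{n_0,R,a}-\varphi_0)|^2\,dx$ into the integrals over $D_{R|a|}$ (where one uses \eqref{eq:46zar} and \eqref{eq:35_la}) and over $\Omega\setminus D_{R|a|}$ (where $(i\nabla+A_0)v_{n_0,R,a}^{ext}=e^{\frac i2(\theta_0-\theta_a)}(i\nabla+A_a)\varphi_a$, so one concludes by \eqref{eq:45} and dominated convergence, using $e^{\frac i2(\theta_0-\theta_a)}\to1$ a.e.).

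I would then estimate the three components of $F(\lambda_a,v_{n_0,R,a})$. For the first, gauge invariance on $\Omega\setminus D_{R|a|}$ gives $\int_{\Omega\setminus D_{R|a|}}|(i\nabla+A_0)v_{n_0,R,a}^{ext}|^2\,dx=\int_{\Omega\setminus D_{R|a|}}|(i\nabla+A_a)\varphi_a|^2\,dx=\lambda_a-\int_{D_{R|a|}}|(i\nabla+A_a)\varphi_a|^2\,dx$; combining this with \eqref{eq:46}, \eqref{eq:46zar} and with $0\le\lambda_0-\lambda_a=O\big(H(\varphi_a,K_\delta|a|)\big)$ (Corollary \ref{cor:lower-bound-lambda_a} and Proposition \ref{prop:stima_sopra}) one gets $\|v_{n_0,R,a}\|_{H^{1,0}_0(\Omega,\C)}^2-\lambda_0=O\big(H(\varphi_a,K_\delta|a|)\big)$, which is $o\big(\sqrt{H(\varphi_a,K_\delta|a|)}\big)$. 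For the second, the normalization \eqref{eq:6} makes $\int_\Omega e^{\frac i2(\theta_0-\theta_a)}\varphi_a\overline{\varphi_0}\,dx$ real, so $\mathfrak{Im}\int_\Omega v_{n_0,R,a}\overline{\varphi_0}\,dx=\mathfrak{Im}\int_{D_{R|a|}}v_{n_0,R,a}^{int}\overline{\varphi_0}\,dx-\mathfrak{Im}\int_{D_{R|a|}}e^{\frac i2(\theta_0-\theta_a)}\varphi_a\overline{\varphi_0}\,dx$, and by the Schwarz inequality with \eqref{eq:48}, \eqref{eq:48zar}, \eqref{eq:36_la} this is $O\big(|a|^{5/2}\sqrt{H(\varphi_a,K_\delta|a|)}\big)=o\big(\sqrt{H(\varphi_a,K_\delta|a|)}\big)$.

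The delicate component is $(i\nabla+A_0)^2v_{n_0,R,a}-\lambda_a v_{n_0,R,a}\in(H^{1,0}_{0,\R}(\Omega,\C))^\star$. Here I would set $v_{n_0,R,a}=\bar v+\psi$, where $\bar v=\eta\,e^{\frac i2(\theta_0-\theta_a)}\varphi_a$ with $\eta$ a smooth cut-off satisfying $\eta\equiv1$ on $\Omega\setminus D_{R|a|}$, $\eta\equiv0$ on $D_{R|a|/2}$, $|\nabla\eta|\le C(R|a|)^{-1}$, $|\Delta\eta|\le C(R|a|)^{-2}$, and $\psi$ equal to $v_{n_0,R,a}^{int}-\bar v$ on $D_{R|a|}$ and to $0$ elsewhere; both $\bar v$ and $\psi$ belong to $H^{1,0}_0(\Omega,\C)$, and $\psi$ is supported in $\overline{D_{R|a|}}$. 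Since $R>2$ the annulus $D_{R|a|}\setminus D_{R|a|/2}$ contains neither $0$ nor $a$ nor the segment joining them, so on the support of $\eta$ the function $e^{\frac i2(\theta_0-\theta_a)}\varphi_a$ is a classical solution of $(i\nabla+A_0)^2(\cdot)=\lambda_a(\cdot)$, and therefore $(i\nabla+A_0)^2\bar v-\lambda_a\bar v=2i\,\nabla\eta\cdot e^{\frac i2(\theta_0-\theta_a)}(i\nabla+A_a)\varphi_a-(\Delta\eta)\,e^{\frac i2(\theta_0-\theta_a)}\varphi_a$ is supported on the annulus. Testing against $u\in H^{1,0}_0(\Omega,\C)$, the Schwarz inequality together with \eqref{eq:46}, \eqref{eq:48} and the Hardy inequality \eqref{eq:hardy} in the form $\|u\|_{L^2(D_{R|a|})}\le 2R|a|\,\|u\|_{H^{1,0}_0(\Omega,\C)}$ (which absorbs the factors $C(R|a|)^{-1}$ and $C(R|a|)^{-2}$) gives $\big|\langle(i\nabla+A_0)^2\bar v-\lambda_a\bar v,u\rangle\big|\le O\big(\sqrt{H(\varphi_a,K_\delta|a|)}\big)\,\|u\|_{H^{1,0}_0(\Omega,\C)}$. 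For $\psi$, using \eqref{eq:46}, \eqref{eq:46zar}, \eqref{eq:48}, \eqref{eq:48zar} one has $\|(i\nabla+A_0)\psi\|_{L^2(D_{R|a|})}=O\big(\sqrt{H(\varphi_a,K_\delta|a|)}\big)$ and $\|\psi\|_{L^2(D_{R|a|})}=O\big(|a|\sqrt{H(\varphi_a,K_\delta|a|)}\big)$, and since $\psi$ is supported in $\overline{D_{R|a|}}$ and $\|u\|_{L^2(\Omega)}\le C\|u\|_{H^{1,0}_0(\Omega,\C)}$ (Lemma \ref{poincare_inequality}), this gives $\big|\langle(i\nabla+A_0)^2\psi-\lambda_a\psi,u\rangle\big|\le O\big(\sqrt{H(\varphi_a,K_\delta|a|)}\big)\,\|u\|_{H^{1,0}_0(\Omega,\C)}$.

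Putting the three estimates together yields $\|F(\lambda_a,v_{n_0,R,a})\|=O\big(\sqrt{H(\varphi_a,K_\delta|a|)}\big)$, and the conclusion follows from the quantitative inversion of $F$ recalled above. The main obstacle is the third component: $v_{n_0,R,a}$ is pieced together from $v_{n_0,R,a}^{ext}$ and $v_{n_0,R,a}^{int}$, which agree only to zeroth order across $\partial D_{R|a|}$, so the residual $(i\nabla+A_0)^2v_{n_0,R,a}-\lambda_a v_{n_0,R,a}$ carries an interface contribution; the splitting $v_{n_0,R,a}=\bar v+\psi$ converts it into interior terms concentrated on the thin annulus $D_{R|a|}\setminus D_{R|a|/2}$, and the fact that keeps the bound sharp (rather than losing a power $|a|^{-\varepsilon}$ through the two-dimensional Sobolev embedding) is precisely that the test functions $u$ lie in $H^{1,0}_0(\Omega,\C)$, so the Hardy inequality yields the gain $\|u\|_{L^2(D_{R|a|})}=O(|a|)\,\|u\|_{H^{1,0}_0(\Omega,\C)}$ that exactly compensates the $|\nabla\eta|$, $|\Delta\eta|$ weights.
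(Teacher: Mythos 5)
Your proposal is correct and follows the paper's strategy: invert the map $F$ of \eqref{def_operatore_F} via Lemma \ref{F_frechet} and bound the three components of $F(\lambda_a,v_{n_0,R,a})$ by $O\big(\sqrt{H(\varphi_a,K_\delta|a|)}\big)$, with the first component controlled via \eqref{eq:46}, \eqref{eq:46zar} and Proposition \ref{prop:quasi_finito1}, and the second via the normalization \eqref{eq:6} together with \eqref{eq:48}, \eqref{eq:48zar}, \eqref{eq:36_la}. The only (minor, technical) difference is in the dual-space component: you decompose $v_{n_0,R,a}=\bar v+\psi$ with $\bar v=\eta\,e^{\frac i2(\theta_0-\theta_a)}\varphi_a$ and compute the strong-form residual $(i\nabla+A_0)^2\bar v-\lambda_a\bar v=2i\nabla\eta\cdot e^{\frac i2(\theta_0-\theta_a)}(i\nabla+A_a)\varphi_a-(\Delta\eta)e^{\frac i2(\theta_0-\theta_a)}\varphi_a$, which asks for a second-order cut-off bound $|\Delta\eta|\lesssim(R|a|)^{-2}$, whereas the paper tests the weak equation for $\varphi_a$ directly against the gauged cut-off test function $\eta_{a,R}\,e^{\frac i2(\theta_a-\theta_0)}\varphi$ and integrates by parts, so that only $|\nabla\eta_{a,R}|\lesssim(R|a|)^{-1}$ enters (see \eqref{eq:51}--\eqref{eq:52}); both routes close because of the Hardy gain $\|u\|_{L^2(D_{R|a|})}\lesssim R|a|\,\|u\|_{H^{1,0}_0(\Omega,\C)}$, which you correctly single out as the mechanism that keeps the estimate sharp.
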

\begin{proof}
Let $R>2$. We first notice that $v_{n_0,R,a}\to \varphi_0$ in
$H^{1,0}_0(\Omega,\C)$ as $|a|\to 0^+$. Indeed, recalling
definitions \eqref{def_blowuppate_normalizzate}, \eqref{eq:85} and \eqref{eq:85_la}, we have that
\begin{align*}
  & \int_{\Omega}\big|(i\nabla+A_0)(v_{n_0,R,a}-\varphi_0)\big|^2\,dx=
  \int_{\Omega}| e^{\frac{i}{2}(\theta_0 - \theta_a)}
(i\nabla+A_a)\varphi_a-
  (i\nabla+A_0)\varphi_0|^2\,dx \\
  &\notag\qquad +
  H(\varphi_a,K_\delta|a|)\int_{D_{R}}\Big|(i\nabla+A_0)\Big(Z_a^{R}-
  \tfrac{|a|^{k/2}}{\sqrt{H(\varphi_a,K_\delta|a|)}} W_a
  \Big)\Big|^2\,dx\\
  &\notag\qquad -
  H(\varphi_a,K_\delta|a|)\int_{D_{R}}\Big| e^{\frac{i}{2}(\theta_0 -
    \theta_{\mathbf e})} (i\nabla+A_{\mathbf
    e})\tilde\varphi_a-\tfrac{|a|^{k/2}}{\sqrt{H(\varphi_a,K_\delta|a|)}}
  (i\nabla+A_0)W_a \Big)\Big|^2\,dx.
\end{align*} 
Estimate \eqref{eq:stima_sopra_radiceH} implies that $H(\varphi_a,
K_\delta |a|)=o(1)$ whereas Proposition \ref{prop:quasi_finito1}
implies that 
$|a|^{k/2}(H(\varphi_a,K_\delta|a|))^{-1/2}=O(1)$ as $|a|\to 0^+$.
Then Theorem \ref{t:stime_blowup},
Lemma \ref{stime_lemma_blow-up}, \eqref{eq:vkext_la}, and \eqref{eq:45}
imply that $v_{n_0,R,a}\to \varphi_0$ in
$H^{1,0}_0(\Omega,\C)$ as $|a|\to 0^+$.

Therefore, from Lemma \ref{F_frechet}, we have that 
\begin{equation}\label{eq:37_bi}
 F(\lambda_a,v_{n_0,R,a}) = dF(\lambda_0,\varphi_0) (\lambda_a - \lambda_0, v_{n_0,R,a} - \varphi_0)
+ o\big(|\lambda_a - \lambda_0| + \|v_{n_0,R,a} -
\varphi_0\|_{H^{1,0}_0(\Omega,\C)}\big)
\end{equation}
 as $|a|\to 0^+$.
In view of Lemma \ref{F_frechet}, the operator $dF(\lambda_0,\varphi_0)$ is
invertible (and its inverse is continuous by the Open Mapping
Theorem), then \eqref{eq:37_bi} implies that 
\begin{multline*}
  |\lambda_{a} - \lambda_0| + \|v_{n_0,R,a} -
  \varphi_0\|_{H^{1,0}_0(\Omega,\C)}\\
  \leq\|(dF(\lambda_0,\varphi_0))^{-1}\|_{ \mathcal L( \R\times \R
    \times (H^{1,0}_{0,\R}(\Omega,\C))^\star,\C \times
    H^{1,0}_{0,\R}(\Omega,\C))} \| F(\lambda_a,v_{n_0,R,a})\|_{
    \R\times\R \times (H^{1,0}_{0,\R}(\Omega))^\star} (1+o(1))
\end{multline*}
as $|a|\to 0^+$.
In order to prove the theorem, it remains to estimate the norm of
\begin{align}\label{eq:42}
  F&(\lambda_a,v_{n_0,R,a}) =\left( \alpha_a, \beta_a, w_a\right)
  \\
  \notag& = \left( \|v_{n_0,R,a}\|_{H^{1,0}_0(\Omega,\C)}^2
    -\lambda_0,
    \mathfrak{Im}\left({\textstyle{\int_{\Omega}v_{n_0,R,a}\overline{\varphi_0}\,dx}}\right),
    (i\nabla+A_0)^2 v_{n_0,R,a} - \lambda_a v_{n_0,R,a} \right)
\end{align}
in $\R\times\R \times (H^{1,0}_{0,\R}(\Omega))^\star$.
As far as $\alpha_a$ is concerned, arguing as in \eqref{eq:95}, we
have  that, in view of  \eqref{def_blowuppate_normalizzate},
\eqref{eq:85}, Theorem \ref{t:stime_blowup},
Lemma \ref{stime_lemma_blow-up}, and Proposition \ref{prop:quasi_finito1},
\begin{align*}
 \alpha_a 
 &= \left(
 \int_{ D_{R|a|}} |(i\nabla+A_0)v_{n_0,R,a}^{int}|^2 \,dx-
 \int_{D_{R|a|}} |(i\nabla+A_a)\varphi_a|^2\,dx 
 \right) +(\lambda_a-\lambda_0)\\
 &=  H(\varphi_a,K_\delta|a|)
\left(
 \int_{ D_{R}} |(i\nabla+A_0)Z_a^R|^2 \,dx-
 \int_{D_{R}} |(i\nabla+A_{\mathbf e})\tilde\varphi_a|^2\,dx 
 \right) +(\lambda_a-\lambda_0)
\\
 &= O(H(\varphi_a,K_\delta|a|)) \quad \text{as }|a|\to0^+.
\end{align*}
As far as $\beta_a$ is concerned, by Theorem \ref{t:stime_blowup}, Lemma
\ref{stime_lemma_blow-up}, \eqref{eq:vkext_la}, and the normalization
condition \eqref{eq:6} required on $\varphi_a$, we have that 
\begin{align*}
  \beta_a &= \Im \left( \int_{D_{R|a|}}
    v_{n_0,R,a}^{int}\overline{\varphi_0} \,dx- \int_{D_{R|a|}}
    e^{\frac i2 (\theta_0-\theta_a)} \varphi_a\overline{\varphi_0}\,dx
    +\int_{\Omega} e^{\frac i2 (\theta_0-\theta_a)}
    \varphi_a\overline{\varphi_0}\,dx
  \right)\\
  &= \sqrt{H(\varphi_a,K_\delta|a|)} |a|^{\frac k2 +2} \Im \left(
    \int_{D_{R}} Z_a^R\overline{W_a}\,dx
    - \int_{D_{R}} e^{\frac i2 (\theta_0-\theta_{\mathbf e})} \tilde\varphi_a \overline{W_a}\,dx \right)\\
  &= o\big(\sqrt{H(\varphi_a,K_\delta|a|)}\big) \quad \text{as }|a|\to
  0^+.
\end{align*}
Let $\eta_{a,R}:\R^2\to\R$ be a smooth cut-off function such that
$\eta_{a,R}(x)=1$ if $|x|\geq R|a|$, $\eta_{a,R}(x)=0$ if $|x|\leq
\frac{R|a|}{2}$, $0\leq \eta_{a,R}(x)\leq 1$ for all $x$, and $|\nabla
\eta_{a,R}(x)|\leq \frac4{R|a|}$.  Then, for every $\varphi\in
H^{1,0}_{0}(\Omega,\C)$ we have that $\eta_{a,R}e^{\frac
  i2(\theta_a-\theta_0)}\varphi\in H^{1,a}_{0}(\Omega,\C)$. Hence
testing equation \eqref{eq:equation_a} with $\eta_{a,R}e^{\frac
  i2(\theta_a-\theta_0)}\varphi$ we obtain that
\begin{multline*}
  \int_{\Omega\setminus D_{R|a|}}e^{\frac i2(\theta_0-\theta_a)}
  (i\nabla+A_a)\varphi_a\cdot\overline{(i\nabla+A_0) \varphi
  }\,dx-\lambda_a \int_{\Omega\setminus D_{R|a|}}e^{\frac
    i2(\theta_0-\theta_a)} \varphi_a\overline{\varphi
  }\,dx \\
  = -
  \int_{D_{R|a|}}(i\nabla+A_a)\varphi_a\cdot\overline{(i\nabla+A_a)
    \big(\eta_{a,R}e^{\frac i2(\theta_a-\theta_0)}\varphi\big) }\,dx
  +\lambda_a \int_{D_{R|a|}}\varphi_a\eta_{a,R}e^{\frac
    i2(\theta_0-\theta_a)}\overline{\varphi}\,dx\\
  = -
  \int_{D_{R|a|}}(i\nabla+A_a)\varphi_a\cdot\overline{(i\nabla+A_0)\varphi}
  \eta_{a,R}e^{\frac i2(\theta_0-\theta_a)}\,dx -i
  \int_{D_{R|a|}}(i\nabla+A_a)\varphi_a\cdot\nabla\eta_{a,R}
  \overline{\varphi}
  e^{\frac i2(\theta_0-\theta_a)}\,dx\\
  +\lambda_a \int_{D_{R|a|}}\varphi_a\eta_{a,R}e^{\frac
    i2(\theta_0-\theta_a)}\overline{\varphi}\,dx
\end{multline*}
and hence, by H\"older inequality and \eqref{eq:hardy}, 
\begin{align}\label{eq:51}
  & \bigg| \int_{\Omega\setminus D_{R|a|}}e^{\frac
    i2(\theta_0-\theta_a)}
  (i\nabla+A_a)\varphi_a\cdot\overline{(i\nabla+A_0) \varphi
  }\,dx-\lambda_a \int_{\Omega\setminus D_{R|a|}}e^{\frac
    i2(\theta_0-\theta_a)} \varphi_a\overline{\varphi
  }\,dx\bigg|\\
  \notag\leq \bigg(&\int_{D_{R|a|}}\!\!|
  (i\nabla+A_a)\varphi_a|^2\,dx\bigg)^{\!\!\frac12} \bigg[
  \bigg(\int_{D_{R|a|}}\!\!|
  (i\nabla+A_0)\varphi|^2\,dx\bigg)^{\!\!\frac12}\!\!+
  \bigg(\int_{D_{R|a|}}\!\!|\nabla\eta_{a,R}|^2|\varphi|^2\,dx\bigg)^{\!\!\frac12}
  \bigg]\\
  \notag&\qquad+2\lambda_a R|a| \bigg(\int_{D_{R|a|}}|
  (i\nabla+A_0)\varphi|^2\,dx\bigg)^{\!\!\frac12}
  \bigg(\int_{D_{R|a|}}|\varphi_a|^2\,dx\bigg)^{\!\!\frac12}
  \\
  \notag&\leq \left(9 \bigg(\int_{D_{R|a|}}|
    (i\nabla+A_a)\varphi_a|^2\,dx\bigg)^{\!\!1/2} +2\lambda_a |a|R
    \bigg(\int_{D_{R|a|}}|\varphi_a|^2\,dx\bigg)^{\!\!1/2}\right)
  \|\varphi\|_{H^{1,0}_0(\Omega,\C)}.
\end{align}
 By H\"older inequality and \eqref{eq:hardy}, we also have that 
\begin{align}\label{eq:52}
  \bigg| \int_{D_{R|a|}}& (i\nabla+A_0)
  v_{n_0,R,a}^{int}\cdot\overline{(i\nabla+A_0) \varphi
  }\,dx-\lambda_a \int_{D_{R|a|}}v_{n_0,R,a}^{int} \overline{\varphi
  }\,dx\bigg|\\
  \notag&\leq \bigg(\int_{D_{R|a|}}| (i\nabla+A_0)
  v_{n_0,R,a}^{int}|^2\,dx\bigg)^{\!\!1/2} (1+4\lambda_a|a|^2R^2)
  \|\varphi\|_{H^{1,0}_0(\Omega,\C)}.
\end{align}
From \eqref{eq:51} and \eqref{eq:52} it follows that, for every
$\varphi\in H^{1,0}_{0}(\Omega,\C)$,
\begin{align*}
  \bigg| \Re
  &\left(\int_{\Omega}(i\nabla+A_0)v_{n_0,R,a}\cdot\overline{(i\nabla+A_0)\varphi}\,dx
    -\lambda_a \int_{\Omega} v_{n_0,R,a}\overline{\varphi}
    \,dx\right)\bigg|\\
  &\leq \|\varphi\|_{H^{1,0}_0(\Omega,\C)} \bigg[9
  \bigg(\int_{D_{R|a|}}| (i\nabla+A_a)\varphi_a|^2\,dx\bigg)^{\!\!1/2}
  +2\lambda_a |a|R
  \bigg(\int_{D_{R|a|}}|\varphi_a|^2\,dx\bigg)^{\!\!1/2}\\
  \notag&\hskip4cm +(1+4\lambda_a|a|^2R^2)\bigg(\int_{D_{R|a|}}|
  (i\nabla+A_0) v_{n_0,R,a}^{int}|^2\,dx\bigg)^{\!\!1/2} \bigg].
\end{align*}
Hence, in view of \eqref{eq:46}, \eqref{eq:48}, and \eqref{eq:46zar},
\begin{align*}
  &\|w_a\|_{(H^{1,0}_{0,\R}(\Omega,\C))^\star} \\
  & = \!\!\sup_{\substack{\varphi\in H^{1,0}_{0}(\Omega,\C) \\
      \|\varphi\|_{H^{1,0}_0(\Omega,\C)}=1}} \!\!\bigg|\Re
  \left(\int_{\Omega}(i\nabla+A_0)v_{n_0,R,a}\cdot\overline{(i\nabla+A_0)\varphi}\,dx
    -\lambda_a \int_{\Omega} v_{n_0,R,a}\overline{\varphi}
    \,dx\right)\bigg|\\
  &\leq 9 \bigg(\int_{D_{R|a|}}|
  (i\nabla+A_a)\varphi_a|^2\,dx\bigg)^{\!\!1/2} +2\lambda_a |a|R
  \bigg(\int_{D_{R|a|}}|\varphi_a|^2\,dx\bigg)^{\!\!1/2}\\
  &\qquad\qquad
  +(1+4\lambda_a|a|^2R^2)\bigg(\int_{D_{R|a|}}| (i\nabla+A_0) v_{n_0,R,a}^{int}|^2\,dx\bigg)^{\!\!1/2}\\
  &=O\big(\sqrt{H(\varphi_a, K_\delta|a|)}\big),\quad\text{as
  }|a|\to0^+.
\end{align*}
The proof is thereby complete.
\end{proof}

As a direct consequence of Theorem \ref{stima_teo_inversione}, we
obtain the following uniform energy estimate for scaled
eigenfunctions.
\begin{Theorem}\label{conseguenza_stima_teo_inversione}
  For $a=(|a|,0)\in\Omega$, let $\varphi_a\in H^{1,a}_{0}(\Omega,\C)$
  solve (\ref{eq:equation_a}-\ref{eq:6}), $\varphi_0\in
  H^{1,0}_{0}(\Omega,\C)$ be a solution to
  (\ref{eq:equation_lambda0}--\ref{eq:83}) satisfying \eqref{eq:1},
  \eqref{eq:37}, and \eqref{eq:54}, $\tilde\varphi_a$ be as in
  \eqref{def_blowuppate_normalizzate} and $W_a$ as in
  \eqref{eq:85_la}.  Then, for every $R>2$,
\begin{equation}\label{eq:41}
 \int_{\big(\frac{1}{|a|}\Omega\big)\setminus D_{R}}\bigg|(i\nabla+A_{\mathbf e})
 \Big(\tilde \varphi_a(x) - e^{\frac i2(\theta_{\mathbf e}-\theta_0)}
\tfrac{|a|^{k/2}}{\sqrt{H(\varphi_a,K_\delta|a|)}}W_a\Big)\bigg|^2dx=O(1),
\quad\text{as }|a|\to0^+.
\end{equation}
\end{Theorem}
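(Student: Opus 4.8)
The plan is to obtain \eqref{eq:41} as a rescaled reformulation of Theorem \ref{stima_teo_inversione}. The starting point is that, by construction, $v_{n_0,R,a}$ coincides with $e^{\frac i2(\theta_0-\theta_a)}\varphi_a$ on $\Omega\setminus D_{R|a|}$, where the gauge phase $e^{\frac i2(\theta_0-\theta_a)}$ is smooth since $R>1$ forces $D_{R|a|}$ to contain the segment joining $0$ and $a=|a|{\mathbf e}$, on which $\theta_0-\theta_a$ has its jump. Since $\nabla\big(\tfrac{\theta_0}2\big)=A_0$ and $\nabla\big(\tfrac{\theta_a}2\big)=A_a$ (see \S\ref{sec:four-coeff-angul}), the elementary gauge covariance identity $(i\nabla+A_0)\big(e^{\frac i2(\theta_0-\theta_a)}u\big)=e^{\frac i2(\theta_0-\theta_a)}(i\nabla+A_a)u$ holds there; likewise $(i\nabla+A_{\mathbf e})\big(e^{\frac i2(\theta_{\mathbf e}-\theta_0)}u\big)=e^{\frac i2(\theta_{\mathbf e}-\theta_0)}(i\nabla+A_0)u$ wherever $\theta_{\mathbf e}-\theta_0$ is smooth, in particular on $\frac1{|a|}\Omega\setminus D_R$ for $R>1$. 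Consequently, $(i\nabla+A_0)v_{n_0,R,a}=e^{\frac i2(\theta_0-\theta_a)}(i\nabla+A_a)\varphi_a$ on $\Omega\setminus D_{R|a|}$.

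Next I would carry out the change of variables $y=|a|x$ in $\int_{\Omega\setminus D_{R|a|}}|(i\nabla+A_0)(v_{n_0,R,a}-\varphi_0)|^2\,dy$. From $a=|a|{\mathbf e}$ one has $\theta_a(|a|x)=\theta_{\mathbf e}(x)$, $\theta_0(|a|x)=\theta_0(x)$, and, by the homogeneity of $A_0$ of degree $-1$, $A_a(|a|x)=|a|^{-1}A_{\mathbf e}(x)$ and $A_0(|a|x)=|a|^{-1}A_0(x)$; combining these with the definitions \eqref{def_blowuppate_normalizzate} and \eqref{eq:85_la} of $\tilde\varphi_a$ and $W_a$ gives
\[
[(i\nabla+A_a)\varphi_a](|a|x)=\frac{\sqrt{H(\varphi_a,K_\delta|a|)}}{|a|}\,(i\nabla+A_{\mathbf e})\tilde\varphi_a(x),\qquad [(i\nabla+A_0)\varphi_0](|a|x)=\frac{|a|^{k/2}}{|a|}\,(i\nabla+A_0)W_a(x).
\]
Using the two gauge identities above, a direct computation then yields, for $x\in\frac1{|a|}\Omega\setminus D_R$,
\[
[(i\nabla+A_0)(v_{n_0,R,a}-\varphi_0)](|a|x)=\frac{\sqrt{H(\varphi_a,K_\delta|a|)}}{|a|}\,e^{\frac i2(\theta_0-\theta_{\mathbf e})(x)}\,(i\nabla+A_{\mathbf e})\Big(\tilde\varphi_a-e^{\frac i2(\theta_{\mathbf e}-\theta_0)}\tfrac{|a|^{k/2}}{\sqrt{H(\varphi_a,K_\delta|a|)}}W_a\Big)(x).
\]
Taking squared moduli and integrating, the Jacobian $|a|^2$ cancels the $|a|^{-2}$ in the prefactor, so that
\[
\int_{\Omega\setminus D_{R|a|}}\!\!|(i\nabla+A_0)(v_{n_0,R,a}-\varphi_0)|^2\,dy=H(\varphi_a,K_\delta|a|)\int_{\frac1{|a|}\Omega\setminus D_R}\!\!\Big|(i\nabla+A_{\mathbf e})\Big(\tilde\varphi_a-e^{\frac i2(\theta_{\mathbf e}-\theta_0)}\tfrac{|a|^{k/2}}{\sqrt{H(\varphi_a,K_\delta|a|)}}W_a\Big)\Big|^2\,dx.
\]

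To conclude, I would bound the left-hand side of the last identity by $\|v_{n_0,R,a}-\varphi_0\|_{H^{1,0}_0(\Omega,\C)}^2$, which is $O\big(H(\varphi_a,K_\delta|a|)\big)$ by Theorem \ref{stima_teo_inversione}; since $H(\varphi_a,K_\delta|a|)>0$ by Lemma \ref{l:Hpos}, dividing by it gives \eqref{eq:41}. The proof is thus essentially bookkeeping: the only steps requiring some care are verifying that the two gauge phases are genuinely smooth on the relevant annular regions (guaranteed by $R>2>1$) and checking the scaling identities for $A$, $\theta$, and the normalized functions; there is no genuine analytical obstacle, the content of the statement being entirely carried by Theorem \ref{stima_teo_inversione}.
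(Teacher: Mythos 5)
Your proof is correct and is precisely the argument the paper intends: it states only that the result "follows directly from scaling and Theorem \ref{stima_teo_inversione}," and your calculation carefully unpacks that scaling step (the gauge covariance identities, the homogeneity of $A_0$ and $\theta_0$ under dilation, the identity $\theta_a(|a|x)=\theta_{\mathbf e}(x)$ for $a=|a|{\mathbf e}$, and the cancellation of the Jacobian against the $|a|^{-2}$ prefactor). Nothing is missing.
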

\begin{proof}
  The proof follows directly from  scaling  and Theorem \ref{stima_teo_inversione}.
\end{proof}

\section{Blow-up analysis}\label{sec:blow-up-analysis}

In this section we study the limit of the blow-up sequence introduced
in \eqref{def_blowuppate_normalizzate}. 

\begin{Theorem}\label{t:blowup}
  For $a=(|a|,0)\in\Omega$, let $\varphi_a\in H^{1,a}_{0}(\Omega,\C)$
  solve (\ref{eq:equation_a}-\ref{eq:6}) and $\varphi_0\in
  H^{1,0}_{0}(\Omega,\C)$ be a solution to
  (\ref{eq:equation_lambda0}--\ref{eq:83}) satisfying \eqref{eq:1},
  \eqref{eq:37}, and \eqref{eq:54}.  Let $\tilde\varphi_a$ and
  $K_\delta$ be as in \eqref{def_blowuppate_normalizzate},
  $\beta_k^2(0,\varphi_0,\lambda_0)$ as in \eqref{eq:84}, and $\Psi_k$
  be the function defined in \eqref{eq:10}.  Then
\begin{equation}\label{eq:58}
  \lim_{|a|\to 0^+}\frac{|a|^{k/2}}{\sqrt{H(\varphi_{a},K_\delta|a|)}}=
  \frac{\sqrt\pi}{|\beta_k^2(0,\varphi_0,\lambda_0)|}
  \sqrt{\frac{K_\delta}{\int_{\partial D_{K_\delta}}|\Psi_k|^2ds}}
\end{equation}
and
\begin{equation}\label{eq:59}
  \tilde \varphi_{a}\to \frac{\beta_k^2(0,\varphi_0,\lambda_0)}{|\beta_k^2(0,\varphi_0,\lambda_0)|}
  \sqrt{\frac{K_\delta}{\int_{\partial D_{K_\delta}}|\Psi_k|^2ds}} \ \Psi_k \quad\text{as
  }|a|\to0^+,
\end{equation}
in $H^{1 ,{\mathbf e}}(D_R,\C)$ for every $R>1$, almost everywhere and
in $C^{2}_{\rm loc}(\R^2\setminus\{{\mathbf e}\},\C)$.
\end{Theorem}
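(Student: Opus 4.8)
The plan is a blow-up compactness argument: the a priori bounds of Theorem \ref{t:stime_blowup} provide compactness of the family $\{\tilde\varphi_a\}$, the normalization \eqref{eq:62} prevents the limit from being trivial, the identification of the limit profile is obtained by coupling the uniform tail estimate of Theorem \ref{conseguenza_stima_teo_inversione} with the rigidity statement of Proposition \ref{p:uniqueness}, and a final testing argument upgrades weak convergence to strong convergence in $H^{1,{\mathbf e}}(D_R,\C)$.

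First I would fix an arbitrary sequence $a_n=|a_n|{\mathbf e}\to0$ and set $L_n:=|a_n|^{k/2}/\sqrt{H(\varphi_{a_n},K_\delta|a_n|)}$. Comparing the lower bound $-4|\beta|^2{\mathfrak m}_k|a|^k(1+o(1))$ with the upper bound $C^*H(\varphi_a,K_\delta|a|)$ for $\lambda_0-\lambda_a$ in Proposition \ref{prop:quasi_finito1} shows that the $L_n$ are bounded, so, up to a subsequence, $L_n\to L\ge0$. By Theorem \ref{t:stime_blowup} the family $\{\tilde\varphi_{a_n}\}$ is bounded in $H^{1,{\mathbf e}}(D_R,\C)$ for every $R$, hence, up to a further subsequence, $\tilde\varphi_{a_n}\rightharpoonup\Phi$ weakly in $H^{1,{\mathbf e}}_{\rm loc}(\R^2,\C)$ and strongly in $L^2_{\rm loc}(\R^2,\C)$; since $\tilde\varphi_{a_n}$ solves \eqref{eq:equations_for_blowedup} with $|a_n|^2\lambda_{a_n}\to0$, classical elliptic estimates away from ${\mathbf e}$ give in addition convergence in $C^2_{\rm loc}(\R^2\setminus\{{\mathbf e}\},\C)$. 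Passing to the limit in the weak formulation of \eqref{eq:equations_for_blowedup}, $\Phi\in H^{1,{\mathbf e}}_{\rm loc}(\R^2,\C)$ weakly solves $(i\nabla+A_{\mathbf e})^2\Phi=0$ in $\R^2$; moreover, since $\partial D_{K_\delta}$ stays away from ${\mathbf e}$, compactness of the trace together with \eqref{eq:62} forces $\frac1{K_\delta}\int_{\partial D_{K_\delta}}|\Phi|^2\,ds=1$, so $\Phi\not\equiv0$.

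Next I would identify $\Phi$. By \eqref{eq:vkext_la}, $W_a\to\beta\,e^{\frac i2\theta_0}\psi_k$ in $H^{1,0}(D_R,\C)$ for every $R>1$, with $\beta$ as in \eqref{eq:beta}; applying the gauge multiplier $e^{\frac i2(\theta_{\mathbf e}-\theta_0)}$ one gets $e^{\frac i2(\theta_{\mathbf e}-\theta_0)}L_nW_{a_n}\to L\beta\,e^{\frac i2\theta_{\mathbf e}}\psi_k$ in $H^{1,{\mathbf e}}(D_R,\C)$, so that, combining this local convergence with the uniform tail bound of Theorem \ref{conseguenza_stima_teo_inversione} and weak lower semicontinuity, one obtains $\int_{\R^2}|(i\nabla+A_{\mathbf e})(\Phi-L\beta\,e^{\frac i2\theta_{\mathbf e}}\psi_k)|^2\,dx<+\infty$. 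If $L=0$, testing the equation for $\Phi$ with $(1-\eta_R)^2\Phi$, $\eta_R$ as in \eqref{eq:88}, and letting $R\to+\infty$ exactly as in the proof of Proposition \ref{p:uniqueness} yields $\Phi\equiv0$, contradicting $\Phi\not\equiv0$; hence $L>0$, and then $\Phi/(L\beta)$ satisfies \eqref{equation_Psi}--\eqref{eq:14}, so Proposition \ref{p:uniqueness} gives $\Phi=L\beta\,\Psi_k$. Inserting this into $\frac1{K_\delta}\int_{\partial D_{K_\delta}}|\Phi|^2\,ds=1$ (noting $\int_{\partial D_{K_\delta}}|\Psi_k|^2\,ds>0$, since otherwise unique continuation would force $\Psi_k\equiv0$) and recalling $\beta=\beta_k^2(0,\varphi_0,\lambda_0)/\sqrt\pi$ and $L>0$, one determines $L=\frac{\sqrt\pi}{|\beta_k^2(0,\varphi_0,\lambda_0)|}\sqrt{K_\delta/\int_{\partial D_{K_\delta}}|\Psi_k|^2\,ds}$ and $\Phi=\frac{\beta_k^2(0,\varphi_0,\lambda_0)}{|\beta_k^2(0,\varphi_0,\lambda_0)|}\sqrt{K_\delta/\int_{\partial D_{K_\delta}}|\Psi_k|^2\,ds}\,\Psi_k$. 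Since these limits do not depend on the chosen subsequence, the Urysohn property yields \eqref{eq:58} together with the convergence \eqref{eq:59} weakly in $H^{1,{\mathbf e}}_{\rm loc}(\R^2,\C)$, a.e., and in $C^2_{\rm loc}(\R^2\setminus\{{\mathbf e}\},\C)$.

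It remains to upgrade the weak convergence to strong convergence in $H^{1,{\mathbf e}}(D_R,\C)$ for every $R>1$; it suffices to treat $R\ge K_\delta$ and restrict. Fixing $R'>R$ and a cut-off $\zeta$ with $\zeta\equiv1$ on $D_R$ and supported in $D_{R'}$, testing \eqref{eq:equations_for_blowedup} with $\zeta^2\tilde\varphi_a$ and using $|a|^2\lambda_a\to0$, the $L^2_{\rm loc}$-convergence of $\tilde\varphi_a$ and the weak $L^2_{\rm loc}$-convergence of $(i\nabla+A_{\mathbf e})\tilde\varphi_a$, one finds $\int_{D_R}|(i\nabla+A_{\mathbf e})\tilde\varphi_a|^2\,dx\le\int_{\R^2}\zeta^2|(i\nabla+A_{\mathbf e})\tilde\varphi_a|^2\,dx\to\int_{\R^2}\zeta^2|(i\nabla+A_{\mathbf e})\Phi|^2\,dx$; letting $R'\downarrow R$ and combining with weak lower semicontinuity gives $\int_{D_R}|(i\nabla+A_{\mathbf e})\tilde\varphi_a|^2\,dx\to\int_{D_R}|(i\nabla+A_{\mathbf e})\Phi|^2\,dx$, which, together with $\tilde\varphi_a\to\Phi$ in $L^2(D_R,\C)$ and the equivalence (via \eqref{eq:hardy}) of the $H^{1,{\mathbf e}}(D_R,\C)$-norm with \eqref{eq:norma}, yields strong convergence in $H^{1,{\mathbf e}}(D_R,\C)$. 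The crux of the whole argument is the identification of $\Phi$, which rests on the global finiteness of $\int_{\R^2}|(i\nabla+A_{\mathbf e})(\Phi-L\beta\,e^{\frac i2\theta_{\mathbf e}}\psi_k)|^2\,dx$: this is precisely what Theorem \ref{conseguenza_stima_teo_inversione} (itself built on the invertibility in Lemma \ref{F_frechet}) is designed to provide, enabling the application of Proposition \ref{p:uniqueness}.
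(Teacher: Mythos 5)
Your argument is correct and follows essentially the same route as the paper: boundedness of $L_n$ and of $\{\tilde\varphi_a\}$ from Proposition \ref{prop:quasi_finito1} and Theorem \ref{t:stime_blowup}, subsequential extraction of a weak limit $\Phi$ solving $(i\nabla+A_{\mathbf e})^2\Phi=0$, nontriviality via the trace normalization \eqref{eq:62}, identification $\Phi=L\beta\Psi_k$ by combining the uniform tail bound of Theorem \ref{conseguenza_stima_teo_inversione} with Proposition \ref{p:uniqueness} (and exclusion of $L=0$ via the uniqueness argument), determination of $L$ from \eqref{eq:56}, and a Urysohn argument. The only technical deviation is the upgrade to strong $H^{1,{\mathbf e}}(D_R,\C)$-convergence: you test \eqref{eq:equations_for_blowedup} against $\zeta^2\tilde\varphi_a$ with an interior cut-off and shrink $R'\downarrow R$, whereas the paper integrates by parts on $D_R$ and compares the resulting boundary fluxes for $\tilde\varphi_a$ and for $\tilde\Phi$; both are routine and yield the same conclusion.
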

\begin{proof}
  From Theorem \ref{t:stime_blowup} we know that the family of
  functions $\big\{\tilde \varphi_a: a=|a|{\mathbf e},
  |a|<\tfrac{r_\delta}{R}\big\}$ is bounded in $H^{1 ,{\mathbf
      e}}(D_R,\C)$ for all $R\geq K_\delta$. Furthermore, from
  Proposition \ref{prop:quasi_finito1},
\[
\frac{|a|^{k/2}}{\sqrt{H(\varphi_a,K_\delta|a|)}}=O(1)\quad\text{as }|a|\to0^+.
\]
It follows that, for every sequence $a_n=(|a_n|,0)=|a_n|\mathbf e$
with $|a_n|\to 0$, by a diagonal process there exist $c\in
[0,+\infty)$, $\tilde \Phi\in H^{1 ,{\mathbf e}}_{\rm loc}(\R^2,\C)$,
and a subsequence $a_{n_\ell}$ such that
\[
\lim_{\ell\to+\infty}\frac{|a_{n_\ell}|^{k/2}}{\sqrt{H(\varphi_{a_{n_\ell}},K_\delta|a_{n_\ell}|)}}=c
\]
and
\[
\tilde \varphi_{a_{n_\ell}}\rightharpoonup \tilde \Phi
\quad\text{weakly in }H^{1 ,{\mathbf e}}(D_R,\C)\text{  
 for
every $R>1$ and almost everywhere}.
\] 
We notice that $\tilde \Phi\not\equiv 0$ since 
\begin{equation}\label{eq:56}
\frac1{K_\delta}\int_{\partial D_{K_\delta}}|\tilde\Phi|^2\,ds=1
\end{equation}
thanks to \eqref{eq:62} and the compactness of the trace embedding
$H^{1,{\mathbf e}}(D_{K_{\delta}},\C)\hookrightarrow L^2(\partial
D_{K_{\delta}},\C)$.

Multiplying \eqref{eq:equations_for_blowedup} by $\eta\in
C^\infty_{\rm c}(\R^2\setminus\{{\mathbf e}\},\C)$ and integrating by
parts, we have that, if $|a|$ is sufficiently small so that
$\mathop{\rm supp}\eta\subset\frac1{|a|}\Omega$,
\begin{align*}
 \int_{\R^2} (i\nabla+ A_{\mathbf e} )\tilde\varphi_a
 \cdot\overline{(i\nabla+ A_{\mathbf e} )\eta}\,dx 
 =\lambda_a |a|^2 \int_{\R^2} \tilde\varphi_a \overline\eta\,dx.
\end{align*}
Along $a=a_{n_\ell}$ with $\ell\to\infty$, the left hand side converges to
$\int_{\R^2} (i\nabla+ A_{\mathbf e} )\tilde\Phi
\cdot\overline{(i\nabla+ A_{\mathbf e} )\eta}\,dx $ via the weak $H^{1
  ,{\mathbf e}}(D_R,\C)$-convergence, where $R>1$ is such that
$\mathop{\rm supp}\eta\subset D_R$, whereas, in view of \eqref{eq:67},
the right hand side can be estimated as
\[ 
\abs{ \lambda_{a_{n_\ell}} |{a_{n_\ell}}|^2 \int_{\R^2}
  \tilde\varphi_{a_{n_\ell}} \overline\eta\,dx} \leq
\lambda_{a_{n_\ell}} |{a_{n_\ell}}|^2
\|\tilde\varphi_{a_{n_\ell}}\|_{H^{1,{\mathbf e}}(D_{R},\C)}
\|\eta\|_{L^2(\R^2,\C)} =O(|{a_{n_\ell}}|^2) \quad\text{as
}\ell\to\infty,
\]
thus proving that $\tilde \Phi$ weakly solves 
\begin{equation}\label{eq:tildePhi}
(i\nabla +A_{\mathbf e})^2 \tilde\Phi =0, \quad \text{in }\R^2. 
\end{equation}
We now claim that 
the convergence of  the subsequence $\tilde \varphi_{a_{n_\ell}}$ to
$\tilde \Phi$ is actually strong in $H^{1 ,{\mathbf e}}(D_R,\C)$ for
every $R>1$. 

By classical elliptic estimates, we can easily prove that $\tilde
\varphi_{a_{n_l}} \to \tilde \Phi$ in $C^{2,\alpha}(D_{R_2}\setminus
D_{R_1},\C)$ for every $1< R_1 <R_2$.  Therefore, multiplying by $\tilde \Phi$
equation \eqref{eq:tildePhi} and integrating by parts in $D_R$ for
$R>1$, we obtain 
\begin{equation}\label{eq:conv_forte_tilde_Phi_1}
  -i \int_{\partial D_R} \!\!\big( (i\nabla +A_{\mathbf e})\tilde
  \varphi_{a_{n_\ell}}  \!\cdot \nu \big)
  \overline{\varphi_{a_{n_\ell}}}\,ds
  \to -i \int_{\partial D_R} \!\!\big( (i\nabla +A_{\mathbf e})\tilde
  \Phi \cdot \nu \big) 
  \overline{\tilde \Phi}\,ds
  = \int_{D_R} |(i\nabla + A_{\mathbf e})\tilde\Phi|^2dx
\end{equation}
as $\ell\to\infty$.
On the other hand, multiplying equation
\eqref{eq:equations_for_blowedup} by $\tilde \varphi_{a_{n_\ell}}$ with
$\ell$ large
and integrating by parts in $D_R$ for
$R>1$, we obtain 
\begin{equation}\label{eq:conv_forte_tilde_Phi_2}
  \int_{D_R} |(i\nabla + A_{\mathbf e})\tilde\varphi_{a_{n_\ell}}|^2 dx 
  = \lambda_{a_{n_\ell}} \,|a_{n_\ell}|^2 \int_{D_R} |\tilde
  \varphi_{a_{n_\ell}}|^2 dx 
  -i \int_{\partial D_R} \big( (i\nabla +A_{\mathbf e})\tilde
  \varphi_{a_{n_\ell}} \cdot \nu \big)
  \overline{\tilde \varphi_{a_{n_\ell}}}\,ds.
\end{equation}
Combining \eqref{eq:conv_forte_tilde_Phi_1} and
\eqref{eq:conv_forte_tilde_Phi_2},  we obtain that
\[
\int_{D_R} |(i\nabla + A_{\mathbf e})\tilde\varphi_{a_{n_\ell}}|^2 dx
\to \int_{D_R} |(i\nabla + A_{\mathbf e})\tilde\Phi|^2dx\quad\text{as }\ell\to\infty,
\]
whereas the compactness of the trace embedding
$H^{1,{\mathbf e}}(D_{R},\C)\hookrightarrow
  L^2(\partial D_{R},\C)$ yields 
\[
\int_{\partial D_R} |\tilde\varphi_{a_{n_\ell}}|^2 ds
\to \int_{\partial D_R} |\tilde\Phi|^2ds\quad\text{as }\ell\to\infty,
\]
so that, in view of Lemma \ref{poincare_inequality}, we can conclude that
\[
\|\tilde\varphi_{a_{n_\ell}}\|_{H^{1 ,{\mathbf e}}(D_R,\C)}\to 
\|\tilde\Phi\|_{H^{1 ,{\mathbf e}}(D_R,\C)}\quad\text{as }\ell\to\infty,
\]
and hence $\tilde\varphi_{a_{n_\ell}}\to \tilde\Phi$ strongly in $H^{1
  ,{\mathbf e}}(D_R,\C)$ for every $R>1$ as desired.

Passing to the limit along $a_{n_\ell}$ in \eqref{eq:41} and recalling
\eqref{eq:vkext_la}, we obtain
that, for every $R>2$,
\begin{equation*}
 \int_{\R^2\setminus D_{R}}\bigg|(i\nabla+A_{\mathbf e})
 \Big(\tilde\Phi(x) - c\beta e^{\frac i2\theta_{\mathbf e}}
\psi_k\Big)\bigg|^2dx<+\infty,
\end{equation*}
where $\beta$ is defined in \eqref{eq:beta}. Hence 
\begin{equation}\label{eq:55}
 \int_{\R^2}\bigg|(i\nabla+A_{\mathbf e})
 \Big(\tilde\Phi(x) -c\beta e^{\frac i2\theta_{\mathbf e}}
 \psi_k\Big)\bigg|^2dx<+\infty.
\end{equation}
Estimate \eqref{eq:55} implies that $c>0$. Indeed, $c=0$
would imply that $\int_{\R^2}|(i\nabla+A_{\mathbf e})
\tilde\Phi|^2dx<+\infty$ and then, arguing as in the proof of
Proposition \ref{p:uniqueness}, we could prove that $\tilde\Phi\equiv
0$, thus contradicting \eqref{eq:56}.

Then, from \eqref{eq:tildePhi}, \eqref{eq:55}, and Proposition
\ref{p:uniqueness} we deduce that necessarily
\begin{equation}\label{eq:57}
\tilde\Phi=c\beta\Psi_k
\end{equation}
with   $\Psi_k$ being the function defined in \eqref{eq:10}. 
From \eqref{eq:56}, \eqref{eq:57}, and the fact that $c$ is a positive
real number, it follows that 
\[
c=
\frac{1}{|\beta|}
\sqrt{\frac{K_\delta}{\int_{\partial D_{K_\delta}}|\Psi_k|^2ds}}.
\]
Hence we have that 
\[
\tilde \varphi_{a_{n_\ell}}\to 
\frac{\beta}{|\beta|}
\sqrt{\tfrac{K_\delta}{\int_{\partial D_{K_\delta}}|\Psi_k|^2ds}}
\Psi_k
\quad\text{ in }H^{1 ,{\mathbf e}}(D_R,\C)\text{  
 for
every $R>1$ and a. e.},
\] 
and 
\[
\frac{|a_{n_\ell}|^{k/2}}{\sqrt{H(\varphi_{a_{n_\ell}},K_\delta|a_{n_\ell}|)}}\to \frac{1}{|\beta|}
\sqrt{\frac{K_\delta}{\int_{\partial D_{K_\delta}}|\Psi_k|^2ds}}.
\]
Since the above limits depend neither on the sequence $\{a_{n}\}_n$ nor on the
  subsequence $\{a_{n_\ell}\}_\ell$, we conclude that the above
  convergences hold as $|a|\to 0^+$, thus concluding  the proof of the
  theorem (the convergence in $C^{2}_{\rm
  loc}(\R^2\setminus\{{\mathbf e}\},\C)$ follows easily from classical elliptic estimates).
 \end{proof}

\begin{Theorem}\label{t:blowup2}
  For $a=(|a|,0)\in\Omega$, let $\varphi_a\in H^{1,a}_{0}(\Omega,\C)$
  solve (\ref{eq:equation_a}-\ref{eq:6}) and $\varphi_0\in
  H^{1,0}_{0}(\Omega,\C)$ be a solution to
  (\ref{eq:equation_lambda0}--\ref{eq:83}) satisfying \eqref{eq:1},
  \eqref{eq:37}, and \eqref{eq:54}. Then
\[
\dfrac{\varphi_a(|a|x)}{|a|^{k/2}}\to
\frac{\beta_k^2(0,\varphi_0,\lambda_0)}{\sqrt\pi}\Psi_k \quad\text{as
}|a|\to0^+,
\]
in $H^{1 ,{\mathbf e}}(D_R,\C)$ for
every $R>1$, a.e. and in $C^{2}_{\rm
  loc}(\R^2\setminus\{{\mathbf e}\},\C)$,
with $\beta_k^2(0,\varphi_0,\lambda_0)\neq 0$ as in \eqref{eq:84} and
 $\Psi_k$ being the function defined in \eqref{eq:10}.  
\end{Theorem}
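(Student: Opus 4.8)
The plan is to obtain the statement as an immediate consequence of Theorem \ref{t:blowup} by a rescaling argument, with essentially no new analytic content. First I would rewrite the blow-up quotient in terms of the normalized blow-up sequence $\tilde\varphi_a$ from \eqref{def_blowuppate_normalizzate}: since $\tilde\varphi_a(x) = \varphi_a(|a|x)/\sqrt{H(\varphi_a,K_\delta|a|)}$, one has
\[
\frac{\varphi_a(|a|x)}{|a|^{k/2}} = \frac{\sqrt{H(\varphi_a,K_\delta|a|)}}{|a|^{k/2}}\,\tilde\varphi_a(x),
\]
so the task reduces to multiplying a convergent real scalar factor by a sequence of functions whose limit is already known.

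Next I would invoke the two conclusions of Theorem \ref{t:blowup}. From \eqref{eq:58}, taking reciprocals (which is legitimate since $\beta_k^2(0,\varphi_0,\lambda_0)\neq0$ by \eqref{eq:70}--\eqref{eq:54}, so the limit in \eqref{eq:58} is a positive real number), one gets
\[
\frac{\sqrt{H(\varphi_a,K_\delta|a|)}}{|a|^{k/2}}\longrightarrow \frac{|\beta_k^2(0,\varphi_0,\lambda_0)|}{\sqrt\pi}\sqrt{\frac{\int_{\partial D_{K_\delta}}|\Psi_k|^2\,ds}{K_\delta}}\qquad\text{as }|a|\to0^+.
\]
From \eqref{eq:59}, the sequence $\tilde\varphi_a$ converges to $\dfrac{\beta_k^2(0,\varphi_0,\lambda_0)}{|\beta_k^2(0,\varphi_0,\lambda_0)|}\sqrt{K_\delta/\int_{\partial D_{K_\delta}}|\Psi_k|^2\,ds}\ \Psi_k$ in $H^{1,{\mathbf e}}(D_R,\C)$ for every $R>1$, almost everywhere, and in $C^2_{\rm loc}(\R^2\setminus\{{\mathbf e}\},\C)$.

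Finally I would combine the two: multiplying a sequence converging in each of these topologies by a sequence of real numbers with a finite limit produces a sequence converging, in the same topology, to the product of the two limits. Carrying out this multiplication, the geometric normalization factor $\sqrt{\int_{\partial D_{K_\delta}}|\Psi_k|^2\,ds/K_\delta}$ cancels against its reciprocal and the modulus $|\beta_k^2(0,\varphi_0,\lambda_0)|$ cancels against itself, leaving precisely $\dfrac{\beta_k^2(0,\varphi_0,\lambda_0)}{\sqrt\pi}\Psi_k$, which is the claimed limit. There is no genuine obstacle in this argument — all the substantive work (boundedness of the blow-up family in Theorem \ref{t:stime_blowup}, passage to the limit via the Almgren monotonicity estimates of Section \ref{sec:5}, strong $H^{1,{\mathbf e}}$-convergence and identification of the limit profile through Proposition \ref{p:uniqueness}, and the energy estimate of Theorem \ref{conseguenza_stima_teo_inversione}) has already been carried out in the proof of Theorem \ref{t:blowup}; the only point worth a word of care is verifying that $\sqrt{H(\varphi_a,K_\delta|a|)}/|a|^{k/2}$ has the stated strictly positive finite limit, which is exactly the reciprocal of \eqref{eq:58} and relies on $\beta_k^2(0,\varphi_0,\lambda_0)\neq0$.
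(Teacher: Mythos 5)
Your proposal is correct and is essentially identical to the paper's own (one-line) proof, which also derives Theorem~\ref{t:blowup2} directly by combining the convergences \eqref{eq:58} and \eqref{eq:59} from Theorem~\ref{t:blowup}; you have simply written out the scalar-times-function bookkeeping explicitly.
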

\begin{proof}
  It follows directly from convergences \eqref{eq:58} and
  \eqref{eq:59} established in Theorem \ref{t:blowup}.
\end{proof}

As a consequence of Theorem \ref{t:blowup}, we can prove convergence of
the blow-up family of functions introduced in \eqref{eq:85}.
Let $z_R$ be the unique solution to the minimization problem 
\begin{multline}\label{eq:def_zR}
  \int_{D_{R}} |(i\nabla +A_0)
  z_R(x)|^2\,dx\\
  = \min\left\{ \int_{D_{R}} |(i\nabla +A_0)u(x)|^2\,dx:\, u\in
    H^{1,0}(D_{R},\C), \ u= e^{\frac{i}{2}(\theta_0 - \theta_{\mathbf e})}
\Psi_k \text{ on }\partial D_{R} \right\},
\end{multline}
which then  solves
\begin{equation}\label{eq:117}
 \begin{cases}
  (i\nabla +A_0)^2 z_R = 0, &\text{in }D_{R},\\
  z_R = e^{\frac{i}{2}(\theta_0 - \theta_{\mathbf e})} \Psi_k, &\text{on }\partial D_{R}.
 \end{cases}
\end{equation}

\begin{Lemma}\label{lemma_blow-up}
Under the same assumptions as in Theorem \ref{t:blowup}, let $Z_a^R $ be as in \eqref{eq:85}.
Then, for all $R>2$,
\begin{equation}\label{eq:vkint} 
  Z_{a}^R \to  \frac{\beta_k^2(0,\varphi_0,\lambda_0)}{|\beta_k^2(0,\varphi_0,\lambda_0)|}
  \sqrt{\frac{K_\delta}{\int_{\partial D_{K_\delta}}|\Psi_k|^2ds}} \
  z_R, \quad\text{in  }H^{1,0}(D_R,\C),
\end{equation}
as $|a|\to0^+$.
\end{Lemma}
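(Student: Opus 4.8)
The plan is to use the variational (Dirichlet-principle) characterisation of $Z_a^R$ and $z_R$ as minimisers of the $(i\nabla+A_0)$-Dirichlet energy with prescribed trace on $\partial D_R$, combined with the strong convergence $\tilde\varphi_a\to c\Psi_k$ in $H^{1,\mathbf e}(D_R,\C)$ already established in Theorem~\ref{t:blowup}, where
\[
c:=\frac{\beta_k^2(0,\varphi_0,\lambda_0)}{|\beta_k^2(0,\varphi_0,\lambda_0)|}\sqrt{\frac{K_\delta}{\int_{\partial D_{K_\delta}}|\Psi_k|^2\,ds}}
\]
(well defined because $\beta_k^2(0,\varphi_0,\lambda_0)\neq0$ by \eqref{eq:70} and \eqref{eq:54}). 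First I would record, exactly as in the proof of Lemma~\ref{stime_lemma_blow-up}, that $Z_a^R$ weakly solves $(i\nabla+A_0)^2Z_a^R=0$ in $D_R$ with $Z_a^R=e^{\frac i2(\theta_0-\theta_{\mathbf e})}\tilde\varphi_a$ on $\partial D_R$; since $z_R$ solves \eqref{eq:117}, the difference $Z_a^R-c\,z_R$ weakly solves $(i\nabla+A_0)^2(Z_a^R-c\,z_R)=0$ in $D_R$ with boundary datum $e^{\frac i2(\theta_0-\theta_{\mathbf e})}g_a$ on $\partial D_R$, where $g_a:=\tilde\varphi_a-c\Psi_k\to0$ in $H^{1,\mathbf e}(D_R,\C)$. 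By the Dirichlet principle, $Z_a^R-c\,z_R$ minimises $u\mapsto\int_{D_R}|(i\nabla+A_0)u|^2\,dx$ among $u\in H^{1,0}(D_R,\C)$ with trace $e^{\frac i2(\theta_0-\theta_{\mathbf e})}g_a$ on $\partial D_R$, so it suffices to exhibit a competitor whose energy tends to $0$.

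To build such a competitor I would use that $R>2$, so the annulus $D_R\setminus\overline{D_{R/2}}$ contains neither $0$ nor $\mathbf e$ nor the segment $[0,\mathbf e]$, and a loop around it winds once around both $0$ and $\mathbf e$; hence $e^{\frac i2(\theta_0-\theta_{\mathbf e})}=e^{\frac i2\theta_0}e^{-\frac i2\theta_{\mathbf e}}$ is a genuine (single-valued) smooth function on $D_R\setminus\overline{D_{R/2}}$, satisfying there the gauge identity $(i\nabla+A_0)(e^{\frac i2(\theta_0-\theta_{\mathbf e})}f)=e^{\frac i2(\theta_0-\theta_{\mathbf e})}(i\nabla+A_{\mathbf e})f$. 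Taking $\eta_R$ as in \eqref{eq:88} and setting $u_a:=\eta_R\,e^{\frac i2(\theta_0-\theta_{\mathbf e})}g_a$, one checks that $u_a\in H^{1,0}(D_R,\C)$ (it vanishes on $D_{R/2}$, so the singularity at $0$ is irrelevant) and that $u_a=e^{\frac i2(\theta_0-\theta_{\mathbf e})}g_a$ on $\partial D_R$; the gauge identity then gives $(i\nabla+A_0)u_a=i\,e^{\frac i2(\theta_0-\theta_{\mathbf e})}(\nabla\eta_R)g_a+\eta_R\,e^{\frac i2(\theta_0-\theta_{\mathbf e})}(i\nabla+A_{\mathbf e})g_a$, whence
\[
\int_{D_R}|(i\nabla+A_0)u_a|^2\,dx\le\frac{32}{R^2}\int_{D_R\setminus D_{R/2}}|g_a|^2\,dx+2\int_{D_R\setminus D_{R/2}}|(i\nabla+A_{\mathbf e})g_a|^2\,dx\;\longrightarrow\;0
\]
as $|a|\to0^+$, since $\|g_a\|_{H^{1,\mathbf e}(D_R,\C)}\to0$. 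By minimality this forces $\int_{D_R}|(i\nabla+A_0)(Z_a^R-c\,z_R)|^2\,dx\to0$.

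Finally, to control the $L^2$-part of the norm I would apply Lemma~\ref{poincare_inequality} on $D_R$ with $a=0$: since $|Z_a^R-c\,z_R|=|g_a|$ on $\partial D_R$ and the trace embedding $H^{1,\mathbf e}(D_R,\C)\hookrightarrow L^2(\partial D_R,\C)$ is continuous,
\[
\frac1{R^2}\int_{D_R}|Z_a^R-c\,z_R|^2\,dx\le\frac1R\int_{\partial D_R}|g_a|^2\,ds+\int_{D_R}|(i\nabla+A_0)(Z_a^R-c\,z_R)|^2\,dx\;\longrightarrow\;0,
\]
so $Z_a^R\to c\,z_R$ in $H^{1,0}(D_R,\C)$, which is the claim. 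I expect the only genuinely delicate point to be the bookkeeping around the gauge factor $e^{\frac i2(\theta_0-\theta_{\mathbf e})}$ — verifying it is single-valued and smooth precisely on the annulus where $\eta_R$ varies, and that the resulting competitor $u_a$ really lies in $H^{1,0}(D_R,\C)$ rather than merely in $H^1$; everything else is routine. (Alternatively one could first extract a weak $H^{1,0}(D_R,\C)$-limit of $Z_a^R$ from the bounds in Lemma~\ref{stime_lemma_blow-up}, identify it as $c\,z_R$ via the trace convergence $\tilde\varphi_a|_{\partial D_R}\to c\Psi_k|_{\partial D_R}$ and the uniqueness of the Dirichlet problem \eqref{eq:117}, and only then upgrade to strong convergence with the same competitor; the direct route above sidesteps the subsequence extraction.)
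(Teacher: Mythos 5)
Your proof is correct and follows essentially the same route as the paper: reduce to the boundary-value problem for $Z_a^R - c\,z_R$, build the cut-off competitor $\eta_R\,e^{\frac i2(\theta_0-\theta_{\mathbf e})}(\tilde\varphi_a - c\Psi_k)$, invoke the Dirichlet principle, and conclude with a Poincar\'e/Hardy bound. The paper quotes \eqref{eq:hardy} where you use Lemma \ref{poincare_inequality}, but both yield the same $L^2$ control; your explicit check that the gauge factor $e^{\frac i2(\theta_0-\theta_{\mathbf e})}$ is smooth on the annulus $D_R\setminus\overline{D_{R/2}}$ (since $[0,\mathbf e]\subset D_{R/2}$ for $R>2$) and that $u_a\in H^{1,0}(D_R,\C)$ is an implicit step the paper does not spell out.
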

\begin{proof}
Let us denote
\[
\gamma_\delta=\frac{\beta_k^2(0,\varphi_0,\lambda_0)}{|\beta_k^2(0,\varphi_0,\lambda_0)|}
\sqrt{\frac{K_\delta}{\int_{\partial D_{K_\delta}}|\Psi_k|^2ds}}.
\]
We notice that $Z_a^R -  \gamma_\delta z_R$ solves
\begin{equation*}
 \begin{cases}
   (i\nabla +A_0)^2 \big(Z_a^R -  \gamma_\delta z_R\big) =0, &\text{in $D_R$}\\
   Z_a^R - \gamma_\delta z_R = e^{\frac{i}{2}(\theta_0-\theta_{\mathbf
       e})}\left(\tilde \varphi_a - \gamma_\delta\Psi_k\right), &\text{on $\partial D_R$},
 \end{cases}
\end{equation*}
and, by the Dirichlet principle and Theorem \ref{t:blowup},
\begin{align*}
 \int_{D_R} &\abs{(i\nabla +A_0)(Z_a^R - \gamma_\delta z_R) }^2dx
\leq \int_{D_R} \abs{(i\nabla +A_0)\big(\eta_R\,
e^{\frac{i}{2}(\theta_0-\theta_{\mathbf e})}(\tilde \varphi_a - \gamma_\delta\Psi_k)\big)}^2 dx\\
&\leq 2 \int_{D_R}|\nabla \eta_R|^2\big|
\tilde \varphi_a - \gamma_\delta\Psi_k\big|^2dx 
+2\int_{D_R\setminus D_{R/2}}
\eta_R^2\big|(i\nabla +A_{\mathbf e})(\tilde \varphi_a - \gamma_\delta\Psi_k)\big|^2dx\\
&= o(1)\quad \text{as }
|a|\to0^+,
\end{align*}
where $\eta_R:\R^2\to\R$ is a smooth cut-off function as in \eqref{eq:88}.
Then, taking into account
\eqref{eq:hardy}, we conclude that 
convergence \eqref{eq:vkint} holds.
\end{proof}

\section{Sharp asymptotics for convergence of eigenvalues}\label{sec:9}

In view of the exact asymptotics of the term
$H(\varphi_{a},K_\delta|a|)$ established in \eqref{eq:58},
Proposition \ref{prop:quasi_finito1} yields a control of
$\lambda_0-\lambda_a$ with $|a|^k$ both from above and below. To compute
explicitly the limit of $\frac{\lambda_0-\lambda_a}{|a|^k}$ it remains
to determine the limit of the function $f_R(a)$ in Lemma
\ref{l:stima_Lambda0_sopra} as $|a|\to 0$ and $R\to+\infty$.

\begin{Lemma}\label{l:limite_kappa_R}
For all $R>\tilde R$ and  $a=(|a|,0)\in\Omega$ with
  $|a|<\frac{R_0}R$, let $f_R(a)$ be as in Lemma
  \ref{l:stima_Lambda0_sopra}.
Then 
\begin{equation}\label{eq:60}
\lim_{|a|\to0^+}f_R(a)= -i \frac{K_\delta}{\int_{\partial D_{K_\delta}}|\Psi_k|^2ds}
\kappa_R
\end{equation}
where 
\begin{equation}\label{eq:68}
\kappa_R=\int_{\partial D_R}\Big(e^{-\frac i2 \theta_0}
  (i\nabla+A_0)z_R\cdot\nu -e^{-\frac i2 \theta_{\mathbf e}}
  (i\nabla+A_{\mathbf e})\Psi_k\cdot\nu \Big) e^{\frac i2
    \theta_{\mathbf
      e}}\overline{\Psi_k}\,ds.
\end{equation}
Furthermore
\begin{equation}\label{eq:71}
  \lim_{R\to+\infty}\kappa_R=-4i{\mathfrak m}_k,
\end{equation}
where ${\mathfrak m}_k$ is defined in \eqref{eq:Ik}.
\end{Lemma}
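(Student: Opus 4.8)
The plan is to establish the two assertions separately, the first being a direct passage to the limit and the second an adaptation of the argument used for Lemma~\ref{l:lim_tilde_kappa_R}.

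\textit{Proof of \eqref{eq:60}.} Start from the expression for $f_R(a)$ furnished by Lemma~\ref{l:stima_Lambda0_sopra}, namely $f_R(a)=\int_{D_R}|(i\nabla+A_0)Z_a^R|^2\,dx-\int_{D_R}|(i\nabla+A_{\mathbf e})\tilde\varphi_a|^2\,dx+o(1)$ as $|a|\to0^+$. By Theorem~\ref{t:blowup}, $\tilde\varphi_a\to\gamma_\delta\Psi_k$ strongly in $H^{1,{\mathbf e}}(D_R,\C)$, and by Lemma~\ref{lemma_blow-up}, $Z_a^R\to\gamma_\delta z_R$ strongly in $H^{1,0}(D_R,\C)$, where $\gamma_\delta=\frac{\beta_k^2(0,\varphi_0,\lambda_0)}{|\beta_k^2(0,\varphi_0,\lambda_0)|}\sqrt{K_\delta/\int_{\partial D_{K_\delta}}|\Psi_k|^2\,ds}$, so that $|\gamma_\delta|^2=K_\delta/\int_{\partial D_{K_\delta}}|\Psi_k|^2\,ds$. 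Hence $\lim_{|a|\to0^+}f_R(a)=|\gamma_\delta|^2\big(\int_{D_R}|(i\nabla+A_0)z_R|^2\,dx-\int_{D_R}|(i\nabla+A_{\mathbf e})\Psi_k|^2\,dx\big)$. It then remains to rewrite the right-hand side as a boundary integral. Since $z_R$ weakly solves $(i\nabla+A_0)^2z_R=0$ in $D_R$ and $\Psi_k$ weakly solves $(i\nabla+A_{\mathbf e})^2\Psi_k=0$, the magnetic Green identity $\int_{D_R}|(i\nabla+A)u|^2\,dx=-i\int_{\partial D_R}\big((i\nabla+A)u\cdot\nu\big)\overline{u}\,ds$ (valid for solutions of $(i\nabla+A)^2u=0$) gives
\[
\int_{D_R}|(i\nabla+A_0)z_R|^2\,dx-\int_{D_R}|(i\nabla+A_{\mathbf e})\Psi_k|^2\,dx=-i\int_{\partial D_R}\Big(\big((i\nabla+A_0)z_R\cdot\nu\big)\overline{z_R}-\big((i\nabla+A_{\mathbf e})\Psi_k\cdot\nu\big)\overline{\Psi_k}\Big)ds.
\]
Using the boundary condition $z_R=e^{\frac i2(\theta_0-\theta_{\mathbf e})}\Psi_k$ on $\partial D_R$, so that $\overline{z_R}=e^{-\frac i2\theta_0}\big(e^{\frac i2\theta_{\mathbf e}}\overline{\Psi_k}\big)$ while $\overline{\Psi_k}=e^{-\frac i2\theta_{\mathbf e}}\big(e^{\frac i2\theta_{\mathbf e}}\overline{\Psi_k}\big)$, the last integrand becomes exactly the integrand of $\kappa_R$ in \eqref{eq:68}; thus the above difference of energies equals $-i\kappa_R$, and \eqref{eq:60} follows.

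\textit{Proof of \eqref{eq:71}.} Here I would mimic the proof of Lemma~\ref{l:lim_tilde_kappa_R}. Consider the angular Fourier coefficient $\zeta_R(r):=\int_0^{2\pi}z_R(r(\cos t,\sin t))e^{-\frac i2\theta_0(r\cos t,r\sin t)}e^{i\frac t2}\overline{\psi_2^k(t)}\,dt$; by \eqref{coeff_fourier_centrata0}--\eqref{eq:fourier_centrata0} (with $b=0$, $\lambda=0$, $\ell=2$, $j=k$) it solves $\big(r^{1+k}(r^{-k/2}\zeta_R)'\big)'=0$ on $(0,R)$, so $\zeta_R(r)=Ar^{k/2}+Br^{-k/2}$; since $z_R\in H^{1,0}(D_R,\C)$ is regular at the origin, the singular mode is excluded, i.e. $\zeta_R(r)=\zeta_R(1)\,r^{k/2}$ on $(0,R]$. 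Evaluating on $\partial D_R$ with $z_R=e^{\frac i2\theta_0}\Phi_k$ there (recall $\Psi_k=e^{\frac i2\theta_{\mathbf e}}\Phi_k$ and $\Phi_k=\psi_k+w_k$, with $w_k$ real and $w_k=O(|x|^{-1/2})$ by \eqref{eq:75}) and using \eqref{eq:126} computes $\zeta_R(R)$, hence $\zeta_R(1)$; differentiating $\zeta_R$ and writing it as a boundary integral as in the derivation of \eqref{eq:128} expresses $\int_{\partial D_R}e^{-\frac i2\theta_0}\big((i\nabla+A_0)z_R\cdot\nu\big)\psi_k\,ds$ in terms of $\zeta_R(1)$. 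The remaining contribution to $\kappa_R$ coming from $w_k$ in $\Phi_k$, together with the $\Psi_k$-part $-i\int_{\partial D_R}\Phi_k\,\partial_\nu\Phi_k\,ds$, is handled through the harmonic expansion $w_k(x)=\sum_{j\ \mathrm{odd}}c_j|x|^{-j/2}\sin(\tfrac j2\theta_0)$ at infinity (only odd half-integer modes occur because $w_k$ is harmonic on $\R^2\setminus s_0$ and vanishes, near infinity, on $s_0$). The key point I expect is that the leading $O(R^k)$ terms of the two pieces of $\kappa_R$ cancel exactly, leaving $\kappa_R=ik\pi\,c_k+O(R^{-1})$ as $R\to+\infty$, where $c_k$ is the coefficient of the mode $|x|^{-k/2}\sin(\tfrac k2\theta_0)$ in the expansion of $w_k$. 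Finally one identifies $c_k$ with ${\mathfrak m}_k$: the function $\omega$ of Lemma~\ref{l:legame_compl} satisfies $\omega(1)=\sqrt\pi\,c_k$ (by orthogonality of the angular modes) and $\omega(1)=-\tfrac4{k\sqrt\pi}{\mathfrak m}_k$ (this is the identity \eqref{eq:43}, equivalently $\sqrt\pi-\xi(1)=-\omega(1)$), whence $c_k=-\tfrac4{k\pi}{\mathfrak m}_k$ and $\kappa_R\to ik\pi\big(-\tfrac4{k\pi}{\mathfrak m}_k\big)=-4i{\mathfrak m}_k$, which is \eqref{eq:71}.

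\textit{Main obstacle.} The delicate part is the second assertion: the boundary integrals defining $\kappa_R$ are individually of size $O(R^k)$, so one must expand them with enough precision to see that the divergent parts cancel and to extract the finite remainder $ik\pi c_k$. This needs, besides the decay estimate \eqref{eq:75}, the refined harmonic expansion of $w_k$ at infinity and its uniform remainder control, together with elliptic estimates bounding $\int_{\partial D_R}|(i\nabla+A_0)z_R\cdot\nu|^2\,ds$ in terms of the (growing) Dirichlet energy of $z_R$, so as to dispose of the cross term between $(i\nabla+A_0)z_R\cdot\nu$ and $w_k$; once these are in place, the algebraic cancellation and the identification via Lemma~\ref{l:legame_compl} are routine.
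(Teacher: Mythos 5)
Your proof of \eqref{eq:60} is correct and matches the paper's: pass to the limit in $f_R(a)$ using the strong convergences of Theorem~\ref{t:blowup} and Lemma~\ref{lemma_blow-up}, then rewrite the difference of Dirichlet energies as a boundary integral using the equations for $z_R$ and $\Psi_k$ and the boundary matching $z_R=e^{\frac i2(\theta_0-\theta_{\mathbf e})}\Psi_k$ on $\partial D_R$.

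For \eqref{eq:71} you take a genuinely different route. You keep $e^{\frac i2\theta_{\mathbf e}}\overline{\Psi_k}=\Phi_k=\psi_k+w_k$ intact inside $\kappa_R$ and propose to compute all the boundary integrals directly by expanding $w_k$ at infinity into odd half-integer harmonics, showing that the $O(R^k)$ parts of the $z_R$-piece and the $\Psi_k$-piece cancel exactly and that the finite remainder is $ik\pi c_k$, where $c_k$ is the coefficient of the $|x|^{-k/2}\sin(\frac k2\theta_0)$ mode, identified with $-\frac{4}{k\pi}{\mathfrak m}_k$ via Lemma~\ref{l:legame_compl}. The arithmetic is consistent and the plan can be made rigorous: because $u_R:=e^{-\frac i2\theta_0}z_R$ shares the boundary trace $\Phi_k$ with the exterior solution, the $j$-th mode of $\partial_\nu u_R-\partial_\nu\Phi_k$ on $\partial D_R$ is $jb_jR^{-j/2-1}$, so $\kappa_R=i\pi kb_k+i\pi\sum_j jb_j^2R^{-j}\to i\pi kb_k$ by mode-by-mode orthogonality, with $b_k=\omega(1)/\sqrt\pi$. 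The paper instead inserts a \emph{Step~1} (claim \eqref{eq:108}) that replaces $e^{\frac i2\theta_{\mathbf e}}\overline{\Psi_k}$ by $\psi_k$ in $\kappa_R$ with $o(1)$ error via a Green's identity/Dirichlet-principle argument resting solely on the finite energy \eqref{eq:17} and the decay \eqref{eq:75}; after that, \emph{Step~2} requires only the single $j=k$ Fourier coefficient. The paper's detour thus avoids the full harmonic expansion of $w_k$ entirely, which is the efficiency your approach gives up.

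One technical caveat in your sketch needs fixing: the suggestion to control the cross term between $(i\nabla+A_0)z_R\cdot\nu$ and $w_k$ by Cauchy--Schwarz with an elliptic $L^2(\partial D_R)$ bound on $(i\nabla+A_0)z_R\cdot\nu$ cannot work as stated. That trace has $L^2(\partial D_R)$-norm of order $R^{\frac k2-\frac12}$ while $\|w_k\|_{L^2(\partial D_R)}=O(1)$, so the resulting bound $O(R^{\frac k2-\frac12})$ does not vanish. What actually controls that cross term is precisely the orthogonality of the angular Fourier modes (and, for the $j=k$ mode, the exact cancellation against the $\Psi_k$-piece), not a crude $L^2$ estimate; equivalently, it is dealt with automatically by the paper's Green-identity argument for $I_1(R),I_2(R)$. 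Any rigorous write-up of your route must be organized around that orthogonality rather than around boundary $L^2$ bounds.
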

\begin{proof}
We first observe that, by Theorem \ref{t:blowup}, Lemma
\ref{lemma_blow-up}, \eqref{eq:16}, and \eqref{eq:117}, 
\begin{align*}
  &\lim_{|a|\to0^+}\int_{D_R}|(i\nabla+A_0)Z_a^R|^2\,dx-\int_{D_R}|(i\nabla+A_{\mathbf
    e})\tilde \varphi_a|^2\,dx\\
  &\quad= \tfrac{K_\delta}{\int_{\partial
      D_{K_\delta}}\!\!|\Psi_k|^2ds}
  \bigg(\int_{D_R}|(i\nabla+A_0)z_R|^2\,dx-\int_{D_R}|(i\nabla+A_{\mathbf
    e})\tilde \Psi_k|^2\,dx\bigg) = \tfrac{-i K_\delta}{\int_{\partial
      D_{K_\delta}}\!\!|\Psi_k|^2ds} \kappa_R
\end{align*}
with $\kappa_R$ as in \eqref{eq:68}. Hence \eqref{eq:60} follows from 
\eqref{eq:69}.
We divide the proof of \eqref{eq:71} in two steps.

\smallskip\noindent{\bf Step 1.} We claim that 
\begin{equation}\label{eq:108}
  \kappa_R=\int_{\partial D_R}\Big(e^{-\frac i2 \theta_0} (i\nabla+A_0)z_R\cdot\nu
  -e^{-\frac i2 \theta_{\mathbf e}} (i\nabla+A_{\mathbf e})\Psi_k\cdot\nu
  \Big) \psi_k\,ds+o(1)
\end{equation}
as $R\to \infty$.
To prove the claim, we observe that 
\begin{align}\label{eq:109}
  \kappa_R &=\int_{\partial D_R}\Big(e^{-\frac i2 \theta_0}
  (i\nabla+A_0)z_R\cdot\nu -e^{-\frac i2 \theta_{\mathbf e}}
  (i\nabla+A_{\mathbf e})\Psi_k\cdot\nu \Big) \Big(e^{\frac i2
    \theta_{\mathbf
      e}}\overline{\Psi_k}-\psi_k\Big)\,ds\\
  \notag &\quad+ \int_{\partial D_R}\Big(e^{-\frac i2 \theta_0}
  (i\nabla+A_0)z_R\cdot\nu -e^{-\frac i2 \theta_{\mathbf e}}
  (i\nabla+A_{\mathbf e})\Psi_k\cdot\nu
  \Big) \psi_k\\
  \notag &=\int_{\partial D_R}\Big(e^{-\frac i2
    (\theta_0-\theta_{\mathbf
      e})}\overline{\Psi_k}-e^{-\frac{i}{2}\theta_0}\psi_k\Big)
  (i\nabla+A_0)\Big(z_R-e^{\frac i2 (\theta_0-\theta_{\mathbf
      e})}\Psi_k\Big) \cdot\nu
  \,ds\\
  \notag&\quad+ \int_{\partial D_R}\Big(e^{-\frac i2 \theta_0}
  (i\nabla+A_0)z_R\cdot\nu -e^{-\frac i2 \theta_{\mathbf e}}
  (i\nabla+A_{\mathbf e})\Psi_k\cdot\nu \Big) \psi_k\,ds\\
  \notag&=\int_{\partial D_R}\Big(e^{-\frac i2 \theta_0}
  (i\nabla+A_0)z_R\cdot\nu -e^{-\frac i2 \theta_{\mathbf e}}
  (i\nabla+A_{\mathbf e})\Psi_k\cdot\nu \Big) \psi_k\,ds+I_1(R)+I_2(R)
\end{align}
where 
\begin{align*}
  I_1(R)&=\int_{\partial D_R}\Big(e^{-\frac i2
    (\theta_0-\theta_{\mathbf
      e})}\overline{\Psi_k}-e^{-\frac{i}{2}\theta_0}\psi_k\Big)
  (i\nabla+A_0)\Big(e^{\frac i2 \theta_0}\psi_k-e^{\frac i2
    (\theta_0-\theta_{\mathbf e})}\Psi_k\Big) \cdot\nu
  \,ds\\
  &= \int_{\partial
    D_R}\Big(\overline{\Psi_k}-e^{-\frac{i}{2}\theta_{\mathbf
      e}}\psi_k\Big) (i\nabla+A_{\mathbf e})\Big(e^{\frac i2
    \theta_{\mathbf e}}\psi_k-\Psi_k\Big) \cdot\nu
  \,ds\\
  I_2(R)&=\int_{\partial D_R}\Big(e^{-\frac i2
    (\theta_0-\theta_{\mathbf
      e})}\overline{\Psi_k}-e^{-\frac{i}{2}\theta_0}\psi_k\Big)
  (i\nabla+A_0)\Big(z_R-e^{\frac i2 \theta_0}\psi_k \Big) \cdot\nu
  \,ds.
\end{align*} 
Testing the equation
\[ 
(i\nabla +A_{\mathbf e})^2
\big(e^{\frac{i}{2}\theta_{\mathbf e}}\psi_k-{ \Psi_k}\big) =0, 
\]
which is satisfied in $\R^2\setminus D_R$, with the function 
$\big(e^{\frac{i}{2}\theta_{\mathbf e}}\psi_k-{
  \Psi_k}\big)(1-\eta_{2R})^2$ (being $\eta_{2R}$ as in \eqref{eq:88}), we obtain that
\begin{align*}
  I_1(R)&= i \int_{\R^2 \setminus D_R} (i\nabla +A_{\mathbf
    e})\big(e^{\frac{i}{2}\theta_{\mathbf e}}\psi_k-{
    \Psi_k}\big)\cdot \overline{(i\nabla +A_{\mathbf
      e})\big((1-\eta_{2R})^2 (e^{\frac{i}{2}\theta_{\mathbf
        e}}\psi_k-{ \Psi_k)}\big)}
  \,dx \\
  &= i \int_{\R^2 \setminus D_R} |(i\nabla +A_{\mathbf
    e})\big(e^{\frac{i}{2}\theta_{\mathbf e}}\psi_k-{
    \Psi_k}\big)|^2(1-\eta_{2R})^2
  \,dx  \\
  &\quad +2\int_{\R^2 \setminus D_R}(1-\eta_{2R})
  (e^{-\frac{i}{2}\theta_{\mathbf e}}\psi_k- \overline{\Psi_k})
  (i\nabla +A_{\mathbf e})\big(e^{\frac{i}{2}\theta_{\mathbf
      e}}\psi_k-{ \Psi_k}\big)\cdot\nabla\eta_{2R} \,dx,
\end{align*}
and hence, thanks to \eqref{eq:17} and \eqref{eq:75},
\begin{align}\label{eq:110}
  &|I_1(R)|\leq 2\int_{\R^2 \setminus D_R} |(i\nabla +A_{\mathbf
    e})\big(e^{\frac{i}{2}\theta_{\mathbf e}}\psi_k-{ \Psi_k}\big)|^2
  \,dx \\
  \notag&\qquad\qquad + \int_{D_{2R} \setminus D_R}
  |e^{\frac{i}{2}\theta_{\mathbf e}}\psi_k-
  \Psi_k|^2|\nabla\eta_{2R}|^2 \,dx \\
  \notag&\leq 2\int_{\R^2 \setminus D_R} |(i\nabla +A_{\mathbf
    e})\big(e^{\frac{i}{2}\theta_{\mathbf e}}\psi_k-{ \Psi_k}\big)|^2
  \,dx +\frac{4}{R^2} \int_{D_{2R} \setminus D_R}
  |e^{\frac{i}{2}\theta_{\mathbf e}}\psi_k- \Psi_k|^2 \,dx \to0
\end{align}
as $R\to +\infty$.

On the other hand, testing the equation $(i\nabla +A_0)^2
(e^{\frac{i}{2}\theta_0}\psi_k - z_R)=0$ in $D_R$ with the function
$\eta_R \big(e^{\frac{i}{2}(\theta_0 - \theta_{\mathbf e})}{ \Psi_k}
-e^{\frac{i}{2}\theta_0}\psi_k\big)$ (with $\eta_R$ as in
\eqref{eq:88}) and using the Dirichlet Principle, we have that 
\begin{align*}
  &|I_2(R)|=\bigg|-i \int_{D_R} (i\nabla +A_0)(e^{\frac{i}{2}\theta_0}\psi_k -
  z_R)\cdot \overline{(i\nabla +A_0)\Big(\eta_R
    \big(e^{\frac{i}{2}(\theta_0 -
      \theta_{\mathbf e})}\Psi_k -e^{\frac{i}{2}\theta_0}\psi_k\big)\Big)}\,dx\bigg|\\
  &\leq \left(\int_{D_R} \big|(i\nabla +A_0) \big
    (e^{\frac{i}{2}\theta_0}\psi_k - z_R \big)
    \big|^2\,dx\right)^{\!\!1/2}\times\\
  &\hskip3cm\times \left(\int_{D_R} \Big|(i\nabla +A_0) \Big(\eta_R
    \big(e^{\frac{i}{2}(\theta_0 -
      \theta_{\mathbf e})}\Psi_k -e^{\frac{i}{2}\theta_0}\psi_k\big)\Big)\Big|^2dx\right)^{1/2} \\
  & \leq \int_{D_R} \Big|(i\nabla +A_0) \Big(\eta_R
  \big(e^{\frac{i}{2}(\theta_0 -
    \theta_{\mathbf e})}\Psi_k -e^{\frac{i}{2}\theta_0}\psi_k\big)\Big)\Big|^2dx\\
  & \leq 2 \int_{D_R \setminus D_{\frac R2}} \!\!\Big|(i\nabla +A_0)
  \Big( e^{\frac{i}{2}(\theta_0 - \theta_{\mathbf e})}\Psi_k
  -e^{\frac{i}{2}\theta_0}\psi_k\Big)\Big|^2dx+2\int_{D_R \setminus
    D_{\frac R2}}\!\! |\nabla \eta_R|^2
  \big| \Psi_k -e^{\frac{i}{2}\theta_{\mathbf e}}\psi_k \big|^2 dx\\
  & \leq 2 \int_{D_R \setminus D_{\frac R2}} \Big|(i\nabla +A_{\mathbf
    e}) \Big( \Psi_k -e^{\frac{i}{2}\theta_{\mathbf
      e}}\psi_k\Big)\Big|^2dx+\frac{32}{R^2} \int_{D_R \setminus
    D_{\frac R2}}\big| \Psi_k -e^{\frac{i}{2}\theta_{\mathbf e}}\psi_k
  \big|^2\,dx
\end{align*}
which, in view of \eqref{eq:17} and estimate
\eqref{eq:75}, yields that 
\begin{equation}\label{eq:111}
  I_2(R)\to 0
\end{equation}
as $R\to +\infty$. Claim \eqref{eq:108} then follows from \eqref{eq:109},
\eqref{eq:110}, and \eqref{eq:111}.

\smallskip\noindent{\bf Step 2.} We claim that
\begin{equation}\label{eq:step2}
  \int_{\partial D_R}\Big(e^{-\frac i2 \theta_0} (i\nabla+A_0)z_R\cdot\nu
-e^{-\frac i2 \theta_{\mathbf e}} (i\nabla+A_{\mathbf e})\Psi_k\cdot\nu
\Big) \psi_k\,ds=ik\sqrt\pi\big(\xi(1)-\sqrt \pi \big),
\end{equation}
where the function $\xi$ is defined in \eqref{eq:coeff_fourier_Psi1}. 
From \eqref{coeff_fourier_centrata0} and
\eqref{eq:fourier_centrata0},  the function $\xi$
satisfies
\[ 
\big( r^{-k/2}\xi(r) \big)'= \dfrac{C_\xi}{r^{1+k}}, \quad \text{in
}(1,+\infty),
 \]
 for some $C_\xi\in\C$.
Integrating the previous equation over $(1,r)$ we obtain that
\begin{equation}\label{eq:114} 
r^{-k/2} \xi(r) -\xi(1) = \frac{C_\xi}{k}\left(1-\frac1{r^k}\right). 
\end{equation}
From \eqref{eq:psi_k} and estimate \eqref{eq:75} it follows that 
\begin{align*}
  \xi(r)&=\frac1{\sqrt\pi}\int_0^{2\pi}\psi_k(r\cos t,r\sin t)
\sin \big(\tfrac k2 t\big)\,dt\\
&+\int_{0}^{2\pi} e^{\frac{i}{2}(\theta_0-\theta_{\mathbf
      e})(r\cos t,r\sin t) } \Big(\Psi_k(r\cos t,r\sin t)-e^{\frac{i}{2}\theta_{\mathbf
      e}(r\cos t,r\sin t) }\psi_k(r\cos t,r\sin t) \Big)
  \overline{\psi^k_2(t)}\,dt\\
&=\sqrt{\pi}\,r^{k/2}+O(r^{-1/2}),\quad\text{as }r\to+\infty,
\end{align*}
and hence 
$r^{-k/2} \xi(r) \to \sqrt\pi$ as $r\to+\infty$. 
Letting $r\to+\infty$ in \eqref{eq:114}, this implies that $\tfrac{C_\xi}{k}=\sqrt\pi -\xi(1)$, so that 
\begin{align}
 \xi(r)&= \sqrt\pi \,r^{k/2} +  r^{-k/2} \big( \xi(1)-\sqrt\pi
 \big),\quad r>1, \label{eq:xi}\\
 \xi'(r)&=\frac{k}{2}\sqrt\pi r^{k/2-1} + (\sqrt\pi - \xi(1))\frac{k}{2} r^{-k/2-1},\quad r>1. \label{eq:xi'}
\end{align}
In particular, from \eqref{eq:xi} we have that 
\begin{equation}\label{eq:sqrtpi-xi1}
 \sqrt\pi - \xi(1) = \sqrt\pi r^k - r^{k/2}\xi(r),\quad \text{for
   all }r>1,
\end{equation}
whose substitution into \eqref{eq:xi'} yields
\begin{equation}\label{eq:115}
  \xi'(r)=k\sqrt{\pi}r^{k/2-1}-\frac k2\,\frac{\xi(r)}{r},\quad \text{for
   all }r>1.
\end{equation}
On the other hand, writing $\xi$ as 
\[
\xi(r)=\frac1r\int_{\partial D_r}
e^{\frac{i}{2}(\theta_0-\theta_{\mathbf e})}\Psi_k(x)\,
\overline{\psi^k_2(\theta_0(x))}\,ds(x),
\]
differentiating and taking into account \eqref{eq:9}, \eqref{eq:psi_k}
and the fact that $A_0\cdot \nu=0$ on $\partial D_r$, we obtain also that 
\begin{align}\label{eq:112}
  \xi'(r)&=\frac1r\int_{\partial D_r}
  \nabla\Big(e^{\frac{i}{2}(\theta_0-\theta_{\mathbf
      e})}\Psi_k\Big)\cdot\nu \,
  \overline{\psi^k_2(\theta_0(x))}\,ds\\
  \notag&=-\frac{i}{\sqrt\pi}r^{-\frac k2-1}\int_{\partial
    D_r}e^{-\frac{i}{2}\theta_{\mathbf e}}(i\nabla+A_{\mathbf
    e})\Psi_k\cdot\nu\, \psi_k\, ds.
\end{align}
Combination of  \eqref{eq:115} and \eqref{eq:112} yields that
\begin{equation}\label{eq:113}
  \int_{\partial D_r}e^{-\frac{i}{2}\theta_{\mathbf
      e}}(i\nabla+A_{\mathbf e})\Psi_k\cdot\nu\, \psi_k\, ds=i\sqrt\pi
  r^{k/2+1}\xi'(r)=i\sqrt\pi
  r^{k/2+1}\bigg(k\sqrt{\pi}r^{k/2-1}-\frac k2\,\frac{\xi(r)}{r} \bigg),
\end{equation}
for all $r>1$.

According to \eqref{coeff_fourier_centrata0} and
\eqref{eq:fourier_centrata0},  the function $\zeta_R$ defined as 
\begin{equation*}
\zeta_R(r):= \int_{0}^{2\pi} z_R(r\cos t,r\sin t) \overline{\psi^k_2(t)}\,dt
\end{equation*}
satisfies, for some $C_{R}\in\C$,
\[ 
\big( r^{-k/2}\zeta_R(r) \big)'= \dfrac{C_{R}}{r^{1+k}}, \quad r\in(0,R). 
\]
Integrating the previous equation over $(r,R)$ we obtain 
\[
 R^{-k/2} \zeta_R(R) -r^{-k/2} \zeta_R(r) =
\frac{C_{R}}{k} \left(\frac{1}{r^k}-\frac{1}{R^k}\right), \quad \text{for all }r\in(0,R]. 
 \]
Since by Proposition \ref{prop:fft} $\zeta_R(r)=O(r^{1/2})$ as $r\to 0^+$,
we necessarily have $C_{R}=0$. Hence 
\begin{align}
 \notag \zeta_R(r)&= \dfrac{\zeta_R(R)}{R^{k/2}} r^{k/2}, \quad \text{for all }r\in(0,R],\\
 \label{eq:116}\zeta_R'(r)&=\frac{k}{2}\dfrac{\zeta_R(R)}{R^{k/2}} r^{k/2-1}, \quad \text{for all }r\in(0,R].
\end{align}
On the other hand, writing $\zeta_R$ as 
\[
\zeta_R(r)=\frac1r\int_{\partial D_r}
z_R(x)\,
\overline{\psi^k_2(\theta_0(x))}\,ds(x),
\]
differentiating and using  \eqref{eq:9}, \eqref{eq:psi_k}
and $A_0\cdot \nu=0$ on $\partial D_r$, we obtain that
\begin{equation}\label{eq:112_la}
  \zeta_R'(r)=\frac1r\int_{\partial D_r}
  \nabla z_R\cdot\nu \,
  \overline{\psi^k_2(\theta_0(x))}\,ds=-\frac{i}{\sqrt\pi}r^{-\frac k2-1}
  \int_{\partial D_r}e^{-\frac{i}{2}\theta_0}(i\nabla+A_{0})z_R\cdot\nu\, \psi_k\, ds.
\end{equation}
Combination of  \eqref{eq:116} and \eqref{eq:112_la} yields that
\begin{equation}\label{eq:secondocontributo}
  \int_{\partial
    D_r}e^{-\frac{i}{2}\theta_0}(i\nabla+A_{0})z_R\cdot\nu\, \psi_k\,
  ds=
  \frac{ik}2 \sqrt\pi \frac{\zeta_R(R)}{R^{k/2}}r^k
\end{equation}
for all $r\in(0,R]$.

From the boundary condition in \eqref{eq:117} it follows that
$\xi(R)=\zeta_R(R)$. Hence, collecting \eqref{eq:113},
\eqref{eq:secondocontributo}, and \eqref{eq:sqrtpi-xi1} we obtain that 
\begin{align*}
  \int_{\partial D_R}&\Big(e^{-\frac i2 \theta_0}
  (i\nabla+A_0)z_R\cdot\nu -e^{-\frac i2 \theta_{\mathbf e}}
  (i\nabla+A_{\mathbf e})\Psi_k\cdot\nu
  \Big) \psi_k\,ds\\
  &= \frac{ik}2 \sqrt\pi \frac{\zeta_R(R)}{R^{k/2}}R^k- i\sqrt\pi
  R^{k/2+1}\bigg(k\sqrt{\pi}R^{k/2-1}-\frac k2\,\frac{\xi(R)}{R} \bigg) \\
  &=ik\sqrt\pi \big(\xi(R)R^{k/2}-\sqrt \pi
  R^k\big)=ik\sqrt\pi\big(\xi(1)-\sqrt \pi \big),
\end{align*}
thus proving claim \eqref{eq:step2}.

Combining \eqref{eq:108} with
\eqref{eq:step2} we obtain that 
\begin{equation}\label{eq:124}
  \kappa_R=ik\sqrt\pi(\xi(1)-\sqrt\pi)+o(1),\quad\text{as }R\to+\infty.
\end{equation}
The conclusion \eqref{eq:71} follows by \eqref{eq:124}  and Lemma
\ref{l:legame_compl} (see also \eqref{eq:43}).
\end{proof}

We are now in position to prove our main result.
\begin{proof}[Proof of Theorem \ref{t:main_asy_eige}]
From Proposition \ref{prop:quasi_finito1}, Lemma
\ref{l:stima_Lambda0_sopra}, Lemma \ref{l:limite_kappa_R}, and \eqref{eq:58}
it follows that, for every $R>\tilde R$, 
\begin{align*}
  -4 \frac{|\beta_k^2(0,\varphi_0,\lambda_0)|^2}{\pi}
  &\mathfrak{m}_k\, (1+o(1)) \leq \frac{\lambda_0 -
    \lambda_{a}}{|a|^k} \leq f_R(a)
  \frac{H(\varphi_{a},K_\delta|a|)}{|a|^{k}}\\
  &=\bigg( -i \frac{K_\delta}{\int_{\partial
      D_{K_\delta}}|\Psi_k|^2ds} \kappa_R+o(1)\bigg)
  \bigg(\frac{|\beta_k^2(0,\varphi_0,\lambda_0)|^2}{\pi}
  \frac{\int_{\partial D_{K_\delta}}|\Psi_k|^2ds}{K_\delta}+o(1)\bigg)
\end{align*}
as $|a|\to0^+$.
Hence 
\begin{equation}\label{eq:72}
  -4 \frac{|\beta_k^2(0,\varphi_0,\lambda_0)|^2}{\pi} \mathfrak{m}_k\leq \liminf_{|a|\to0^+}\frac{\lambda_0 -
    \lambda_{a}}{|a|^k} \leq 
  \limsup_{|a|\to0^+}\frac{\lambda_0 -
    \lambda_{a}}{|a|^k} \leq 
  -i 
  \kappa_R
  \frac{|\beta_k^2(0,\varphi_0,\lambda_0)|^2}{\pi}
\end{equation}
 for every $R>\tilde R$.  From \eqref{eq:71}, letting $R\to+\infty$ in
 \eqref{eq:72} we obtain the chain of inequalities
\[
-4 \frac{|\beta_k^2(0,\varphi_0,\lambda_0)|^2}{\pi} \mathfrak{m}_k\leq
\liminf_{|a|\to0^+}\frac{\lambda_0 - \lambda_{a}}{|a|^k} \leq
\limsup_{|a|\to0^+}\frac{\lambda_0 - \lambda_{a}}{|a|^k} \leq -
4{\mathfrak m}_k \frac{|\beta_k^2(0,\varphi_0,\lambda_0)|^2}{\pi}
\]
which yields the conclusion (see Remark \ref{rem_beta1=0}).
\end{proof}

\bigskip\noindent {\bf Acknowledgments.}  The authors would like to
thank Susanna Terracini for her encouragement and for fruitful
discussions.

\end{document}